\definecolor{myorange}{RGB}{180,90,0}
\newcolumntype{P}[1]{>{\centering\arraybackslash}m{#1}}
\def\wrtext#1{\relax\ifmmode{\leavevmode\hbox{#1}}\else{#1}\fi}
\def\abs#1{\left|#1\right|}
\def\begeq{\begin{equation}}
\def\endeq{\end{equation}}
\def\?[#1]{\textbf{[#1]}\marginpar{\Large{\textbf{??}}}}
\let\epsilon=\varepsilon 
\newcommand{\RR}{{\mathbb R}}
\newcommand{\NN}{{\mathbb N}}
\newcommand{\CC}{{\mathbb C}}
\newcommand{\ZZ}{{\mathbb Z}}
\newcommand{\e}{\mathrm{e}}
\newcommand{\mO}{\mathcal{O}}
\newcommand{\dist}{\mathrm{dist}}
\def\norm#1{||\,#1\,||}
\newtheorem{theo}{Theorem} 
\newtheorem{prop}{Proposition}[section]	
\newtheorem{defi}[prop]{Definition}
\newtheorem{lemm}[prop]{Lemma}
\newtheorem{rem}[prop]{Remark}
\numberwithin{equation}{section}
\let\Im=\Imag
\let\Re=\Real
\DeclareMathOperator{\supp}{supp}
\DeclareMathOperator{\vol}{vol}
\DeclareMathOperator{\tr}{tr}
\DeclareMathOperator{\neigh}{neigh}
\newcommand\reallywidehat[1]{\arraycolsep=0pt\relax%
\begin{array}{c}
\stretchto{
  \scaleto{
    \scalerel*[\widthof{\ensuremath{#1}}]{\kern-.5pt\bigwedge\kern-.5pt}
    {\rule[-\textheight/2]{1ex}{\textheight}} 
  }{\textheight} %
}{0.5ex}\\           
#1\\                 
\rule{-1ex}{0ex}
\end{array}
}
\def\blue#1{\textcolor{blue}{#1}}
\begin{document}

\title[Singular values]{Weyl laws for exponentially small singular values of the $\overline{\partial}$ operator}

\author{Michael Hitrik}
\address{Department of Mathematics, University of California,
Los Angeles, CA 90095, USA.}
\email{hitrik@math.ucla.edu}
\author{Johannes Sj\"ostrand}
\address{Universit\'e Bourgogne Europe, CNRS, IMB UMR 5584, 21000 Dijon, France.}
\email{jsjostra@ube.fr}

\author{Martin Vogel}
\address{Institut de Recherche Math\'ematique Avanc\'ee - UMR 7501, CNRS et
Universit\'e de Strasbourg, 7 rue Ren\'e-Descartes, 67084 Strasbourg Cedex, France.}
\email{vogel@math.unistra.fr}

\begin{abstract}
We study the number of exponentially small singular values of the semiclassical $\overline{\partial}$ operator on exponentially weighted $L^2$ spaces on a compact Riemann surface. Accurate upper and lower bounds on the number of such singular values are established in terms of auxiliary notions of upper and lower bound weights. Assuming that the Laplacian of the exponential weight changes sign along a curve, we construct optimal such weights by solving a free boundary problem, which yields Weyl asymptotics for the counting 
function of the singular values in an interval of the form $[0,\mathrm{e}^{-\tau/h}]$, for $\tau>0$ smaller than the oscillation of the 
weight. We also provide a precise description of the leading term in the Weyl asymptotics, in the regime of small $\tau > 0$.
\end{abstract}

\maketitle

\setcounter{tocdepth}{1}
\tableofcontents

\section{Introduction and statement of results}
\label{sec_intro}

\medskip
\noindent
The present paper is a contribution to the spectral theory of non-self-adjoint operators in the semiclassical limit. An essential feature making the study of non-self-adjoint operators particularly challenging is the potential instability of their spectra. Thus, while the operator norm of the resolvent of a self-adjoint operator $P$ acting on a complex Hilbert space, satisfies
\begin{equation*}
\|(P-z)^{-1}\| = \frac{1}{\dist(z, {\rm Spec}(P))}, \quad z\in \rho(P)\subseteq \CC,
\end{equation*}
in contrast, the resolvent norm can be much larger than $(\dist(z, {\rm Spec}(P)))^{-1}$, when the operator is non-self-adjoint. Here $\rho(P)$ denotes the resolvent set of $P$. This implies that the spectrum of $P$ can be highly sensitive to small perturbations, complicating considerably the numerical computation of eigenvalues and illustrating their fragility "deep in the complex domain", see~\cite{TrEm05}. The phenomenon of spectral instability, and the closely related notion of pseudospectrum, are also at the heart of the fascinating recent developments concerning the distribution of eigenvalues for non-self-adjoint operators subject to random perturbations and the associated probabilistic Weyl laws. See~\cite{Sj19},~\cite{BOV24}, \cite{NoVo21}, \cite{Ol23}, \cite{Vo20}.

\medskip
\noindent
Turning the attention to the realm of semiclassical analysis, let us recall a basic mecha\-nism for the spectral instability of large classes of non-normal operators in the semiclassical limit. Let
\begin{equation*}
P = P(x,hD_x;h) = \sum_{|\alpha|\leq m} a_\alpha(x;h)(hD_x)^\alpha, \quad 0<h \leq 1, \quad \alpha \in \NN^d,
\end{equation*}
with $(hD_x)^\alpha=(hD_{x_1})^{\alpha_1}\cdots(hD_{x_d})^{\alpha_d}$, be a semiclassical differential operator on a smooth $d$-dimensional compact manifold $M$, with $a_{\alpha}(x;h) = a_{\alpha}(x;0) + \mathcal O(h)$ in $C^{\infty}(M)$, so that the semiclassical principal symbol of $P$ is given by
\begin{equation*}
p(x,\xi) = \sum_{|\alpha|\leq m} a_\alpha(x;0)\xi^\alpha \in C^\infty(T^*M).
\end{equation*}
Using a complex WKB construction, it was shown in \cite{Horm60a,Horm60b, Da99a, Da99b, Zw01,DSZ04}
that if $\CC \ni z = p(x_0,\xi_0)$ for some $ (x_0,\xi_0)\in T^*M$ and
\begin{equation}
\label{eq:0}
\frac{1}{i}\{p,\overline{p}\}(x_0,\xi_0) >0,
\end{equation}
where $\{a,b\} = \partial_\xi a \cdot \partial_x b - \partial_x a \cdot \partial_\xi b$ denotes the Poisson bracket of two functions $a,b\in C^1(T^*M)$, then
\begin{equation}
\label{eqWF}
\exists u_h \in C^\infty_0(M),~ \|u_h\|_{L^2(M)}=1,~\mathrm{WF}_h(u_h)=\{(x_0,\xi_0)\},
\end{equation}
such that
\begeq
\label{eqQM}
\|(P-z)u_h\|_{L^2(M)} = \mathcal{O}(h^\infty).
\endeq
Here $\mathrm{WF}_h(u_h)$ stands for the semiclassical wave front set of $u_h$, see~\cite[Chapter 2]{Ma}. When $M$ is real analytic and $P$ has analytic coefficients, then (\ref{eqQM}) can be improved to
\begin{equation}
\label{eqExpdecay}
\|(P-z)u_h\| \leq C\, \mathrm{e}^{-1/Ch},
\end{equation}
for some fixed $C>0$, see~\cite{Sj82,DSZ04}, and we still have (\ref{eqWF}), with $\mathrm{WF}_h(u_h)$ replaced by the analytic semiclassical wave front set of $u_h$, see~\cite[Chapter 3]{Ma}. Further refinements on such quasimode constructions under iterated Poisson bracket conditions have been obtained in \cite{Pr04,Pr06,Pr08}.

\medskip
\noindent
Keeping the analyticity assumptions and letting $P$ stand also for the corresponding closed densely defined operator on $L^2(M)$, equipped with the maximal domain $\mathcal D(P) = \{u\in L^2(M); Pu \in L^2(M)\}$, we get in view of (\ref{eqExpdecay}),
\begin{equation*}
\| (P-z)^{-1}\|_{\mathcal L(L^2(M),L^2(M))} \geq C^{-1} e^{1/Ch}, 
\quad z\in \rho(P).
\end{equation*}
Taking $z=0$ for simplicity and assuming that $P$ is elliptic in the classical sense, we conclude also that $P$ has singular values, i.e. eigenva\-lues of the self-adjoint operator $(P^* P)^{1/2}$, that are $\mathcal O(1)\, e^{-1/Ch}$. Furthermore, under suitable assumptions, the norm of $P^{-1}$ is given by the reciprocal of the smallest singular value of $P$. Following the classical results on microlocal analytic hypoellipticity for analytic operators of principal type~\cite{SKK73},~\cite{Him86}, we further expect the region of the energy surface $p^{-1}(0) \subseteq T^*M$ where
\[
\frac{1}{i}\{p,\overline{p}\} < 0
\]
to be "classically forbidden" for the singular states of $P$ associated to exponentially small singular values. Assuming that
\[
\{p,\{p,\overline{p}\}\} \neq 0 \,\,\, \text{along}\,\,\, p = 0, \,\, \{p,\overline{p}\} = 0,
\]
we also expect the singular states to be concentrated away from the region where the bracket vanishes, thanks to the recent results \cite{HiZw25},~\cite{Sj23}. See also~\cite{Tr84}, \cite{Him86}. Speaking heuristically, the study of exponentially small singular values for the non-self-adjoint operator $P$ can be viewed therefore as a quantum tunneling problem, connecting points $\rho_0 \in p^{-1}(0)$ in the classically allowed region where
\[
\frac{1}{i}\{p,\overline{p}\}(\rho_0) > 0,
\]
via a path in the complexification of $T^*M$, to points $\rho_1 \in p^{-1}(0)$ in the classically forbidden region where
\[
\frac{1}{i}\{p,\overline{p}\}(\rho_1) < 0.
\]
We shall return to these heuristic considerations at the end of the introduction, when discussing an outline of the paper.

\medskip
\noindent
In this work we shall be interested in exponentially small singular values for a model non-self-adjoint operator. Specifically, our purpose here is to obtain precise asymptotics for the number of exponentially small singular values of the semiclassical $\overline{\partial}$ operator on an exponentially weighted space of the form $e^{\varphi/h}L^2(M)$, where $M$ is a compact Riemann surface. We shall next describe the assumptions, state the results, and outline some aspects of the proofs.

\subsection{The setting}
\label{sec:Setting}
We shall work on a compact Riemann surface $M$, i.e. a compact connected complex manifold of complex dimension one without boundary. We shall assume that $M$ is equipped with a conformal Riemannian metric $g$, i.e. a Riemannian metric compatible with the complex structure on $M$. Thus, if $(U,z)$ is a local holomorphic coordinate chart on $M$ with $z = x +iy$, we have 
\begeq
\label{eq_metric0} 
g = g_U(z)(dx\otimes dx + dy\otimes dy), \quad 0 < g_U \in C^{\infty}(U).
\endeq
Let $TM$ be the tangent bundle of $M$ in the sense of real smooth manifolds. Writing $2 dx = dz + d\overline{z}$, $2i dy = dz - d\overline{z}$, we obtain the following expression for the Riemannian metric $g$ as a symmetric positive definite bilinear form in the fibers of $TM$, 
\begeq
\label{eq_metric1} 
g = \frac{1}{2} g_U(z)\left(dz \otimes d\overline{z} + d\overline{z} \otimes dz\right).
\endeq
Associated to the metric $g$ is the area (K\"ahler) $(1,1)$-form $\omega$, which in the coordinate chart $U$ is given by 
\begin{equation}
\label{eq_metric}
\omega = g_U(z) \frac{d\overline{z}\wedge dz}{2i}.
\end{equation}
Introducing the canonical almost complex structure $J: TM \rightarrow TM$ on $M$, locally given by  
\begeq
\label{eq_J}
J(\partial_x) = \partial_y,\ \quad J(\partial_y) = -\partial_x,
\endeq
we may write 
\begeq
\label{eq_Kahler}
\omega(t,s) = g(Jt,s), \quad t,s\in T_z M.
\endeq

\medskip
\noindent 
For $\varphi \in C^{\infty}(M;\mathbb R)$ non-constant let
\begin{equation}\label{eq:dbar1}
h\overline{\partial}:L^2_\varphi (M, \omega) \to L^2_\varphi (M, T^*_{0,1}M), \quad h\in(0,1].
\end{equation}
The definition of these exponentially weighted $L^2$ spaces and the associated scalar products is recalled in Section \ref{Sec:MetIP} below. Moreover, we define the operator $P = P_{\varphi}$ by 
\begin{equation}\label{eq:DefOp}
P:=(h\overline{\partial})_\varphi := {e}^{-\varphi/h}\circ h\overline{\partial}\circ {e}^{\varphi/h}.
\end{equation}
We shall view $P$ as a closed densely defined operator: $L^2 (M, \omega) \to L^2(M, T^*_{0,1}M)$, equipped with the maximal domain 
\begin{equation*}
\mathcal{D}(P) = \{u\in L^2(M,\omega); Pu \in L^2(M, T^*_{0,1}M)\}. 
\end{equation*}
We have $\mathcal D(P) = H^1(M)$, the standard Sobolev space on $M$, in view of the classical ellipticity of $h\overline{\partial}$. Our main problem is to study the distribution of exponentially small singular values of $P$. Let therefore $P^*$ denote the adjoint of $P$ and note that in view of the classical ellipticity, the domain of $P^*$ is given by $\mathcal{D}(P^*)=H^1_{0,1}(M)$, i.e. the space of $(0,1)$-forms whose coefficients in every local coordinate chart are in $H^1$. We shall view $P^*P$ as a closed densely defined operator: $L^2(M,\omega)\to L^2(M,\omega)$ equipped with the domain
\begin{equation*}
\mathcal{D}(P^*P) = \{u\in\mathcal{D}(P); Pu\in \mathcal{D}(P^*) \} = H^2(M),
\end{equation*}
where the last equality again follows by the ellipticity. We also note that $P^*P$ is self-adjoint on this domain, since it is an elliptic second order formally self-adjoint operator. The resolvent of $P^*P$ is compact, in view of the compactness of the embedding $H^2(M) \hookrightarrow L^2(M,\omega)$~\cite[Theorem 2.34]{Au82}, which in turn implies that the spectrum of the self-adjoint non-negative operator $P^*P$ is purely discrete. The eigenvalues 
of $(P^*P)^{1/2}$,
\begin{equation}
\label{eq:sgvals}
0 = t_1 \leq t_2 \leq \dots \nearrow +\infty
\end{equation}
are the singular values of $P$. 

\medskip
\noindent
Associated to the conformal metric $g$ in (\ref{eq_metric0}) is the (negative) Laplacian $\Delta = \Delta_g$ acting on functions. We have in each local holomorphic coordinate chart $(U,z)$ on $M$, 
\begeq
\label{eq_Lapl}
\Delta_g = \frac{4}{g_U(z)} \partial_z \,\partial_{\overline{z}},
\endeq
where the definition of the standard operators $\partial_z$, $\partial_{\overline{z}}$ is recalled in (\ref{eq1.4.0.1}) below. Using also (\ref{eq_metric}), we obtain the following equality of (1,1)-forms, 
\begin{equation}
\label{eq_Laplace}
(\Delta_g u)\, \omega = 2i \partial \overline{\partial}\, u, \quad u \in C^2(M). 
\end{equation}
Here the right hand side is independent of the choice of a conformal metric on $M$.

\bigskip
\noindent
The starting point for this work is the following result. 
\begin{theo}
\label{decay_rate_large} 
Let $\tau > 0$ be such that
\begeq
\label{eq_large} 
\tau > \underset{M}{\rm max}\, \varphi - \underset{M}{\rm min}\, \varphi.
\endeq
For all $h>0$ small enough, the operator $P: L^2(M,\omega) \rightarrow L^2(M,T^*_{0,1}M)$, or equivalently, $h\overline{\partial}: L^2_{\varphi}(M,\omega) \rightarrow L^2_{\varphi}(M,T^*_{0,1}M)$, has precisely one singular value  in the interval $[0, e^{-\tau/h}]$, namely $0$. Here $\omega$ is the area form associated to the metric $g$. 
\end{theo}

\medskip
\noindent
In view of Theorem \ref{decay_rate_large}, established in Appendix \ref{sec:app_large_decay}, in the main part of the paper we shall be concerned with counting singular values of $P$ in an interval of the form $[0, e^{-\tau/h}]$, with the exponential decay rate $\tau$ satisfying the condition $0 < \tau < \underset{M}{\rm max}\, \varphi - \underset{M}{\rm min}\, \varphi$.

\subsection{Upper and lower bounds on the number of exponentially small singular values.}
Let $0 < \tau < \underset{M}{\rm max}\, \varphi - \underset{M}{\rm min}\, \varphi$ be fixed. Given a function $\psi \in C(M;\mathbb R)$ such that
\begin{equation}
\label{eq2.7_n}
\varphi - \tau \leq \psi \leq \varphi \quad \wrtext{on}\,\,\, M,
\end{equation}
the compact sets
\begin{equation}\label{eq2.7_b}
M_+(\psi)=\{x\in M;\psi(x) = \varphi(x)\} \quad \text{and} \quad M_-(\psi)=\{x\in M;\psi(x) = \varphi(x)-\tau\}
\end{equation}
will be referred to as \emph{the contact sets} of $\psi$.

\medskip
\noindent
Let $\Omega \subseteq M$ be open. We recall that a function $u\in C(\Omega;\mathbb R)$ is subharmonic in $\Omega$ if for each local holomorphic coordinate chart $z: \mathbb C \supseteq V \rightarrow U \subseteq \Omega$, the function $u = u(z)$ is subharmonic on the open set $V \subseteq \mathbb C$ in the usual sense of complex analysis: for each $K \subseteq V$ compact and each $h\in C(K;\mathbb R)$ harmonic in the interior of $K$ such that $u \leq h$ on $\partial K$, we have $u\leq h$ in $K$. We say that $u\in C(\Omega;\mathbb R)$ is superharmonic in $\Omega$ if $-u$ is subharmonic. 

\medskip
\noindent
When deriving upper and lower bounds on the number of singular values of the operator $P$ in the interval $[0, e^{-\tau/h}]$, it will be convenient to work with the following notions of auxiliary weight functions.

\medskip
\noindent
\begin{defi}\label{def_ubw_n}
We say that $\psi \in C(M;\mathbb R)$ is an upper bound weight if $\psi$ satisfies {\rm (\ref{eq2.7_n})} and $\psi$ is superharmonic on the open set $\{z\in M;\, \psi(z) < \varphi(z)\}$.
\end{defi}

\medskip
\noindent
\begin{defi}
\label{def_lbw_n}
We say that $\psi \in C(M;\mathbb R)$ is a lower bound weight if $\psi$ satisfies {\rm (\ref{eq2.7_n})} and $\psi$ is subharmonic on the open set
$\{z\in M;\, \varphi(z) - \tau < \psi(z)\}$.
\end{defi}

\medskip
\noindent
The following are the first two main results of this work.
\begin{theo}\label{thm:UpperBd}
Let $\psi\in C(M;\mathbb R)$ be an upper bound weight, in the sense of Definition {\rm \ref{def_ubw_n}}. Assume that the contact set $M_+(\psi)$ is contained in $M_+ = \{z\in M; \Delta \varphi(z) > 0\}$ and that ${\rm int}(M_+(\psi)) \neq \emptyset$. Then the number $N([0,e^{-\tau/h}]$ of singular values of $P$ in the interval $[0,e^{-\tau/h}]$ satisfies
\begin{equation*}
N([0,e^{-\tau/h}]) \leq \frac{1}{2\pi h}\int_{M_+(\psi)} \Delta \varphi(z)\, \omega(z,dz d\overline{z}) + \frac{o(1)}{h}, \quad h\to 0^+.
\end{equation*}
\end{theo}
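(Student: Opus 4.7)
\smallskip
\noindent
\textbf{Proof plan.} My plan is to combine the min-max characterization of the counting function with an Agmon-type localization driven by the upper bound weight $\psi$, and to close the argument with a semiclassical Bergman-type dimension count on a neighborhood of the contact set $M_+(\psi) \subset M_+$.

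First, by the min-max principle applied to the non-negative self-adjoint operator $P^* P$,
\begin{equation*}
N\bigl([0,e^{-\tau/h}]\bigr) = \max\bigl\{\dim V : V \subset H^1(M),\ \|Pu\| \leq e^{-\tau/h}\|u\|\ \text{for all}\ u \in V\bigr\}.
\end{equation*}
Under the unitary identification $L^2(M) \to L^2(M;e^{-2\varphi/h} L(dz))$, $u \mapsto f = e^{\varphi/h}u$, together with the computation $Pu = e^{-\varphi/h}\, h\overline\partial f$, the task of bounding $N([0,e^{-\tau/h}])$ from above reduces to bounding the dimension of any subspace of $L^2(M;e^{-2\varphi/h}L(dz))$ all of whose elements satisfy the approximate-holomorphy condition $\|h\overline\partial f\|_{L^2(e^{-2\varphi/h})} \leq e^{-\tau/h}\|f\|_{L^2(e^{-2\varphi/h})}$.

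The central step is to use $\psi$ to localize any such $f$ near $M_+(\psi)$. The conditions $\varphi - \tau \leq \psi \leq \varphi$ and the superharmonicity of $\psi$ on $\{\psi < \varphi\}$ are precisely what is required to run a weighted $L^2$ estimate of H\"ormander--Carleman type, with $\psi$ in the role of an Agmon distance function. Concretely, I would aim to establish that for every open neighborhood $\omega$ of $M_+(\psi)$ there is $\delta = \delta(\omega) > 0$ with
\begin{equation*}
\int_{M \setminus \omega} |f|^2\, e^{-2\psi/h}\, L(dz)\ \leq\ e^{-\delta/h}\, \|f\|_{L^2(e^{-2\varphi/h})}^{\,2}
\end{equation*}
for any admissible $f$. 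Since $\psi = \varphi$ on $M_+(\psi)$, this forces the weighted $L^2$-mass of $f$ to concentrate on $\omega$ up to exponentially small error.

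Finally, choose $\omega \subset M_+$, so that $\Delta \varphi > 0$ on $\omega$. There, the semiclassical Bergman kernel with weight $\varphi$ has diagonal density $(2\pi h)^{-1}\Delta \varphi(z)$, and comparing $V$ with the local Bergman space on $\omega$ through the near-injective restriction--projection map (near-injectivity being furnished by the localization above) yields
\begin{equation*}
\dim V\ \leq\ \frac{1}{2\pi h}\int_{\omega} \Delta \varphi\, L(dz) + \frac{o(1)}{h}, \quad h \to 0^+.
\end{equation*}
Letting $\omega$ shrink to $M_+(\psi)$, and using the boundedness of $\Delta \varphi$ on $M$, gives the stated inequality; the hypothesis $\mathrm{int}\, M_+(\psi) \neq \emptyset$ enters to guarantee that the leading-order Bergman count has genuine content. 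The main obstacle is the weighted localization estimate above: upgrading the qualitative superharmonicity of the merely continuous weight $\psi$ on $\{\psi < \varphi\}$ to a quantitative exponential suppression. This is delicate because $\psi$ need not be smooth, so the relevant $\overline\partial$-identities must be carried out on smooth superharmonic approximants of $\psi$ and passed to the limit, and because the free boundary $\partial M_+(\psi)$ -- exactly the interface between the two regimes defining $\psi$ -- has to be handled in a way compatible with the semiclassical $h$-scaling.
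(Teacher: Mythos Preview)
Your plan is essentially the paper's own strategy: a H\"ormander--Carleman estimate with the superharmonic weight $\psi$ forces exponential localization of the singular states to any neighborhood $\omega\supset M_+(\psi)$, and then a semiclassical Bergman/Toeplitz trace computation on $\omega\subset M_+$ bounds the number of such states by $(2\pi h)^{-1}\int_\omega \Delta\varphi\,L(dz)+o(1)/h$. The paper makes the second step precise by showing that the orthogonal projection $\widehat\Pi$ onto the span of the singular states satisfies $\chi\widetilde\Pi\chi\,\widehat\Pi=\widehat\Pi+O(h^\infty)$ in trace norm for $\chi\in C^\infty_0(W)$, $\chi=1$ near $M_+(\psi)$, whence $N=\|\widehat\Pi\|_{\mathrm{tr}}\le\|\chi\widetilde\Pi\chi\|_{\mathrm{tr}}+O(h^\infty)$; this is the concrete incarnation of your ``near-injective restriction--projection'' remark, and it is worth noting that the local Bergman space $H_\varphi(W)$ is infinite-dimensional, so the bound really comes from the \emph{trace} of the Toeplitz operator $\chi\Pi\chi$, not from a dimension comparison.

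Two small corrections. First, your displayed localization estimate should carry the weight $e^{-2\varphi/h}$ on the left, not $e^{-2\psi/h}$: with the $\psi$-weight one only gets $\|e^{-\psi/h}f\|_{L^2(\Omega)}\le C_\varepsilon e^{\varepsilon/h}$ (Proposition~\ref{appl_sstate}), and the exponential decay $e^{-\delta/h}$ on $M\setminus\omega$ is then recovered because $\varphi-\psi\ge c>0$ there. Second, the hypothesis $\mathrm{int}\,M_+(\psi)\neq\emptyset$ is not about the Bergman count having content; it is needed so that $\Omega=\{\psi<\varphi\}$ has nonempty exterior in the torus, which is what allows the Carleman estimate to be run at all (on a compact manifold without boundary there is no global strictly superharmonic function, so one must excise a point of $\mathrm{int}\,M_+(\psi)$ to build the auxiliary strictly subharmonic $F$ used in the regularization step).
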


\medskip
\noindent
{\it Remark}. The assumption $M_+(\psi) \subseteq M_+$ implies that the upper bound weight $\psi$ in Theorem \ref{thm:UpperBd} is strictly subharmonic in ${\rm int}(M_+(\psi))$.

\begin{theo}\label{thm:LowerBd}
Let $\psi\in C(M;\mathbb R)$ be a lower bound weight, in the sense of Definition {\rm \ref{def_lbw_n}}, and let $0\leq \omega(\delta)$
be the modulus of continuity of $\psi$. Assume that $M_+(\psi) \subseteq M_+ = \{z\in M; \Delta \varphi(z) > 0\}$ and that ${\rm int}(M_\pm(\psi)) \neq \emptyset$. Let $\widetilde{\tau} = \tau - \omega(h)$. Then the number $N([0,e^{-\widetilde{\tau}/h}])$ of singular values of $P$ in the interval $[0,e^{-\widetilde{\tau}/h}]$ satisfies
\begin{equation*}
N([0,e^{-\widetilde{\tau}/h}]) \geq \frac{1}{2\pi h}\int_{{\rm int}(M_+(\psi))} \Delta \varphi(z)\, \omega(z,dz d\overline{z}) - \frac{o(1)}{h}, \quad h\to 0^+.
\end{equation*}
When $\psi$ is Lipschitz continuous, the lower bound above holds also for the number $N([0,e^{-\tau/h}])$ of singular values of $P$ in the interval $[0,e^{-\tau/h}]$.
\end{theo}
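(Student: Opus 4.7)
I would invoke the variational (min-max) principle for the singular values of $P$: to bound $N([0,e^{-\widetilde\tau/h}])$ from below by $N_0$, it suffices to exhibit a subspace $V_h\subset L^2(M)$ of dimension $N_0$ on which $\|Pu\|_{L^2}\le e^{-\widetilde\tau/h}\|u\|_{L^2}$. My candidate $V_h$ is the span of localized Bergman-type coherent-state quasimodes attached to a lattice $\Lambda_h\subset\operatorname{int}(M_+(\psi))$ of local density $\Delta\varphi(z)/(2\pi h)$, so that $|\Lambda_h|\ge \tfrac{1}{2\pi h}\int_{\operatorname{int}(M_+(\psi))}\Delta\varphi\, L(dz)-o(1/h)$.

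\textbf{Construction.} First, regularize $\psi$ by convolving with a mollifier on scale $h$ to obtain a smooth $\widetilde\psi$ with $|\widetilde\psi-\psi|\le C\omega(h)$, preserving the subharmonicity on a slight shrinkage of $\{\psi>\varphi-\tau\}$; this regularization is exactly what forces the replacement $\tau\leadsto\widetilde\tau=\tau-\omega(h)$. The Lipschitz refinement follows because then $\omega(h)=O(h)$, which contributes only an $O(1)$ prefactor absorbed into the $o(1)/h$ error. Next, pick a compact $K\Subset\operatorname{int}(M_+(\psi))$ capturing nearly all of $\int_{\operatorname{int}(M_+(\psi))}\Delta\varphi\, L(dz)$, and a lattice $\Lambda_h\subset K$ of the correct density, and for each $z_j\in\Lambda_h$ set
\[
u_j(z)=c_j\,\chi_j(z)\,e^{(\Phi_j(z)-\varphi(z))/h},
\]
where $\Phi_j$ is an almost-holomorphic extension of $\varphi$ at $z_j$ (so that $\partial_{\bar z}\Phi_j=O(|z-z_j|^\infty)$ and $\Re\Phi_j(z)-\varphi(z)=-\tfrac14\Delta\varphi(z_j)|z-z_j|^2+O(|z-z_j|^3)$), $\chi_j$ is a smooth cutoff whose support is chosen with the help of $\widetilde\psi$, and $c_j$ normalizes $\|u_j\|_{L^2}=1$. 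A direct calculation gives $Pu_j=c_j\, h(\partial_{\bar z}\chi_j)\,e^{(\Phi_j-\varphi)/h}+O(h^\infty)\|u_j\|$; using that $\Re\Phi_j-\varphi\le -\widetilde\tau$ on $\operatorname{supp}\partial_{\bar z}\chi_j$, justified by a maximum-principle comparison between the harmonic $\Re\Phi_j$ and the subharmonic $\widetilde\psi$ on $\{\widetilde\psi>\varphi-\tau\}$ together with the defining equality $\widetilde\psi=\varphi-\widetilde\tau$ on the transition region, one obtains $\|Pu_j\|_{L^2}\le e^{-\widetilde\tau/h}\|u_j\|_{L^2}$.

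\textbf{Orthogonality and main obstacle.} The overlaps $\langle u_j,u_k\rangle$ are Gaussian-small in $|z_j-z_k|^2/h$, so for a sufficiently separated lattice the Gram matrix is close to the identity and well-conditioned, yielding linear independence and $\dim V_h=|\Lambda_h|$. A standard Gram-matrix argument, costing only a harmless $o(1)/h$ in the exponent, transfers the single-quasimode bound to the full span, and the min-max principle then delivers the claimed lower bound. The most delicate step, which I expect to be the main obstacle, is the maximum-principle comparison tying the locally-holomorphic phase $\Re\Phi_j$ to the global subharmonic weight $\widetilde\psi$, so as to produce the exponential decay $e^{-\widetilde\tau/h}$ on the support of $\partial_{\bar z}\chi_j$: one must show that the cutoff $\chi_j$ can genuinely be placed where $\Re\Phi_j-\varphi\le -\widetilde\tau$ using only the defining properties of a lower bound weight, namely subharmonicity on $\{\psi>\varphi-\tau\}$ and the contact-set geometry $\psi=\varphi$ on $M_+(\psi)\subset M_+$, $\psi=\varphi-\tau$ on $M_-(\psi)$. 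Managing this comparison together with the $\omega(h)$-loss from regularization, and verifying the improvement in the Lipschitz case, is the delicate bookkeeping at the heart of the argument.
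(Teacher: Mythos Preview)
Your proposal contains a genuine gap in the quasimode construction, and it is precisely at the step you yourself flag as ``the main obstacle''.

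The coherent state $u_j(z)=c_j\chi_j(z)e^{(\Phi_j(z)-\varphi(z))/h}$ is built from an almost-holomorphic extension $\Phi_j$ of $\varphi$ at $z_j$. Such an extension is only meaningful in a small neighborhood of $z_j$: the almost-holomorphy $\partial_{\bar z}\Phi_j=O(|z-z_j|^\infty)$ and the Taylor expansion $\Re\Phi_j-\varphi=-\tfrac14\Delta\varphi(z_j)|z-z_j|^2+O(|z-z_j|^3)$ are both local statements. Consequently, if $\chi_j$ is supported in a fixed small disc of radius $r_0$ (so that $\Phi_j$ remains useful), then on $\operatorname{supp}\partial_{\bar z}\chi_j$ you only get $\Re\Phi_j-\varphi\approx -\tfrac14\Delta\varphi(z_j)r_0^2$, which is a fixed constant depending on the cutoff geometry, \emph{not} on $\tau$. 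The suggested maximum-principle comparison cannot rescue this: $\Re\Phi_j$ is harmonic and $\widetilde\psi$ is subharmonic, so $\widetilde\psi-\Re\Phi_j$ is subharmonic and the maximum principle bounds it from above on the interior, whereas you would need $\Re\Phi_j\le\widetilde\psi$ (a lower bound) to conclude $\Re\Phi_j\le\varphi-\widetilde\tau$ on $\partial\Omega$. And even that aside, $\Re\Phi_j$ simply has no global meaning on $\{\psi>\varphi-\tau\}$.

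The paper resolves this by a fundamentally different mechanism. Starting from any $O(h^\infty)$-quasimode $u\in C_0^\infty(\operatorname{int}(M_+(\psi)))$ (which your local coherent states do provide), one uses H\"ormander's $L^2$-estimate for $\overline\partial$ on $\Omega=\{\psi>\varphi-\tau\}$ with the \emph{subharmonic} weight $\psi$ to solve $h\overline\partial v=-h\overline\partial u$ with $\|v\|_{L^2_\psi(\Omega)}=O(h^\infty)$. Then $u+v$ is holomorphic on all of $\Omega$, and one cuts it off with $\chi$ equal to $1$ except within distance $h$ of $\partial\Omega$. The error $h(\overline\partial\chi)(u+v)=h(\overline\partial\chi)v$ lives where $\psi-(\varphi-\tau)\le\omega(h)+O(h)$, so its $L^2_\varphi$-norm is $O(h^\infty)e^{-\widetilde\tau/h}$. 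This is where $\widetilde\tau=\tau-\omega(h)$ actually enters; it is not a mollification of $\psi$ but the continuity estimate of $\psi$ near $\partial\Omega$. The counting is then done not via a lattice and Gram matrix but via trace-class estimates for Toeplitz operators $\chi\Pi\chi$ and the projection $\widehat\Pi$ onto the relevant singular subspace, showing $\|(\widehat\Pi-1)\Pi\chi\Pi\|_{\mathrm{tr}}=O(h^\infty)$ and computing $\mathrm{tr}(\chi\widetilde\Pi)$ explicitly.

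In short: your local quasimodes are $O(h^\infty)$-accurate but not $e^{-\tau/h}$-accurate, and the global subharmonicity of $\psi$ must be exploited through a $\overline\partial$-correction in $L^2_\psi$, not through a pointwise comparison with a locally defined harmonic phase.
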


\medskip
\noindent
The value of such general results as Theorem \ref{thm:UpperBd} and Theorem \ref{thm:LowerBd} depends very much on our ability to construct and analyze, for a given weight function $\varphi$, upper and lower bound weights satisfying also the additional assumptions above. In order to do so, we shall now introduce some additional hypotheses on $\varphi$.

\subsection{Optimal weights and Weyl asymptotics}
Under additional assumptions on the weight $\varphi$ in (\ref{eq:DefOp}), we shall show the existence of an \emph{optimal weight} $\psi \in C^{1,1}(M;\mathbb R)$, which is both an upper and a lower bound weight, in the sense of Definition \ref{def_ubw_n} and Definition \ref{def_lbw_n}. When doing so, we let $\tau >0$ be fixed such that
\begin{equation}
\label{eq:opw2_2a_0}
0 < \tau < \max_M \varphi - \min_M \varphi.
\end{equation}
Moreover, suppose that
\begin{equation}\label{eq:cs4_2a}
d \Delta \varphi \neq 0\,\,\, \text{along}\,\,\,(\Delta \varphi)^{-1}(0).
\end{equation}
Since an optimal weight $\psi$ is both an upper and a lower bound weight we see that it is given by the solution of the following double obstacle
problem,
\begin{equation}\label{eq:din1_2a}
\begin{cases}
\Delta \psi \geq 0 \quad \text{on } \{ \psi  > \varphi-\tau\}, \\
\Delta \psi  \leq 0 \quad \text{on } \{ \psi <\varphi\}, \\
\varphi-\tau \leq \psi \leq \varphi \text{ a.e on } M.
\end{cases}
\end{equation}
Recall the contact sets $M_{\pm}(\psi)$ defined in \eqref{eq2.7_b}. Their boundaries will be called \emph{free boundaries}.
\begin{theo}\label{thm:DPPex2}
Suppose that $\varphi\in C^{\infty}(M;\RR)$ is non-constant and that \eqref{eq:opw2_2a_0} and \eqref{eq:cs4_2a} hold. Then the double obstacle problem \eqref{eq:din1_2a} has a unique solution $\psi$ such that $\psi\in W^{2,p}(M)$ for all $2 < p < +\infty$. Moreover,
\begin{itemize}
\item $\psi\in C^{1,\alpha}(M)$, for every $0<\alpha<1$,
\item the contact sets $M_\pm(\psi)$ satisfy $\vol(M_\pm(\psi)) > 0$ and $M_{\pm}(\psi) \subseteq \{\pm\Delta \varphi >0\}$. We have
\begin{equation*}
\int_{M_+(\psi)}\Delta \varphi(z) \, \omega(z, dz\,d\overline{z}) =  -\int_{M_-(\psi)}\Delta \varphi(z) \, \omega(z, dz\,d\overline{z}), 
\end{equation*}
\item the free boundaries $\partial M_\pm(\psi)$ have $\vol(\partial M_\pm(\psi))= 0$.
\end{itemize}
\end{theo}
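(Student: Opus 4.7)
The plan is to realize $\psi$ as the minimizer of the Dirichlet energy
\[
\psi = \arg\min\left\{\int_M |\nabla u|^2 \, L(dz) : u\in H^1(M;\RR),\ \varphi-\tau \leq u \leq \varphi \text{ a.e.}\right\}
\]
over the convex admissible set $K$, which is nonempty (take $u=\varphi$), closed in $H^1(M)$, and uniformly bounded in $L^{\infty}$; a minimizer exists by weak lower semicontinuity. The associated variational inequality
\[
\int_M \nabla\psi\cdot\nabla(v-\psi)\,L(dz)\geq 0,\qquad v\in K,
\]
tested against admissible perturbations $\psi\pm\zeta$ with $\zeta\geq 0$ supported in $\{\psi<\varphi\}$, respectively in $\{\psi>\varphi-\tau\}$, delivers precisely the sign conditions on $\Delta\psi$ in \eqref{eq:din1_2a}. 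Uniqueness follows by the standard comparison argument: for two solutions $\psi_1,\psi_2$ one tests each inequality against the other solution and subtracts, forcing $\nabla\psi_1=\nabla\psi_2$; once $M_\pm(\psi_i)$ are shown to be nontrivial (see below), the additive constant is pinned down.

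For regularity, since both obstacles $\varphi$ and $\varphi-\tau$ are smooth, the classical theory of the double obstacle problem (Caffarelli, Frehse, Kinderlehrer--Stampacchia) gives $\psi\in W^{2,p}(M)$ for every $2<p\leq\infty$, and hence $\psi\in C^{1,1}(M)$; moreover $\Delta\psi=\Delta\varphi$ a.e.\ on each contact set $M_\pm(\psi)$, by the general fact that the weak second derivatives of two $W^{2,p}$ functions agree a.e.\ on their coincidence set.

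Next I analyze the contact sets using \eqref{eq:cs4_2a}. On $\mathrm{int}(M_\pm(\psi))$ one has $\pm\Delta\varphi=\pm\Delta\psi\geq 0$, and since \eqref{eq:cs4_2a} makes $\{\Delta\varphi=0\}$ a smooth closed curve, no open set may be contained in it; thus $\pm\Delta\varphi>0$ on $\mathrm{int}(M_\pm(\psi))$, and I extend the inclusion to the closed set $M_\pm(\psi)$ by Hopf-type non-degeneracy at free boundary points. Integrating $\Delta\psi$ over the closed manifold $M$ and splitting the integral between the contact sets (where $\Delta\psi=\Delta\varphi$) and the complement (where the two-sided sign conditions force $\Delta\psi=0$) yields
\[
\int_{M_+(\psi)}\Delta\varphi \, L(dz) = -\int_{M_-(\psi)}\Delta\varphi \, L(dz).
\]
To show $\vol(M_\pm(\psi))>0$, suppose for instance that $\vol(M_+(\psi))=0$; then $\Delta\psi\leq 0$ a.e.\ on $M$, and $\int_M\Delta\psi=0$ forces $\Delta\psi=0$ a.e., so $\psi$ is harmonic on the torus and hence constant, which together with $\varphi-\tau\leq\psi\leq\varphi$ contradicts $\tau<\max\varphi-\min\varphi$ from \eqref{eq:opw2_2a_0}. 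Finally, $\vol(\partial M_\pm(\psi))=0$ follows from Caffarelli's free boundary regularity in the non-degenerate regime secured by \eqref{eq:cs4_2a}: the free boundaries are locally $C^1$ curves, which are of zero planar Lebesgue measure.

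The main obstacle will be the free boundary analysis: verifying $M_\pm(\psi)\subseteq\{\pm\Delta\varphi>0\}$ all the way up to the free boundary, and $\vol(\partial M_\pm(\psi))=0$, both demand a careful deployment of the transversality hypothesis \eqref{eq:cs4_2a} together with the non-degeneracy of obstacle-problem solutions. The remaining steps -- existence, $C^{1,1}$ regularity, uniqueness, and the signed balance identity -- are by contrast relatively mechanical applications of standard variational technology.
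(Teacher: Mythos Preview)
Your variational approach is genuinely different from the paper's and largely sound in outline. The paper obtains existence and $W^{2,p}$ regularity via a penalization scheme (a nonlinear PDE with a Lipschitz penalty $f_\varepsilon$ plus an extra scalar penalty $g_\varepsilon((u^\varepsilon|\psi_0))$ designed specifically for the compact-manifold-without-boundary setting), then uses Schauder estimates to get $C^{1,1}$, and finally proves $\vol(\partial M_\pm)=0$ by showing the free boundary is \emph{porous}. Your direct minimization of the Dirichlet energy over $K$ is more classical and sidesteps the need for the novel penalty $g_\varepsilon$; the contact-set positivity and balance identity arguments you give match the paper's almost verbatim.

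Two points deserve care. First, when you defer the $W^{2,p}$ and $C^{1,1}$ regularity to ``classical theory (Caffarelli, Frehse, Kinderlehrer--Stampacchia)'', note that those references treat bounded Euclidean domains with boundary data; the paper explicitly flags that the compact-manifold-without-boundary setting required them to adapt the arguments. The regularity theory is local, so the transfer is not difficult, but it is not a pure citation. Second, your claim that ``the free boundaries are locally $C^1$ curves'' via Caffarelli's theory is too strong: Caffarelli's results decompose the free boundary into a regular part and a (possibly nonempty) singular set, and full $C^1$ regularity is not asserted without further structure. The paper does not attempt this; it proves only porosity, which already gives Lebesgue measure zero and is all that is needed here. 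Your Hopf-lemma sketch for the strict inclusion $M_\pm(\psi)\subseteq\{\pm\Delta\varphi>0\}$ is the right idea and matches the paper's Proposition~\ref{prop:cs2}.
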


\medskip
\noindent
{\it Remark}. Theorem \ref{thm:DPPex2} is a special case of Theorems \ref{thm:DPPex} and \ref{thm:FreeBoundary}, and of Propositions \ref{prop:cs1} and \ref{prop:cs2} in Section \ref{sec:FB}, where we prove these results for general double obstacle problems on a $C^{\infty}$ compact $d$--dimensional Riemannian manifold without boundary. The $C^{1,\alpha}$ regularity of the unique solution $\psi$ to the double obstacle problem given in Theorem \ref{thm:DPPex2} is not optimal. One can prove that $\psi\in C^{1,1}(M)$ which can be obtained following standard methods as outlined in Remark \ref{rem:OptReg}. This is optimal since $\Delta \psi$ in general exhibits a jump discontinuity when passing from the contact sets $M_\pm(\psi)$ to the region $\{\varphi-\tau <\psi <\varphi\}$. However, since we do not the require optimal regularity of $\psi$ for this work we opted not to provide a detailed proof of this fact.
%

\medskip
\noindent
Combining Theorems \ref{thm:UpperBd} and \ref{thm:LowerBd} with the existence of an optimal weight provided by Theorem \ref{thm:DPPex2}
immediately yields the following result.
\begin{theo}
\label{thm:Weyl}
Let $\varphi\in C^{\infty}(M;\RR)$ be such that \eqref{eq:opw2_2a_0} and \eqref{eq:cs4_2a} hold, and let $\psi$ be the solution of 
{\rm (\ref{eq:din1_2a})} given in Theorem {\rm \ref{thm:DPPex2}}. Then the number $N([0,e^{-\tau/h}])$ of singular 
values of $P$ in the interval $[0,e^{-\tau/h}]$ satisfies
\begin{equation*}
N([0,e^{-\tau/h}]) = \frac{1}{2\pi h}\int_{M_+(\psi)} \Delta \varphi(z)\, \omega(z, dz\,d\overline{z}) + \frac{o(1)}{h}, \quad h\to 0^+.
\end{equation*}
\end{theo}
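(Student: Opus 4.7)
The plan is to apply Theorem \ref{thm:UpperBd} and Theorem \ref{thm:LowerBd} with the optimal weight $\psi$ provided by Theorem \ref{thm:DPPex2}, and to show that the two bounds coincide. First I would verify that this $\psi$ meets the hypotheses of both theorems. By \eqref{eq:din1_2a}, $\psi$ is superharmonic on $\{\psi < \varphi\}$ and subharmonic on $\{\psi > \varphi - \tau\}$; together with $\varphi - \tau \leq \psi \leq \varphi$, this says that $\psi$ is simultaneously an upper bound weight in the sense of Definition \ref{def_ubw_n} and a lower bound weight in the sense of Definition \ref{def_lbw_n}. Theorem \ref{thm:DPPex2} further gives $M_\pm(\psi) \subseteq \{\pm \Delta \varphi > 0\}$, which yields in particular $M_+(\psi) \subseteq M_+$, and $\vol(M_\pm(\psi)) > 0$ together with $\vol(\partial M_\pm(\psi)) = 0$, so that ${\rm int}(M_\pm(\psi))$ is nonempty. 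Hence all hypotheses on the test weight in Theorems \ref{thm:UpperBd} and \ref{thm:LowerBd} hold.

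Applying Theorem \ref{thm:UpperBd} to $\psi$ gives directly the upper bound
\begin{equation*}
N([0,e^{-\tau/h}]) \leq \frac{1}{2\pi h}\int_{M_+(\psi)} \Delta \varphi(z)\, L(dz) + \frac{o(1)}{h}, \quad h \to 0^+.
\end{equation*}
For the matching lower bound, the key observation is that $\psi \in C^{1,1}(M)$ by Theorem \ref{thm:DPPex2}, so $\psi$ is Lipschitz continuous. This lets me invoke the second (Lipschitz) assertion of Theorem \ref{thm:LowerBd}, which avoids replacing $\tau$ by $\widetilde{\tau} = \tau - \omega(h)$ and produces
\begin{equation*}
N([0,e^{-\tau/h}]) \geq \frac{1}{2\pi h}\int_{{\rm int}(M_+(\psi))} \Delta \varphi(z)\, L(dz) - \frac{o(1)}{h}, \quad h \to 0^+.
\end{equation*}

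Finally, since $\varphi \in C^\infty(M;\RR)$ the function $\Delta \varphi$ is bounded on the compact torus $M$, and the last clause of Theorem \ref{thm:DPPex2} asserts $\vol(\partial M_+(\psi)) = 0$. Writing $M_+(\psi) = {\rm int}(M_+(\psi)) \cup \partial M_+(\psi)$ therefore yields
\begin{equation*}
\int_{M_+(\psi)} \Delta \varphi(z)\, L(dz) = \int_{{\rm int}(M_+(\psi))} \Delta \varphi(z)\, L(dz),
\end{equation*}
so the upper and lower bounds coincide up to $o(1)/h$, delivering the claimed Weyl asymptotics. I do not anticipate a genuine obstacle: all the analytic content is already packaged into Theorems \ref{thm:UpperBd}, \ref{thm:LowerBd}, and \ref{thm:DPPex2}, and the only point requiring care is to use the Lipschitz form of the lower bound in order to obtain the lower estimate exactly at threshold $e^{-\tau/h}$ rather than at $e^{-\widetilde{\tau}/h}$, which would otherwise shift the exponent by an amount $\omega(h)$ and spoil the matching.
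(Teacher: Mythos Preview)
Your proposal is correct and follows exactly the approach the paper takes: the paper states that Theorem \ref{thm:Weyl} follows immediately by combining Theorems \ref{thm:UpperBd} and \ref{thm:LowerBd} with the optimal weight from Theorem \ref{thm:DPPex2}. Your write-up is in fact more detailed than the paper's one-line justification, carefully checking the hypotheses (Lipschitz continuity of $\psi$, nonemptiness of the interiors, and $\vol(\partial M_+(\psi))=0$) that make the upper and lower bounds match.
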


\medskip
\noindent
In the model case when $M = \mathbb C/(2\pi \mathbb Z + 2\pi i \mathbb Z)$ is the standard $2$-torus and $\varphi = \varphi(z) = \sin ({\rm Im}\, z)$, Theorem \ref{thm:Weyl} has been obtained by two of the authors in \cite{SV}, using methods based on the separation of variables.

\subsection{The case when $\tau $ is small: thin bands of separation}
In the case when the band $\{\varphi-\tau <\psi <\varphi\}$, where the optimal weight $\psi$ in (\ref{eq:din1_2a}) is harmonic, is very thin, we obtain a more precise version of Theorem \ref{thm:Weyl}. We work in the regime when $0<\tau\ll1$ is small and we keep the hypothesis (\ref{eq:cs4_2a}), assuming that $\varphi \in C^{\infty}(M;\mathbb{ R})$ is such that
\begin{equation}
\label{eq:Dweight}
d\Delta \varphi \ne 0 \,\,\, \text{along}\,\,\, \gamma = (\Delta \varphi)^{-1}(0).
\end{equation}

\begin{theo}\label{1csp4_n}
Let $\tau = \epsilon ^3\widehat{\tau } > 0$, where $\varepsilon>0$ is small and $\widehat{\tau } \asymp 1$, and let $M_+ = \{z\in M; \Delta \varphi(z) > 0\}$ be as in Theorems {\rm \ref{thm:UpperBd}}, {\rm \ref{thm:LowerBd}}. Assume that {\rm (\ref{eq:Dweight})} holds. Then the number $N([0,e^{-\tau/h}])$ of singular values of $P$ in the interval $[0,e^{-\tau/h}]$ satisfies, for every fixed $N\ge 4$,
\begin{equation}\label{3.csp4}
N([0,e^{-\tau /h}])= \frac{1}{2\pi h}\left(\int_{\Omega_+} \Delta \varphi(z)\, \omega(z, dz\,d\overline{z}) + \mathcal{ O}(\epsilon ^{(N+1)/2})+o(1) \right),\ h\to 0^+.
\end{equation}
Here, the "$\mathcal{ O}(\epsilon ^{(N+1)/2})$" is uniform in $\epsilon,h$ and the ``$o(1)$'' is uniform for $\epsilon $ varying in each interval of the form $[\beta /2,\beta]$, with $0<\beta \ll 1$. Furthermore, $\Omega_+ \subseteq M_+$ is 
such that
\begin{equation}
\label{4.csp4}
\int_{\Omega_+} \Delta \varphi(z)\, {\omega} = \int_{M_+} \Delta \varphi(z)\, {\omega} - \tau ^{2/3}\frac{1}{2}\left(\frac{3}{2} \right)^{2/3}\int_\gamma (\partial _n\Delta \varphi )^{1/3} dx +\mathcal{O}(\epsilon ^3),
\end{equation}
where $n$ is the unit normal to $\gamma$ such that $\partial_n \Delta \varphi|_{\gamma} > 0$, and $dx$ is the arc length element along $\gamma$.
\end{theo}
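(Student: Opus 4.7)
\medskip
\noindent
\textbf{Proof plan.} The strategy is to bypass the exact optimal weight from Theorem~\ref{thm:DPPex2} and instead feed explicit approximate upper and lower bound weights $\psi _N^{\pm}\in C^{1,1}(M)$ into Theorems~\ref{thm:UpperBd} and~\ref{thm:LowerBd}. These will be constructed by a WKB--type expansion in the small parameter $\epsilon $ and will sandwich $N([0,e^{-\tau /h}])$ between two bounds of the form $\frac{1}{2\pi h}\int_{\Omega _+}\Delta \varphi \, L(dz)+o(1)/h$, with a discrepancy of size $\mathcal{O}(\epsilon ^{(N+1)/2})/h$ between them.

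\medskip
\noindent
\textbf{Local model near the vanishing curve.} Introduce tubular coordinates $(x,t)$ in a neighbourhood of $\gamma =(\Delta \varphi )^{-1}(0)$, where $x$ is arclength along $\gamma $ and $t$ the signed normal distance, positive on $M_+$. Then $\Delta \varphi (x,t)=a(x)\,t+\mathcal{O}(t^2)$ with $a(x):=\partial _n\Delta \varphi |_\gamma (x)>0$, and the Euclidean Laplacian reads $\Delta =\partial _t^2+\partial _x^2+$ curvature corrections, all of lower order on the scale $t\sim \epsilon $. Writing $u:=\varphi -\psi $ for any candidate, the obstacle problem~(\ref{eq:din1_2a}) becomes $0\le u\le \tau $, $\Delta u=\Delta \varphi $ in the band $\{0<u<\tau \}$; the $C^{1,1}$ regularity of $\psi $ combined with $u$ being constant on each contact set enforces the free boundary conditions $u=0,\,\partial _t u=0$ on the $M_+$ side and $u=\tau ,\,\partial _t u=0$ on the $M_-$ side. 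At leading order we are led to the one--dimensional cubic ODE $u_0''(t)=a(x)\,t$ with these four boundary data, whose unique solution is symmetric, $t_\pm =\pm T(x)$, with
\begin{equation*}
T(x)=\left(\frac{3\tau }{2 a(x)}\right)^{1/3},\qquad u_0(x,t)=\frac{a(x)}{6}\bigl(t^3-3T(x)^2t+2T(x)^3\bigr),\quad |t|\le T(x).
\end{equation*}

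\medskip
\noindent
\textbf{WKB expansion and the counting formula.} Refine this by setting $u_N=u_0+\epsilon u_1+\dots +\epsilon ^{N-1}u_{N-1}$ and $T_N(x)=T(x)+\epsilon T^{(1)}(x)+\dots $, matching powers of $\epsilon $ in $\Delta u_N=\Delta \varphi $ and in the four boundary conditions at $t=\pm T_N(x)$; each $u_k(x,\cdot )$ is a polynomial in $t$ with smooth $x$--dependent coefficients obtained from a linear ODE in $t$, and the $T^{(j)}$ are fixed by compatibility. Extending $u_N$ by $0$ on the $M_+$ side past the band and by $\tau $ on the $M_-$ side produces a global $C^{1,1}$ function; after a further $\mathcal{O}(\epsilon ^N)$ correction in the band enforcing the strict inequality $\Delta u\ge \Delta \varphi $ (resp.\ $\le $), the function $\psi _N^{\pm}:=\varphi -u_N^{\pm}$ is a genuine upper (resp.\ lower) bound weight that satisfies the hypotheses of Theorems~\ref{thm:UpperBd}--\ref{thm:LowerBd}, in particular $M_+(\psi _N^{\pm})\subseteq M_+$ with non--empty interior. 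Taking $\Omega _+:=M_+(\psi _N^{\pm})$, a direct change of variables in the thin band yields
\begin{equation*}
\int_{M_+}\Delta \varphi \,L(dz)-\int_{\Omega _+}\Delta \varphi \,L(dz)=\int_\gamma \frac{a(x)\,T(x)^2}{2}\,dx+\mathcal{O}(\epsilon ^3)=\frac{\tau ^{2/3}}{2}\left(\frac{3}{2}\right)^{2/3}\int_\gamma a(x)^{1/3}\,dx+\mathcal{O}(\epsilon ^3),
\end{equation*}
which is precisely~(\ref{4.csp4}).

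\medskip
\noindent
\textbf{Main obstacle.} The subtle point is the quadratic vanishing of $u$ at the free boundary: since $\partial _t u=0$ and $\partial _t^2 u\sim a(x)T(x)\asymp \epsilon $ there, an $\mathcal{O}(\epsilon ^N)$ error in $u$ translates, after taking a square root, into only an $\mathcal{O}(\epsilon ^{(N-1)/2})$ displacement of $T_N$. Since $\Delta \varphi =\mathcal{O}(\epsilon )$ on the band, this produces the error $\mathcal{O}(\epsilon \cdot \epsilon ^{(N-1)/2})=\mathcal{O}(\epsilon ^{(N+1)/2})$ in $\int_{\Omega _+}\Delta \varphi $ that appears in~(\ref{3.csp4}), and forces the WKB expansion to be pushed to arbitrary order $N$. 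The remaining verifications---that the $\mathcal{O}(\epsilon ^N)$ correction does not destroy the sub/superharmonicity of $\psi _N^{\pm}$ globally on $M$ (which is automatic on the extended contact sets away from $\gamma $, since there $\Delta \psi _N^{\pm}=\Delta \varphi $ has a definite sign), that all constructions are uniform for $\epsilon \in [\beta /2,\beta ]$, and that Lipschitz continuity of $\psi _N^{\pm}$ permits the use of $\tau $ rather than $\widetilde \tau $ in Theorem~\ref{thm:LowerBd}---are then routine.
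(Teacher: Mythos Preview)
Your proposal is correct and follows essentially the same strategy as the paper: build approximate upper and lower bound weights via an asymptotic expansion in $\epsilon$ of the free boundary problem in a thin band around $\gamma$, then feed them into Theorems~\ref{thm:UpperBd}--\ref{thm:LowerBd}. The paper differs only in technical implementation---it carries out the thin-band analysis via a two-scale pseudodifferential framework (rescaling to a fixed band and treating $P$ as an $\epsilon$-pseudodifferential operator with operator-valued symbol) and glues the weights by $\max/\min$ constructions (hence only Lipschitz, which suffices by the last clause of Theorem~\ref{thm:LowerBd}) rather than your $C^{1,1}$ matching---but arrives at the same leading width $T(x)=\bigl(3\tau/2\,\partial_n\Delta\varphi\bigr)^{1/3}$ and the same square-root mechanism producing the $\mathcal{O}(\epsilon^{(N+1)/2})$ discrepancy.
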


\medskip
\noindent
We remark that the set $\Omega_+$ in Theorem \ref{1csp4_n} is an asymptotic approximation of the contact set $M_+(\psi)$ from Theorem
\ref{thm:Weyl}, in the regime of $\tau > 0$ small.

\medskip
\noindent
{\it Remark}. It follows from Theorem \ref{1csp4_n} that, assuming that (\ref{eq:Dweight}) holds, the number $N([0,e^{-\tau/h}])$ of singular values of $P$ in the interval $[0,e^{-\tau/h}]$ satisfies 
\[ 
\lim_{\tau \rightarrow 0^+}\, \lim_{h \rightarrow 0^+} h\, N([0,e^{-\tau/h}]) = \frac{1}{2\pi} \int_{M_+} \Delta \varphi(z)\, \omega(z,dz\,d\overline{z}).
\]
Here $M_+ = \{z\in M; \Delta \varphi(z) > 0\}$. 
\begin{figure}[ht]
\centering
\includegraphics[width=0.9\textwidth]{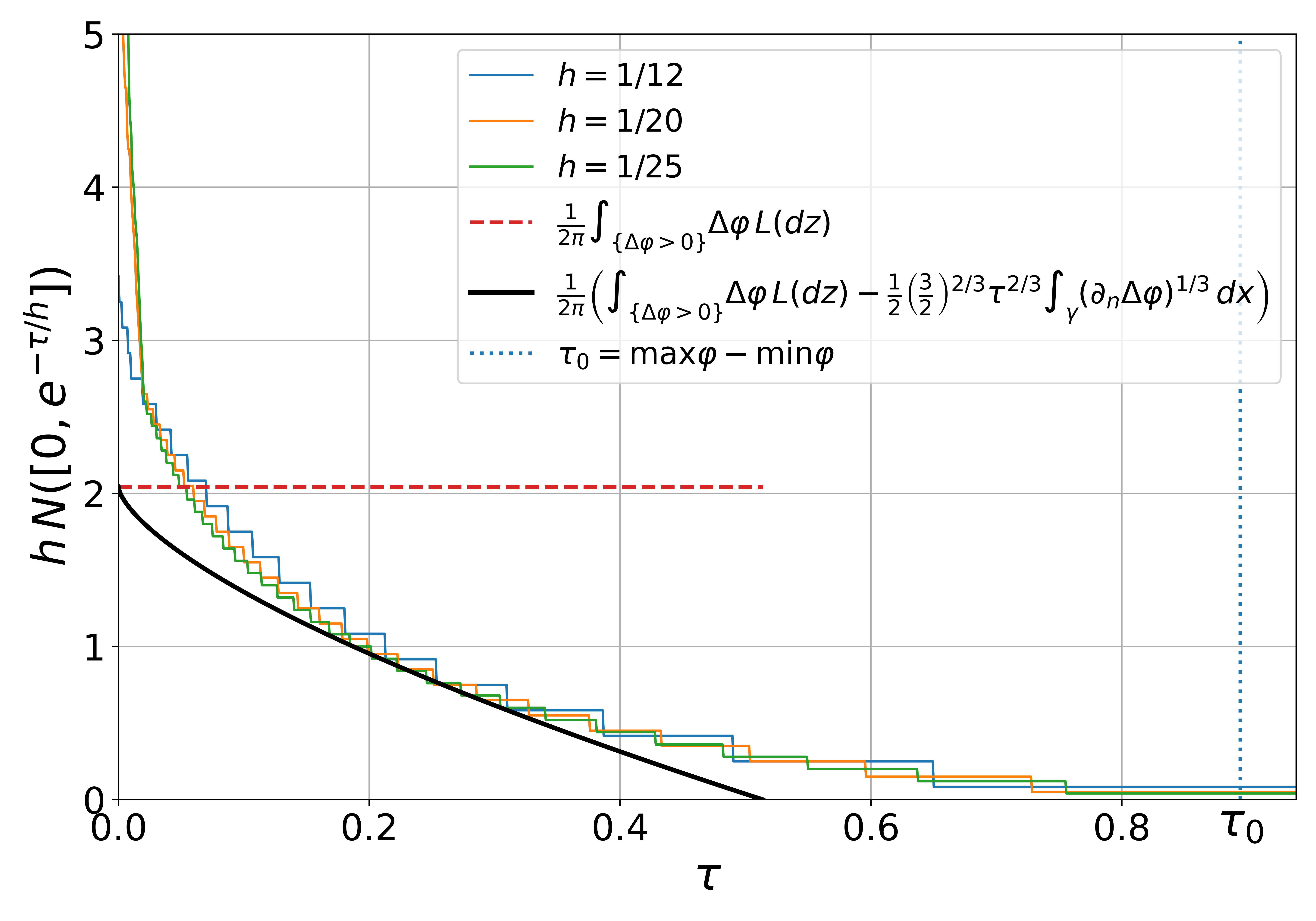}
\caption{The numerically obtained number of exponentially small singular values $N([0,\e^{-\tau/h}])$, normalized by a factor of $h$, for a given $\tau$ in the case when $\varphi(z)=0.2\left(\cos(\Im z)-\cos(\Re z)+\cos(\Re z+2\Im z)\right)$ for $z\in M = \mathbb C/(2\pi \mathbb Z + 2\pi i \mathbb Z)$. The green, orange and blue line correspond to different values of $h$. The thick black line shows the leading term approximation from Theorem \ref{1csp4_n}. For $\tau > \tau_0$ there is only one singular value in $[0,\e^{-\tau/h}]$ as proven in Theorem \ref{decay_rate_large}.}
\label{figA}
\end{figure}

\medskip
\noindent
We illustrate Theorem \ref{1csp4_n} with a numerical simulation in Figure \ref{figA}. Recall that Theorem \ref{1csp4_n} is valid only for small and $h$-independent $\tau$. In Figure \ref{figA}, we see that for $\tau$ large the approximation of $N([0,e^{-\tau /h}])$ by the two leading terms on the right hand side of \eqref{4.csp4} ceases to hold. A discrepancy appears when $\tau$ becomes too small as a function of $h$. In this regime Theorem \ref{1csp4_n} is no longer valid and we expect to see a transition towards the standard semiclassical Weyl asymptotics whose onset one can see in Figure \ref{figA} for values of $\tau$ close to $0$ (this corresponds to the stark growth of the singular value counting function seen in Figure \ref{figB} for values of $\lambda>0.6$). It would be a very interesting problem to provide an asymptotic formula for $N([0,e^{-\tau /h}])$ in the regime when $\tau = \tau(h)=o_{h\to 0}(1)$. 
\begin{figure}[ht]
\centering
\includegraphics[width=0.8\textwidth]{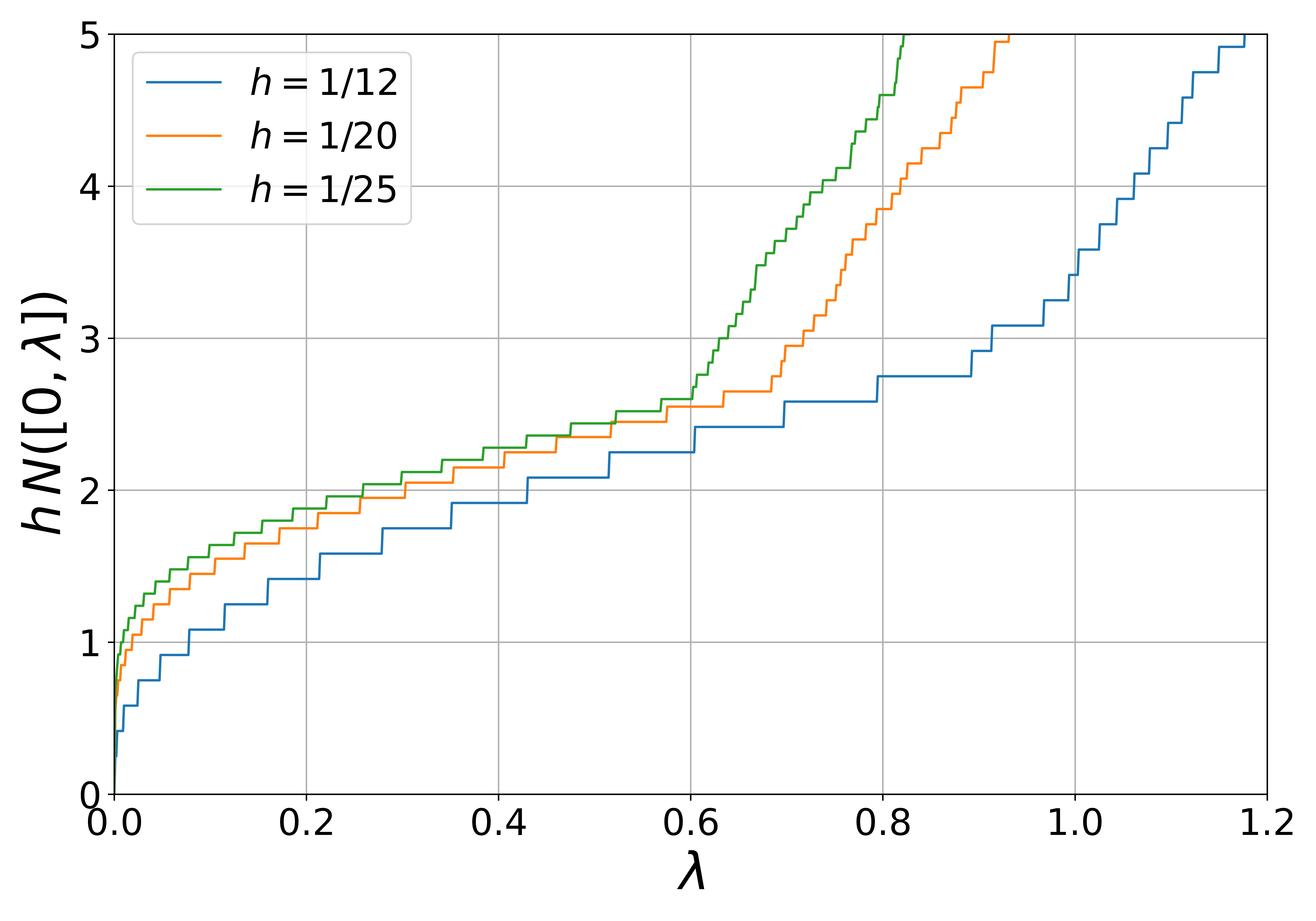}
\caption{The numerically obtained number of exponentially small singular values 
$N([0,\lambda])$ normalized by a factor of $h$ for a given $\lambda$ for various values of $h$. The function $\varphi$ and the manifold $M$ for this simulation are the same as in Figure \ref{figA}.}
\label{figB}
\end{figure}

\subsection{Connection to recent works on tunneling for operators with magnetic fields}
Assume for simplicity that $M = \mathbb C/(2\pi \mathbb Z + 2\pi i\mathbb Z)$ is the standard $2$-torus. The self-adjoint operator
\[
(2P^*)(2P) = \left(-2h\partial_z + 2\varphi'_z\right)\left(2h\partial_{\overline{z}} + 2\varphi'_{\overline{z}}\right) = (hD_x + \varphi'_y)^2 + (hD_y - \varphi'_x)^2 -h\Delta \varphi,
\]
studied in this work, is closely related to the two-dimensional magnetic Schr\"odinger operator as well as to the Pauli operator
\[
\begin{pmatrix} 0 & 2P
\\ 2P^*  & 0  \end{pmatrix}^2,
\]
for which the study of tunneling and exponentially small eigenvalues has been the subject of intense recent activity. The papers \cite{EKP16}, \cite{HeSu17a}, \cite{HeSu17b}, \cite{HKS19} are mainly concerned with the bottom of the spectrum for such operators on bounded domains in $\mathbb R^2$ with the Dirichlet boundary conditions. Of particular relevance here is the work~\cite{HKS19}, which considers the case when the magnetic field, which is the analogue of $\Delta \varphi$, is allowed to change sign. Accurate semiclassical estimates for the exponentially small eigenvalues of the Dirichlet Pauli operator are established in~\cite{BLTRS21}, assuming that the magnetic field is positive.

\medskip
\noindent
The exponentially small splitting of the first two eigenvalues for the two-dimensional magnetic Laplacian has been studied in \cite{BoHeRa22}, \cite{FoMoRa25}. A similar study is made in \cite{HeKaSu24} for the magnetic Schr\"odinger operator, showing a braid structure for the lowest eigenvalues in the presence of a triangular symmetry. See also \cite{FeShWe22}, \cite{FeShWe25}, \cite{HeKa24}.

\medskip
\noindent
The work~\cite{Be14} considers the average distribution of exponentially small eigenvalues for Dolbeault Laplacians, associated to high powers of a holomorphic line bundle over a compact complex manifold. We would also like to mention the recent work~\cite{Fi25}, which studies the asymptotic logarithmic distribution of exponentially small eigenvalues for Toeplitz operators on complex projective manifolds. 

\medskip
\noindent
Finally, earlier results concerning tunneling for semiclassical Schrödinger operators with electric potentials have been obtained by Simon~\cite{Si84} and by Helffer and the second named author \cite{HeSj84,HeSj85a,HeSj85b,HeSj85c}.

\subsection{Organization of the paper} We shall conclude the introduction by reviewing the structure of the paper and outlining some aspects of the proofs. In Section \ref{sec_HC} we derive H\"ormander's $L^2$ estimates for the $\overline{\partial}$ operator with superharmonic weights on an open Riemann surface, which are then used to establish the exponential decay of the singular states of $P$ in the "classically forbidden" region where $\psi_{ub} < \varphi$, with $\psi_{ub}$ being an upper bound weight. In the same spirit, we also review the existence theory for $\overline{\partial}$ on Riemann surfaces, in exponentially weighted $L^2$ spaces with subharmonic weights. Section \ref{sec:bounds} is devoted to the proof of Theorem \ref{thm:UpperBd} and Theorem \ref{thm:LowerBd}. Roughly, when establishing upper bounds on the number of exponentially small singular values, we rely on the localization of the corresponding singular states to a small neighborhood of the contact set $M_+(\psi_{ub})$, assumed to be confined to the region where $\Delta \varphi > 0$. This allows us to introduce and to exploit the classical Bergman projection in the region of the strict subharmonicity of $\varphi$, for which a precise asymptotic description in the semiclassical limit is available, thanks to \cite{Catlin}, \cite{Ze98}, \cite{BBSj}. Passing to trace class estimates for the associated truncated Bergman projections, with the truncations supported in small neighborhoods of the contact set, we obtain the result of Theorem \ref{thm:UpperBd} essentially by a trace computation. The starting point for the proof of Theorem \ref{thm:LowerBd} is a family of $\mathcal O(h^{\infty})$ quasimodes of $P$, given by the coherent states associated to Toeplitz quantizations of symbols localized in the interior of the contact set $M_+(\psi_{lb})$. Here $\psi_{lb}$ is a lower bound weight. Using $\overline{\partial}$ arguments, we then show that such quasimodes are reproduced by the orthogonal projection onto the spectral subspace associated to the singular values in $[0,e^{-\tau/h}]$, up to a small error, and the number of singular values can be bounded from below by the trace class norm of the Toeplitz operators in question. Analyzing the trace class norms by passing to the corresponding asymptotic Bergman projections~\cite{BBSj}, similarly to the proof of Theorem \ref{thm:UpperBd}, we obtain the result of Theorem \ref{thm:LowerBd}. 

\medskip
\noindent
Theorem \ref{1csp4_n} is established in Section \ref{tb}, by developing a detailed asymptotic analysis of the Dirichlet problem for the Laplacian in a thin band of variable width $\asymp \tau^{1/3}$ around each of the finitely many curves comprising the connected components of the set $\gamma = (\Delta \varphi)^{-1}(0)$. This is accomplished by means of the techniques for treating two-scale operators, using the semiclassical pseudodifferential calculus in the slow variable with operator valued symbols, see~\cite{GeMaSj91}, \cite[Chapter 13]{DiSj}. This analysis is then used to construct upper and lower bound weights in the regime of $\tau > 0$ small, so that Theorem \ref{thm:UpperBd} and Theorem \ref{thm:LowerBd} can be applied. In particular, the discussion in Section \ref{tb} does not depend on the results of Section \ref{sec:FB}.

\medskip
\noindent
Section \ref{sec:FB}, which is independent of the previous sections, is devoted to the proof of existence, uniqueness, and regularity results for solutions of the double obstacle problem (\ref{eq:din1}) below, posed on an arbitrary $d$-dimensional compact Riemannian manifold without boundary. Theorem \ref{thm:DPPex2}, concerning the problem (\ref{eq:din1_2a}), is then obtained as a special case of these general results. We study the regularity of the free boundaries $\partial M_{\pm}(\psi)$ by reducing to a one obstacle problem. In particular, we show that the free boundaries $\partial M_{\pm}(\psi)$ are of measure $0$ which is crucial here since it guarantees, together with the contact sets $M_{\pm}(\psi)$ having strictly positive measure, that the interior of the contact sets is non-empty, as required by Theorem \ref{thm:UpperBd} and Theorem \ref{thm:LowerBd}. Let us also recall that the study of free boundary problems emerging from obstacle problems has a long and rich history, see \cite{PSU12,Ro87}, \cite{Fi18,Ca98}. However, the typical setting considered for obstacle problems is that of bounded domains with suitable boundary conditions. In our case, we work on a compact manifold without boundary, which required us to extend the existing theory to this setting. In particular, one novelty in Section \ref{sec:FB} is a new form of the method of penalization adapted from \cite{LePa23}.
%

\medskip
\noindent
The paper is concluded by four appendices. Appendix \ref{sec:almholexten} discusses almost holomorphic extensions of smooth vector fields and smooth functions with values in a totally real submanifold of a Riemann surface. Appendix \ref{sec:Prop3.9} provides a self-contained proof of the rapid off-diagonal decay of the Bergman kernel on an open Riemann surface, needed in Section \ref{sec:bounds}. Appendix \ref{sec:app_large_decay} is devoted to the proof of Theorem \ref{decay_rate_large}. In Appendix \ref{sec:appHoSoSpace}, we collect some basic notions and results concerning Sobolev and H\"older spaces on compact manifolds, used in Section \ref{sec:FB}.

\medskip
\noindent
We shall finally say a few words about the perspectives offered by this work. A more precise asymptotic study of the case when $\tau$ is close to 
$\underset{M}{\rm max}\, \varphi - \underset{M}{\rm min}\, \varphi$, somewhat in the spirit of Section \ref{tb}, may also be possible and would 
be interesting and natural to undertake. Another direction, which seems promising, concerns the extension to the case of high powers of a complex line bundle over a compact Riemann surface, equipped with a Hermitian metric whose curvature changes sign. Finally, the problem of quantum tunneling for general analytic non-self-adjoint operators remains largely unexplored, and it would be most interesting to address it as well.

\bigskip
\noindent
{\bf Acknowledgments.} This project started when M.H. was visiting Universit\'e de Bourgogne in October 2023. He is most grateful to its Institut de Math\'ematiques for the generous hospitality and excellent working conditions. M.H. would also like to thank Robert Greene, Inwon Kim, and Maciej Zworski for stimulating discussions and helpful advice. He gratefully acknowledges partial support from the Simons Foundation grant MPS-TSM-00007843. J.S. acknowledges that the IMB receives support from the EIPHI Graduate School (contract ANR-17-EURE-0002).  M.V. is partially funded by the Agence Nationale de la Recherche, through the project ADYCT (ANR-20-CE40-0017). M.V. is very grateful to the Department of Mathematics at the Universit\`a di Bologna for hosting him as a visiting researcher during parts of this project and for providing excellent working conditions. We would like to thank Robert Berman and Siarhei Finski for bringing the works~\cite{Be14} and ~\cite{Fi25}, respectively, to our attention. We would like to thank Henry Zeng for kindly providing us with the code on which the numerical simulations in Figures \ref{figA} and \ref{figB} are based.

\section{Carleman-H\"ormander estimates on Riemann surfaces}
\setcounter{equation}{0}
\label{sec_HC}

\medskip
\noindent
The purpose of this section is to establish some essentially well known a pri\-ori weighted $L^2$ estimates and existence theorems for the $\overline{\partial}$--operator on a compact Riemann surface $M$, instrumental when proving Theorem \ref{thm:UpperBd} and Theorem \ref{thm:LowerBd}.  See~\cite[Chapter 2]{NaRa11},~\cite[Chapter 11]{Va11},~\cite{Ch15}. 

\subsection{Metrics and $L^2$ scalar products on $M$}
\label{Sec:MetIP}
We shall denote by $T^{1,0}M $ and $T^{0,1}M$ the holomorphic and anti-holomorphic tangent bundle of $M$, respectively, so that 
\[
TM \otimes \mathbb C = T^{1,0}M \oplus T^{0,1}M. 
\]
Here $TM \otimes \mathbb C$ is the fiberwise complexification of $TM$, the tangent bundle of $M$ in the sense of real smooth manifolds. The space of smooth sections of the complex vector bundle of $(p,q)$ forms on $M$ will be denoted by $\Omega^{p,q}(M)$. The conformal Riemannian metric $g$ in (\ref{eq_metric0}), (\ref{eq_metric1}) extends to a complex bilinear form $g^{\mathbb C}$ in the fibers of $TM \otimes \mathbb C$, and if $z = x+iy$ is a local holomorphic coordinate on $M$, we have in view of (\ref{eq_metric1}), 
\begeq
\label{eq_ext0}
g^{\mathbb C}\left(\frac{\partial}{\partial z}, \frac{\partial}{\partial z}\right) = g^{\mathbb C}\left(\frac{\partial}{\partial \overline{z}}, \frac{\partial}{\partial \overline{z}}\right) =0, 
\endeq
\begeq
\label{eq_ext0.1}
g^{\mathbb C}\left(\frac{\partial}{\partial z}, \frac{\partial}{\partial \overline{z}}\right) = g^{\mathbb C}\left(\frac{\partial}{\partial \overline{z}}, \frac{\partial}{\partial {z}}\right) = \frac{1}{2} g_U(z). 
\endeq
Here and below we write in a local holomorphic coordinate chart $(U,z)$, 
\begin{equation}
\label{eq1.4.0.1}
\frac{\partial}{\partial {z}} = \frac{1}{2}\left(\frac{\partial}{\partial x} + \frac{1}{i} \frac{\partial}{\partial y}\right), \quad 
\frac{\partial}{\partial \overline{z}} = \frac{1}{2}\left(\frac{\partial}{\partial x} + i \frac{\partial}{\partial y}\right), \quad z = x+iy. 
\end{equation}
It follows that the representation (\ref{eq_metric1}) still holds for $g^{\mathbb C}$, 
\begeq
\label{eq_ext0.2}
g^{\mathbb C} = \frac{1}{2} g_U(z)\left(dz\otimes d\overline{z} + d\overline{z}\otimes dz\right).
\endeq
Taking a complex linear extension of $J$ in (\ref{eq_J}) to the fibers of $TM \otimes \mathbb C$, we get similarly to (\ref{eq_Kahler}),
\[
\omega(t,s) = g^{\mathbb C}(Jt,s), \quad t,s \in T_z M \otimes \mathbb C. 
\]

\medskip
\noindent
The bilinear form $g^{\mathbb C}$ gives rise to a positive definite Hermitian form 
\begeq
\label{eq_ext03}
h(t,s) = g^{\mathbb C}(t,\overline{s}), \quad t,s \in T_z^{1,0}M,
\endeq
defined in the fibers of the holomorphic tangent bundle, varying smoothly with the base point $z \in M$, and there is an analogous positive definite Hermitian form in the fibers of $T^{0,1}M$. Here in (\ref{eq_ext03}), $\overline{s} \in T_z^{0,1}M$ is obtained by applying the operation of complex conjugation to $s\in T_z^{1,0}M$, well defined on the complexification $T_zM \otimes \mathbb C$ of $T_z M$. We then also have the corresponding dual Hilbert space norms and scalar products in the fibers of the bundles $T^*_{1,0} M$ and $T^*_{0,1} M$ of $(1,0)$-forms and $(0,1)$-forms on $M$, respectively. If $z$ is a local holomorphic coordinate on $M$, following (\ref{eq_metric}), we shall view the $(1,1)$-form 
\begin{equation}
\label{eq:ext1}
\omega = g(z) \frac{d\overline{z}\wedge dz}{2i} 
\end{equation}
as a positive smooth density of integration on the corresponding coordinate chart. Here $(2i)^{-1} d\overline{z}\wedge dz \simeq L(dz)$ can be viewed as the Lebesgue measure when identifying the coordinate chart with an open subset of $\CC$. We also notice that $\omega$ is a globally well defined positive density on $M$. Indeed, in the intersection of two local holomorphic coordinate charts $(V,q)$ and $(U,z)$, we have $q=\chi(z)$, where $\chi$ is holomorphic, and using that $g_U(z) = g_V(q) |\chi'(z)|^2$, in view of (\ref{eq_metric0}), we get 
\begin{equation}\label{eq:ext2}
\omega_{q} = g_V(q) \frac{d\overline{q}\wedge dq}{2i} = g_V(q) \frac{|\chi'(z)|^2 d\overline{z}\wedge dz}{2i} = g_U(z) \frac{d\overline{z}\wedge dz}{2i} = \omega_z. 
\end{equation}

\medskip
\noindent 
If $u,v \in C(M)$ are continuous functions on $M$, we define the $L^2$ scalar product 
\begin{equation}\label{eq:ext3}
(u,v) = \int_M u\overline{v}\,\omega, 
\end{equation}
and notice that when at least one of the functions $u,v$ has compact support in a coordinate chart $(U,z)$ 
then 
\begin{equation}\label{eq:ext3a}
(u,v) = \int_M u\overline{v}\,\omega = \iint_U u(z)\overline{v(z)}\, g_U(z)\frac{d\overline{z}\wedge dz}{2i}.
\end{equation}
We denote the corresponding $L^2$ space by $L^2(M,\omega)$ and notice that $C^\infty(M)$ is a dense subspace. 

\medskip
\noindent 
If $\nu,\mu$ are continuous $(0,1)$-forms on $M$ we define their $L^2$ scalar product by 
\begin{equation}\label{eq:ext4}
(\nu,\mu) = \frac{1}{2i} \int_M \nu \wedge \overline{\mu}.
\end{equation}
If $\nu,\mu$ have compact support in a coordinate chart $(U,z)$, we write $\nu = n(z)d\overline{z}$, $\mu = m(z)d\overline{z}$ and 
get 
\begin{equation}\label{eq:ext5}
(\nu,\mu) = \iint_U n(z)\overline{m(z)}\frac{d\overline{z}\wedge dz}{2i}.
\end{equation}
We have therefore a well defined scalar product and the corresponding $L^2$ space $L^2(M,T^*_{0,1}M)$ of $(0,1)$-forms on $M$. Similarly, if $\nu,\mu$ are continuous $(1,0)$-forms on $M$ we define their scalar product by 
\begin{equation}\label{eq:ext4.1}
(\nu,\mu) = -\frac{1}{2i} \int_M \nu \wedge \overline{\mu}.
\end{equation}
If $\nu,\mu$ have compact support in a coordinate chart $(U,z)$,  we write $\nu = n(z)dz$, $\mu = m(z)dz$ and get 
\begin{equation}\label{eq:ext5.1}
  (\nu,\mu) 
  = -\frac{1}{2i}\iint_U n(z)\overline{m(z)}\frac{dz\wedge d\overline{z}}{2i}
  =\iint_U n(z)\overline{m(z)}\frac{d\overline{z}\wedge dz}{2i}.
\end{equation}
We have therefore a scalar product and the corresponding $L^2$ space $L^2(M,T^*_{1,0}M)$ of $(1,0)$-forms on $M$. Notice that the definitions of the $L^2$ spaces of forms, $L^2(M,T^*_{0,1}M)$ and $L^2(M,T^*_{1,0}M)$, are independent of the choice of a conformal Riemannian metric on $M$. The spaces $\Omega^{0,1}(M)$ and $\Omega^{1,0}(M)$ are dense in $L^2(M,T^*_{0,1}M)$ and $L^2(M,T^*_{1,0}M)$, respectively.

\medskip
\noindent
An open connected proper subset $\Omega\subseteq M$ is naturally an open Riemann surface and the inclusion map $\iota:\Omega \hookrightarrow M$ is a holomorphic embedding. For an $(p,q)$-form $\alpha$ on $M$, the restriction $\iota^*\alpha = \alpha|_\Omega$ is a well-defined $(p,q)$-form $\Omega$. 
Thus, using $\omega|_\Omega$ instead of $\omega$ and proceeding as in \eqref{eq:ext3}, we obtain the $L^2$ space $L^2(\Omega,\omega|_\Omega)$. 
Similarly to \eqref{eq:ext4}, \eqref{eq:ext4.1}, we define the $L^2$ spaces $L^2(\Omega,T^*_{1,0}\Omega)$ and $L^2(\Omega,T^*_{0,1}\Omega)$. 
Restricting functions and forms to $\Omega$ by means of $\iota$ we get that 
\begin{equation}\label{eq:ext5.5} 
\begin{split}
  & \iota^*:   L^2(M,\omega) \rightarrow L^2(\Omega,\omega|_\Omega), \\
  & \iota^*:L^2(M,T^*_{p,q}M) \rightarrow L^2(\Omega,T^*_{p,q}\Omega), ~~ p+q=1.
\end{split}
\end{equation}
To simplify the notation we shall not indicate the restriction of $\omega$ to $\Omega$ and write $L^2(\Omega,\omega)$.

%
%
%

\subsection{Carleman estimates and weighted $L^2$ spaces on $M$}
Let $\varphi:M\to \RR$ be smooth and consider the exponentially weighted $L^2$ spaces 
\begin{equation}\label{eq:ext6}
\begin{split}
& L^2_\varphi (M, \omega) = {e}^{\varphi/h}L^2(M,\omega), \\ 
& L^2_\varphi (M, T^*_{0,1}M) = {e}^{\varphi/h}L^2(M,T^*_{0,1}M). \\ 
\end{split}
\end{equation}
We shall be concerned with the semiclassical $\overline{\partial}$ operator 
\begin{equation}\label{eq:ext7}
h\overline{\partial}:L^2_\varphi (M, \omega) \to L^2_\varphi (M, T^*_{0,1}M), \quad h\in(0,1],
\end{equation}
or, equivalently, with the conjugated operator 
\begin{equation}\label{eq1.3}
P:=(h\overline{\partial})_\varphi := {e}^{-\varphi/h}\circ h\overline{\partial}\circ {e}^{\varphi/h}
:L^2 (M, \omega) \to L^2(M, T^*_{0,1}M).
\end{equation}
We have in view of (\ref{eq1.3}), 
\begin{equation}\label{eq:ext9}
P = (h\overline{\partial})_\varphi = h\overline{\partial} + (\overline{\partial}\varphi)^\wedge, 
\end{equation}
and therefore, in a local holomorphic coordinate chart $(U,z)$, the operator $P$ is given by 
\begin{equation}\label{eq:ext10}
Pu =  (h\overline{\partial})_\varphi u =
  \left(h\frac{\partial u}{\partial \overline{z}}+ \frac{\partial\varphi}{\partial \overline{z}} u
  \right)\, d\overline{z}.
\end{equation}

\medskip
\noindent 
We shall view $P$ in (\ref{eq1.3}) as a closed densely defined operator on $L^2(M,\omega)$ equipped with the maximal domain,
\begeq
\label{eq1.4}
\mathcal{D}(P) = \{u\in L^2(M,\omega); Pu \in L^2(M, T^*_{0,1}M)\}, 
\endeq
and it follows from (\ref{eq:ext9}) that 
\begin{equation}
\label{eq1.4.1}
\mathcal{D}(P) = \{u\in L^2(M,\omega); h\overline{\partial}u\in L^2(M, T^*_{0,1}M)\} = H^1(M),
\end{equation}
the standard Sobolev space on $M$, in view of the classical ellipticity of $h\overline{\partial}$.

\bigskip
\noindent
Similarly to \eqref{eq1.3}, we set 
\begin{equation}\label{eq:ext10.1}
(h\partial)_{-\varphi} := {e}^{\varphi/h}\circ h\partial\circ {e}^{-\varphi/h} :L^2 (M, \omega) \to L^2(M, T^*_{1,0}M), 
\end{equation}
so that 
\begin{equation}\label{eq:ext10.2}
(h\partial)_{-\varphi} = h\partial  - (\partial\varphi)^\wedge. 
\end{equation}
In a local holomorphic coordinate chart $(U,z)$ we have therefore 
\begin{equation}
\label{eq:ext10.3}
(h\partial)_{-\varphi} u = \left(h\frac{\partial u}{\partial z}- \frac{\partial\varphi}{\partial z} u \right)\, dz.
\end{equation}

\medskip
\noindent
We shall now compute the formal adjoint $P^*$ of $P$ in (\ref{eq1.3}). When doing so, let $u\in C^\infty(M)$, $\mu \in \Omega^{0,1}(M)$, and let us write using (\ref{eq:ext4}),  
\begin{multline}
\label{eq:rs14}
(Pu,\mu) = \frac{1}{2i} \int_M Pu \wedge \overline{\mu } = \frac{1}{2i} \int_M h\overline{\partial}(e^{\varphi/h}u) \wedge e^{-\varphi/h} \overline{\mu } \\
 = \frac{1}{2i} \int_M h\overline{\partial}(u\overline{\mu }) - \frac{1}{2i} \int_M e^{\varphi/h}u\, h\overline{\partial}(e^{-\varphi/h} \overline{\mu}).
\end{multline} 
Here $u\overline{\mu}$ is a $(1,0)$-form, and therefore 
\begeq
\label{eqStokes}
\int_M h\overline{\partial}(u\overline{\mu}) = \int_M hd(u\overline{\mu}) = 0, \quad d = \partial + \overline{\partial},
\endeq
by Stokes' theorem. It follows from (\ref{eq:ext3}), (\ref{eq:rs14}), and (\ref{eqStokes}) that $P^* \mu$ is the unique $C^{\infty}$ function on $M$ such that we have, on the level of $(1,1)$-forms, 
\begeq
\label{eq1.4.0.2}
(P^* \mu)\,\omega = \frac{1}{2i} e^{\varphi/h}\circ h\partial\circ e^{-\varphi/h} (\mu) = \frac{1}{2i} (h\partial)_{-\varphi} (\mu).
\endeq
Letting  
\begin{equation}
\label{eq:ext8.0}
P^*: L^2(M, T^*_{0,1}M)\to L^2 (M, \omega),
\end{equation}
stand also for the $L^2$--adjoint of $P$ with the domain (\ref{eq1.4.1}), we conclude by ellipticity that the domain $\mathcal{D}(P^*)$ of $P^*$ satisfies $\mathcal{D}(P^*) = H^1_{0,1}(M)$, the space of $(0,1)$-forms with coefficients in $H^1$, and that (\ref{eq1.4.0.2}) still holds for $\mu \in \mathcal D(P^*)$. 

\medskip
\noindent
{\it Remark.} For future reference, we notice that (\ref{eq1.4.0.2}) gives, taking $\varphi = 0$, 
\begeq
\label{eq1.4.0.3}
\left((h\overline{\partial})^* \mu\right) \omega = -\frac{i}{2} h\partial \mu, \quad \mu \in \Omega^{0,1}(M). 
\endeq
We get therefore, letting $\mu = h \overline{\partial} u$, $u\in C^{\infty}(M)$, 
\begeq
\label{eq1.4.0.4} 
\left(h^2 \overline{\partial}^* \overline{\partial} u\right) \omega = -\frac{i}{2} h^2 \partial \overline{\partial} u = - \frac{1}{4}(h^2 \Delta_g u)\omega, 
\endeq
leading to the factorization 
\begeq
\label{eq1.4.0.5}
-\Delta_g = 4 \overline{\partial}^* \overline{\partial}.
\endeq
Here in (\ref{eq1.4.0.4}) we have also used (\ref{eq_Laplace}). 

\medskip
\noindent
Let $z = x+iy$ be a local holomorphic coordinate on $M$, and let $(x,y;\xi,\eta)$ be the corresponding local canonical coordinates on $T^*M$, the cotangent bundle of $M$ in the sense of real smooth manifolds. Setting
\begeq
\label{eq1.7}
\zeta = \frac{1}{2}(\xi - i\eta), \quad \overline{\zeta} = \frac{1}{2}(\xi + i\eta),
\endeq
we may express the standard symplectic 2-form on $T^*M$ as follows, 
\begin{equation}
\label{eq1.9}
\sigma_{\mathbb R} = d\xi \wedge dx + d\eta \wedge dy =  2 \Re d \zeta \wedge d z = d \zeta \wedge d z + d \bar \zeta \wedge d \bar z. 
\end{equation}
Introducing $\partial_{\zeta} = \partial_{\xi} + i\partial_{\eta}$, $\partial_{\bar{\zeta}} = \partial_{\xi} - i\partial_{\eta} \in T\mathbb C \otimes \mathbb C$, which is the dual basis to $d\zeta$, $d\overline{\zeta} \in T^* \mathbb C \otimes \mathbb C$, we get using also (\ref{eq1.4.0.1}), 
\[
\begin{pmatrix} \partial_z \\ \partial_{\bar{z}}
\end{pmatrix} = A \begin{pmatrix} \partial_x \\ \partial_{y}
\end{pmatrix}, \quad A = \begin{pmatrix} \frac{1}{2} & -\frac{i}{2} \\
\frac{1}{2} & \frac{i}{2}\end{pmatrix},
\]
\[
\begin{pmatrix} \partial_{\zeta} \\ \partial_{\bar {\zeta}}
\end{pmatrix} = B \begin{pmatrix} \partial_{\xi} \\ \partial_{\eta}
\end{pmatrix}, \quad B = \begin{pmatrix} 1 & i \\
1 & -i\end{pmatrix}.
\]
Using that $B^t A = 1$, we obtain 
\[
\partial_\zeta a\, \partial_z b + \partial_{\bar \zeta } a\, \partial_{\bar z } b = B \begin{pmatrix} \partial_{\xi} a \\ \partial_{\eta} a \end{pmatrix} \cdot A \begin{pmatrix} \partial_{x} b \\ \partial_{y} b \end{pmatrix} = \begin{pmatrix} \partial_{\xi} a \\ \partial_{\eta} a \end{pmatrix}\cdot B^t A \begin{pmatrix} \partial_{x} b \\ \partial_{y} b \end{pmatrix} =  \partial_{\xi} a\, \partial_{x} b + \partial_{\eta} a\, \partial_y b,
\]
\[
\partial_z a\, \partial_{\zeta} b + \partial_{\bar z} a\, \partial_{\bar \zeta } b = A \begin{pmatrix} \partial_x a \\ \partial_y a \end{pmatrix} \cdot B \begin{pmatrix} \partial_{\xi} b \\ \partial_{\eta} b \end{pmatrix} = B^tA \begin{pmatrix} \partial_{x} a \\ \partial_{y} a \end{pmatrix}\cdot \begin{pmatrix} \partial_{\xi} b \\ \partial_{\eta} b \end{pmatrix} =  \partial_{x} a\, \partial_{\xi} b + \partial_{y} a\, \partial_{\eta} b.
\]
Consequently, in the $(z,\zeta)$ coordinates, the Poisson bracket of two functions $a, b \in C^1(T^*M)$ is given by
\begin{equation}
\label{eq1.10}
\{ a , b \} = \partial_\zeta a\, \partial_z b  + \partial_{\bar \zeta } a\, \partial_{\bar z } b - \partial_z a\, \partial_\zeta b - \partial_{\bar z } a\, \partial_{\bar \zeta } b.
\end{equation} 

\medskip
\noindent
The semiclassical symbol $p(\gamma)$ of $P$ in (\ref{eq1.3}) at $\gamma = (x,y;\xi, \eta) \in T^*M$ is a linear map from $\mathbb C$ to the fiber of $T^*_{0,1} M$ at $z = x+ iy$ given by 
\begeq
\label{eq1.6}
p(\gamma)\lambda = \lambda \left(\frac{i}{2}(\xi + i\eta) + \partial_{\overline{z}}\varphi\right)\,d\overline{z} =  \lambda\left(i \overline{\zeta} + \partial_{\overline{z}}\varphi\right)\, d\overline{z}, \quad \lambda \in \mathbb C,
\endeq
see~\cite[Chapter 6]{Horm_ALPDO}. 

\medskip
\noindent
{\it Remark}. Assume that $M = \mathbb C/(2\pi \mathbb Z + 2\pi i \mathbb Z)$ is the standard $2$-torus equipped with the usual complex structure and the Euclidean metric. The $(0,1)$-cotangent bundle $T^*_{0,1} M$ of $M$ is trivial, and the operators $P$, $P^*$ in (\ref{eq1.3}), (\ref{eq:ext8.0}) can naturally be viewed as scalar operators, acting on $C^{\infty}(M)$. In the $(z,\zeta)$ coordinates, the semiclassical symbols of $P$, $P^*$ are, respectively, 
\begeq
\label{eq1.8}
p = i\overline{\zeta} + \partial_{\overline{z}}\varphi, \quad p^* = \overline{p} = -i \zeta +  \partial_z \varphi.
\endeq
We get, in view of (\ref{eq:ext10}), (\ref{eq1.4.0.2}), (\ref{eq1.10}), and (\ref{eq1.8}),
\begeq
\label{eq1.11}
[P,P^*] = 2h\partial_z \partial_{\overline{z}}\varphi = \frac{h}{2} \Delta \varphi, \quad \frac{1}{i}\{p,p^*\} =
2\partial_z \partial_{\overline{z}}\varphi = \frac{1}{2} \Delta \varphi.
\endeq
It follows, in particular, from (\ref{eq1.10}), (\ref{eq1.8}), (\ref{eq1.11}) that conditions (\ref{eq:cs4_2a}), (\ref{eq:Dweight}) are equivalent to the non-vanishing of the second Poisson bracket along the zero set of the first bracket,
\begin{equation}
\label{eq1.11.1}
\{p,\{p,\overline{p}\}\} \neq 0 \,\, \text{ when } \,\, \{p,\overline{p}\} =0.
\end{equation}

\medskip
\noindent
Let us return to the operator $P$ given in (\ref{eq1.3}), (\ref{eq1.4}). When equipped with the domain 
\begin{equation}\label{eq:domVN}
\mathcal{D}(P^*P) =\{ u \in  \mathcal{D}(P); Pu \in  \mathcal{D}(P^*)\} \subseteq L^2(M,\omega), 
\end{equation}
the operator $P^*P$ is non-negative self-adjoint by a theorem of von Neumann~\cite[Theorem 3.24]{Ka95}. By ellipticity, we see that $\mathcal{D}(P^*P) = H^2(M)$, which is compactly embedded into $L^2(M,\omega)$~\cite[Theorem 2.34]{Au82}. It follows that the spectrum of $P^*P$ is discrete. The eigenvalues are of the form $t_j^2$, where by definition, $t_j \geq 0$ are the singular values of $P$, see also~\cite{SV} and the discussion in Section \ref{sec:Setting}. The corresponding eigenfunctions of $P^*P$ will be called singular functions or singular states.

\bigskip
\noindent
Let $0\leq t \leq e^{-\tau/h}$ be an exponentially small singular value of $P$, $\tau > 0$, and let $v \in L^2(M,\omega)$ be a corresponding $L^2$--normalized singular state, $P^*P v = t^2 v$. Then with the $L^2$ scalar products and norms we have
\begeq
\label{eq1.12}
\|Pv\|^2 = (P^*Pv,v) = t^2 \|v\|^2 = t^2,\quad \|Pv\| = t.
\endeq
Recalling (\ref{eq1.3}), we see that $u = e^{\varphi/h}v$ satisfies
\begeq
\label{eq1.13}
\|u\|_{L^2_{\varphi}(M,\omega)} = \|v\|_{L^2(M,\omega)} = 1,\quad \|h\overline{\partial}u\|_{L^2_{\varphi}(M, T^*_{0,1} M)} = \|Pv\|_{L^2(M,T^*_{0,1}M)} = t \leq e^{-\tau/h}.
\endeq
Here the $L^2_{\varphi}$ and $L^2$ spaces of functions and $(0,1)$-forms are as in \eqref{eq:ext6}, $\|u\|_{L^2_{\varphi}(M,\omega)} = \|e^{-\varphi/h}u\|_{L^2(M,\omega)}$. We think of 
(\ref{eq1.13}) as
\begeq
\label{eq1.14}
u = \mathcal O(1)\, e^{\varphi/h},\quad h\overline{\partial}u
= \mathcal O(1)\, e^{(\varphi - \tau)/h}, \quad \wrtext{in the}\,\,
L^2 \,\,\wrtext{sense}.
\endeq

\medskip
\noindent
By the standard Carleman-H\"ormander weighted $L^2$ estimates~\cite[Chapter 4]{Horm_CASV},~\cite[Section 4.2]{Horm_Conv}, we get the following result.

\begin{prop}\label{propCH}
Let $\Omega \subseteq M$ be an open subset such that ${\rm ext}(\Omega) = {\rm int}(M\setminus \Omega) \neq \emptyset$ and let 
$\psi \in C(\overline{\Omega};\mathbb R)$ be superharmonic in $\Omega$. Let $\Theta \subseteq \overline{\Omega}$ be a neighborhood of $\partial \Omega$ in $\overline{\Omega}$ and let $\varepsilon > 0$. Then there exists $C = C(\Theta, \varepsilon)$, depending also on $\Omega$ but not on $\psi$, such that
\begin{equation}
\label{eq1.15}
\|e^{-\psi/h} u\|_{L^2(\Omega,\omega)} \leq C e^{\varepsilon/h} \left(\|e^{-\psi/h} 
h\overline{\partial}u\|_{L^2(\Omega,T^*_{0,1}\Omega)} + \|1_{\Theta} e^{-\psi/h} u\|_{L^2(\Omega,\omega)}\right),
\end{equation}
for all $u\in H^1(\Omega)$.
\end{prop}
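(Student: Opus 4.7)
The plan is a H\"ormander--Carleman commutator argument, preceded by two reductions on the weight: regularization to $C^\infty$ and perturbation to strict superharmonicity. Each reduction changes $\psi$ by at most $\varepsilon$ in $L^\infty$, costing a factor $e^{\varepsilon/h}$ per side that the statement anyway allows. Since $\omega$ contains a neighborhood of $\partial\Omega$ in $\overline\Omega$, I would first pick $\eta>0$ so small that the inner domain $\Omega_\eta := \{z\in\Omega : \dist(z,\partial\Omega)>\eta\}$ satisfies $\Omega\setminus\Omega_\eta \subseteq \omega$. Mollifying, $\psi_\eta := \psi \ast \rho_\eta$ with a non-negative radial mollifier is smooth and superharmonic on $\Omega_\eta$, and converges to $\psi$ uniformly there by the uniform continuity of $\psi$ on $\overline\Omega$. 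Setting $\widetilde{\psi} := \psi_\eta - \alpha|z|^2$ with $\alpha>0$ sufficiently small then produces a smooth, strictly superharmonic weight satisfying $-\Delta\widetilde{\psi} \geq 4\alpha > 0$ on $\Omega_\eta$ and $\|\widetilde{\psi} - \psi\|_{L^\infty(\Omega_\eta)} \leq \varepsilon$.

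\medskip
\noindent
With $\widetilde{\psi}$ in hand, I would run the standard commutator step. Introduce the conjugated operator $P_{\widetilde{\psi}} := h\partial_{\bar z} + \partial_{\bar z}\widetilde{\psi} = e^{-\widetilde{\psi}/h}\circ h\partial_{\bar z}\circ e^{\widetilde{\psi}/h}$, with formal adjoint $P_{\widetilde{\psi}}^* = -h\partial_z + \partial_z\widetilde{\psi}$. The commutator identity $[P_{\widetilde{\psi}}, P_{\widetilde{\psi}}^*] = (h/2)\Delta\widetilde{\psi}$ recorded in \eqref{eq1.11} yields, for every $v \in H^1_0(\Omega_\eta)$,
\begin{equation*}
\|P_{\widetilde{\psi}} v\|^2 = \|P_{\widetilde{\psi}}^* v\|^2 - \frac{h}{2}\int_{\Omega_\eta}(\Delta\widetilde{\psi})\, |v|^2\, L(dz) \geq 2h\alpha\, \|v\|^2.
\end{equation*}
Select $\chi \in C_0^\infty(\Omega_\eta; [0,1])$ equal to $1$ on $\Omega\setminus\omega$ with $\supp(\partial_{\bar z}\chi) \subseteq \omega$ (possible as $\Omega\setminus\omega$ is a compact subset of $\Omega_\eta$), and apply the bound to $v := \chi\, e^{-\widetilde{\psi}/h} u$. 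The conjugation gives $P_{\widetilde{\psi}}(e^{-\widetilde{\psi}/h}u) = e^{-\widetilde{\psi}/h}\, h\partial_{\bar z}u$, so by the Leibniz rule $P_{\widetilde{\psi}} v = \chi\, e^{-\widetilde{\psi}/h} h\partial_{\bar z} u + h(\partial_{\bar z}\chi)\, e^{-\widetilde{\psi}/h}u$. Splitting $\Omega = (\Omega\setminus\omega)\cup\omega$ and using that $\chi=1$ on $\Omega\setminus\omega$ while $\partial_{\bar z}\chi$ is supported in $\omega$, one arrives at
\begin{equation*}
\|e^{-\widetilde{\psi}/h}u\|_{L^2(\Omega)} \leq \frac{C'}{\sqrt h}\bigl(\|e^{-\widetilde{\psi}/h}\, h\partial_{\bar z}u\|_{L^2(\Omega)} + \|e^{-\widetilde{\psi}/h}u\|_{L^2(\omega)}\bigr).
\end{equation*}

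\medskip
\noindent
To revert to $\psi$, I would multiply pointwise by $e^{(\widetilde{\psi} - \psi)/h}$; the bound $|\widetilde{\psi}-\psi| \leq \varepsilon$ contributes at most a factor $e^{\varepsilon/h}$ on each side, and the algebraic prefactor $h^{-1/2}$ is absorbed into $e^{\varepsilon/h}$ for all sufficiently small $h$. After relabeling $\varepsilon$ by a constant multiple, this produces the claimed estimate with a constant $C = C(\omega, \varepsilon, \operatorname{diam}\Omega)$. The main obstacle in the scheme is the first paragraph, namely producing a smooth, strictly superharmonic surrogate for the merely continuous, weakly superharmonic $\psi$. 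This is resolved by confining the mollification to $\Omega_\eta$ and exploiting that the uncontrolled collar $\Omega\setminus\Omega_\eta$ is already absorbed into $\omega$, which is precisely the region where the statement permits a free $L^2$ contribution.
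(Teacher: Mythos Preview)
Your proof is correct and follows essentially the same route as the paper: the H\"ormander commutator identity $\|P_{\widetilde\psi}v\|^2 \geq \|P_{\widetilde\psi}^*v\|^2 + \frac{h}{2}\int(-\Delta\widetilde\psi)|v|^2$ for a smooth strictly superharmonic weight, a perturbation by $-\alpha|z|^2$ to obtain strictness, a mollification to pass from continuous to smooth superharmonic, and a cutoff $\chi\in C^\infty_0$ equal to $1$ on $\Omega\setminus\omega$ to handle general $u\in H^1(\Omega)$. The paper orders these reductions slightly differently---it first establishes the $H^1_0$ estimate for the original $\psi$ (perturb, then mollify, then let $\psi_\delta\nearrow\psi$) and only afterwards introduces the cutoff---and one cosmetic point in your write-up is that $\widetilde\psi$ lives only on $\Omega_\eta$, so the intermediate bound should read $\|e^{-\widetilde\psi/h}u\|_{L^2(\Omega\setminus\omega)}$ on the left, with the $\omega$-contribution added after you revert to $\psi$.
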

\begin{proof}
We shall first show that for each $\varepsilon > 0$ there exists $C_{\varepsilon} > 0$, depending also on $\Omega$ but not on $\psi$, such that for all $u\in C^{\infty}_0(\Omega)$ we have
\begin{equation}
\label{eq1.15.1}
\norm{e^{-\psi/h} u}_{L^2(\Omega,\omega)} 
\leq C_{\varepsilon}\, e^{\varepsilon/h} \norm{e^{-\psi/h} h\overline{\partial} u}_{L^2(\Omega,T^*_{0,1}\Omega)}. 
\end{equation}
When doing so, we shall assume first that $\psi \in C^{\infty}(\Omega; \mathbb R)$ is strictly superharmonic in $\Omega$,
\begeq
\label{eq1.15.2}
-\Delta \psi \geq \alpha > 0.
\endeq
Estimate (\ref{eq1.15.1}) is then equivalent to an estimate for the conjugated operator $P_{\psi} = (h\overline{\partial})_{\psi} = e^{-\psi/h} \circ h\overline{\partial} \circ e^{\psi/h}$,
\begeq
\label{eq1.16}
\norm{u}_{L^2(\Omega,\omega)} \leq C_{\varepsilon}\, e^{\varepsilon/h} 
\norm{P_{\psi} u}_{L^2(\Omega,T^*_{0,1}\Omega)}, \quad u\in C^{\infty}_0(\Omega).
\endeq
When establishing (\ref{eq1.16}), we begin with a local argument: let $(U,z)$ be a local holomorphic coordinate chart in $\Omega$ and given 
$u\in C_0^{\infty}(U)$, $v\in C^{\infty}_0(\Omega)$, let us consider 
\begin{equation}
\label{eq:ext11}
\begin{split}
(P_{\psi} u, P_{\psi} v) = ((h\overline{\partial})_\psi u, (h\overline{\partial})_\psi v) 
&= \frac{1}{2i} \iint_U (h\overline{\partial})_\psi u \wedge \overline{(h\overline{\partial})_\psi v} \\
&= \iint_U \left(h\frac{\partial u}{\partial \overline{z}} + \frac{\partial\psi}{\partial \overline{z}}u
\right)\overline{\left(h\frac{\partial v}{\partial \overline{z}} + \frac{\partial\psi}{\partial \overline{z}}v
\right)}\frac{d\overline{z}\wedge dz}{2i} \\
&= \iint_U \left[\left(-h\frac{\partial}{\partial z}+ \frac{\partial\psi}{\partial z}
\right)\!\left(h\frac{\partial}{\partial \overline{z}}+ \frac{\partial\psi}{\partial \overline{z}}
\right)u\right] \overline{v}\frac{d\overline{z}\wedge dz}{2i}. 
\end{split}
\end{equation}
Similarly, using (\ref{eq:ext4.1}), \eqref{eq:ext10.3} we get   
\begin{equation}\label{eq:ext11.1}
\begin{split}
  ((h\partial)_{-\psi} u, (h\partial)_{-\psi} v)
  &= -\frac{1}{2i} \iint_U (h\partial)_{-\psi}  u \wedge \overline{(h\partial)_{-\psi}  v}
  \\
  &= \iint_U \left[\left(-h\frac{\partial}{\partial \overline{z}}- \frac{\partial\psi}{\partial \overline{z}}
  \right)\left(h\frac{\partial}{\partial z} - \frac{\partial\psi}{\partial z}
  \right)u\right]\overline{v}\frac{d\overline{z}\wedge dz}{2i} \\
  & = \iint_U \left[\left(h\frac{\partial}{\partial \overline{z}}+ \frac{\partial\psi}{\partial \overline{z}}
  \right)\left(-h\frac{\partial}{\partial z} + \frac{\partial\psi}{\partial z}
  \right)u\right]\overline{v}\frac{d\overline{z}\wedge dz}{2i}. 
\end{split}
\end{equation}
Subtracting \eqref{eq:ext11.1} from \eqref{eq:ext11} gives 
\begin{equation}\label{eq:ext11.2}
\begin{split}
((h\overline{\partial})_\psi u, (h\overline{\partial})_\psi v)
&- ((h\partial)_{-\psi} u, (h\partial)_{-\psi} v) \\
&= \iint_U \left(\left[-h\frac{\partial}{\partial z}+ \frac{\partial\psi}{\partial z}
  ,h\frac{\partial}{\partial \overline{z}}+ \frac{\partial\psi}{\partial \overline{z}}
  \right]u\right) \overline{v}\frac{d\overline{z}\wedge dz}{2i}
  \\
  &=
  \iint_U \left(-2h\partial_z\partial_{\overline{z}} \psi
  \right) u \,\overline{v} \,\frac{d\overline{z}\wedge dz}{2i}. 
\end{split}
\end{equation}
Here we notice that 
\begin{equation}\label{eq:ext11.3}
-2 \partial_z\partial_{\overline{z}} \psi\, \frac{d\overline{z}\wedge dz}{2i} = -i \partial\overline{\partial}\psi 
\end{equation} 
is invariant under changes of local holomorphic coordinates and is therefore a globally defined real $(1,1)$-form on $\Omega$.

\medskip
\noindent
Let $u \in C^{\infty}_0(\Omega)$. Decomposing $u$ by means of a partition of unity subordinate to an open cover of $\supp u$ by local holomorphic coordinate charts, we obtain from \eqref{eq:ext11.2} that for all $u,v\in C^{\infty}_0(\Omega)$, we have 
\begin{equation}\label{eq:ext11.4}
  ((h\overline{\partial})_\psi u, (h\overline{\partial})_\psi v)
  -
  ((h\partial)_{-\psi} u, (h\partial)_{-\psi} v)
  = -ih \int_\Omega  u \,\overline{v}\, \partial\overline{\partial}\psi. 
\end{equation}
In particular, when $u=v$ we get from \eqref{eq:ext11.4}, 
\begin{equation}
\label{eq1.18}
\|P_{\psi} u\|^2 \geq -ih \int_\Omega  |u|^2 \, \partial\overline{\partial}\psi = -\frac{h}{2}\int_\Omega (\Delta\psi)|u|^2 \,\omega  \geq \frac{h\alpha}{2}\|u\|^2.
\end{equation}
Here we have also used (\ref{eq_Laplace}) and \eqref{eq1.15.2}. The estimate (\ref{eq1.18}) can be reformulated as follows,
\begeq
\label{eq1.19}
\left(\frac{h\alpha}{2}\right)^{1/2} \norm{e^{-\psi/h} u}_{L^2(\Omega,\omega)}
\leq \norm{e^{-\psi/h} h\overline{\partial}u}_{L^2(\Omega,T^*_{0,1}\Omega)},
\endeq
for all $u\in C^{\infty}_0(\Omega)$. 

\medskip
\noindent
We shall next drop assumption (\ref{eq1.15.2}) and assume only that $\psi \in C^{\infty}(\Omega;\mathbb R)$ satisfies $-\Delta \psi \geq 0$ 
in $\Omega$. To this end, let $z_0 \in {\rm ext}(\Omega)$ and let us set $\widetilde{\Omega} = M \setminus \{z_0\}$, so that $\Omega \Subset \widetilde{\Omega}$. The open Riemann surface $\widetilde{\Omega}$ is a Stein manifold~\cite[Section 2.2]{Fo11}, and hence there exist 
$f_j \in {\rm Hol}(\widetilde{\Omega})$, $1 \leq j \leq 3$, such that the map $\widetilde{\Omega} \ni z \mapsto (f_1(z),f_2(z),f_3(z))\in \mathbb C^3$ 
is a proper holomorphic embedding, see~\cite[Theorem 5.3.9]{Horm_CASV}. The non-negative $C^{\infty}$ function
\begin{equation}
\label{eq1.22}
F(z) = \sum_{j=1}^3 \abs{f_j(z)}^2 \geq 0, \quad z\in \widetilde{\Omega},
\end{equation}
is therefore strictly subharmonic in $\widetilde{\Omega}$. (Alternatively, given $0 < b \in C^{\infty}(M)$, we may consider the elliptic equation $\Delta F = b - C_b \delta_{z_0}$ on $M$, with $C_b = \int b(z)\, \omega(z, dz d\overline{z}) > 0$. Here $\Delta: H^{s+2}(M) \rightarrow H^s(M)$ is Fredholm of index zero for each $s\in \mathbb R$, and we have $f\in \mathcal R(\Delta: H^{s+2}(M) \rightarrow H^s(M))$ precisely when $\langle{f,1\rangle} = 0$. In our case $b - C_b \delta_{z_0} \in H^{s}(M)$ for $\displaystyle s < -\frac{{\rm dim}_{\mathbb R}\, M}{2} = -1$, and therefore we get a solution $F \in H^{s+2}(M)$, unique up to an additive constant. By the elliptic regularity, we have $F \in C^{\infty}(\widetilde{\Omega})$ and $F$ is strictly subharmonic in $\widetilde{\Omega}$. Modifying $F$ by a constant, we may achieve that $F\geq 0$ 
in $\Omega$.)

\medskip
\noindent
When $\alpha >0$, the function $\psi_{\alpha} = \psi - \alpha F \in C^{\infty}(\Omega)$ satisfies in $\Omega$, 
\begeq
\label{eq1.22.0.1}
-\Delta \psi_{\alpha} = -\Delta \psi + \alpha\, \Delta F \geq \alpha\, \Delta F \geq \alpha c,
\endeq
for some $c>0$ independent of $\psi$, and applying (\ref{eq1.19}) with $\psi$ replaced by $\psi_{\alpha}$, we get, using that $\psi_{\alpha} \leq \psi$,
\begeq
\label{eq1.19.2}
\norm{e^{-\psi/h} u}_{L^2(\Omega,\omega)} \leq \left(\frac{2}{h\alpha c}\right)^{1/2}
\norm{e^{-\psi_{\alpha}/h} h\overline{\partial} u}_{L^2(\Omega,T^*_{0,1}\Omega)}, \quad u \in C^{\infty}_0(\Omega). 
\endeq 
For a given $\varepsilon>0$, we choose $\alpha >0$ small enough, depending also on $\|F\|_{L^{\infty}(\Omega)}$ but not on $\psi$, so that
\begeq
\label{eq1.19.3}
\psi(z) - \frac{\varepsilon}{4} \leq \psi_{\alpha}(z),\quad z\in \Omega, 
\endeq and hence we obtain (\ref{eq1.15.1}) for all $u\in C^{\infty}_0(\Omega)$, assuming that $\psi \in C^{\infty}(\Omega; \mathbb R)$ is superharmonic. 

\medskip
\noindent
We shall next remove the smoothness assumption on $\psi$ by an approximation argument. Applying~\cite[Proposition 5.1.2]{Br10}, we obtain that for every $\Omega_1\Subset \Omega$ open, there exists a sequence of strictly superharmonic smooth functions $\psi_j$, defined on an open neighborhood of $\overline{\Omega}_1$, which increases pointwise to $\psi$ on $\Omega_1$. When $u\in C^{\infty}_0(\Omega_1) \subseteq C^\infty_0(\Omega)$, we can therefore apply (\ref{eq1.15.1}) with $\psi$ replaced by $\psi_j$, and letting $j\rightarrow \infty$, we obtain (\ref{eq1.15.1}) for all $u\in C^{\infty}_0(\Omega_1)$, and hence for all $u\in C^{\infty}_0(\Omega)$. Recalling that $\psi \in C(\overline{\Omega}; \mathbb R)$, by a density argument we next extend (\ref{eq1.15.1}) to all $u\in H^1_0(\Omega)$. Here the latter space is defined as the closure of $C^{\infty}_0(\Omega)$ in $H^1(\Omega)$. 

\medskip
\noindent
Let now $u\in H^1(\Omega)$. If $\Theta$ is an open neighborhood of $\partial \Omega$ in $\overline{\Omega}$, let $\chi \in C^{\infty}_0(\Omega; [0,1])$ 
be equal to $1$ in $\Omega \backslash \Theta$, and let us apply (\ref{eq1.15.1}) 
to $\chi u \in H^1_0(\Omega)$. We get
\begin{multline} 
\label{eq1.20}
\norm{e^{-\psi/h} \chi u}_{L^2(\Omega,\omega)} \\ 
\leq C_{\varepsilon}\, e^{\varepsilon/h}\left(\norm{e^{-\psi/h} h\overline{\partial} u}_{L^2(\Omega,T^*_{0,1}\Omega)} 
+ h\, \norm{\nabla \chi}_{L^{\infty}(\Omega)}\,\norm{1_{\Theta}e^{-\psi/h} u}_{L^2(\Omega,\omega)} \right).
\end{multline} 
Hence we get for $0 < h \leq 1$,
\begin{multline}
\label{eq1.21}
\norm{e^{-\psi/h} u}_{L^2(\Omega,\omega)} \leq \norm{e^{-\psi/h} \chi u}_{L^2(\Omega,\omega)} + \norm{e^{-\psi/h} (1-\chi) u}_{L^2(\Omega,\omega)} \\
\leq C_{\varepsilon}\, e^{\varepsilon/h} \norm{e^{-\psi/h} h\overline{\partial} u}_{L^2(\Omega,T^*_{0,1}\Omega)} +
\left(C_{\varepsilon}\,\norm{\nabla \chi}_{L^{\infty}(\Omega)}\, e^{\varepsilon/h} +1\right)\norm{1_{\Theta}e^{-\psi/h} u}_{L^2(\Omega,\omega)} .
\end{multline}
We obtain (\ref{eq1.15}), completing the proof.
\end{proof}

\medskip
\noindent
As an application of Proposition \ref{propCH}, we obtain the following conclusion concerning singular states of $P$ in \eqref{eq1.3}, associated 
to exponentially small singular values.
\begin{prop}
\label{appl_sstate}
Let $\Omega \subseteq M$ be open such that ${\rm ext}(\Omega) \neq \emptyset$. Let $\psi \in C(\overline{\Omega}; \mathbb R)$ be superharmonic in $\Omega$ satisfying
\begin{equation}
\label{eq1.25}
\psi \geq \varphi \,\,\, \wrtext{on}\,\,\, \partial \Omega, \quad
\psi \geq \varphi - \tau \,\,\, \wrtext{on}\,\,\, \overline{\Omega},
\end{equation}
where $0 < \tau < {\rm max}\, \varphi - {\rm min}\, \varphi$. Let $0 \leq t \leq e^{-\tau/h}$ be
a singular value of $P$ in {\rm (\ref{eq1.3})}, or equivalently of $h\overline{\partial}:L^2_\varphi (M, \omega) \to L^2_\varphi (M, T^*_{0,1}M)$,
and let $u\in L^2_{\varphi}(M,\omega)$ be a corresponding normalized singular state of $h\overline{\partial}$, so that we have {\rm (\ref{eq1.13})}. Then for every $\varepsilon > 0$, we have
\begin{equation}
\label{eq1.26}
\norm{\e^{-\psi/h} u}_{L^2(\Omega,\omega)} \leq \mathcal O_{\varepsilon} (1)\,
\e^{\varepsilon/h}.
\end{equation}
This holds uniformly with respect to $(u,t)$.
\end{prop}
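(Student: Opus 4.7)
The plan is to apply Proposition \ref{propCH}, in the torus-version stated in the Remark following its proof, to the singular state $u$ on $\Omega$, and then to bound each of the two terms on the right-hand side using the hypotheses \eqref{eq1.25} together with the defining bounds \eqref{eq1.13}--\eqref{eq1.14}. First I note that $u \in H^1(M)$, and in particular $u \in H^1(\Omega)$, since $\varphi$ is bounded and $u, h\partial_{\bar z} u \in L^2(M)$ (classical ellipticity of $h\overline{\partial}$). Fix $\varepsilon > 0$ and let me fix an auxiliary parameter $\delta \in (0, \varepsilon/2)$, to be specified. By the hypothesis $\psi \geq \varphi$ on $\partial \Omega$, the continuity of $\psi - \varphi$, and the compactness of $\partial \Omega$, I can choose an open neighborhood $\omega$ of $\partial \Omega$ in $\overline{\Omega}$ such that
\begin{equation*}
\psi \geq \varphi - \delta \quad \text{on } \omega.
\end{equation*}

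With this choice of $\omega$, the torus version of Proposition \ref{propCH} gives, for every $\varepsilon' > 0$,
\begin{equation*}
\|e^{-\psi/h} u\|_{L^2(\Omega)} \leq C(\omega, \varepsilon')\, e^{\varepsilon'/h}\left(\|e^{-\psi/h}\, h\partial_{\bar z} u\|_{L^2(\Omega)} + \|e^{-\psi/h} u\|_{L^2(\omega)}\right).
\end{equation*}
For the first term on the right, the assumption $\psi \geq \varphi - \tau$ on $\overline{\Omega}$ yields $e^{-\psi/h} \leq e^{\tau/h} e^{-\varphi/h}$, and combined with \eqref{eq1.13}--\eqref{eq1.14} this gives
\begin{equation*}
\|e^{-\psi/h}\, h\partial_{\bar z} u\|_{L^2(\Omega)} \leq e^{\tau/h}\, \|e^{-\varphi/h}\, h\partial_{\bar z} u\|_{L^2(M)} = e^{\tau/h}\, \|Pv\| \leq e^{\tau/h}\cdot e^{-\tau/h} = 1.
\end{equation*}
For the second term, on $\omega$ I have $e^{-\psi/h} \leq e^{\delta/h} e^{-\varphi/h}$, so
\begin{equation*}
\|e^{-\psi/h} u\|_{L^2(\omega)} \leq e^{\delta/h}\, \|e^{-\varphi/h} u\|_{L^2(M)} = e^{\delta/h}\,\|v\| = e^{\delta/h}.
\end{equation*}

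Combining these with the choice $\varepsilon' = \delta = \varepsilon/2$ gives
\begin{equation*}
\|e^{-\psi/h} u\|_{L^2(\Omega)} \leq C(\omega, \varepsilon/2)\, e^{\varepsilon/(2h)}\left(1 + e^{\varepsilon/(2h)}\right) \leq 2\, C(\omega, \varepsilon/2)\, e^{\varepsilon/h},
\end{equation*}
which is the required bound \eqref{eq1.26}. The constants depend only on $\varepsilon$ (and implicitly on $\Omega$, $\psi$, $\varphi$, $\tau$ through $\omega$ and the diameter), and the bound is uniform in the pair $(u,t)$ since the only use made of $(u,t)$ is through \eqref{eq1.13}. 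There is no real obstacle here beyond the mild bookkeeping with $\varepsilon$ and $\delta$; the substantive work is already contained in Proposition \ref{propCH} and its extension to the torus in the preceding Remark.
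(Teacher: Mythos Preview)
Your proof is correct and follows essentially the same approach as the paper's: apply the torus version of Proposition~\ref{propCH}, bound the $h\partial_{\overline z}u$ term using $\psi\ge\varphi-\tau$ together with \eqref{eq1.13}, and bound the boundary term by choosing a thin neighborhood $\omega$ of $\partial\Omega$ on which $\psi\ge\varphi-\delta$. The only cosmetic difference is that you split $\varepsilon$ as $\varepsilon/2+\varepsilon/2$ to land exactly on $e^{\varepsilon/h}$, whereas the paper applies Proposition~\ref{propCH} with the same $\varepsilon$ used for $\omega_\varepsilon$ and absorbs the resulting $e^{2\varepsilon/h}$ by relabelling.
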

\begin{proof}
We have in view of (\ref{eq1.25}) and the second equation in (\ref{eq1.13}),
\begin{equation}
\label{eq1.27}
\norm{e^{-\psi/h}h\overline{\partial}u}_{L^2(\Omega,T^*_{0,1}\Omega))} 
\leq \norm{e^{-(\varphi - \tau)/h}h\overline{\partial}u}_{L^2(\Omega,T^*_{0,1}\Omega)}
\leq 1.
\end{equation}
Furthermore, the first equation in (\ref{eq1.13}) gives that
\begin{equation}
\label{eq1.28}
\norm{e^{-\varphi/h}u}_{L^2(\Omega,\omega)} \leq 1,
\end{equation}
and using (\ref{eq1.25}) again, we conclude that for every $\varepsilon > 0$ there exists a neighborhood $\Theta_{\varepsilon} \subseteq  \overline{\Omega}$  of $\partial \Omega$ in $\overline{\Omega}$ such that
\begin{equation}
\label{eq1.29}
\norm{1_{\Theta_\varepsilon}e^{-\psi/h}u}_{L^2(\Omega,\omega)} \leq 
\norm{1_{\Theta_\varepsilon}e^{-(\varphi - \varepsilon)/h}u}_{L^2(\Omega,\omega)}
\leq e^{\varepsilon/h}.
\end{equation}
Combining (\ref{eq1.27}), (\ref{eq1.29}), and Proposition \ref{propCH}, we obtain (\ref{eq1.26}). This completes the proof.
\end{proof}

\medskip
\noindent
{\it Remark}. Let $\psi$, $\tau$ be as in Proposition \ref{appl_sstate}. In view of (\ref{eq1.26}), the interesting case occurs when $\psi \leq \varphi$ in $\Omega$.

\bigskip
\noindent
We shall next review briefly H\"ormander's $L^2$ estimates and existence theorems for the $\overline{\partial}$ operator on the Riemann surface $M$, on the level of 0-forms. See also~\cite[Chapter 11]{Va11},~\cite{Ch15}. Let $\mu \in \Omega^{0,1}(M)$ be such that ${\rm supp}\, \mu$ is a compact subset of a local holomorphic coordinate chart $(U,z)$, and let us write $\mu = m(z)\, d\overline{z}$. It follows from (\ref{eq1.4.0.2}) that we have in $U$, 
\begeq
\label{eq1.29.1}
(P^* \mu)\,\omega = \left(-h \frac{\partial m}{\partial z} + \frac{\partial \varphi}{\partial z} m\right)\, \frac{d\overline{z}\wedge dz}{2i},
\endeq
and recalling also (\ref{eq:ext1}), it will be convenient to write in the local chart $(U,z)$, 
\begeq
\label{eq1.29.2}
\omega = e^{-2f}\, \frac{d\overline{z}\wedge dz}{2i}, \quad f = f_U \in C^{\infty}(U).
\endeq
Let also $\nu \in \Omega^{0,1}(M)$ and let us write $\nu = n(z)\, d\overline{z}$ locally in the chart $(U,z)$. We have therefore, in view of (\ref{eq:ext3a}), (\ref{eq1.29.1}), and (\ref{eq1.29.2}), 
\begin{multline}
\label{eq1.29.3} 
(P^* \mu, P^* \nu)_{L^2(M, \omega)} = \iint_U e^{2f} \left(-h \frac{\partial m}{\partial z} + \frac{\partial \varphi}{\partial z} m\right) 
\overline{\left(-h \frac{\partial n}{\partial z} + \frac{\partial \varphi}{\partial z} n\right) }\, \frac{d\overline{z}\wedge dz}{2i} \\
= \iint_U \left[\left(h \frac{\partial}{\partial \overline{z}} + \frac{\partial \varphi}{\partial \overline{z}}\right) \left(e^{2f} \left(-h \frac{\partial m}{\partial z} + \frac{\partial \varphi}{\partial z} m\right)\right)\right]\, \overline{n}\,\frac{d\overline{z}\wedge dz}{2i}. 
\end{multline}
Here 
\begin{multline}
\label{eq1.29.4} 
\left(h \frac{\partial}{\partial \overline{z}} + \frac{\partial \varphi}{\partial \overline{z}}\right) \circ e^{2f} \circ \left(-h \frac{\partial }{\partial z} + \frac{\partial \varphi}{\partial z}\right) \\ 
= e^{f} \circ \left(e^{-f} \circ \left(h \frac{\partial}{\partial \overline{z}} + \frac{\partial \varphi}{\partial \overline{z}}\right) \circ e^{f}\right) \left(e^{f}\circ \left(-h \frac{\partial }{\partial z} + \frac{\partial \varphi}{\partial z}\right)\circ e^{-f}\right) \circ e^{f} \\
= e^{f} \circ \left(h \frac{\partial}{\partial \overline{z}} + \frac{\partial \varphi}{\partial \overline{z}} + h\frac{\partial f}{\partial \overline{z}}\right) \left(-h \frac{\partial }{\partial z} + \frac{\partial \varphi}{\partial z} + h\frac{\partial f}{\partial z}\right)\circ e^{f},
\end{multline}
and we get using (\ref{eq1.29.3}), (\ref{eq1.29.4}),
\begin{multline}
\label{eq1.29.5}
(P^* \mu, P^* \nu)_{L^2(M, \omega)} \\
= \iint_U \left[\left(h \frac{\partial}{\partial \overline{z}} + \frac{\partial \varphi}{\partial \overline{z}} + h \frac{\partial f}{\partial \overline{z}}\right) \left(-h \frac{\partial }{\partial z} + \frac{\partial \varphi}{\partial z} + h \frac{\partial f}{\partial z}\right) (e^{f}m)\right]\, (e^{f} \overline{n})\, \frac{d\overline{z}\wedge dz}{2i}.
\end{multline}

\medskip
\noindent
We shall next make the following general observation. Let $\beta \in \Omega^{0,1}(M)$ and let us write 
$\beta = u(z)\, d\overline{z}$, locally in the chart $(U,z)$. Proceeding similarly to~\cite[Chapter 11]{Va11}, we claim that the expression 
\begeq
\label{eq1.29.5.1}
\overline{\nabla} \beta := \left(h\frac{\partial u}{\partial \overline{z}} + \frac{\partial \varphi}{\partial \overline{z}} u + 2h\frac{\partial f}{\partial \overline{z}} u\right)\, d\overline{z} \otimes d\overline{z}
\endeq
defines a global section of the complex line bundle $T^*_{0,1}M \otimes T^*_{0,1}M$. Indeed, in the intersection of two local holomorphic coordinate charts $(V,w)$ and $(U,z)$, we have $w = \chi(z)$, where $\chi$ is holomorphic and writing $\beta = u(z)\, d\overline{z} = v(w)\, d\overline{w}$, we get 
\begeq
\label{eq1.29.5.2}
u(z) = v(w) \overline{\chi'(z)}.
\endeq 
It follows from (\ref{eq1.29.5.2}) that 
\begeq
\label{eq1.29.5.3}
\frac{\partial u}{\partial \overline{z}}(z) = \frac{\partial v}{\partial \overline{w}}(w) (\overline{\chi'(z)})^2 + v(w) \overline{\chi''(z)},
\endeq
and we also have 
\begeq
\label{eq1.29.5.4}
\frac{\partial \varphi}{\partial \overline{z}}(z) = \frac{\partial \varphi}{\partial \overline{w}}(w) \overline{\chi'(z)}. 
\endeq
Finally, expressing the area form $\omega$ in (\ref{eq1.29.2}) in the $w$--coordinates, we get 
\begeq
\label{eq1.29.5.4.1}
2 f_U(z) = 2f_V(w) - \log \abs{\chi'(z)}^2, 
\endeq
implying that 
\begeq
\label{eq1.29.5.5}
2 \frac{\partial f_U}{\partial \overline{z}}(z) = 2 \frac{\partial f_V}{\partial \overline{w}}(w) \overline{\chi'(z)} - \frac{\chi'(z)}{\abs{\chi'(z)}^2} \overline{\chi''(z)}.
\endeq
We get therefore, combining (\ref{eq1.29.5.2}), (\ref{eq1.29.5.3}), (\ref{eq1.29.5.4}), and (\ref{eq1.29.5.5}), in the intersection of the local holomorphic coordinate charts $(V,w)$ and $(U,z)$, 
\[
\left(h\frac{\partial u}{\partial \overline{z}} + \frac{\partial \varphi}{\partial \overline{z}} u + 2h\frac{\partial f_U}{\partial \overline{z}} u\right)\, d\overline{z} \otimes d\overline{z} = \left(h\frac{\partial v}{\partial \overline{w}} + \frac{\partial \varphi}{\partial \overline{w}} v + 2h\frac{\partial f_V}{\partial \overline{w}} v\right)\, d\overline{w} \otimes d\overline{w}.
\]
This establishes the claim and a partition of unity argument shows that the local expression (\ref{eq1.29.5.1}) gives rise to a globally well defined linear map 
\begeq
\label{eq1.29.5.6}
\overline{\nabla}: \Omega^{0,1}(M) \rightarrow C^{\infty}(M; T^*_{0,1}M \otimes T^*_{0,1}M).
\endeq 
The fibers of the complex line bundle $T^*_{0,1}M \otimes T^*_{0,1}M$ carry a natural Hermitian scalar product induced by the Hermitian form in (\ref{eq_ext03}), which in a local holomorphic coordinate chart $(U,z)$ is given by 
\begeq
\label{eq1.29.5.6.1}
(u_1 d\overline{z} \otimes d\overline{z}, u_2 d\overline{z} \otimes d\overline{z})_z = e^{4f(z)} u_1(z) \overline{u_2(z)}.
\endeq
Here the locally defined function $f\in C^{\infty}(U)$ has been introduced in (\ref{eq1.29.2}). Using (\ref{eq1.29.5.1}), (\ref{eq1.29.5.6.1}) we see that the pointwise scalar product of the sections $\overline{\nabla} \mu$ and $\overline{\nabla} \nu$ is a well defined function on $M$ given by 
\begeq 
\label{eq1.29.5.7}
(\overline{\nabla} \mu, \overline{\nabla} \nu)_z  
= e^{4f(z)} \left(h\frac{\partial m}{\partial \overline{z}} + \frac{\partial \varphi}{\partial \overline{z}} m + 2h\frac{\partial f}{\partial \overline{z}} m\right) \overline{\left(h\frac{\partial n}{\partial \overline{z}} + \frac{\partial \varphi}{\partial \overline{z}} n + 2h\frac{\partial f}{\partial \overline{z}} n\right)}. 
\endeq
Here we recall that the support of $\mu \in \Omega^{0,1}(M)$ is a compact subset of a local holomorphic coordinate chart $(U,z)$. Writing 
\[ 
h\frac{\partial}{\partial \overline{z}} + \frac{\partial \varphi}{\partial \overline{z}} + 2h\frac{\partial f}{\partial \overline{z}} = \
e^{-f}\circ \left(h\frac{\partial}{\partial \overline{z}} + \frac{\partial \varphi}{\partial \overline{z}} + h\frac{\partial f}{\partial \overline{z}}\right)\circ e^{f},
\] 
we get using (\ref{eq1.29.5.7}), 
\begeq
\label{eq1.29.5.8}
(\overline{\nabla} \mu, \overline{\nabla} \nu)_z  
= e^{2f(z)} \left(h\frac{\partial }{\partial \overline{z}} + \frac{\partial \varphi}{\partial \overline{z}} + h\frac{\partial f}{\partial \overline{z}} \right)(e^f m) \overline{\left(h\frac{\partial}{\partial \overline{z}} + \frac{\partial \varphi}{\partial \overline{z}} + h\frac{\partial f}{\partial \overline{z}}\right)(e^f n)}, 
\endeq
and it follows from (\ref{eq1.29.2}), (\ref{eq1.29.5.8}) that 
\begin{multline}
\label{eq1.29.5.9}
(\overline{\nabla} \mu, \overline{\nabla} \nu)_{L^2(M,\omega)} := \int_M (\overline{\nabla} \mu, \overline{\nabla} \nu)_z \,\omega \\ 
= \iint_U \left(h\frac{\partial }{\partial \overline{z}} + \frac{\partial \varphi}{\partial \overline{z}} + h\frac{\partial f}{\partial \overline{z}} \right)(e^f m) \overline{\left(h\frac{\partial}{\partial \overline{z}} + \frac{\partial \varphi}{\partial \overline{z}} + h\frac{\partial f}{\partial \overline{z}}\right)(e^f n)}\, \frac{d\overline{z}\wedge dz}{2i} \\
= \iint_U \left[\left(-h\frac{\partial}{\partial {z}} + \frac{\partial \varphi}{\partial {z}} + h\frac{\partial f}{\partial {z}}\right)
\left(h\frac{\partial }{\partial \overline{z}} + \frac{\partial \varphi}{\partial \overline{z}} + h\frac{\partial f}{\partial \overline{z}} \right)(e^f m)\right] (e^f \overline{n})\, \frac{d\overline{z}\wedge dz}{2i}. 
\end{multline} 
Subtracting (\ref{eq1.29.5.9}) from (\ref{eq1.29.5}) gives that 
\begin{multline}
\label{eq1.29.6}
(P^* \mu, P^* \nu)_{L^2(M, \omega)} - (\overline{\nabla} \mu, \overline{\nabla} \nu)_{L^2(M,\omega)} \\
= \iint_U \left[\left(h \frac{\partial}{\partial \overline{z}} + \frac{\partial \varphi}{\partial \overline{z}} + h \frac{\partial f}{\partial \overline{z}}\right) \left(-h \frac{\partial }{\partial z} + \frac{\partial \varphi}{\partial z} + h \frac{\partial f}{\partial z}\right) (e^{f}m)\right]\, (e^{f} \overline{n})\, \frac{d\overline{z}\wedge dz}{2i} \\
-  \iint_U \left[\left(-h \frac{\partial }{\partial z} + \frac{\partial \varphi}{\partial z} + h \frac{\partial f}{\partial z}\right) 
\left(h \frac{\partial}{\partial \overline{z}} + \frac{\partial \varphi}{\partial \overline{z}} + h \frac{\partial f}{\partial \overline{z}}\right) (e^{f}m)\right]\, (e^{f} \overline{n})\, \frac{d\overline{z}\wedge dz}{2i} \\
= \iint_U \left(\left[h \frac{\partial}{\partial \overline{z}} + \frac{\partial \varphi}{\partial \overline{z}} + h \frac{\partial f}{\partial \overline{z}}, -h \frac{\partial }{\partial z} + \frac{\partial \varphi}{\partial z} + h \frac{\partial f}{\partial z}\right] e^{f} m\right)\, (e^{f} \overline{n})\, \frac{d\overline{z}\wedge dz}{2i} \\
= 2h \iint_U \left(\partial_z \partial_{\overline{z}}\, \varphi + h \partial_z \partial_{\overline{z}}\, f\right) e^{2f} m\overline{n}\,  \frac{d\overline{z}\wedge dz}{2i}. 
\end{multline} 
Here, similarly to (\ref{eq:ext11.3}), 
\[
2 \partial_z \partial_{\overline{z}}\, \varphi\, \frac{d\overline{z}\wedge dz}{2i} = i \partial \overline{\partial} \varphi
\]
is a globally defined $(1,1)$ form, and we remark that the $(1,1)$ form $i \partial \overline{\partial} f$ is globally defined as well, since in the intersection of the local holomorphic coordinate charts $V$ and $U$ on $M$, the difference $f_U - f_V$ is harmonic, see (\ref{eq1.29.5.4.1}). It follows therefore from (\ref{eq1.29.6}) that
\begin{multline}
\label{eq1.29.7}
(P^* \mu, P^* \nu)_{L^2(M, \omega)} - (\overline{\nabla} \mu, \overline{\nabla} \nu)_{L^2(M,\omega)} \\
= h \iint_U e^{2f} m \overline{n} \left(i \partial \overline{\partial} \varphi + ih \partial \overline{\partial} f\right) 
= \frac{h}{2} \iint_U \Delta(\varphi + hf) e^{2f} m \overline{n}\, \omega \\ 
= \frac{h}{2} \iint_U \Delta(\varphi + hf) m \overline{n}\, \frac{d\overline{z}\wedge dz}{2i} = \frac{h}{2} \int_M \Delta(\varphi + hf) \frac{\mu \wedge \overline{\nu}}{2i}.
\end{multline} 
Here we have also used (\ref{eq_Laplace}). By a partition of unity argument, we can now eliminate the support condition on $\mu \in \Omega^{0,1}(M)$, and conclude that 
\begeq
\label{eq1.29.7.1}
(P^* \mu, P^* \nu)_{L^2(M, \omega)} - (\overline{\nabla} \mu, \overline{\nabla} \nu)_{L^2(M,\omega)} = \frac{h}{2} \int_M \Delta(\varphi + hf) \frac{\mu \wedge \overline{\nu}}{2i},
\endeq
for all $\mu, \nu \in \Omega^{0,1}(M)$. See also~\cite[Theorem 11.2.1]{Va11}.

\medskip
\noindent
{\it Remark.} Assume that $M = \mathbb C /(2\pi \mathbb Z + 2\pi i \mathbb Z)$ is the standard $2$-torus equipped with the usual complex structure and the Euclidean metric. As observed previously, the operators $P$ and $P^*$ in (\ref{eq1.3}) and (\ref{eq:ext8.0}) can then be viewed as scalar operators acting on $C^{\infty}(M)$, and (\ref{eq1.29.7.1}) becomes the following commutator identity, valid for all $u,v \in C^{\infty}(M)$, 
\[
(P^*u, P^*v)_{L^2(M)} - (P u, Pv)_{L^2(M)} = ([P,P^*]u,v)_{L^2(M)} = \frac{h}{2} (\Delta \varphi\, u,v)_{L^2(M)}.
\]

\medskip
\noindent 
Let $W \subseteq M$ be open such that $\varphi$ is strictly subharmonic in a neighborhood of $\overline{W}$, so that $\Delta \varphi \geq \alpha$ near $\overline{W}$, for some $\alpha > 0$. In what follows, we shall write $\mu \in \mathscr D_{0,1}(W)$ if $\mu \in \Omega^{0,1}(M)$ is such that ${\rm supp}\, \mu$ is a compact subset of $W$. It follows from (\ref{eq1.29.7.1}) that for all $h>0$ small enough and all $\mu \in \mathscr D_{0,1}(W)$, we have 
\begeq
\label{eq1.29.13}
h^{1/2} \|\mu\|_{L^2(W,T^*_{0,1}W)} \leq \mathcal O(1) \|P^* \mu\|_{L^2(M,\omega)}.
\endeq 
Combining (\ref{eq1.29.13}) with classical arguments involving the Hahn-Banach theorem and the Riesz representation theorem, we obtain the following solvability result for $\overline{\partial}$, see also~\cite[Theorem 11.3.1]{Va11}. 

\begin{prop}
\label{exist_dbar}
Let $M$ be a compact Riemann surface equipped with a conformal Riemannian metric $g$ and let $\varphi \in C^{\infty}(M;\mathbb R)$ be non-constant. Let $W\subseteq M$ be an open set such that $\varphi$ is strictly subharmonic in a neighborhood of $\overline{W}$. There exists $h_0 > 0$ such that for all $h \in (0,h_0]$ and all $\beta \in L^2(W, T^*_{0,1} W)$, there exists a solution $u\in L^2(W,\omega)$ of the equation 
\[
e^{-\varphi/h} \circ h\overline{\partial} \circ e^{\varphi/h} u = \beta \quad \wrtext{in}\quad W,
\] 
which satisfies 
\[
\|u\|_{L^2(W,\omega)} \leq \frac{\mathcal O(1)}{h^{1/2}} \|\beta \|_{L^2(W, T^*_{0,1}W)}.
\]
Here $\omega$ is the area (K\"ahler) form associated to $g$. 
\end{prop}

\bigskip
\noindent
We shall finish this section by making some standard computations and remarks. We let $\sigma$ stand for the canonical complex symplectic $(2,0)$-form on $T^*_{1,0}M$. In the natural local holomorphic coordinates $(z,\zeta)$ on $T^*_{1,0}M$ we can write $\sigma$ as 
\begin{equation}
\label{eq1.30.0}
\sigma = d\zeta\wedge dz = d(\zeta\, dz). 
\end{equation}
We have a fiberwise bijection between $T^*_{1,0}M$ and $T^*M$, the cotangent bundle of $M$ in the sense of real smooth manifolds, 
where associated to a $(1,0)$-form $\zeta\, dz$ is the 1-form ${\rm Re}\, ({\zeta}\, dz)$. Writing $z = x + iy$, $\zeta = \xi - i \eta$, we get
\[
{\rm Re}\, ({\zeta}\, dz) = \xi\, dx + \eta\, dy,
\]
so in local coordinates we have a bijection between $(z,\zeta) \in T^*_{1,0}M$ and $(x,y;\xi,\eta) \in T^*M$. Notice that this identification is different from the one used in (\ref{eq1.7}), (\ref{eq1.9}). 

\medskip
\noindent
Associated to the weight $\varphi \in C^{\infty}(M;\mathbb R)$ in (\ref{eq1.3}) and the exponentially weighted space $L^2_{\varphi}(M,\omega) = 
e^{\varphi/h} L^2(M,\omega)$ is the I-Lagrangian manifold, i.e. a Lagrangian manifold for ${\rm Im}\, \sigma$, 
\begin{equation}
\label{eq1.30}
\Lambda_{\varphi} 
= \left\{\left(z,\frac{2}{i}(\partial \varphi)_z \right); \, z\in M, ~(\partial \varphi)_z \in T^*_{z,1,0}M\right\} \subseteq T^*_{1,0} M, 
\end{equation}
and the restriction of $\sigma$ to $\Lambda_{\varphi}$ is given by
\begin{equation}
\label{eq1.31}
\sigma|_{\Lambda_{\varphi}} = 
d\left(\frac{2}{i}\partial \varphi\right) = \frac{2}{i} \overline{\partial} \partial \varphi = 2i \partial \overline{\partial} \varphi = (\Delta \varphi)\,\omega. 
\end{equation}
Here we have also used (\ref{eq_Laplace}). It follows that the open subsets
\begin{equation}
\label{eq2.3}
\left\{\left(z,\frac{2}{i}(\partial \varphi)_z \right); \, z\in M_\pm, 
  ~(\partial \varphi)_z \in T^*_{z,1,0}M_\pm\right\}\subseteq \Lambda_{\varphi},
\end{equation}
where the open sets $M_{\pm} \subseteq M$ are defined by
\begin{equation}\label{eq2.4}
M_{\pm} = \left\{z\in M;\, \pm \Delta \varphi(z) > 0\right\},
\end{equation}
carry the natural symplectic volume forms given by
\begin{equation}
\label{eq2.5}
\pm \sigma|_{\Lambda_{\varphi}} \simeq \pm (\Delta \varphi)\,\omega|_{M_\pm}.
\end{equation}
By Stokes' theorem we have
$$
\int_M \Delta \varphi\, \omega = 0,
$$
and therefore, identifying the sets in (\ref{eq2.3}) with $M_{\pm}$ via the natural projection map $\pi_z: \Lambda_{\varphi} \rightarrow M$, we get that
\begin{equation}
\label{eq2.6}
{\rm Vol}(M_+) = \int_{M_+} \Delta \varphi\, \omega = - \int_{M_-} \Delta \varphi\, \omega= {\rm Vol}(M_-).
\end{equation}

\section{Bounds on the number of exponentially small singular values}
\label{sec:bounds}

\medskip
\noindent
We shall start this section by making some general remarks concerning the notions of upper bound and lower bound weights, as introduced in Definition \ref{def_ubw_n} and Definition \ref{def_lbw_n}. First, let $\psi_{\alpha}$, $\alpha \in J$, be a finite family of upper bound weights. Then the infimum of $\psi_{\alpha}$ is an upper bound weight and we have
\begin{equation}
\label{eq3.00}
M_+\left(\inf_{\alpha \in J} \psi_{\alpha}\right) = \bigcap_{\alpha \in J} M_+(\psi_{\alpha}).
\end{equation}
Analogously, let $\psi_{\alpha}$, $\alpha \in J$, be a finite family of lower bound weights. Then the supremum of $\psi_{\alpha}$ is a lower bound weight and we have
\begin{equation}
\label{eq3.02}
M_+\left(\sup_{\alpha \in J} \psi_{\alpha}\right) = \bigcup_{\alpha \in J} M_+(\psi_{\alpha}).
\end{equation}
Here the notion of the contact set $M_+(\psi_{\alpha})$ has been introduced in (\ref{eq2.7_b}).

\subsection{Upper bounds}
Let $0 < \tau < {\rm max}\, \varphi - {\rm min}\, \varphi$ be fixed and let $\psi \in C(M;\mathbb R)$ be an upper bound weight in the sense
of Definition \ref{def_ubw_n}. The purpose of this subsection is to prove Theorem \ref{thm:UpperBd}. More precisely, we wish to establish
an upper bound on the number of singular values of the operator
\[
h\overline{\partial}: L^2_{\varphi}(M,\omega) \rightarrow L^2_{\varphi}(M, T^*_{0,1}M)
\] 
on the interval $[0,e^{-\tau/h}]$, in terms of the weight $\psi$. When doing so, we shall assume that the contact set $M_+(\psi)$ defined in (\ref{eq2.7_b}) satisfies
\begin{equation}
\label{eq3.1.0.0.1}
M_+(\psi) \subseteq M_+ = \{x\in M; \Delta \varphi(x) > 0\}.
\end{equation}
Let us also introduce the open set
\begin{equation}
\label{eq3.1}
\Omega = M_+(\psi)^{\complement} = \{x\in M;\, \psi(x) < \varphi(x)\},
\end{equation}
where $\psi$ is superharmonic. Following Proposition \ref{appl_sstate}, we shall assume that ${\rm ext}(\Omega) = {\rm int}(M_+(\psi)) \neq \emptyset$.

\medskip
\noindent
Let $0\leq t\leq \e^{-\tau/h}$ be a singular value of $h\overline{\partial}$ and let $u\in L^2_{\varphi}(M,\omega)$ be a corresponding normalized singular state as in \eqref{eq1.12}, \eqref{eq1.13}. We have (\ref{eq1.25}), and Proposition \ref{appl_sstate} shows therefore that
\[
\norm{e^{-\psi/h} u}_{L^2(\Omega,\omega)} = \mathcal{O}_\varepsilon(1)\, \e^{\varepsilon/h},
\]
for every $\varepsilon>0$. It follows that for each compact set $K \subseteq \Omega$ there exists $\delta > 0$ such that
\begin{equation}
\label{eq3.1.0.1}
\norm{e^{-\varphi/h} u}_{L^2(K,\omega)} = \mathcal{O}(1)\, \e^{-\delta/h}.
\end{equation}
In particular, (\ref{eq3.1.0.1}) holds when $K = M \setminus V$, where $V \subseteq M$ is an arbitrary open neighborhood of $M_+(\psi)$.

\bigskip
\noindent
Next, let $W \subseteq M$ be open such that $\varphi$ is strictly subharmonic in a neighborhood of $\overline{W}$. We have therefore 
\begeq
\label{eq3.1.0.2}
\Delta \varphi \geq \alpha \quad \wrtext{on} \quad \overline{W},
\endeq
for some $\alpha >0$. Let us set 
\begin{equation}
\label{eq3.1.0.3} 
H_{\varphi}(W) = {\rm Hol}(W) \cap L^2_{\varphi}(W,\omega), 
\end{equation}
and observe that for each $K \subseteq W$ compact there is a constant $C = C(K,\varphi,\omega,h) > 0$ such that 
\[
\|f\|_{L^{\infty}(K)} \leq C \|f\|_{L^2_{\varphi}(W,\omega)},\quad f \in H_{\varphi}(W).
\]
It follows that the subspace $H_{\varphi}(W) \subseteq L^2_{\varphi}(W,\omega)$ is closed and we may introduce the corresponding orthogonal (Bergman) projection 
\begeq
\label{eq3.1.0.4}
\Pi: L^2_{\varphi}(W,\omega) \rightarrow H_{\varphi}(W).
\endeq
An application of the Schwartz kernel theorem shows that the operator $\Pi$ is of the form 
\begeq
\label{eq3.1.0.5} 
\Pi u(x) = \int_W K(x,y) u(y) e^{-2\varphi(y)/h}\, \omega(y,dy\,d\overline{y}),
\endeq
where $K \in {\rm Hol}(W \times W^{\dagger})$ is such that $W\ni y \mapsto \overline{K(x,y)} \in H_{\varphi}(W)$ for each $x\in W$, see~\cite[Section 5]{RSV}, ~\cite[Appendix A]{CoHiSj}. Here for a complex manifold $Y$, the notation $Y^{\dagger}$ stands for the manifold with the complex conjugate structure. Following~\cite{BBSj},~\cite{RSV},~\cite{HiSt}, we shall now review some basic facts concerning the asymptotic behavior of the Bergman kernel $K(x,y)$ on $W$, in the semiclassical limit $h \rightarrow 0^+$. These results will play a crucial role in the proof of Theorem \ref{thm:UpperBd}. 

\medskip
\noindent
Let $p \in W$ and let $(U,x)$ be a local holomorphic coordinate chart around $p$ such that $U \Subset W$ and $x(p) = 0$. When working locally near $p$, we shall identify the coordinate chart with the corresponding open neighborhood of $0$ in $\mathbb C_x$. When $\Omega \subseteq \mathbb C$ is open, we let $\rho(\Omega) = \{x\in \mathbb C;\, \overline{x} \in \Omega\}$, and in what follows, we shall say that a function $f  = f(x,\theta) \in  C^{\infty}(\Omega \times \rho(\Omega))$ is almost holomorphic along the anti-diagonal $\theta = \overline{x}$ if we have for all $N$,
\[
\partial_{\overline{x}} f(x,\theta) = \mathcal O_N(\abs{\theta - \overline{x}}^N), \quad \partial_{\overline{\theta}} f(x,\theta) = \mathcal O_N(\abs{\theta - \overline{x}}^N),
\]
uniformly on compact subsets of $\Omega \times \rho(\Omega)$. We then have the following well known result going back to~\cite{Catlin} and~\cite{Ze98}, see also~\cite{BBSj}, ~\cite{HiSt}. 

\begin{prop}
\label{Bergman_kernel}
There exist open neighborhoods $\widetilde{U} \Subset V \Subset U$ of $0$ such that 
\begeq
\label{eq3.1.0.6}
e^{-\varphi(x)/h}\left(K(x,y) - \frac{1}{h} e^{\frac{2}{h}\Psi(x,\overline{y})} a(x,\overline{y};h)\right)e^{-\varphi(y)/h}  = \mathcal O(h^\infty),
\endeq
uniformly for $x, y\in \widetilde{U}$. Here $\Psi \in  C^{\infty}(V \times \rho(V))$ is almost holomorphic along the anti-diagonal $\theta = \overline{x}$, with $\Psi(x,\overline{x}) = \varphi(x)$, $x\in V$, and 
\begeq
\label{eq3.1.0.7} 
a(x,\theta;h) \sim \sum_{j=0}^{\infty} h^j a_j(x,\theta), \quad h \rightarrow 0^+, 
\end{equation}
in $C^{\infty}(V\times \rho(V))$. Each function $a_j \in C^{\infty}(V\times \rho(V))$ is almost holomorphic along the anti-diagonal $\theta = \overline{x}$, and we have 
\begeq
\label{eq3.1.0.8} 
a_0(x,\overline{x}) = \frac{1}{2\pi} \Delta_g \varphi(x), \quad x\in V.
\endeq 
\end{prop} 
\begin{proof}
In the case of Bergman projections associated to high powers of a positive complex line bundle over a compact complex manifold, this result has been established in~\cite{BBSj}. The present setting of exponentially weighted spaces of holomorphic functions on an open Riemann surface is quite analogous, and it will therefore be sufficient to recall the main steps of the argument of~\cite{BBSj}, indicating the minor modifications required. Our starting point is~\cite[Proposition 2.4, Lemma 2.6]{BBSj}, which gives that for all $u\in H_{\varphi}(U)$, we have 
\begin{multline}
\label{eq3.1.0.9}
u(x) = \frac{1}{2\pi h} \int\!\!\!\int_{\Gamma_{V_2}} e^{\frac{2}{h}(\Psi(x,\theta) - \Psi(y,\theta))} \chi(y) m(x,y,\theta) u(y)\, d\theta \wedge dy \\ 
+ \mathcal O(h^{\infty}) e^{\varphi(x)/h} \|u\|_{H_{\varphi}(U)}, \quad x\in V_1.
\end{multline} 
Here $V_1 \Subset V_2 \Subset U$ are small open (convex) neighborhoods of $0$, $\chi\in C^{\infty}_0(V_2; [0,1])$ is such that $\chi =1$ in a neighborhood of $\overline{V_1}$, the contour $\Gamma_{V_2}$ is given by $\theta = \overline{y}$, $y\in  V_2$, and $\Psi \in  C^{\infty}(V_ 2\times \rho(V_ 2))$ is almost holomorphic along $\theta = \overline{x}$, with $\Psi(x,\overline{x}) = \varphi(x)$, $x\in V_2$. The amplitude $m(x,y,\theta)$ is $h$--independent and is given by 
\begeq
\label{eq3.1.0.10}
m(x,y,\theta) = \frac{2}{i}\int_0^1 \Psi''_{x\theta} (tx + (1-t)y,\theta)\, dt. 
\endeq
Recalling the expression (\ref{eq1.29.2}) for the area form $\omega$ in $U$, let us rewrite (\ref{eq3.1.0.9}) as follows, 
\begin{multline} 
\label{eq3.1.0.11}
u(x) = \frac{1}{2\pi h} \int\!\!\!\int_{\Gamma_{V_2}} e^{\frac{2}{h}(\Psi(x,\theta) - \Psi(y,\theta) - hF(y,\theta))} \chi(y) c(x,y,\theta;h) u(y)\, d\theta \wedge dy \\ 
+ \mathcal O(h^{\infty}) e^{\varphi(x)/h} \|u\|_{H_{\varphi}(U)}, \quad x\in V_1.
\end{multline} 
Here $F = F(y,\theta)$ is almost holomorphic along $\theta = \overline{y}$ with $F(y,\overline{y}) = f(y)$, and 
\begeq
\label{eq3.1.0.12} 
c(x,y,\theta;h) = m(x,y,\theta) e^{2F(y,\theta)}.
\endeq
Proceeding as in~\cite{BBSj}, we are therefore led to the general problem of eliminating the $y$-dependence in the amplitude of an operator of the form 
\begin{equation}
\label{eq3.1.0.13}
Au(x) = \frac{1}{2 \pi h} \int\!\!\!\int_{\Gamma_{V_2}} e^{\frac{2}{h}(\Psi(x,\theta) - \Psi(y,\theta) - hF(y,\theta))} \chi(y) c(x,y,\theta;h) u(y)\, d\theta \wedge dy, \quad u\in H_{\varphi}(U).
\end{equation}
Here the amplitude $c(x,y,\theta;h)$ is given by 
\[
c(x,y,\theta;h) \sim \sum_{j=0}^{\infty} h^j c_j(x,y,\theta),
\]
where $c_j \in C^{\infty}(V_2 \times V_2 \times \rho(V_2))$ are such that we have for each $N$,
\begin{equation}
\label{eq3.1.0.14}
\overline{\partial}c_j(x,y,\theta) = \mathcal O_{j,N}(\abs{x-\overline{\theta}}^N + \abs{y-\overline{\theta}}^N + \abs{x-y}^N).
\end{equation}
Here one of the three terms in the right hand side of (\ref{eq3.1.0.14}) can be dropped, by the triangle inequality. As in~\cite{BBSj}, we let $\mathcal A$ be the algebra of $C^{\infty}$ functions satisfying (\ref{eq3.1.0.14}), and let us also introduce the ideal $\mathcal I^{\infty}$ consisting of $C^{\infty}$ functions vanishing to infinite order along $x = y = \overline{\theta}$. We claim that there exists a classical symbol $b(x,y,\theta;h) \sim \sum_{j=0}^{\infty} h^j b_j(x,y,\theta)$, with $b_j \in \mathcal A$, $j\geq 0$, such that for some $\widetilde{c}(x,\theta;h)\sim \sum_{j=0}^{\infty} h^j \widetilde{c}_j(x,\theta)$, with $\widetilde{c}_j$ almost holomorphic along $\theta = \overline{x}$, we have
\begin{multline}
\label{eq3.1.0.15}
c(x,y,\theta;h) - \widetilde{c}(x,\theta;h) \\ 
= e^{-\frac{2}{h}(\Psi(x,\theta) - \Psi(y,\theta) - hF(y,\theta))} h\partial_{\theta} \left(e^{\frac{2}{h}(\Psi(x,\theta) - \Psi(y,\theta)-hF(y,\theta))} b(x,y,\theta;h)\right),
\end{multline}
modulo a symbol of the form $\sum_{j=0}^{\infty} h^j f_j$, with $f_j \in \mathcal I^{\infty}$, for all $j\geq 0$. Indeed, rewriting (\ref{eq3.1.0.15}) in the form
\begin{multline*}
c(x,y,\theta;h) - \widetilde{c}(x,\theta;h) \\ 
= 2\left(\Psi'_{\theta}(x,\theta) - \Psi'_{\theta}(y,\theta)\right)b(x,y,\theta;h) - 2h F'_{\theta}(y,\theta) b(x,y,\theta) + h\partial_{\theta}b(x,y,\theta;h),
\end{multline*} 
we get a sequence of division problems,
\begin{equation}
\label{eq3.1.0.16}
c_0(x,y,\theta) - \widetilde{c}_0(x,\theta) = 2\left(\Psi'_{\theta}(x,\theta) - \Psi'_{\theta}(y,\theta)\right)b_0(x,y,\theta), \quad {\rm mod}\,\,\mathcal I^{\infty},
\end{equation}
and for $j\geq 1$,
\begin{multline}
\label{eq3.1.0.17}
c_j(x,y,\theta) - \widetilde{c}_j(x,\theta) - (\partial_{\theta} - 2F'_{\theta}(y,\theta)) b_{j-1}(x,y,\theta)  \\ 
= 2\left(\Psi'_{\theta}(x,\theta) - \Psi'_{\theta}(y,\theta)\right)b_j(x,y,\theta), \quad {\rm mod}\,\,\mathcal I^{\infty}.
\end{multline}
Here in (\ref{eq3.1.0.16}) we take $\widetilde{c}_0(x,\theta) = c_0(x,x,\theta)$, and using that $\Psi''_{y\theta}(y,\overline{y}) = \varphi''_{y\overline{y}}(y) \neq 0$, $y\in V_2$, we see, as in~\cite{BBSj}, that there exists $b_0 \in {\mathcal A}$ such that (\ref{eq3.1.0.16}) holds. Similarly, we find $\widetilde{c}_j = \widetilde{c}_j(x,\theta)$ almost holomorphic along $\theta = \overline{x}$ and $b_j \in \mathcal A$, $j\geq 1$, such that (\ref{eq3.1.0.17}) holds. This establishes the claim in (\ref{eq3.1.0.15}). 

\medskip
\noindent
We next recall from~\cite[Section 2]{BBSj},~\cite[Proposition 2.1]{HiSt} that the classical estimate 
\begin{equation}
\label{eq3.1.0.18}
\varphi(x) + \varphi(y) - 2{\rm Re}\, \Psi(x,\overline{y}) \asymp \abs{x-y}^2, \quad x,y \in V_2, 
\end{equation}
holds, in view of (\ref{eq3.1.0.2}). It follows from (\ref{eq3.1.0.18}) that the contribution of $f = f(x,y,\theta;h) \sim \sum_{j=0}^{\infty} h^j f_j(x,y,\theta)$,  with $f_j \in \mathcal I^{\infty}$, to the amplitude of the operator $A$ in (\ref{eq3.1.0.13}) gives rise to an error term of the form 
\begeq
\label{eq3.1.0.18.1}
\mathcal O(h^{\infty}) e^{\varphi(x)/h} \|u\|_{H_{\varphi}(U)}, \quad x\in V_2.
\endeq
Using (\ref{eq3.1.0.13}) and (\ref{eq3.1.0.15}), we get therefore by Stokes' formula for all $N$, modulo an error term (\ref{eq3.1.0.18.1}), 
\begin{multline}
\label{eq3.1.0.19}
2\pi h Au(x) - \int\!\!\!\int_{\Gamma_{V_2}} e^{\frac{2}{h}(\Psi(x,\theta) - \Psi(y,\theta) - hF(y,\theta))} \chi(y) \widetilde{c}(x,\theta;h) u(y)\, d\theta \wedge dy \\ 
= \int\!\!\!\int_{\Gamma_{V_2}} e^{\frac{2}{h}(\Psi(x,\theta) - \Psi(y,\theta)-hF(y,\theta))} \chi(y) (c(x,y,\theta;h) - \widetilde{c}(x,\theta;h))u(y) d\theta \wedge dy \\
= \int\!\!\!\int_{\Gamma_{V_2}} \chi(y) u(y) hd\left(e^{\frac{2}{h}(\Psi(x,\theta) - \Psi(y,\theta) - hF(y,\theta))} b(x,y,\theta)\right)\wedge dy \\
+ \int\!\!\!\int_{\Gamma_{V_2}} \chi(y) u(y) e^{\frac{2}{h}(\Psi(x,\theta) - \Psi(y,\theta) - hF(y,\theta))} \mathcal O_N(\abs{x-\overline{\theta}}^N)\, d\theta \wedge dy \\
= \int\!\!\!\int_{\Gamma_{V_2}} \chi(y) hd\left( u(y)e^{\frac{2}{h}(\Psi(x,\theta) - \Psi(y,\theta) - hF(y,\theta))} b(x,y,\theta)\, dy\right) \\
+ \int\!\!\!\int_{\Gamma_{V_2}} \chi(y) u(y) e^{\frac{2}{h}(\Psi(x,\theta) - \Psi(y,\theta) - hF(y,\theta))} \mathcal O_N(\abs{x-\overline{\theta}}^N)\, d\theta \wedge dy \\
= -\int\!\!\!\int_{\Gamma_{V_2}} u(y)e^{\frac{2}{h}(\Psi(x,\theta) - \Psi(y,\theta) - hF(y,\theta))} b(x,y,\theta) hd\chi \wedge dy \\ 
+ \int\!\!\!\int_{\Gamma_{V_2}} e^{\frac{2}{h}(\Psi(x,\theta) - \Psi(y,\theta))} \mathcal O_N(\abs{x-\overline{\theta}}^N) u(y)\, d\theta \wedge dy \\
= \mathcal O(h^{\infty})\, e^{\varphi(x)/h} \|u\|_{H_{\varphi}(U)}, \quad x\in V_1.
\end{multline}
Here we have used that $u$ is holomorphic and the last equality follows by an application of (\ref{eq3.1.0.18}), using also that $d\chi$ vanishes in a neighborhood of $\overline{V_1}$. 

\medskip
\noindent 
We get therefore for $x\in V_1$, combining (\ref{eq3.1.0.11}) and (\ref{eq3.1.0.19}), 
\begin{multline} 
\label{eq3.1.0.20}
u(x) = \frac{1}{2\pi h} \int\!\!\!\int_{\Gamma_{V_2}} e^{\frac{2}{h}(\Psi(x,\theta) - \Psi(y,\theta) - hF(y,\theta))} \chi(y) \widetilde{c}(x,\theta;h) u(y)\, d\theta \wedge dy \\ 
+ \mathcal O(h^{\infty}) e^{\varphi(x)/h} \|u\|_{H_{\varphi}(U)} \\
= \frac{1}{h} \int_{V_2} e^{\frac{2}{h} \Psi(x,\overline{y})} \chi(y) a(x,\overline{y};h) u(y) e^{-\frac{2 \varphi(y)}{h}} \,
\omega(y, dy d\overline{y}) 
+ \mathcal O(h^{\infty})\, e^{\varphi(x)/h} \|u\|_{H_{\varphi}(U)}.
\end{multline}
Here $a(x,\theta;h) = (1/2\pi) 2i\, \widetilde{c}(x,\theta;h)$ and we have also used (\ref{eq1.29.2}). It follows from (\ref{eq3.1.0.10}) and (\ref{eq3.1.0.12}) that 
\begin{multline} 
\label{eq3.1.0.21} 
a(x,\theta;h) = \frac{1}{2\pi} 2i\, \widetilde{c}(x,\theta;h) = \frac{i}{\pi} \widetilde{c}_0(x,\theta) + \mathcal O(h) 
= \frac{i}{\pi} {c}_0(x,x,\theta) + \mathcal O(h) \\ 
= \frac{i}{\pi} m(x,x,\theta) e^{2F(x,\theta)} + \mathcal O(h) = \frac{i}{\pi} \frac{2}{i} \Psi''_{x\theta}(x,\theta) e^{2F(x,\theta)} + \mathcal  O(h).
\end{multline}
Taking the restriction to the anti-diagonal $\theta = \overline{x}$ we get therefore 
\[
a_0(x,\overline{x}) = \frac{2}{\pi} \varphi''_{x\overline{x}} (x) e^{2f(x)} = \frac{1}{2\pi}\, 4\varphi''_{x\overline{x}} (x) e^{2f(x)} = \frac{1}{2\pi} \Delta_g \varphi(x).
\]
Here the last equality follows in view of (\ref{eq_metric0}), (\ref{eq_metric}), and (\ref{eq1.29.2}). 

\medskip
\noindent
Using (\ref{eq3.1.0.20}) and following an essentially well known argument presented in detail in~\cite[Section 3]{BBSj},~\cite[Section 5]{HiSt} as it stands, we obtain (\ref{eq3.1.0.6}), uniformly in a small neighborhood $\widetilde{U} \Subset V_1$ of $0$. Indeed, the argument of~\cite[Section 3]{BBSj},~\cite[Section 5]{HiSt} consists of applying the approximate reproducing property (\ref{eq3.1.0.20}),
\begin{multline*}
u(y) = \frac{1}{h} \int_{V_2} e^{\frac{2}{h} \Psi(y,\overline{z})} \chi(z) a(y,\overline{z};h) u(z) e^{-\frac{2 \varphi(z)}{h}} \,
\omega(z, dz d\overline{z}) \\ 
+ \mathcal O(h^{\infty}) e^{\varphi(y)/h} \|u\|_{H_{\varphi}(U)}, \quad y\in V_1,
\end{multline*}
to the function $y \mapsto K(y,x) \in H_{\varphi}(W)$. Using also (\ref{eq3.1.0.5}), we obtain that 
\begeq
\label{eq3.1.0.22}
K(x,y) = \Pi(\widehat{K}(\cdot, \overline{y})\chi)(x) + \mathcal O(h^{\infty}) e^{\frac{\varphi(x) + \varphi(y)}{h}}, \quad x,y \in \widetilde{U},
\endeq
where 
\[
\widehat{K}(z,\overline{y}) = \frac{1}{h} e^{\frac{2}{h} \Psi(z,\overline{y})} \overline{a(y,\overline{z};h)},
\]
and estimating the function 
\[
W \ni x \mapsto \widehat{K}(x,\overline{y})\chi(x) - \Pi(\widehat{K}(\cdot, \overline{y})\chi)(x)
\]
in $L^2_{\varphi}(W,\omega)$ by means of Proposition \ref{exist_dbar}, we complete the proof. 
\end{proof}

\bigskip
\noindent
Continuing to work in the situation of Proposition \ref{Bergman_kernel}, let us set 
\begin{equation}
\label{eq3.1.2}
\widetilde{\Pi} u(x) = \widetilde{\Pi}_{\widetilde{U}} u(x) = \frac{1}{h} \int_{\widetilde{U}} e^{\frac{2}{h} \Psi(x,\overline{y})} a(x,\overline{y};h) u(y) e^{-\frac{2}{h}\varphi(y)} \, \omega(y, dy d\overline{y}), \quad u \in C^{\infty}_0(\widetilde{U}).
\end{equation}
Letting $\chi \in C^{\infty}_0(\widetilde{U})$, we conclude in view of (\ref{eq3.1.0.18}) and Schur's lemma that the globally defined operator $\chi \widetilde{\Pi} \chi$ satisfies 
\begin{equation}
\label{eq3.1.4}
\chi \widetilde{\Pi} \chi = \mathcal O(1): L^2_{\varphi}(M,\omega) \rightarrow L^2_{\varphi}(M,\omega). 
\end{equation}
It also follows from (\ref{eq3.1.0.5}) and Proposition \ref{Bergman_kernel} that 
\begeq
\label{eq3.1.10}
\chi (\Pi - \widetilde{\Pi}) \chi = \mathcal O(h^{\infty}): L^2_{\varphi}(M,\omega) \rightarrow L^2_{\varphi}(M,\omega).
\endeq
Here the orthogonal projection $\Pi$ has been introduced in (\ref{eq3.1.0.4}), (\ref{eq3.1.0.5}). 

\medskip
\noindent
We shall need the following sharpening of (\ref{eq3.1.10}). 
\begin{prop}
\label{prop:trace_class}
Let $\chi \in C^{\infty}_0(\widetilde{U})$, where $(\widetilde{U},x) \Subset W$ is a sufficiently small local holomorphic coordinate neighborhood of a point in $W$. The operator
\[
\chi(\Pi - \widetilde{\Pi})\chi: L^2_{\varphi}(M,\omega) \rightarrow L^2_{\varphi}(M,\omega) 
\]
is of trace class, of trace class norm $\mathcal O_{\chi}(h^{\infty})$.
\end{prop} 
\begin{proof}
We have to show that the operator
\begin{equation}
\label{eq3.1.10.4}
T = e^{-\varphi/h} \chi(\Pi - \widetilde{\Pi})\chi e^{\varphi/h}: L^2(M,\omega) \rightarrow L^2(M,\omega)
\end{equation}
is of trace class, of trace class norm $\mathcal O_{\chi}(h^{\infty})$. When doing so, we let $H^s(M)$ be the standard Sobolev space on $M$, equipped with its natural $h$--dependent norm $\norm{u}_{H^s(M)} = \norm{(1-h^2 \Delta_g)^{\frac{s}{2}} u}_{L^2(M,\omega)}$. Let $N(\lambda)$ be the number of eigenvalues of $-\Delta_g$ in $[0,\lambda]$ and let us write for $s > n = {\rm dim}_{\mathbb R}\, M = 2$, 
\[ 
\|(1 - h^2\Delta)^{-\frac{s}{2}}\|_{{\rm tr}} = \int_0^{\infty} \frac{1}{(1+h^2 \lambda)^{s/2}} dN(\lambda). 
\] 
Here $\|\cdot \|_{{\rm tr}}$ stands for the trace class norm for trace class operators on $L^2(M,\omega)$. Weyl's law for $-\Delta_g$ gives $N(\lambda) = \mathcal O(\lambda^{n/2})$ and we get integrating by parts,  
\begin{multline*}
\|(1 - h^2\Delta)^{-\frac{s}{2}}\|_{{\rm tr}} = \int_0^{\infty} \frac{1}{(1+h^2 \lambda)^{s/2}} dN(\lambda)  
= \mathcal O(h^2) \int_0^{\infty} \frac{N(\lambda)}{(1 + h^2 \lambda)^{\frac{s}{2}+1}}\, d\lambda \\ = \mathcal O(h^2) \int_0^{\infty} \frac{\lambda^{n/2}}{(1 + h^2 \lambda)^{\frac{s}{2}+1}}\, d\lambda = \mathcal O(h^{-n}) \int_0^{\infty} \frac{t^{n/2}}{(1 + t)^{\frac{s}{2}+1}}\, dt = \mathcal O(h^{-n}).
\end{multline*}
It suffices therefore to prove that
\begin{equation}
\label{eq3.10.4}
T = \mathcal O_{\chi}(h^{\infty}): L^2(M,\omega) \rightarrow H^s(M),
\end{equation}
for some $s > 2$. When doing so, we observe that in view of (\ref{eq3.1.10}), we have
\begin{equation}
\label{eq3.10.5}
T = \mathcal O_{\chi}(h^{\infty}):  L^2(M,\omega) \rightarrow L^2(M,\omega),
\end{equation}
and combining (\ref{eq3.10.5}) with the logarithmic convexity of Sobolev norms we see that we only need to show that for each $k\in \mathbb N$ there exists $N_k \in \mathbb N$, such that
\begin{equation}
\label{eq3.10.6}
T = \mathcal O_{k, \chi}(h^{-N_k}): L^2(M,\omega) \rightarrow H^k(M).
\end{equation}
We shall now see that (\ref{eq3.10.6}) holds with $N_k = 0$. First, given $v\in {\rm Hol}(W)$ and $\alpha, \beta \in \mathbb N$, let us consider $L^2$ estimates for the $C^{\infty}_0(\widetilde{U})$--function,
\[
(hD_{\overline{x}})^{\beta}\,(hD_x)^{\alpha} \left(e^{-\varphi/h}\, \chi v\right) = e^{-\varphi/h} \left(hD_{\overline{x}} + i\varphi'_{\overline{x}}\right)^{\beta}\, \left(hD_{x} + i\varphi'_{x}\right)^{\alpha}\left(\chi\, v\right).
\]
We get, using Lemma \ref{MVT} below,
\begin{multline}
\label{eq3.10.7}
\norm{(hD_{\overline{x}})^{\beta}\,(hD_x)^{\alpha} \left(e^{-\varphi/h}\, \chi v\right)}_{L^2(M,\omega)} \leq \mathcal O_{\alpha, \beta, \chi}(1) \sum_{\alpha' \leq \alpha} \norm{e^{-\varphi/h} 1_{{\rm supp}\, \chi}  (hD_x)^{\alpha'}v}_{L^2(M,\omega)} \\
\leq \mathcal O_{\alpha, \beta, \chi}(1) \norm{e^{-\varphi/h} 1_{\Omega} v}_{L^2(M,\omega)}.
\end{multline}
Here $\Omega \Subset \widetilde{U}$ is a neighborhood of ${\rm supp}\, \chi$. Applying (\ref{eq3.10.7}) with $v = \Pi(e^{\varphi/h}\, \chi u)\in {\rm Hol}(W)$, for $u\in L^2(M,\omega)$, we get for each $k\in \mathbb N$,
\begin{equation}
\label{eq3.10.8}
e^{-\varphi/h} \chi\Pi \chi e^{\varphi/h} = \mathcal O_{k, \chi}(1): L^2(M,\omega) \rightarrow H^k(M).
\end{equation} 

\medskip
\noindent
Next, it follows from (\ref{eq3.1.2}) that the Schwartz kernel of the operator $e^{-\varphi/h}\, \chi\, \widetilde{\Pi}\, \chi e^{\varphi/h}$ is given by
\begin{equation}
\label{eq3.10.9}
\frac{1}{h} e^{F(x,y)/h}\, \chi(x)\chi(y) a(x,\overline{y};h),\quad F(x,y) = 2\Psi(x,\overline{y}) - \varphi(x) - \varphi(y). 
\end{equation}
Here the amplitude $b(x,y;h) = \chi(x)\chi(y) a(x,\overline{y};h) \in C^{\infty}_0(\widetilde{U} \times \widetilde{U})$ satisfies 
\[
\nabla_{x}^{\alpha} \nabla_y^{\beta}\, b(x,y;h) = \mathcal O_{\alpha,\beta}(1), \quad \forall \,\, \alpha,\beta \in \mathbb N^2, 
\]
and it follows that
\begin{equation}
\label{eq3.10.9.1}
(hD_{\overline{x}})^{\beta}\,(hD_x)^{\alpha}\left(\frac{1}{h} e^{F(x,y)/h} b(x,y;h)\right) = \frac{1}{h} e^{F(x,y)/h} b_{\alpha,\beta}(x,y;h),
\end{equation}
where $b_{\alpha,\beta}\in C^{\infty}_0(\widetilde{U} \times \widetilde{U})$ satisfies the same estimates as $b$ above. We get, combining (\ref{eq3.10.9.1}), (\ref{eq3.1.0.18}), and Schur's lemma, that for each $k\in \mathbb N$ we have
\begin{equation}
\label{eq3.10.9.2}
e^{-\varphi/h} \chi\widetilde{\Pi} \chi e^{\varphi/h} = \mathcal O_{k, \chi}(1): L^2(M,\omega) \rightarrow H^k(M).
\end{equation}
The result follows, in view of (\ref{eq3.10.5}), (\ref{eq3.10.8}), and (\ref{eq3.10.9.2}). 
\end{proof}

\medskip
\noindent
\begin{lemm}
\label{MVT}
Let $\Omega \subseteq \mathbb C^n$ be open, let $\varphi: \Omega \rightarrow \mathbb R$ be locally Lipschitz continuous, and let $\mu(y, dy d\overline{y})$ be a strictly positive $C^{\infty}$ density of integration in $\Omega$. For each compact set $K \subseteq \Omega$, each open neighborhood $\omega \Subset \Omega$ of $K$, and all $\alpha \in \mathbb N^n$ there are constants $C_{\alpha} > 0$ such that 
\begeq
\label{eq3.10.9.3}
\norm{e^{-\varphi/h} 1_K (hD_x)^{\alpha}f}_{L^2(\Omega, d\mu)} \leq C_{\alpha} \norm{e^{-\varphi/h} 1_{\omega} f}_{L^2(\Omega,d\mu)},
\endeq
for all $f\in {\rm Hol}(\Omega)$. 
\end{lemm}
\begin{proof}
It suffices to show (\ref{eq3.10.9.3}) when $\mu(y, dy d\overline{y}) = L(dy)$ is the Lebesgue measure on $\mathbb C^n$. Let $\chi \in C^{\infty}_0(\mathbb C^n)$ be rotation invariant such that ${\rm supp}\, \chi \subseteq \{y\in \mathbb C^n; \abs{y}< 1\}$ and
$\int \chi(y) L(dy) = 1$. The mean value property gives, for all $f\in {\rm Hol}(\Omega)$ and $h>0$ small enough,
\begin{equation}
\label{eq3.10.10}
f(x) = \frac{1}{h^{2n}}\int f(y) \chi\left(\frac{x-y}{h}\right)\, L(dy), \quad x\in \omega_1,
\end{equation}
where $\omega_1 \Subset \omega$ is an open neighborhood of $K$. The result follows by differentiating (\ref{eq3.10.10}) and applying Schur's lemma. 
\end{proof}

\medskip
\noindent
Let $\chi \in C^{\infty}_0(\widetilde{U})$, where $(\widetilde{U},x) \Subset W$ is a local holomorphic coordinate chart as in Proposition \ref{prop:trace_class}. It follows from (\ref{eq3.1.0.7}) and (\ref{eq3.1.2}) that the operator $\chi \widetilde{\Pi} \chi$ is given by 
\begin{equation}
\label{eq3.1.18}
\chi \widetilde{\Pi} \chi u(x) = \frac{1}{h} \int_{\widetilde{U}} e^{\frac{2}{h} \Psi(x,\overline{y})} \chi(x) \chi(y) \left(a_0(x,\overline{y}) + \mathcal O(h)\right)\, u(y) e^{-\frac{2}{h} \varphi(y)}\, \omega(y,dy d\overline{y}), 
\end{equation}
where the $\mathcal O(h)$ remainder term is in the $C^{\infty}$ sense. In particular, $\chi \widetilde{\Pi} \chi$ is of trace class on $L^2_{\varphi}(M,\omega)$, and we have, see~\cite[Chapter 9]{DiSj},~\cite{Du81}, 
\begin{multline}
\label{eq3.1.19}
\tr \chi  \widetilde{\Pi} \chi = \frac{1}{h} \int \chi^2(x) \left(a_0(x,\overline{x}) + \mathcal O(h)\right)\, \omega(x,dx d\overline{x}) \\
= \frac{1}{2 \pi h} \int \chi^2(x) \Delta_g \varphi(x)\, \omega(x,dx d\overline{x}) + \mathcal O_{\chi}(1) =  
\frac{1}{2\pi h}\int\!\!\!\int_{\Lambda_\varphi} \chi^2(x)  d\xi\wedge dx+ \mO_{\chi}(1).
\end{multline}
Here on the last line we have also used (\ref{eq3.1.0.8}) and (\ref{eq1.31}). Combining Proposition \ref{prop:trace_class} and (\ref{eq3.1.19}) we get that the operator $\chi \Pi \chi$ is of trace class on $L^2_{\varphi}(M,\omega)$ and 
\begeq
\label{eq3.1.20}
\tr \chi  {\Pi} \chi = \tr \chi  \widetilde{\Pi} \chi + \mathcal O_{\chi}(h^{\infty}) = \frac{1}{2\pi h}\int\!\!\!\int_{\Lambda_\varphi} \chi^2(x)  d\xi\wedge dx+ \mO_{\chi}(1).
\endeq
Here in the equality 
\begeq
\label{eq3.1.20.1}
\tr \chi  {\Pi} \chi = \frac{1}{2\pi h}\int\!\!\!\int_{\Lambda_\varphi} \chi^2(x)  d\xi\wedge dx+ \mO_{\chi}(1),
\endeq
we would like to eliminate the assumption that the support of $\chi \in C^{\infty}_0(W)$ is small and to this end, we proceed as follows. Let $T\in \mathcal L(L^2_{\varphi}(M,\omega), L^2_{\varphi}(M,\omega))$ be of trace class and let $0 \leq \chi_j \in C^{\infty}(M)$, $1\leq j \leq N$, be such that 
\begeq
\label{eq3.1.21} 
\sum_{j=1}^N \chi_j^2 =1 \quad \wrtext{in} \,\, M.
\endeq
We then have, using the cyclicity of the trace, 
\begeq
\label{eq3.1.26} 
\tr T = \tr \left(\sum_{j=1}^N \chi_j^2 \right) T = \sum_{j=1}^N \tr \chi_j^2 T = \sum_{j=1}^N \tr \chi_j T \chi_j.
\endeq

\medskip
\noindent
We shall apply (\ref{eq3.1.26}) when $T = \chi \Pi \chi \in \mathcal L(L^2_{\varphi}(M,\omega), L^2_{\varphi}(M,\omega))$, where 
$\chi \in C^{\infty}_0(W)$. Indeed, the operator 
\[
e^{-\varphi/h} \chi \Pi \chi e^{\varphi/h}: L^2(M,\omega) \rightarrow H^s(M)
\]
is continuous for each $s\in \mathbb R$, and it follows as in the proof of Proposition \ref{prop:trace_class} that $\chi \Pi \chi$ is therefore of trace class on $L^2_{\varphi}(M,\omega)$. Given $\chi \in C^{\infty}_0(W)$, there exist finitely many local holomorphic coordinate charts $\widetilde{U}_j$, $1\leq j \leq N$, covering $M$ such that if $\widetilde{U}_j \cap {\rm supp}\, \chi \neq \emptyset$ then $\widetilde{U}_j \Subset W$ and (\ref{eq3.1.20.1}) holds for $\chi \in C^{\infty}_0(\widetilde{U}_j)$. Choose $\chi_j \in C^{\infty}_0(\widetilde{U}_j)$, $1\leq j \leq N$, so that (\ref{eq3.1.21}) holds. We get therefore using (\ref{eq3.1.26}) and (\ref{eq3.1.20.1}),
\begin{multline}
\label{eq3.1.27}
\tr \chi \Pi \chi = \sum_{j=1}^N \tr \chi_j \chi \Pi \chi_j \chi = \sum_{j=1}^N \left(\frac{1}{2\pi h} \int\!\!\!\int_{\Lambda_\varphi} \chi^2_j(x) \chi^2(x)  d\xi\wedge dx+ \mO_{\chi,j}(1)\right) \\ 
= \frac{1}{2\pi h} \int\!\!\!\int_{\Lambda_\varphi} \chi^2(x)  d\xi\wedge dx+ \mO_{\chi}(1).
\end{multline}

\medskip
\noindent
We may summarize the discussion above in the following result. 
\begin{prop} 
\label{prop:trace_Bergman}
Let $W\subseteq M$ be open such that $\varphi\in C^{\infty}(M)$ is strictly subharmonic in a neighborhood of $\overline{W}$, and let 
\[
\Pi: L^2_{\varphi}(W,\omega) \rightarrow H_{\varphi}(W) := {\rm Hol}(W) \cap L^2_{\varphi}(W,\omega)
\]
be the orthogonal projection. For each $\chi \in C^{\infty}_0(W)$, the operator $\chi \Pi \chi$ is of trace class on $L^2_{\varphi}(M,\omega)$, and we have 
\begeq
\label{eq3.1.28}
\tr \chi \Pi \chi = \frac{1}{2\pi h} \int\!\!\!\int_{\Lambda_\varphi} \chi^2(x)  d\xi\wedge dx+ \mO_{\chi}(1).
\endeq
\end{prop}

\bigskip
\noindent
Let us now return to our main discussion concerning upper bounds on the number of singular values of the operator
$h\overline{\partial}: L^2_{\varphi}(M,\omega) \rightarrow L^2_{\varphi}(M,T^*_{0,1} M)$, on the interval $[0,\e^{-\tau/h}]$. To this end, let $0\leq t_1\leq \dots \leq t_N \leq \e^{-\tau/h}$ denote the singular values in $[0,\e^{-\tau/h}]$, counted with multiplicity,
and let $e_1,\dots, e_N \in L^2_{\varphi}(M,\omega)$ stand for a corresponding orthonormal system of singular vectors. An application of the Weyl asymptotics to the semiclassical self-adjoint classically elliptic differential operator $P^*P$, with $P$ defined in (\ref{eq1.3}), gives that
\begin{equation}
\label{eq3.12}
N = N([0,e^{-\tau/h}]) = \mO(h^{-2}),
\end{equation}
see~\cite[Chapter 9]{DiSj}. We may also remark that using \cite[Proposition 4.6]{Sj10}, the estimate (\ref{eq3.12}) can be improved to
\begin{equation*}
N([0,e^{-\tau/h}])\leq N([0,C\sqrt{h}])= \mO(h^{-1}), \quad C \geq 1,
\end{equation*}
but having (\ref{eq3.12}) will suffice for the following argument.

\medskip
\noindent
From (\ref{eq3.1.0.1}) we recall that for each neighborhood $V \subseteq M$ of $M_+(\psi)$ there exists $\delta > 0$ such that
\begin{equation}
\label{eq3.13}
\norm{e^{-\varphi/h}e_j}_{L^2(M\setminus V,\omega)} = \mO(1)\, e^{-\frac{\delta}{h}},
\end{equation}
uniformly with respect to $j$, $1\leq j \leq N$. We also recall from
\eqref{eq1.14} that
\begin{equation}
\label{eq3.14}
\norm{e^{-\varphi/h}h\overline{\partial} e_j}_{L^2(M,T^*_{0,1} M)} = \mO(1)\, e^{-\tau/h},
\end{equation}
uniformly with respect to $j$. Associated to the singular states $e_j$, $1\leq j \leq N$, is the orthogonal projection
\begin{equation}
\label{eq3.15}
\widehat{\Pi}:L^2_{\varphi}(M,\omega) \rightarrow L^2_{\varphi}(M,\omega), \quad \widehat{\Pi}u = \sum_{j=1}^N (u,e_j)_{{L^2_{\varphi}(M,\omega)}} e_j
\end{equation}
onto $\bigoplus_{j=1}^N \CC e_j$. We have
\begin{equation}
\label{eq3.16}
N = \tr \widehat{\Pi} = \| \widehat{\Pi}\|_{\tr},
\end{equation}
where $\| \cdot\|_{\tr}$ denotes the trace class norm for trace class operators on $L^2_{\varphi}(M,\omega)$.

\bigskip
\noindent
Recall that in (\ref{eq3.1.0.0.1}), we have assumed that the compact set $M_+(\psi)$ defined in (\ref{eq2.7_b}) satisfies $M_+(\psi) \subseteq M_+$. Let $W \subseteq M$ be an open set such that $M_+(\psi) \subseteq W \Subset M_+$, and let $\Pi: L^2_{\varphi}(W,\omega) \rightarrow H_{\varphi}(W)$ be the orthogonal projection, as in (\ref{eq3.1.0.4}), (\ref{eq3.1.0.5}). Let also $\chi\in C_0^\infty(W; [0,1])$ be such that $\chi=1$ near $M_+(\psi)$. An application of Proposition \ref{exist_dbar} gives that 
\begin{equation}
\label{eq3.16.2}
\norm{(1 - \Pi) \chi e_j}_{L^2_{\varphi}(W,\omega)} \leq \mathcal O(h^{-1/2}) \norm{h\overline{\partial}(\chi e_j)}_{L^2_{\varphi}(W, T^*_{0,1} W)} \leq \mathcal O_{\chi}(h^{\infty}),
\end{equation}
uniformly with respect to $j$. Here we have also used (\ref{eq3.13}), (\ref{eq3.14}). We obtain that
\begin{equation}
\label{eq3.17}
\chi \Pi \chi e_j = \chi^2 e_j + \mO_{\chi} (h^\infty),
\end{equation}
in $L^2_{\varphi}(M,\omega)$, uniformly with respect to $j$, and combining (\ref{eq3.17}) with (\ref{eq3.13}) we get 
\begin{equation}
\label{eq3.17.1}
\norm{\chi {\Pi} \chi e_j - e_j}_{L^2_{\varphi}(M,\omega)} \leq \mathcal O_{\chi}(h^{\infty}),
\end{equation}
uniformly. Let us write therefore 
\begin{equation}
\label{eq3.17.2}
\chi {\Pi} \chi\, \widehat{\Pi} =\widehat{\Pi} + R,
\end{equation}
where $R = (\chi {\Pi} \chi -1) \widehat{\Pi}$ satisfies in view of (\ref{eq3.17.1}), (\ref{eq3.16}), and (\ref{eq3.12})
\begin{equation}
\label{eq3.18}
\|R\|_{\tr} \leq \norm{(\chi {\Pi} \chi -1) \widehat{\Pi}}_{\mathcal L(L^2_{\varphi}(M,\omega), L^2_{\varphi}(M,\omega))} \norm{\widehat{\Pi}}_{\tr} = \mO_{\chi} (h^\infty).
\end{equation}
It follows that 
\begin{equation*}
N = \norm{\widehat{\Pi}}_{\tr} \leq \norm{\chi {\Pi} \chi\, \widehat{\Pi}}_{\tr} + \mO_{\chi}(h^\infty)
\leq \norm{\chi {\Pi}\chi} _{\tr} \norm{\widehat{\Pi}}_{\mathcal L(L^2_{\varphi}(M,\omega),L^2_{\varphi}(M,\omega))} + \mO_{\chi}(h^\infty).
\end{equation*}
Here $\norm{\widehat{\Pi}}_{\mathcal L(L^2_{\varphi}(M,\omega),L^2_{\varphi}(M,\omega))} = 1$ since $\widehat{\Pi}$ is an
orthogonal projection, and therefore we get 
\begin{equation}
\label{eq3.19}
N \leq \norm{\chi \Pi \chi} _{\tr} + \mO_{\chi}(h^\infty) = {\rm tr} (\chi \Pi \chi) + \mO_{\chi}(h^\infty). 
\end{equation}
Here we have also used that the trace class operator $\chi \Pi \chi$ is self-adjoint positive on $L^2_{\varphi}(M,\omega)$ and therefore
\begin{equation}
\label{eq3.19.0.1}
\norm{\chi \Pi \chi} _{\tr} = {\rm tr} (\chi \Pi \chi).
\end{equation}
Applying Proposition \ref{prop:trace_Bergman} to (\ref{eq3.19}) gives that
\begin{equation}
\label{eq3.20}
N  \leq \frac{1}{2\pi h}\int\!\!\!\int_{\Lambda_\varphi} \chi^2(x)  d\xi\wedge dx+ \mO_{\chi}(1).
\end{equation}
Choosing a decreasing sequence of functions $\chi \in C^{\infty}_0(W;[0,1])$ such that $\chi=1$ near $M_+(\psi)$, tending pointwise to the characteristic function $1_{M_+(\psi)}$ of the compact set $M_+(\psi)$, and using (\ref{eq3.20}), (\ref{eq1.31}), we obtain the statement of Theorem \ref{thm:UpperBd}. 

\subsection{Lower bounds}
Our goal here is to prove Theorem \ref{thm:LowerBd}. Let $0 < \tau < {\rm max}\, \varphi - {\rm min}\, \varphi$ be fixed and let $\psi \in C(M; \mathbb R)$ be a lower bound weight in the sense of Definition \ref{def_lbw_n}. In particular, $\psi$ is subharmonic on the open set
\begin{equation}
\label{eq3.20.1}
\Omega = \{z\in M; \varphi(z) - \tau < \psi(z)\}.
\end{equation}
In what follows, we shall make use of H\"ormander's $L^2$ estimates and existence theorems for the $\overline{\partial}$ operator on $\Omega$ for the exponential weight $\psi$, which is merely continuous subharmonic. Our starting point is therefore the following result, closely related to Proposition \ref{exist_dbar}.

\begin{prop}
\label{prop:L2est}
Let $M$ be a compact Riemann surface equipped with a conformal Riemannian metric $g$ and let $\Omega \subseteq M$ be an open set such that ${\rm ext}(\Omega) \neq \emptyset$. Let $\psi \in C(M;\mathbb R)$ be subharmonic in $\Omega$. There exists $h_0 > 0$ such that for all $h\in (0,h_0]$ and all $\beta \in L^2_{\psi}(\Omega, T^*_{0,1} \Omega) = e^{\psi/h}L^2(\Omega, T^*_{0,1} \Omega)$ there exists a solution 
$u \in L^2_{\psi}(\Omega,\omega) = e^{\psi/h} L^2(\Omega,\omega)$ of the equation $h\overline{\partial}u = \beta$ in $\Omega$ such that
\begin{equation}
\label{eq3.21}
\norm{u}_{L^2_{\psi}(\Omega,\omega)} \leq \mathcal O(h^{-1}) \norm{\beta}_{L^2_{\psi}(\Omega, T^*_{0,1}\Omega)}.
\end{equation}
Here $\omega$ is the area form associated to $g$, introduced in {\rm (\ref{eq:ext1})}. 
\end{prop}
\begin{proof}
We shall first establish the result assuming that $\psi$ is of class $C^2(\Omega)$. To this end, let $z_0 \in {\rm ext}(\Omega)$ and let $0 \leq F \in  C^{\infty}(M\backslash \{z_0\})$ be the non-negative strictly subharmonic function in $M\backslash \{z_0\}$, introduced in the proof of Proposition \ref{propCH}. Let 
\begeq
\label{eq3.21.0}
\psi_{\varepsilon} = \psi + \frac{h}{\varepsilon} F, \quad 0 < h \leq \varepsilon \leq 1,
\endeq
and consider the conjugated operator 
\[ 
P_{\psi_{\varepsilon}} = e^{-\psi_{\varepsilon}/h} \circ h\overline{\partial} \circ e^{\psi_{\varepsilon}/h}.
\] 
Using (\ref{eq1.29.7.1}) we get for all $\mu \in \mathscr D_{0,1}(\Omega)$, 
\begin{equation}
\label{eq3.21.1}
\norm{P^*_{\psi_{\varepsilon}} \mu}_{L^2(M,\omega)}^2 \geq \frac{h}{2} \int_{\Omega} \Delta(\psi_{\varepsilon} + h f)\, \frac{\mu \wedge \overline{\mu}}{2i}, 
\end{equation}
and since $\Delta \psi \geq 0$ in $\Omega$ we obtain from (\ref{eq3.21.0}) and (\ref{eq3.21.1}), 
\begin{equation}
\label{eq3.21.1.1}
\norm{P^*_{\psi_{\varepsilon}} \mu}_{L^2(M,\omega)}^2 \geq \frac{h^2}{2} \int_{\Omega} \left(\frac{\Delta F}{\varepsilon} + \Delta f\right)\, \frac{\mu \wedge \overline{\mu}}{2i}.
\end{equation}
Taking $\varepsilon > 0$ sufficiently small independent of $h$ and using that $\Omega \Subset M\backslash \{z_0\}$, we obtain from (\ref{eq3.21.1.1}), \begin{equation}
\label{eq3.21.1.2}
h \norm{\mu}_{L^2(\Omega, T^*_{0,1} \Omega)} \leq \mathcal O(1) \norm{P^*_{\psi_{\varepsilon}} \mu}_{L^2(M,\omega)}. 
\end{equation}
Combining (\ref{eq3.21.1.2}) with the fact that $P^*_{\psi_{\varepsilon}} = e^{F/\varepsilon} \circ P^*_{\psi} \circ e^{-F/\varepsilon}$, we get 
\begin{equation}
\label{eq3.21.1.3}
h \norm{\mu}_{L^2(\Omega, T^*_{0,1} \Omega)} \leq \mathcal O(1) \norm{P^*_{\psi}\, \mu}_{L^2(M,\omega)},
\end{equation}
for all $\mu \in \mathscr D_{0,1}(\Omega)$. Similarly to the proof of Proposition \ref{exist_dbar}, we conclude, using (\ref{eq3.21.1.3}) together with the Hahn-Banach theorem and the Riesz representation theorem, that if $\psi \in C^2(\Omega)$ is subharmonic in $\Omega$ then for each $\beta \in L^2_{\psi}(\Omega, T^*_{0,1} \Omega)$ we can find a solution $u$ of the equation $h\overline{\partial} u = \beta$ in $\Omega$, satisfying (\ref{eq3.21}). Here the implicit constant in (\ref{eq3.21}) is independent of $\psi$, and we shall next remove the smoothness assumption on $\psi$ by an approximation argument, similar to~\cite[Theorem 4.2.1]{Horm_Conv}. Let $\Omega_1 \Subset \Omega$ be open and let $\psi_j$ be a sequence of strictly subharmonic smooth functions defined in a neighborhood of $\overline{\Omega_1}$, which decreases to $\psi$ on  $\Omega_1$, see~\cite[Proposition 5.1.2]{Br10}. It follows that there exists a sequence $u_j$ satisfying $h\overline{\partial} u_j = \beta$ in $\Omega_1$ such that 
\[ 
\norm{u_j}_{L^2_{\psi_j}(\Omega_1,\omega)} \leq \mathcal O(h^{-1}) \norm{\beta}_{L^2_{\psi_j}(\Omega_1, T^*_{0,1}\Omega_1)} \leq 
\mathcal O(h^{-1}) \norm{\beta}_{L^2_{\psi}(\Omega, T^*_{0,1}\Omega)}, \quad j=1,2, \ldots
\] 
Here we have also used that $\psi_j \geq \psi$. For each $k$, the sequence $(u_j)_{j\geq k}$ is bounded in $L^2_{\psi_k}(\Omega_1,\omega)$, and we can find therefore a (diagonal) subsequence converging weakly in $L^2_{\psi_k}(\Omega_1,\omega)$, for all $k$. The limit $u_{\Omega_1}$ satisfies $h\overline{\partial} u_{\Omega_1} = \beta$ in $\Omega_1$, and 
\[  
\norm{u_{\Omega_1}}_{L^2_{\psi_k}(\Omega_1,\omega)} \leq \mathcal O(h^{-1}) \norm{\beta}_{L^2_{\psi}(\Omega, T^*_{0,1}\Omega)}, \quad k=1,2, \ldots
\] 
Letting $k\rightarrow \infty$ we conclude therefore that 
\[  
\norm{u_{\Omega_1}}_{L^2_{\psi}(\Omega_1,\omega)} \leq \mathcal O(h^{-1}) \norm{\beta}_{L^2_{\psi}(\Omega, T^*_{0,1}\Omega)}. 
\] 
Letting $\Omega_1 \Subset \Omega$ increase to $\Omega$ and taking weak limits of $u_{\Omega_1}$'s as above, we conclude the proof. See also~\cite[Lecture 5]{Br10}. 
\end{proof}

\medskip
\noindent
Following Proposition \ref{prop:L2est}, we shall assume in what follows that the open set $\Omega$ in (\ref{eq3.20.1}) satisfies ${\rm ext}(\Omega) \neq \emptyset \Longleftrightarrow {\rm int}(M_-(\psi)) \neq \emptyset$.

\bigskip
\noindent
Let us recall next the contact set $M_+(\psi) \subseteq \Omega$, defined in (\ref{eq2.7_b}). We shall assume that the interior ${\rm int}(M_+(\psi)) \neq \emptyset$ and that (\ref{eq3.1.0.0.1}) holds. Let $u\in C_0^\infty({\rm int}(M_+(\psi)))$ be an $\mathcal O(h^{\infty})$ -- quasimode of $h\overline{\partial}: L^2_{\varphi}(M,\omega) \rightarrow L^2_{\varphi}(M, T^*_{0,1}M)$ of norm $\leq 1$, so that
\begin{equation}
\label{eq3.23}
\norm{u}_{L^2_{\varphi}(M,\omega)} \leq 1, \quad \norm{h\overline{\partial} u} _{L^2_{\varphi}(M, T^*_{0,1}M)} =\mathcal O(h^\infty).
\end{equation}
It follows from (\ref{eq3.23}) that
\[
\norm{h \overline{\partial} u}_{L^2_{\psi}(\Omega, T^*_{0,1}\Omega)} = \mathcal O(h^{\infty}),
\]
and an application of Proposition \ref{prop:L2est} gives that there exists $v\in L^2_{\psi}(\Omega,\omega)$ such that
\begin{equation}
\label{eq3.24}
\begin{split}
& h\overline{\partial}v = -h\overline{\partial}u \text{ in } \Omega, \\
  & \norm{v}_{L^2_{\psi}(\Omega,\omega)} = \mO(h^\infty){}.
\end{split}
\end{equation}
It follows from (\ref{eq3.23}), (\ref{eq3.24}) that $u+v\in H_\psi(\Omega) = {\rm Hol}(\Omega) \cap L^2_{\psi}(\Omega,\omega)$ with
\begin{equation}
\label{eq3.24.0.1}
\norm{u+v}_{L^2_{\psi}(\Omega,\omega)} \leq 1 + \mO(h^\infty).
\end{equation}

\medskip
\noindent
Let
\[
\omega(\delta) = \sup_{{\rm dist}(x,y) \leq \delta} \abs{\psi(x) - \psi(y)}
\]
be the modulus of continuity of $\psi$. Here ${\rm dist}(\cdot, \cdot)$ is the Riemannian distance function on $M$. The function $0\leq \omega$ is increasing with $\omega(\delta) \rightarrow 0$ as $\delta \rightarrow 0^+$. We also let $\chi \in C^\infty_0(\Omega;[0,1])$ be such that $\chi(x) = 1$ on $\{x\in \Omega;\, \dist(x,\partial \Omega)\geq h\}$. When $x\in {\rm supp}\, (\overline{\partial} \chi)$ we have ${\rm dist}(x,\partial \Omega) \leq h$ and given such an $x$, let $y\in \partial \Omega$ be such that ${\rm dist}(x,\partial \Omega) = {\rm dist}(x,y)$. We get using that the function $\psi - (\varphi -\tau)$ vanishes along $\partial \Omega$,
\begin{equation}
\label{eq3.24.1}
0 \leq \psi(x) - (\varphi(x) - \tau) \leq \abs{\psi(x) - \psi(y)} + \abs{\varphi(x)  - \varphi(y)} \leq \omega(h) + \mathcal O(h).
\end{equation}
We shall next use (\ref{eq3.24.1}) to establish a bound on $h\overline{\partial}(\chi (v+u))$ in $L^2_{\varphi}(M, T^*_{0,1} M)$. When doing so, we write using that $u+v\in {\rm Hol}(\Omega)$ and $u\in C^{\infty}_0({\rm int}(M_+(\psi)))$,
\[
h\overline{\partial}(\chi (v+u)) =(h\overline{\partial}\chi )(u+v) = (h\overline{\partial}\chi )v,
\]
for $h>0$ small enough. We get therefore, in view of (\ref{eq3.24}) and (\ref{eq3.24.1}),
\begin{multline}
\label{eq3.25}
\norm{h\overline{\partial}(\chi (v+u))}_{L^2_{\varphi}(M, T^*_{0,1} M)} = \norm{(h\overline{\partial}\chi )v}_{L^2_{\varphi}(M, T^*_{0,1} M)} \\
= \mathcal O(1)\, e^{-\widetilde{\tau}/h}\norm{(h\overline{\partial}\chi) v}_{L^2_{\psi}(\Omega, T^*_{0,1} \Omega)} = \mathcal O(h^{\infty})\, e^{-\widetilde{\tau}/h}.
\end{multline}
Here $\widetilde{\tau} = \tau - \omega(h)$. Let us also observe that $u - \chi(u+v) = - \chi v$ for $h$ small enough, satisfies
\begin{equation}
\label{eq3.26}
\norm{u - \chi(u+v)}_{L^2_{\varphi}(M,\omega)} = \norm{\chi v}_{L^2_{\varphi}(M,\omega)} \leq \norm{v}_{L^2_{\psi}(\Omega,\omega)} = \mO(h^{\infty}).
\end{equation}
Here we have also used (\ref{eq3.24}) and the fact that $\psi \leq \varphi$. Finally, using (\ref{eq3.23}) and (\ref{eq3.26}), we get
\begin{equation}
\label{eq3.27}
\norm{\chi(u+v)}_{L^2_{\varphi}(M,\omega)} \leq 1 + \mO(h^\infty).
\end{equation}

\medskip
\noindent
To summarize the discussion so far, given an $\mathcal O(h^{\infty})$ -- quasimode $u\in C^{\infty}_0({\rm int}(M_+(\psi)))$ satisfying (\ref{eq3.23}), there exists a function $f = \chi(u+v) \in L^2_{\varphi}(M,\omega)$ such that
\begin{equation}
\label{eq3.27.1}
\norm{f}_{L^2_{\varphi}(M,\omega)} \leq 1 + \mO(h^\infty), \quad
\end{equation}
and
\begin{equation}
\label{eq3.27.2}
\norm{u-f}_{L^2_{\varphi}(M, \omega)}  = \mO(h^\infty), \quad \norm{h\overline{\partial} f}_{L^2_{\varphi}(M, T^*_{0,1} M)} = \mO(h^\infty)\, e^{-\widetilde{\tau}/h}, \quad \widetilde{\tau} = \tau - \omega(h).
\end{equation}
It also follows from (\ref{eq3.24.1}) that when $\psi$ is Lipschitz continuous, (\ref{eq3.27.2}) holds with $\tau$ in place of $\widetilde{\tau}$.

\medskip
\noindent
Let $e_1,\dots,e_{\widetilde{N}} \in L^2_{\varphi}(M,\omega)$ be an orthonormal system of singular states of 
\begeq
\label{eq3.27.3}
h\overline{\partial}: L^2_{\varphi}(M,\omega) \to L^2_{\varphi}(M, T^*_{0,1} M), 
\endeq
corresponding to the singular values $t_1,\dots,t_{\widetilde{N}}$  satisfying $0\leq t_1\leq \dots \leq t_{\widetilde N} \leq \e^{-\widetilde{\tau}/h}$. We shall next approximate the function $f$ in (\ref{eq3.27.1}), (\ref{eq3.27.2}) by a linear combination of $e_1,\dots,e_{\widetilde{N}}$. To this end, let $e_{\widetilde{N}+1},\dots$ and $t_{\widetilde{N}+1},\dots$ be such that $e_1,e_2,\dots , e_{\widetilde{N}}, e_{\widetilde{N}+1} \dots$ form an orthonormal basis of singular states in $L^2_{\varphi}(M,\omega)$, and let $t_1\leq\dots \leq t_{\widetilde{N}} \leq t_{\widetilde{N}+1} \leq \dots $ be the corresponding sequence of singular values.

\medskip
\noindent
Let us write $f=\sum_{j=1}^\infty f_je_j$, where $f_j = (f,e_j)_{L^2_{\varphi}(M,\omega)}$. From the second equation in (\ref{eq3.27.2}) we get that
\begin{equation*}
\sum_{j=1}^\infty t_j^2|f_j|^2 = \mO(h^\infty)\, e^{-2\widetilde{\tau}/h},
\end{equation*}
and in particular,
\begin{equation}
\label{eq3.30.1}
\sum_{j=\widetilde{N}+1}^\infty t_j^2|f_j|^2 \leq \mO(h^\infty)\, e^{-2\widetilde{\tau}/h}.
\end{equation}
Noting that $t_j \geq \e^{-\widetilde{\tau}/h}$ for $j\geq \widetilde{N}+1$, we get from (\ref{eq3.30.1}),
\begin{equation}
\label{eq3.30.2}
\sum_{j = \widetilde{N}+1}^\infty |f_j|^2 \leq \mO(h^\infty).
\end{equation}
Introducing the orthogonal projection
\begin{equation}
\label{eq3.30.3}
\widehat{\Pi} = \sum_{j=1}^{\widetilde{N}}(\cdot,e_j)_{L^2_{\varphi}(M,\omega)} e_j,
\end{equation}
from $L^2_{\varphi}(M,\omega)$ onto $\bigoplus_{j=1}^{\widetilde{N}}\CC e_j$, we obtain using (\ref{eq3.30.2}) that
\begin{equation}
\label{eq3.31}
\| f- \widehat{\Pi}\, f\|_{L^2_{\varphi}(M,\omega)} = \mO(h^\infty).
\end{equation}

\medskip
\noindent
\begin{prop}
\label{prop:LBestimate1}
Assume that $u\in C^{\infty}_0({\rm int}(M_+(\psi)))$ satisfies \eqref{eq3.23}. Then we have
\begin{equation*}
\| u- \widehat{\Pi}\, u\|_{L^2_{\varphi}(M,\omega)}=\mO(h^\infty).
\end{equation*}
\end{prop}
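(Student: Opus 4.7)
The proposition follows by combining the approximation results already established in the preceding pages. The plan is to exploit the intermediate function $f = \chi(u+v) \in L^2_\varphi(M)$ constructed from the $\bar\partial$-solution $v$ via H\"ormander's estimate (Proposition \ref{prop:L2est}) applied on $\Omega$, which satisfies the three properties (\ref{eq3.27.1}), (\ref{eq3.27.2}), and (\ref{eq3.31}).

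The argument proceeds by a triangle inequality:
\begin{equation*}
\|u - \widehat{\Pi}u\|_{L^2_\varphi(M)} \leq \|u - f\|_{L^2_\varphi(M)} + \|f - \widehat{\Pi}f\|_{L^2_\varphi(M)} + \|\widehat{\Pi}(f - u)\|_{L^2_\varphi(M)}.
\end{equation*}
The first summand is $\mathcal{O}(h^\infty)$ by the first identity of (\ref{eq3.27.2}). The second summand is $\mathcal{O}(h^\infty)$ by (\ref{eq3.31}), which itself rests on the spectral decomposition together with the quasimode bound $\|h\bar\partial f\|_{L^2_\varphi(M)} = \mathcal{O}(h^\infty)e^{-\widetilde{\tau}/h}$ and the lower bound $t_j \geq e^{-\widetilde{\tau}/h}$ for $j \geq \widetilde{N}+1$. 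The third summand is $\mathcal{O}(h^\infty)$ again from (\ref{eq3.27.2}), since $\widehat{\Pi}$ is an orthogonal projection and hence has operator norm at most $1$ on $L^2_\varphi(M)$.

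There is no real obstacle: all the work has already been done in assembling $f$ from the quasimode $u$ using the $L^2$ theory for $\bar\partial$ on the subharmonicity region $\Omega = \{\varphi - \tau < \psi\}$ and in expanding $f$ in the orthonormal basis of singular states. The proposition is essentially a bookkeeping step that records the conclusion of this construction in the form most convenient for the subsequent trace class estimate, which will compare $\widehat{\Pi}$ with a Toeplitz operator built from coherent states supported in $\text{int}(M_+(\psi))$.
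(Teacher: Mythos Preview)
Your proof is correct and follows essentially the same approach as the paper: both rely on the same two key estimates (\ref{eq3.27.2}) and (\ref{eq3.31}) for the auxiliary function $f$. The only cosmetic difference is that where you use a three-term triangle inequality and bound $\|\widehat{\Pi}(f-u)\|$ by $\|f-u\|$ via the norm bound for the projection, the paper first bounds $\|u-\widehat{\Pi}f\|$ and then invokes the minimality property $\|u-\widehat{\Pi}u\| \leq \|u-\widehat{\Pi}f\|$ of the orthogonal projection onto $\mathcal{R}(\widehat{\Pi})$; the two arguments are interchangeable.
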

\begin{proof}
Let $f\in L^2_{\varphi}(M,\omega)$ be the function constructed in (\ref{eq3.27.1}), (\ref{eq3.27.2}), satisfying \eqref{eq3.31}. Then
\begin{equation*}
\| u- \widehat{\Pi} f\|_{L^2_{\varphi}(M,\omega)} \leq \| u- f\|_{L^2_{\varphi}(M,\omega)}
+ \| f- \widehat{\Pi} f\|_{L^2_{\varphi}(M,\omega)} \leq \mO(h^\infty),
\end{equation*}
by \eqref{eq3.27.2}, \eqref{eq3.31}. Hence,
\begin{equation*}
\| u- \widehat{\Pi} u\|_{L^2_{\varphi}(M,\omega)} \leq \| u- \widehat{\Pi} f\|_{L^2_{\varphi}(M,\omega)}, 
\leq \mO(h^\infty),
\end{equation*}
since $\mathcal{R}(\widehat{\Pi})\ni v \mapsto \|u-v\|_{L^2_{\varphi}(M,\omega)}$ is minimal for $v=\widehat{\Pi} u$.
\end{proof}

\medskip
\noindent
Recall that we have assumed that $M_+(\psi) \subseteq M_+ = \{x\in M; \Delta \varphi(x) > 0\}$. Let $W \subseteq M$ be an open set such that 
$M_+(\psi) \subseteq W \Subset M_+$, and let us recall the orthogonal projection 
\[ 
\Pi: L^2_{\varphi}(W,\omega) \rightarrow H_{\varphi}(W),
\] 
introduced in (\ref{eq3.1.0.4}), (\ref{eq3.1.0.5}). Let $(U,x) \Subset (V,x) \Subset {\rm int}(M_+(\psi))$ be sufficiently small local holomorphic coordinate neighborhoods of a point in ${\rm int}(M_+(\psi))$ corresponding to $x = 0$. Let $\chi_1 \in C^{\infty}_0(V;[0,1])$ be such that $\chi_1 =1$ near $\overline{U}$. We also let
\[
\widetilde{\Pi} = \widetilde{\Pi}_V: \mathcal O(1): L^2_{\varphi}(V,\omega) \rightarrow L^2_{\varphi}(V,\omega)
\]
be the local asymptotic Bergman projection introduced in (\ref{eq3.1.2}). Letting $\chi \in C^{\infty}_0(U)$ and writing 
\[ 
1_U(\Pi - \widetilde{\Pi})\chi = 1_U \chi_1 (\Pi - \widetilde{\Pi}) \chi_1 \chi,
\] 
we conclude, in view of Proposition \ref{prop:trace_class} that the operator
\begin{equation}
\label{eq3.31.1}
1_U (\Pi - \widetilde{\Pi})\chi: L^2_{\varphi}(M,\omega) \rightarrow L^2_{\varphi}(M,\omega)
\end{equation}
is of trace class, of trace class norm $\mathcal O_{\chi}(h^{\infty})$. We may write therefore for all $v\in L^2_{\varphi}(M,\omega)$ and $x\in M$, using (\ref{eq3.1.2}) and (\ref{eq3.31.1}), 
\begin{multline}
\label{eq3.32}
1_U\, \Pi \,\chi v(x) = 1_U(x) \widetilde{\Pi} \chi v(x) + Rv(x) \\
= 1_U(x) \frac{1}{h} \int_U e^{\frac{2}{h}\Psi(x,\overline{y})} a(x,\overline{y};h) \chi(y) v(y)\, e^{-\frac{2}{h}\varphi(y)}\, \omega(y, dy d\overline{y}) + Rv(x). 
\end{multline}
Here the trace class norm of $R\in {\mathcal L}(L^2_{\varphi}(M,\omega), L^2_{\varphi}(M,\omega))$ is $\mathcal O_{\chi}(h^{\infty})$. 

\medskip
\noindent
When acting on the space of holomorphic functions $H_{\varphi}(W) = {\rm Hol}(W) \cap L^2_{\varphi}(W,\omega)$, the operator $\widetilde{\Pi}\chi$ in (\ref{eq3.32}) can be viewed as an asymptotic Toeplitz operator, and we observe that the integral kernel of $\widetilde{\Pi}\chi$ in (\ref{eq3.32}) is not the canonical one, in the sense that it depends on all the variables $(x,y,\overline{y})$ rather than on $(x,\overline{y})$ only. We shall next eliminate the $y$-dependence in the amplitude, committing only a negligible error. 

\begin{prop}
\label{prop:trace_Toeplitz}
Let $(U,x) \Subset {\rm int}(M_+(\psi))$ be a sufficiently small smoothly bounded local holomorphic coordinate neighborhood of a point in ${\rm int}(M_+(\psi))$ corresponding to $x=0$, and let $\chi \in C^{\infty}_0(U; [0,1])$. There exists a classical symbol $c(x,\theta;h) = c_{\chi}(x,\theta;h) \in C^{\infty}(U\times \rho(U)\times (0,h_0])$ of the form
\begin{equation}
\label{eq3.32.0.0.11}
c(x,\theta;h) \sim \sum_{j=0}^{\infty} h^j c_j(x,\theta), 
\end{equation}
where each $c_j\in C^{\infty}_0(U \times \rho(U))$ is almost holomorphic along $\theta = \overline{x}$, such that the operator 
\begin{equation}
\label{eq3.32.0.0.2}
\widetilde{\Pi}_{\chi}v(x)  = \frac{1}{h} \int_U e^{\frac{2}{h}\Psi(x,\overline{y})} c(x,\overline{y};h) v(y)\, e^{-\frac{2}{h}\varphi(y)}\, \omega(y, dy d\overline{y})
\end{equation}
satisfies the following: the operator $(\widetilde{\Pi} \chi - \widetilde{\Pi}_{\chi})\Pi 1_W: L^2_{\varphi}(M,\omega) \rightarrow L^2_{\varphi}(U,\omega)$ is of trace class, of trace class norm $\mathcal O_{\chi}(h^{\infty})$. The classical symbol $c$ in {\rm (\ref{eq3.32.0.0.11})} satisfies ${\rm supp}\, (c(\cdot,\cdot;h)) \Subset U \times \rho(U)$, uniformly in $h \in (0,h_0]$, and 
\[
c_0(x,\overline{x}) = \chi(x) a_0(x,\overline{x}) = \frac{1}{2\pi} \chi(x) \Delta \varphi(x), \quad x \in U. 
\]
\end{prop}
\begin{proof}
Let $(V,x) \Subset {\rm int}(M_+(\psi))$ be a small local holomorphic coordinate neighborhood of the point in question, satisfying $(U,x) \Subset (V,x)$, such that $\Psi$ and $a$ in (\ref{eq3.32}) are defined in $V \times \rho(V)$. We also let $\chi_1 \in C^{\infty}_0(V;[0,1])$ be such that $\chi_1 = 1$ in a neighborhood of $\overline{U}$. Given $u\in H_{\varphi}(W)$, we may write using (\ref{eq3.1.2}), (\ref{eq1.29.2})
\begin{equation}
\label{eq3.32.0.0.21}
\widetilde{\Pi} \chi u(x) = \frac{1}{2ih} \int\!\!\!\int_{\Gamma_V} e^{\frac{2}{h}(\Psi(x,\theta) - \Psi(y,\theta) - h F(y,\theta))} \chi_1(y) a(x,\theta;h) \widetilde{\chi}(y,\theta) u(y)\, d\theta \wedge dy, \quad x\in U. 
\end{equation}
Here the contour $\Gamma_V$ is given by $\theta = \overline{y}$, $y\in V$, the function $F = F(y,\theta)$ is almost holomorphic along 
$\theta = \overline{y}$ with $F(y,\overline{y}) = f(y)$ in (\ref{eq1.29.2}), and $\widetilde{\chi} \in C^{\infty}_0(U \times \rho(U))$ is almost holomorphic along $\theta = \overline{y}$ with $\widetilde{\chi}(y,\overline{y}) = \chi(y)$. Proceeding as in the proof of Proposition \ref{Bergman_kernel} and using the same notation as there, we shall show that there exists a classical symbol $b(x,y,\theta;h) \sim \sum_{j=0}^{\infty} h^j b_j(x,y,\theta)$, satisfying $b_j \in \mathcal A$ for all $j\geq 0$, such that for some $c(x,\theta;h)\sim \sum_{j=0}^{\infty} h^j {c}_j(x,\theta)$, with ${c}_j \in  C^{\infty}_0(U \times \rho(U))$ almost holomorphic along $\theta = \overline{x}$, we have
\begin{multline}
\label{eq3.32.0.0.23}
a(x,\theta;h) \widetilde{\chi}(y,\theta) - {c}(x,\theta;h) \\ 
= e^{-\frac{2}{h}(\Psi(x,\theta) - \Psi(y,\theta) - hF(y,\theta))} h\partial_{\theta} \left(e^{\frac{2}{h}(\Psi(x,\theta) - \Psi(y,\theta)-hF(y,\theta))} b(x,y,\theta;h)\right),
\end{multline}
modulo a symbol of the form $\sum_{j=0}^{\infty} h^j f_j$, with $f_j \in \mathcal I^{\infty}$, for all $j\geq 0$. Similarly to the proof of Proposition \ref{Bergman_kernel}, we see that (\ref{eq3.32.0.0.23}) leads to a sequence of division problems, 
\begin{equation}
\label{eq3.32.0.0.24}
a_0(x,\theta)\widetilde{\chi}(y,\theta) - c_0(x,\theta) = 2\left(\Psi'_{\theta}(x,\theta) - \Psi'_{\theta}(y,\theta)\right)b_0(x,y,\theta) \quad {\rm mod}\,\,\mathcal I^{\infty},
\end{equation}
and for $j\geq 1$, 
\begin{multline}
\label{eq3.32.0.0.25}
a_j(x,\theta)\widetilde{\chi}(y,\theta) - {c}_j(x,\theta) - (\partial_{\theta} - 2F'_{\theta}(y,\theta)) b_{j-1}(x,y,\theta)  \\ 
= 2\left(\Psi'_{\theta}(x,\theta) - \Psi'_{\theta}(y,\theta)\right)b_j(x,y,\theta) \quad {\rm mod}\,\,\mathcal I^{\infty}.
\end{multline}
Here in (\ref{eq3.32.0.0.24}) we take $c_0(x,\theta) = a_0(x,\theta) \widetilde{\chi}(x,\theta) \in C^{\infty}_0(U\times \rho(U))$, and using that $\Psi''_{y\theta}(y,\overline{y}) = \varphi''_{y\overline{y}}(y) \neq 0$, $y\in {V}$, we see, as in~\cite{BBSj}, that there exists $b_0 \in {\mathcal A}$ of the form 
\begeq
\label{eq3.32.0.0.25.1}
b_0(x,y,\theta) = \zeta(x,y,\theta) \int_0^1 (\partial_x \widetilde{\chi})(tx + (1-t)y,\theta)\, dt, \quad \zeta \in \mathcal A, 
\endeq
such that (\ref{eq3.32.0.0.24}) holds. Continuing in this way and letting 
\[ 
c_j(x,\theta) = a_j(x,\theta) \widetilde{\chi}(x,\theta) - (\partial_{\theta} - 2F'_{\theta}(x,\theta)) b_{j-1}(x,x,\theta), \quad j\geq 1, 
\] 
we obtain $c_j = c_j(x,\theta)$ almost holomorphic along $\theta = \overline{x}$ with ${\rm supp}\, (c_j) \subseteq {\rm supp}(\widetilde{\chi})$, and $b_j \in \mathcal A$, $j\geq 1$, such that (\ref{eq3.32.0.0.25}) holds. We obtain (\ref{eq3.32.0.0.23}) with a classical symbol $c(x,\theta;h)$ such that ${\rm supp}\, (c(\cdot,\cdot;h)) \subseteq {\rm supp}(\widetilde{\chi}) \Subset U \times \rho(U)$.

\medskip
\noindent
It follows next from (\ref{eq3.1.0.18}) and Schur's lemma that the contribution of the amplitude $f = f(x,y,\theta;h) \sim \sum_{j=0}^{\infty} h^j f_j(x,y,\theta)$,  with $f_j \in \mathcal I^{\infty}$, to the operator $\widetilde{\Pi}\chi$ in (\ref{eq3.32.0.0.21}) gives rise to an error term of norm $\mathcal O(h^{\infty}): H_{\varphi}(W) \rightarrow L^2_{\varphi}(U,\omega)$. Using (\ref{eq3.32.0.0.23}), Stokes' formula, and the fact that $\chi_1(y)c(x,\overline{y};h) = c(x,\overline{y};h)$, we get therefore for all $N$ and $x\in U$, modulo such an error term 
\begin{multline}
\label{eq3.32.0.0.26}
2i h \widetilde{\Pi} \chi u(x) - \int\!\!\!\int_{\Gamma_V} e^{\frac{2}{h}(\Psi(x,\theta) - \Psi(y,\theta) - hF(y,\theta))} {c}(x,\theta;h) u(y)\, d\theta \wedge dy \\ 
= \int\!\!\!\int_{\Gamma_V} e^{\frac{2}{h}(\Psi(x,\theta) - \Psi(y,\theta)-hF(y,\theta))} \chi_1(y) (a(x,\theta;h)\widetilde{\chi}(y,\theta) - {c}(x,\theta;h))u(y) d\theta \wedge dy \\
= \int\!\!\!\int_{\Gamma_{V}} \chi_1(y) u(y) hd\left(e^{\frac{2}{h}(\Psi(x,\theta) - \Psi(y,\theta) - hF(y,\theta))} b(x,y,\theta)\right)\wedge dy \\
+ \int\!\!\!\int_{\Gamma_{V}} \chi_1(y) u(y) e^{\frac{2}{h}(\Psi(x,\theta) - \Psi(y,\theta) - hF(y,\theta))} \mathcal O_N(\abs{x-\overline{\theta}}^N)\, d\theta \wedge dy \\
= \int\!\!\!\int_{\Gamma_{V}} \chi_1(y) hd\left( u(y)e^{\frac{2}{h}(\Psi(x,\theta) - \Psi(y,\theta) - hF(y,\theta))} b(x,y,\theta)\, dy\right) \\
+ \int\!\!\!\int_{\Gamma_{V}} \chi_1(y) u(y) e^{\frac{2}{h}(\Psi(x,\theta) - \Psi(y,\theta) - hF(y,\theta))} \mathcal O_N(\abs{x-\overline{\theta}}^N)\, d\theta \wedge dy \\
= - \int\!\!\!\int_{\Gamma_V} u(y)e^{\frac{2}{h}(\Psi(x,\theta) - \Psi(y,\theta) - hF(y,\theta))} b(x,y,\theta) h d\chi_1 \wedge dy \\ 
+ \int\!\!\!\int_{\Gamma_{V}} e^{\frac{2}{h}(\Psi(x,\theta) - \Psi(y,\theta))} \mathcal O_N(\abs{x-\overline{\theta}}^N) u(y)\, d\theta \wedge dy. 
\end{multline}
Here we have also used that $u \in H_{\varphi}(W)$ is holomorphic. As before, we obtain using (\ref{eq3.1.0.18}), (\ref{eq3.32.0.0.26}), and the fact that $d\chi_1$ vanishes in a neighborhood of $\overline{U}$, 
\[ 
\widetilde{\Pi} \chi - \widetilde{\Pi}_{\chi} = \mathcal O(h^{\infty}): H_{\varphi}(W) \rightarrow L^2_{\varphi}(U,\omega).
\] 
In other words, we get 
\begin{equation}
\label{eq3.32.0.0.27}
(\widetilde{\Pi}\chi - \widetilde{\Pi}_{\chi})\Pi 1_W = \mathcal O(h^{\infty}): L^2_{\varphi}(M)\rightarrow L^2_{\varphi}(U,\omega). 
\end{equation}

\medskip
\noindent
When passing from (\ref{eq3.32.0.0.27}) to the trace class estimates, we argue as in the proof of Proposition \ref{prop:trace_class}, observing that for each $k\in \mathbb N$, we have
\begin{equation}
\label{eq3.32.0.0.28}
e^{-\varphi/h}(\widetilde{\Pi}\chi - \widetilde{\Pi}_{\chi})\Pi 1_W\,e^{\varphi/h} = \mathcal O_{k}(1): L^2(M,\omega) \rightarrow H^k(U). 
\end{equation}
Here $H^k(U)$ is the standard Sobolev space on the smoothly bounded open set $U \subseteq M$, equipped with its natural semiclassical norm. We next observe that for $k > {\rm dim}_{\mathbb R} M = 2$, the inclusion map $\iota: H^k(U) \rightarrow L^2(U,\omega)$ is of trace class, of trace class norm $\mathcal O(h^{-2})$. Indeed, this follows by writing $\iota = \rho \iota_M \sigma$, where $\sigma: H^k({U}) \rightarrow H^k(M)$ is a uniformly bounded Seeley extension map~\cite[Section 22]{Es11}, $\iota_M: H^k(M) \rightarrow L^2(M,\omega)$ is the inclusion map, and $\rho: L^2(M,\omega) \rightarrow L^2({U},\omega)$ is the (uniformly bounded) restriction map. It follows from the proof of Proposition \ref{prop:trace_class} that $\iota_M$ is of trace class, of trace class norm $\mathcal O(h^{-2})$, and the trace class property of the inclusion map $\iota: H^k(U) \rightarrow L^2(U,\omega)$ follows. The result of the proposition is now obtained using (\ref{eq3.32.0.0.27}), (\ref{eq3.32.0.0.28}). 
\end{proof}

\bigskip
\noindent
It follows from (\ref{eq3.31.1}) that the operator 
\begeq
\label{eq3.32.0.0.29}
1_U(\Pi - \widetilde{\Pi})\chi \Pi 1_W = 1_U(\Pi - \widetilde{\Pi})\chi 1_U \Pi 1_W: L^2_{\varphi}(M,\omega) \rightarrow L^2_{\varphi}(M,\omega)
\endeq
is of trace class, of trace class norm $\mathcal O_{\chi}(h^{\infty})$, and writing 
\[ 
1_U(\Pi \chi - \widetilde{\Pi}_{\chi}) \Pi 1_W = 1_U(\Pi - \widetilde{\Pi})\chi \Pi 1_W + 1_U (\widetilde{\Pi}\chi - \widetilde{\Pi}_{\chi})\Pi 1_W,
\] 
we obtain, in view of Proposition \ref{prop:trace_Toeplitz}, that the operator 
\begeq
\label{eq3.32.0.0.29.1}
1_U(\Pi \chi - \widetilde{\Pi}_{\chi}) \Pi 1_W = 1_U \Pi \chi \Pi 1_W - \widetilde{\Pi}_{\chi} \Pi 1_W: L^2_{\varphi}(M,\omega) \rightarrow L^2_{\varphi}(M,\omega)
\endeq
is also of trace class, of trace class norm $\mathcal O_{\chi}(h^{\infty})$. Here we have also used that $1_U \widetilde{\Pi}_{\chi} = \widetilde{\Pi}_{\chi}$, since the Schwartz kernel of $\widetilde{\Pi}_{\chi}$ in (\ref{eq3.32.0.0.2}) is compactly supported in $U \times U$. 

\medskip
\noindent
Using the notation of Proposition \ref{prop:trace_Toeplitz} and recalling (\ref{eq3.32.0.0.2}), let us set for $y\in U \Subset {\rm int}(M_+(\psi))$,
\[
u_y(x) = \frac{1}{h} e^{\frac{2}{h}\Psi(x,\overline{y})} c(x,\overline{y};h) e^{-\frac{1}{h}\varphi(y)} \in C^{\infty}_0(U).
\]
The idea now is to apply Proposition \ref{prop:LBestimate1} to the family of "coherent states" $u_y$, and when doing so we observe first that 
\begin{equation}
\label{eq32.0.0.3}
\norm{u_y}_{L^2_{\varphi}(M,\omega)} \leq \mathcal O\left(\frac{1}{h^{1/2}}\right), 
\end{equation}
uniformly for $y \in U$, in view of (\ref{eq3.1.0.18}). We may also remark that the method of stationary phase shows that we have a lower bound on $\norm{u_y}_{L^2_{\varphi}(M,\omega)}$ that is of the same order of magnitude as the upper bound in (\ref{eq32.0.0.3}), for $y\in U \subseteq {\rm int}(M_+(\psi))$ satisfying $\chi(y) \geq \delta > 0$, for $\delta > 0$ fixed. Writing next 
\[
h\overline{\partial} u_y(x) = \mathcal O_N(1) \abs{x-y}^N \frac{1}{h} e^{\frac{2}{h}\Psi(x,\overline{y}) - \frac{1}{h}\varphi(y)}\, d\overline{x}, \quad N=1,2,\ldots , 
\]
and using (\ref{eq3.1.0.18}) again, we see that
\[
\norm{h\overline{\partial} u_y}_{L^2_{\varphi}(M,T^*_{0,1}M)} = \mathcal O(h^{\infty}), 
\]
uniformly in $y\in U$. Applying Proposition \ref{prop:LBestimate1} to the quasimode $h^{1/2} u_y/C$ of $h\overline{\partial}$, where $C>0$ is a sufficiently large constant, we obtain that
\begin{equation}
\label{eq3.32.1}
\norm{u_y - \widehat{\Pi} u_y}_{L^2_{\varphi}(M,\omega)} = \mathcal O(h^{\infty}),
\end{equation}
uniformly for $y \in U$. We get therefore, using (\ref{eq3.32.0.0.2}), (\ref{eq3.32.1}), the Minkowski, and the Cauchy-Schwarz inequalities,
\begin{multline}
\label{eq3.32.2}
\norm{\widetilde{\Pi}_\chi v - \widehat{\Pi} \widetilde{\Pi}_\chi v}_{L^2_{\varphi}(M,\omega)} \leq \int_U \norm{u_y - \widehat{\Pi} u_y}_{L^2_{\varphi}(M,\omega)}\, \abs{v(y)}\, e^{-\frac{1}{h}\varphi(y)}\, \omega(y,dy d\overline{y}) \\
\leq \mathcal O(h^{\infty})\, \norm{v}_{L^2_{\varphi}(M,\omega)},\quad v\in L^2_{\varphi}(M,\omega).
\end{multline}

\begin{prop}
\label{prop:trace_lower}
The operator
\[
\widetilde{\Pi}_\chi - \widehat{\Pi}\, \widetilde{\Pi}_\chi: L^2_{\varphi}(M,\omega) \rightarrow L^2_{\varphi}(M,\omega)
\]
is of trace class, of trace class norm $\mathcal O_{\chi}(h^{\infty})$.
\end{prop}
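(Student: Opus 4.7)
The plan is to mimic the three-step strategy used in the proof of Proposition \ref{prop:trace_class}: first establish the operator-norm bound $T = \mathcal O(h^\infty)$ on $L^2_\varphi(M)$; then obtain Sobolev mapping bounds with only polynomial loss in $h^{-1}$; and finally interpolate to $\mathcal O(h^\infty)$ in $H^k_\varphi$ and factor through a trace-class semiclassical smoothing operator.

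Writing $T = (1-\widehat{\Pi})T_1$ with $T_1 = \widetilde{\Pi}_\chi \Pi 1_W$, the estimate \eqref{eq3.32.2} combined with the $L^2_\varphi$-boundedness of $\Pi 1_W$ yields at once the operator-norm bound $T = \mathcal O(h^\infty):L^2_\varphi(M)\to L^2_\varphi(M)$. Next I will establish, for every $k\in\mathbb N$, a polynomial-loss estimate
$$ T = \mathcal O_{k,\chi}(h^{-N_k}): L^2_\varphi(M)\longrightarrow H^k_\varphi(M), \qquad H^k_\varphi(M) := e^{\varphi/h}H^k(M). $$
For $T_1$ I will repeat the Schur-lemma argument from the proof of Proposition \ref{prop:trace_class}: after conjugation by $e^{\pm\varphi/h}$, the Schwartz kernel of $\widetilde{\Pi}_\chi$ is of the form $h^{-1}e^{F(x,y)/h}d(x,y;h)$ with $d\in C^\infty_0(W\times W)$ and $F$ controlled by \eqref{eq3.1.4.1}, and iterated differentiation by $hD_x$, $hD_{\bar x}$ preserves this structure, yielding the desired $H^k_\varphi$ bound after composition with the $L^2_\varphi$-bounded $\Pi 1_W$.

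For the projection piece $\widehat{\Pi}T_1$ the crucial input is a uniform-in-$j$ Sobolev bound on the singular states $v_j := e^{-\varphi/h}e_j$. Since $P^*P$ is classically elliptic of order $2$ with smooth coefficients, iterating the classical elliptic regularity estimate on $P^*Pv_j = t_j^2 v_j$, with $\|v_j\|_{L^2(M)}=1$ and $t_j\leq 1$, produces $\|v_j\|_{H^k(M)}\leq C_k h^{-M_k}$ uniformly in $j$, for some $M_k$. Combining this with the Cauchy--Schwarz inequality and the a priori count $\widetilde{N} = \mathcal O(h^{-2})$ from \eqref{eq3.12} gives $\widehat{\Pi} = \mathcal O_k(h^{-M_k - 1}):L^2_\varphi(M)\to H^k_\varphi(M)$, whence the claim for $\widehat{\Pi}T_1$.

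With both ingredients in hand, the logarithmic convexity of Sobolev norms interpolates the $\mathcal O(h^\infty)$ bound in $L^2_\varphi$ against the polynomial $H^k_\varphi$ bound to produce $T = \mathcal O_{k,\chi}(h^\infty):L^2_\varphi(M)\to H^k_\varphi(M)$ for every $k$. To conclude, for any $m>2={\rm dim}\, M$ the operator $(1-h^2\Delta)^{-m/2}$ on $L^2(M)$ is of trace class with trace-class norm $\mathcal O(h^{-2})$ by the Weyl law; writing $e^{-\varphi/h}Te^{\varphi/h} = (1-h^2\Delta)^{-m/2}\circ\bigl[(1-h^2\Delta)^{m/2}e^{-\varphi/h}Te^{\varphi/h}\bigr]$, the second factor is $\mathcal O(h^\infty):L^2(M)\to L^2(M)$, and the trace-class norm bound for $T$ follows since $e^{-\varphi/h}:L^2_\varphi(M)\to L^2(M)$ is an isometry. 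The main obstacle is controlling $\widehat{\Pi}$ on Sobolev scales: unlike in Proposition \ref{prop:trace_class}, where holomorphy of the range of $\Pi$ delivered smoothing estimates at no $h$-cost, here we must extract uniform-in-$j$ regularity from the singular states of $P^*P$ via classical ellipticity, accepting a polynomial $h$-loss that only gets killed at the interpolation stage.
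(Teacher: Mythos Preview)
Your proposal is correct and follows essentially the same route as the paper: the $\mathcal{O}(h^\infty)$ operator-norm bound from \eqref{eq3.32.2}, polynomial-loss $L^2\to H^k$ bounds via the kernel structure of $\widetilde{\Pi}_\chi$ and elliptic regularity for the conjugated singular states $g_j=e^{-\varphi/h}e_j$, then interpolation and factorization through the trace-class operator $(1-h^2\Delta)^{-m/2}$. The only minor refinement in the paper is that it works with the semiclassical $H^k$-norm and uses the semiclassical ellipticity of $P^*P$ near infinity to get $\|g_j\|_{H^k}\leq\mathcal{O}_k(1)$ with no $h$-loss, then bounds $e^{-\varphi/h}\widehat{\Pi}e^{\varphi/h}$ as a map $H^k\to H^k$ (composed with $e^{-\varphi/h}\widetilde{\Pi}_\chi\Pi 1_W e^{\varphi/h}:L^2\to H^k$) rather than $L^2\to H^k$ directly; your version with polynomial $h$-loss is equally valid since the interpolation step absorbs any finite power of $h^{-1}$.
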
 
\begin{proof}
We proceed as in the proof of Proposition \ref{prop:trace_class}. It follows from (\ref{eq3.32.2}) that
\begin{equation}
\label{eq3.32.3}
\widetilde{\Pi}_\chi - \widehat{\Pi}\, \widetilde{\Pi}_\chi = \mathcal O_{\chi}(h^{\infty}): L^2_{\varphi}(M,\omega) \rightarrow L^2_{\varphi}(M,\omega),
\end{equation}
and it suffices therefore to estimate the operator norm of
\[
e^{-\varphi/h}\left(\widetilde{\Pi}_\chi - \widehat{\Pi}\, \widetilde{\Pi}_\chi\right) e^{\varphi/h}: L^2(M) \rightarrow H^k(M),
\]
for each $k\in \mathbb N$. To this end, using (\ref{eq3.32.0.0.2}) and arguing as in the proof of Proposition \ref{prop:trace_class}, we first see that 
\begin{equation}
\label{eq3.32.4}
e^{-\varphi/h} \widetilde{\Pi}_\chi e^{\varphi/h} = \mathcal O_{k,\chi}(1): L^2(M) \rightarrow H^k(M).
\end{equation}

\medskip
\noindent
It remains for us therefore to estimate the operator norm of
\[
e^{-\varphi/h} \widehat{\Pi} e^{\varphi/h}: H^k(M) \rightarrow H^k(M),
\]
and when doing so we write in view of (\ref{eq3.30.3}),
\begin{equation}
\label{eq3.32.5}
e^{-\varphi/h} \widehat{\Pi} e^{\varphi/h} u = \sum_{j=1}^{\widetilde{N}} (u, g_j)_{L^2(M,\omega)} g_j.
\end{equation}
Here  $g_j = e^{-\varphi/h} e_j \in L^2(M,\omega)$, $1\leq j \leq \widetilde{N}$, is an orthonormal system of singular states of the operator $P: L^2(M,\omega) \rightarrow L^2(M,T^*_{0,1}M)$ in (\ref{eq1.3}) associated to the singular values $t_j \in [0,e^{-\widetilde{\tau}/h}]$, $1\leq j\leq \widetilde{N}$. We have $(P^*P - t_j^2)g_j = 0$, and it follows from the ellipticity of the operator $P^*P$ near fiber infinity in $T^*M$ that we have for each $k\in \mathbb N$,
\begin{equation}
\label{eq3.32.6}
\norm{g_j}_{H^k(M)} \leq \mathcal O_k(1),
\end{equation}
uniformly in $j$. We infer, using (\ref{eq3.32.5}) and (\ref{eq3.32.6}) that
\begin{equation}
\label{eq3.32.7}
e^{-\varphi/h} \widehat{\Pi} e^{\varphi/h} = \mathcal O_k(1) \widetilde{N} = \mathcal O_k(h^{-2}): H^k(M) \rightarrow H^k(M).
\end{equation}
The result now follows from (\ref{eq3.32.3}), (\ref{eq3.32.4}), and (\ref{eq3.32.7}). 
\end{proof}

\medskip
\noindent
Using (\ref{eq3.32.0.0.29.1}) and Proposition \ref{prop:trace_lower} we obtain that
\begeq
\label{eq3.32.7.1}
1_U\, \Pi \,\chi\Pi 1_W = \widetilde{\Pi}_\chi \Pi 1_W + R_2 = \widehat{\Pi}\, \widetilde{\Pi}_\chi \Pi 1_W + R_3,
\endeq
where the trace class norm of $R_2, R_3 \in \mathcal L(L^2_{\varphi}(M,\omega),L^2_{\varphi}(M,\omega))$ is $\mathcal O_{\chi}(h^{\infty})$. Here we recall that $U \Subset {\rm int}(M_+(\psi))$ is a small local holomorphic coordinate chart and $\chi \in C^{\infty}_0(U;[0,1])$. Let $\widetilde{W} \subseteq M$ be open such that $M_+(\psi) \subseteq \widetilde{W} \Subset W$ and let us write, in view of (\ref{eq3.32.7.1}),
\begeq
\label{eq3.32.7.2}
1_U\, \Pi \,\chi\Pi 1_{\widetilde{W}} = \widehat{\Pi}\, \widetilde{\Pi}_\chi \Pi 1_{\widetilde{W}} + R_4,
\endeq
where $R_4 = \mathcal O_{\chi}(h^{\infty})$ in the trace class norm. We would like to compare the operator in the left hand side of (\ref{eq3.32.7.2}) to the self-adjoint operator $1_{\widetilde{W}}\, \Pi \,\chi\Pi 1_{\widetilde{W}}$, and to this end we shall make use of the following essentially well known result, see also \cite{Li01}, \cite{Chr18}, \cite{HeLuXu20} for much stronger off-diagonal decay estimates. 

\begin{prop}
\label{prop:off-diag}
Let $W \subseteq M$ be open such that $W \Subset \{x\in M; \Delta \varphi(x) >0\}$, and let 
\[ 
\Pi: L^2_{\varphi}(W,\omega) \rightarrow H_{\varphi}(W) := {\rm Hol}(W) \cap L^2_{\varphi}(W,\omega)
\] 
be the orthogonal projection. The Schwartz kernel $K(x,y) \in {\rm Hol}(W \times W^{\dagger})$ of $\Pi$ given in {\rm (\ref{eq3.1.0.5})} satisfies, uniformly on compact subsets of $W \times W \setminus \Delta(W \times W)$, 
\begeq
\label{eq3.32.7.3}
K(x,y) = \mathcal O(h^{\infty})\, e^{(\varphi(x) + \varphi(y))/h}.
\endeq
Here $\Delta(W \times W) = \{(x,x); x\in W\}$ is the diagonal of $W \times W$. 
\end{prop} 

\medskip
\noindent
A proof of Proposition \ref{prop:off-diag} is provided in Appendix \ref{sec:Prop3.9}, for completeness and the convenience of the reader.

\medskip
\noindent
It follows from Proposition \ref{prop:off-diag} that 
\begeq
\label{eq3.32.7.4} 
1_{\widetilde{W}\setminus U} \Pi \chi = \mathcal O_{\chi}(h^{\infty}): L^2_{\varphi}(M,\omega) \rightarrow L^2_{\varphi}(M,\omega).
\endeq
In the proof of Proposition \ref{prop:trace_class} we checked that for each $\psi \in C^{\infty}_0(U)$ and each $k\in \mathbb N$, we have 
\begeq
\label{eq3.32.7.5}
e^{-\varphi/h} \psi \Pi 1_{\widetilde{W}} e^{\varphi/h} = \mathcal O_{k,\psi}(1): L^2(M,\omega) \rightarrow H^k(M).
\endeq
It is therefore clear, in view of (\ref{eq3.32.7.4}) and (\ref{eq3.32.7.5}), that the operator 
\[
1_{\widetilde{W}\setminus U} \Pi \chi \Pi 1_{\widetilde{W}}: L^2_{\varphi}(M,\omega) \rightarrow L^2_{\varphi}(M,\omega)
\] 
is of trace class, of trace class norm $\mathcal O_{\chi}(h^{\infty})$. Combining this observation with (\ref{eq3.32.7.2}), we obtain 
\begeq
\label{eq3.32.7.6}
1_{\widetilde{W}}\, \Pi \,\chi\Pi 1_{\widetilde{W}} = \widehat{\Pi}\, \widetilde{\Pi}_\chi \Pi 1_{\widetilde{W}} + R_5,
\endeq
where $R_5 = \mathcal O_{\chi}(h^{\infty})$ in the trace class norm. Now the first equality in (\ref{eq3.32.7.1}) and the discussion above show also that 
\begeq
\label{eq3.32.7.7}
1_{\widetilde{W}}\, \Pi \,\chi\Pi 1_{\widetilde{W}} = \widetilde{\Pi}_\chi \Pi 1_{\widetilde{W}} + R_6,
\endeq
where $R_6 = \mathcal O_{\chi}(h^{\infty})$ in the trace class norm. We have therefore proved that 
\begin{equation}
\label{eq3.32.8}
\norm{(\widehat{\Pi} -1)1_{\widetilde{W}}\, \Pi \,\chi \Pi 1_{\widetilde{W}}}_{\rm tr} = \mathcal O_{\chi}(h^{\infty}).
\end{equation}
Here (\ref{eq3.32.8}) has been established for $\chi \in C^{\infty}_0(U;[0,1])$, where $U \Subset {\rm int}(M_+(\psi))$ is a small local holomorphic coordinate chart, but a partition of unity argument allows us now to eliminate this restriction, and we conclude that (\ref{eq3.32.8}) holds for all 
$\chi \in C^{\infty}_0({\rm int}(M_+(\psi));[0,1])$. 

\medskip
\noindent
We get using (\ref{eq3.32.8}), when $\chi \in C^{\infty}_0({\rm int}(M_+(\psi));[0,1])$, 
\begin{multline}
\label{eq3.32.9}
\norm{1_{\widetilde{W}}\, \Pi \,\chi \Pi 1_{\widetilde{W}}}_{\rm tr} \leq \norm{\widehat{\Pi}\, 1_{\widetilde{W}}\, \Pi \,\chi \Pi 1_{\widetilde{W}}}_{\rm tr} + \mathcal O_{\chi}(h^{\infty}) \\ \leq
\norm{\widehat{\Pi}}_{{\rm tr}}\, \norm{1_{\widetilde{W}}\, \Pi \,\chi \Pi 1_{\widetilde{W}}}_{\mathcal L(L^2_{\varphi}(M,\omega),L^2_{\varphi}(M,\omega))} + \mathcal O_{\chi}(h^{\infty})
\leq \norm{\widehat{\Pi}}_{{\rm tr}} + \mathcal O_{\chi}(h^{\infty}).
\end{multline} 
Here we have also used that $\norm{1_{\widetilde{W}}\, \Pi \,\chi \Pi 1_{\widetilde{W}}}_{\mathcal L(L^2_{\varphi}(M,\omega),L^2_{\varphi}(M,\omega))} \leq 1$. Now the operators $1_{\widetilde{W}}\, \Pi \,\chi \Pi 1_{\widetilde{W}}$ and $\widehat{\Pi}$ are self-adjoint positive on $L^2_{\varphi}(M,\omega)$, and thus 
\begin{equation}
\label{eq3.32.10}
\norm{1_{\widetilde{W}}\, \Pi \,\chi \Pi 1_{\widetilde{W}}}_{\rm tr} = \tr (1_{\widetilde{W}}\, \Pi \,\chi \Pi 1_{\widetilde{W}}), \quad \norm{\widehat{\Pi}}_{{\rm tr}} = \tr \widehat{\Pi} = \widetilde{N}.
\end{equation}
Here $\widetilde{N}$ is the number of singular values in the interval $[0,e^{-\widetilde{\tau}/h}]$, $\widetilde{\tau} = \tau - \omega(h)$ --- see (\ref{eq3.27.3}) and the adjacent discussion. We get therefore using (\ref{eq3.32.9}), (\ref{eq3.32.10}),
\begin{equation}
\label{eq3.33}
\tr (1_{\widetilde{W}}\, \Pi \,\chi \Pi 1_{\widetilde{W}}) \leq \widetilde{N} + \mathcal O_{\chi}(h^{\infty}), \quad \chi \in C^{\infty}_0({\rm int}(M_+(\psi));[0,1]), 
\end{equation}
and in particular, 
\begin{equation}
\label{eq3.33.1}
\tr (1_{\widetilde{W}}\, \Pi \,\chi^2 \Pi 1_{\widetilde{W}}) \leq \widetilde{N} + \mathcal O_{\chi}(h^{\infty}), \quad \chi \in C^{\infty}_0({\rm int}(M_+(\psi));[0,1]). 
\end{equation} 

\begin{prop}
\label{prop:HS}
Let $M_+(\psi) \subseteq \widetilde{W} \Subset W \Subset \{x\in M; \Delta \varphi (x) >0\}$, and let 
\[ 
\Pi: L^2_{\varphi}(W,\omega) \rightarrow H_{\varphi}(W)
\] 
be the orthogonal projection. We have for each $\chi \in C^{\infty}_0({\rm int}(M_+(\psi));[0,1])$, 
\begeq
\label{eq3.33.2}
\tr (1_{\widetilde{W}}\, \Pi \,\chi^2 \Pi 1_{\widetilde{W}}) = \frac{1}{2\pi h}\iint_{\Lambda_{\varphi}} \chi^2(x)  d\xi\wedge dx+ \mO_{\chi,\widetilde{W}}(1).
\endeq
\end{prop}
\begin{proof}
Using a quadratic partition of unity argument, similar to the one employed in the proof of Proposition \ref{prop:trace_Bergman}, we see that it suffices to establish (\ref{eq3.33.2}) for $\chi \in C^{\infty}_0(U;[0,1])$, where $U \Subset {\rm int}(M_+(\psi))$ is a small local holomorphic coordinate chart. In what follows we shall assume therefore $\chi \in C^{\infty}_0(U;[0,1])$. 

\medskip
\noindent
The operator $\chi\, \Pi\, 1_{\widetilde{W}}$ is of Hilbert-Schmidt class on $L^2_{\varphi}(M,\omega)$ and we have 
\begeq
\label{eq3.33.3}
\tr (1_{\widetilde{W}}\, \Pi \,\chi^2 \Pi 1_{\widetilde{W}}) = \|\chi\, \Pi\, 1_{\widetilde{W}}\|^2_{{\rm HS}},
\endeq
in view of the following general observation: let $\mathcal H$ be a complex separable Hilbert space and let $T \in \mathcal L(\mathcal  H, \mathcal H)$ be a Hilbert-Schmidt operator. Then $T^*T$ is of trace class on $\mathcal H$ and ${\rm tr}(T^* T) = \|T\|^2_{{\rm HS}}$. 

\medskip
\noindent
We get, using (\ref{eq3.1.0.5}) and (\ref{eq3.33.3}), 
\begeq
\label{eq3.33.4} 
\tr (1_{\widetilde{W}}\, \Pi \,\chi^2 \Pi 1_{\widetilde{W}}) = \int\!\!\!\int \chi^2(x) 1_{\widetilde{W}}(y) |K(x,y)|^2 e^{-2(\varphi(x) + \varphi(y))/h}\, \omega(y,dy\,d\overline{y})\, \omega(x,dx\,d\overline{x}).
\endeq
Let $\widetilde{\chi} \in C^{\infty}_0(U;[0,1])$ be such that $\widetilde{\chi} = 1$ in a neighborhood of ${\rm supp}\, \chi$, and let us write, using 
(\ref{eq3.33.4}) and Proposition \ref{prop:off-diag}, 
\begin{multline}
\label{eq3.33.5} 
\tr (1_{\widetilde{W}}\, \Pi \,\chi^2 \Pi 1_{\widetilde{W}}) \\ 
= \int\!\!\!\int_{U \times U} \chi^2(x) \widetilde{\chi}^2(y) |K(x,y)|^2 e^{-2(\varphi(x) + \varphi(y))/h}\, \omega(y,dy d\overline{y})\, \omega(x,dx d\overline{x}) + \mathcal O_{\chi,\widetilde{W}}(h^{\infty}). 
\end{multline} 
Applying Proposition \ref{Bergman_kernel} to (\ref{eq3.33.5}) we get, using also (\ref{eq3.1.0.18}), 
\begin{multline}
\label{eq3.33.6} 
\tr (1_{\widetilde{W}}\, \Pi \,\chi^2 \Pi 1_{\widetilde{W}}) \\ 
= \frac{1}{h^2} \int\!\!\!\int_{U \times U} \chi^2(x) \widetilde{\chi}^2(y) e^{\frac{1}{h} \left(4{\rm Re}\, \Psi(x,\overline{y}) - 2\varphi(x) - 2\varphi(y)\right)} 
|a(x,\overline{y};h)|^2\, \omega(y,dy d\overline{y})\, \omega(x,dx d\overline{x}) \\ 
+ \mathcal O_{\chi,\widetilde{W}}(h^{\infty}). 
\end{multline}
We saw in the proof of~\cite[Proposition 2.1]{HiSt} that 
\begeq
\label{eq3.33.7} 
4{\rm Re}\, \Psi(x,\overline{y}) - 2\varphi(x) - 2\varphi(y) = - 2 \varphi''_{x\overline{x}}(x) |y-x|^2 + \mathcal O(|y-x|^3),
\endeq
and in order to understand the double integral in (\ref{eq3.33.6}) we may apply therefore the method of stationary phase to the $y$--integration (Laplace integrals). A straightforward application of the method of stationary phase to (\ref{eq3.33.6}), together with (\ref{eq3.33.7}) and (\ref{eq1.29.2}), gives 
\begin{multline}
\label{eq3.33.8} 
\tr (1_{\widetilde{W}}\, \Pi \,\chi^2 \Pi 1_{\widetilde{W}}) \\ 
= \frac{1}{h^2} \int_U \chi^2(x) \frac{2\pi h}{4 \varphi''_{x\overline{x}}(x)} 
\left(|a_0(x,\overline{x})|^2 + \mathcal O(h)\right) e^{-2f(x)}\, \omega(x,dx\,d\overline{x}) + \mathcal O_{\chi,\widetilde{W}}(h^{\infty}) \\
= \frac{1}{2\pi h} \int_U \chi^2(x) \Delta_g \varphi(x)\, \omega(x,dx\,d\overline{x}) + \mathcal O_{\chi,\widetilde{W}}(1) = \frac{1}{2\pi h}\iint_{\Lambda_{\varphi}} \chi^2(x)  d\xi\wedge dx+ \mO_{\chi,\widetilde{W}}(1).
\end{multline}
Here we have also used (\ref{eq_Lapl}), (\ref{eq3.1.0.8}), and (\ref{eq1.31}). The proof is complete. 
\end{proof}

\medskip
\noindent
We obtain, combining (\ref{eq3.33.1}) and Proposition \ref{prop:HS}, 
\begin{equation}
\label{eq3.37}
\widetilde{N} \geq \frac{1}{2\pi h}\iint_{\Lambda_{\varphi}} \chi^2(x)  d\xi\wedge dx - \mO_\chi(1), \quad \chi \in C^{\infty}_0({\rm int}(M_+(\psi)); [0,1]). 
\end{equation}
Choosing an increasing sequence of functions $\chi \in C^{\infty}_0({\rm int}(M_+(\psi);[0,1])$ tending pointwise to $1_{{\rm int}(M_+(\psi))}$, and recalling (\ref{eq1.31}) again, we complete the proof of Theorem \ref{thm:LowerBd}.

\section{Singular values for small exponential decay rates: thin bands of separation}
\label{tb}

\medskip
\noindent
In this section, we continue to work on a compact Riemann surface $M$ equipped with a conformal Riemannian metric $g$ given in (\ref{eq_metric0}), (\ref{eq_metric1}). The metric $g$ gives rise to a norm on each tangent space $T_z M$, $z\in M$, which will be denoted by 
$|\cdot|_z$. Furthermore, associated to $g$ is the Laplacian $\Delta = \Delta_g \leq 0$, see (\ref{eq_Lapl}), (\ref{eq_Laplace}), (\ref{eq1.4.0.5}).
Our purpose here is to prove Theorem \ref{1csp4_n}. When doing so, we assume that $\varphi \in C^{\infty}(M;\mathbb{ R})$ is such that
\begin{equation}
\label{2.tb0}
d\Delta \varphi \ne 0 \hbox{ on }\gamma :=\{z\in M;\,\Delta \varphi(z)=0\}.
\end{equation}
Until further notice we shall also assume that $\gamma$ is connected, hence a simple closed curve, for which we choose a unit speed parametrization 
\begin{equation}
\label{3.tb0}
\gamma :\mathbb{ T}_\lambda \to M,\ \gamma \in C^\infty ,\ |\dot{\gamma}(x)|_{\gamma(x)} = 1 \hbox{ for all }x\in \mathbb{ T}_\lambda.
\end{equation}
Here $\mathbb{ T}_\lambda =\mathbb{R}/(\lambda \mathbb{ Z})$, for a suitable $\lambda >0$. 

\medskip
\noindent
Let $\widetilde{\mathbb{ T}}=\mathbb{ T}_\lambda+ i(-\alpha ,\alpha)$, $\alpha >0$, be a small open neighborhood of $\mathbb{T}_\lambda$ in $\mathbb{ T}_\lambda+i\mathbb{R}$, and let $\widetilde{\gamma}:\widetilde{\mathbb{ T}} \to M$ be an almost holomorphic extension of $\gamma$ in \eqref{3.tb0}, so that 
\begin{equation}\label{3.tb0.7.5}
\widetilde{\gamma}|_{\mathbb{ T}_\lambda} = \gamma, \quad 
\text{ and }\quad \overline{\partial}\widetilde{\gamma}(w) = \mO(|\Im w|^\infty),\quad w\in \widetilde{\mathbb T}. 
\end{equation}
Here the second equation in (\ref{3.tb0.7.5}) is understood introducing local holomorphic coordinates near $w\in \widetilde{\mathbb T}$ and $\widetilde{\gamma}(w)\in M$, and viewing $\widetilde{\gamma}$ locally as a map: $\mathbb C \rightarrow \mathbb C$. A construction of such an almost holomorphic extension is given in Appendix \ref{sec:almholexten}. 

\medskip
\noindent
Using the inverse function theorem we obtain, after possibly decreasing $\alpha$, that the map $\widetilde{\gamma }$ is a $C^{\infty}$ diffeomorphism from $\widetilde{\mathbb{ T}}$ onto $\Omega :=\widetilde{\gamma }(\widetilde{\mathbb{ T}})$, an open neighborhood of $\gamma$ in $M$. (Here and below we frequently identify curves with their image sets.) 

\medskip
\noindent
When $w\in \widetilde{\mathbb{ T}}$, we write $w = x + iy$, for $x\in \mathbb{T}_\lambda$ and $y\in (-\alpha,\alpha)$. The complex manifold $\widetilde{\mathbb{ T}}$ carries the globally defined smooth real vector fields $\frac{\partial}{\partial x}$, $\frac{\partial}{\partial y}$, that are linearly independent at each point. The vector fields $(\frac{\partial}{\partial x}, \frac{\partial}{\partial y})$ form therefore a global frame for $\widetilde{\mathbb{ T}}$, viewed as a real smooth manifold, and identifying $\widetilde{\mathbb{ T}}$ with $\Omega$ by means of $\widetilde{\gamma}$, we obtain that $(\frac{\partial}{\partial x}, \frac{\partial}{\partial y})$ is a smooth global frame for $\Omega \subseteq M$. Expressing the Riemannian metric $g$ on $\Omega$ with respect to this frame we get 
\begin{equation}
\label{eq:RS_n0.1}
g = g_{xx} dx\otimes dx + g_{xy}\left(dx\otimes dy + dy\otimes dx\right) + g_{yy}dy\otimes dy,
\end{equation}
where $g_{xx}$, $g_{xy}$, $g_{yy} \in C^\infty(\widetilde{\mathbb{ T}})$ are given by 
\[ 
g_{xx} = g\!\left(\frac{\partial}{\partial x},\frac{\partial}{\partial x}\right), \quad 
g_{xy} = g\!\left(\frac{\partial}{\partial x},\frac{\partial}{\partial y}\right), 
\quad g_{yy} = g\!\left(\frac{\partial}{\partial y},\frac{\partial}{\partial y}\right). 
\] 
It follows from (\ref{3.tb0}) that 
\begeq
\label{eq:RS_1}
g_{xx}(x,0) = g\!\left(\frac{\partial}{\partial x}|_{(x,0)},\frac{\partial}{\partial x}|_{(x,0)}\right) = 1, \quad x\in \mathbb{ T}_\lambda,
\endeq
and therefore 
\begin{equation}
\label{eq:RS_2}
g_{xx}(x,y)=1 + \mathcal{O}(y) \quad \text{on }\,\, \widetilde{\mathbb T}.
\end{equation}

\medskip
\noindent
We shall next compare the expressions (\ref{eq:RS_n0.1}), (\ref{eq:RS_2}) for the conformal metric $g$ on $\Omega$ with a local expression implied by (\ref{eq_metric1}). To this end, we work locally near a point $w_0 \in \mathbb{ T}_\lambda \subseteq \widetilde{\mathbb T}$ and let $(U,z)$ be a local holomorphic coordinate neighborhood of $\widetilde{\gamma}(w_0) \in \Omega$. Writing $z = \widetilde{\gamma}(w)$, where $w = x + iy \in \widetilde{\mathbb T}$, $x\in \mathbb{ T}_\lambda$, $y\in (-\alpha,\alpha)$, we get using (\ref{3.tb0.7.5}), 
\begin{equation}
\label{eq:RS_n1}
dz = \frac{\partial \widetilde{\gamma}}{\partial w}(w)\, dw + \frac{\partial \widetilde{\gamma}}{\partial \overline{w}}(w)\, d{\overline{w}}
= \frac{\partial \widetilde{\gamma}}{\partial w}(w)\, dw + \mathcal{ O}(|\Im w|^\infty)\,d{\overline{w}}, 
\end{equation}
\begin{equation}
\label{eq:RS_n2}
d\overline{z} = \frac{\partial \overline{\widetilde{\gamma}}}{\partial w}(w)\, dw + \frac{\partial \overline{\widetilde{\gamma}}}{\partial \overline{w}}(w)\, d{\overline{w}}
= \frac{\partial \overline{\widetilde{\gamma}}}{\partial \overline{w}}(w)\, d\overline{w} + \mathcal{ O}(|\Im w|^\infty)\,dw. 
\end{equation}
Combining (\ref{eq_metric1}), (\ref{eq:RS_n1}), and (\ref{eq:RS_n2}) we obtain therefore
\begin{multline}
\label{eq:RS_n3} 
g = \frac{1}{2} g_U(\widetilde{\gamma}(w)) |\partial_w \widetilde{\gamma}(w)|^2 \left(dw \otimes d\overline{w} + d\overline{w}\otimes dw\right) \\
+ \mathcal{ O}(|\Im w|^\infty)(dw\otimes dw, d{\overline{w}}\otimes dw,dw\otimes d\overline{w},d\overline{w}\otimes d\overline{w}).
\end{multline} 
Writing $w = x + iy$ and using that 
\[
\partial_x \widetilde\gamma(w) = \partial_w \widetilde\gamma(w) + \partial_{\overline{w}} \widetilde\gamma(w) 
    = \partial_w \widetilde\gamma(w) +\mathcal{ O}(|\Im w|^\infty), 
\]
we can express (\ref{eq:RS_n3}) as follows
\begin{multline}
\label{eq:RS_n4}
g = g_U(\widetilde{\gamma}(x+iy))\left|\partial_x \widetilde\gamma(x+iy)\right|^2 (dx\otimes dx + dy\otimes dy) \\
+ \mathcal{ O}(|y|^\infty)(dx\otimes dx,dx\otimes dy, dy\otimes dx, dy\otimes dy).
\end{multline}
Here we find similarly to (\ref{eq:RS_2}), 
\[
g_U(\widetilde{\gamma}(x+iy))\left|\partial_x \widetilde\gamma(x+iy)\right|^2 = 1 + \mathcal O(|y|).
\] 
Comparing (\ref{eq:RS_n0.1}), (\ref{eq:RS_2}), and  (\ref{eq:RS_n4}) we conclude that 
\[ 
g_{yy}(x,y) = g_{xx}(x,y) + \mathcal{ O}(|y|^\infty), \quad g_{xy}(x,y)=\mathcal{ O}(|y|^\infty). 
\] 
The metric $g$ in (\ref{eq:RS_n0.1}) takes therefore the following form, 
\begin{equation}
\label{eq:RS_n0.3}
g = \theta(x,y) (dx\otimes dx +dy\otimes dy) + \mathcal{ O}(|y|^\infty)(dx\otimes dx,dx\otimes dy,dy\otimes dx, dy\otimes dy). 
\end{equation}
Here $\theta \in C^{\infty}(\widetilde{\mathbb T};\mathbb R)$ is such that 
\begin{equation}
\label{eq:RS_n0.2}
\theta(x,y) = 1 + \mathcal{O}(|y|). 
\end{equation}
Let us record for future use that 
\begin{equation}\label{eq:RS_n0.3b}
\det g = \theta(x,y)^2 (1 + \mathcal{ O}(|y|^\infty)). 
\end{equation}

\medskip
\noindent
We shall next express the Laplacian $\Delta$ in the $w = x+ iy$ variables. Writing $(x_1, x_2) = (x,y)$ and using the general formula 
\begin{equation*}
\Delta u = \frac{1}{\sqrt{\det g}}\sum_{j,k=1}^2 \partial_{x_j} \left(g^{jk}\, \sqrt{\det g}\, \partial_{x_k} u\right), \quad (g^{jk}) = (g_{jk})^{-1}, 
\end{equation*}
together with (\ref{eq:RS_n0.3}), (\ref{eq:RS_n0.2}) we get 
\begin{equation}\label{1.tb0,5}
\begin{split}
\Delta &= \frac{1}{\theta}(\partial_x^2 + \partial_y^2) + 
\mathcal{ O}(|y|^\infty)(\partial_x^2,\partial_{x}\partial_{y},\partial_y^2,\partial_{x},\partial_{y})\\
&= (1+\mathcal{O}(y))\Delta_{x,y} + 
\mathcal{ O}(|y|^\infty)(\partial_x^2,\partial_{x}\partial_{y},\partial_y^2,\partial_{x},\partial_{y}), \quad 
x\in \mathbb{ T}_\lambda, \,\, |y|<\alpha .
\end{split}
\end{equation}
We shall also fix an orientation on $\gamma$ so that
\begin{equation}\label{3,5.tb0}
\partial _y \Delta \varphi(w) >0, \hbox{ when }y=0.
\end{equation}

\bigskip
\noindent
Let us define some symbol spaces:
\begin{itemize}
\item $S^0(\mathbb{ T}_\lambda ):=C^\infty (\mathbb{ T}_{\lambda })$. When $u=u(\cdot ,\epsilon )$ belongs to this space and also depends on
the small parameter $\epsilon > 0$, we require $u$ and its derivatives with respect to $x$ to be uniformly bounded with respect to
$\epsilon $.
\item We say that $u=u(x,\epsilon )\in S^0(\mathbb{ T}_{\lambda })$ belongs to $S^0_{\mathrm{cl}}(\mathbb{ T}_\lambda )$ if we have
\begin{equation}
\label{1,5.tb0,5}
u(x,\epsilon )\sim \sum_{k=0}^\infty u_k(x)\epsilon ^k \hbox{ in }C^\infty(\mathbb{ T}_\lambda) .
\end{equation}
\item
We let $S^0_{\mathrm{fcl}}(\mathbb{ T}_\lambda )$ be the space of formal sums as in (\ref{1,5.tb0,5}), with
$u_k\in C^\infty (\mathbb{T}_\lambda )$.
\end{itemize}

\medskip
\noindent
We consider $\Delta$ in a thin band of variable width,
\begin{equation}
\label{2.tb0,5}
\Omega =\Omega _f:\ x\in \mathbb{ T}_\lambda ,\ -\epsilon f_-(x) < y < \epsilon f_+(x),
\end{equation}
where
\begin{equation}\label{2,5.tb0,5}
f_\pm (x)=f_\pm(x,\epsilon )\sim f_\pm^0(x)+\epsilon f^1_\pm(x)+...\in S^0_{\mathrm{fcl}}(\mathbb{ T}_\lambda ),\ \ \ f^0_\pm (x)\asymp 1.
\end{equation}

\medskip
\noindent
Recalling the definition of the open sets $M_\pm$ in (\ref{eq2.4}), that do not depend on $\tau =\epsilon^3\widehat{\tau }$,
$\widehat{\tau} \asymp 1$, we write
\begin{equation}
\label{5.csp2}
\Omega _\pm=M_\pm\setminus \Omega _f^\pm ,
\end{equation}
where
\begin{equation}
\label{6.csp2}
\Omega _f^\pm:=\{(x,y)\in \Omega _f;\, \pm y>0 \}=\{ (x,y);\, 0<\pm y\le \epsilon f_\pm (x,\epsilon ) \}.
\end{equation}
We reduce to the band $\widetilde{\Omega }=\mathbb{ T}_\lambda \times ]-1,1[$ of fixed width by the change of variables
\begin{equation}
\label{3.tb0,5}
\begin{split}
x&=\widetilde{x},\\
    y&=\epsilon
    \left(f_+(\widetilde{x})\frac{1}{2}(1+\widetilde{y})-f_-(\widetilde{x})\frac{1}{2}(1-\widetilde{y}
    )\right)\\
    &=\frac{\epsilon }{2}\left(
      f_+(\widetilde{x})-f_-(\widetilde{x})+(f_+(\widetilde{x})+f_-(\widetilde{x}))\widetilde{y}
    \right).
  \end{split}
\end{equation}
Applying the chain rule,
\[
  \begin{split}
\partial _{\widetilde{x}}&=\frac{\partial x}{\partial
  \widetilde{x}}\partial _x+\frac{\partial y}{\partial
  \widetilde{x}}\partial _y
=\partial _x+\frac{\epsilon
}{2}\left((f_+'(\widetilde{x})-f_-'(\widetilde{x}))
+(f_+'(\widetilde{x})+f_-'(\widetilde{x}))\widetilde{y}
\right)\partial _y,
\\
\partial _{\widetilde{y}}&=\frac{\partial x}{\partial
  \widetilde{y}}\partial _x+\frac{\partial y}{\partial
  \widetilde{y}}\partial _y
=\frac{\epsilon }{2}\left(f_+(\widetilde{x})+f_-(\widetilde{x})\right)\partial _y,
\end{split}
\]
we get
\begin{equation}\label{0.tb1}
  \begin{cases}\begin{displaystyle}
\partial _y=\frac{2}{\epsilon
  (f_++f_-)}\partial _{\widetilde{y}},
\end{displaystyle}
\\
\begin{displaystyle}
\partial _x=\partial
_{\widetilde{x}}-\frac{f_+'-f_-'+(f_+'+f_-')\widetilde{y}}{f_++f_-}\partial _{\widetilde{y}},
\end{displaystyle}
\end{cases}
\end{equation}
where $f_\pm=f_\pm (\widetilde{x})$, $\ f_\pm'=f_\pm'(\widetilde{x})$.
This gives
$$
\Delta _{x,y}=\left(\partial
  _{\widetilde{x}}-\frac{f_+'-f_-'+(f_+'+f_-')\widetilde{y}}{f_++f_-}\partial
  _{\widetilde{y}}\right)^2
+\frac{4}{\epsilon ^2(f_++f_-)^2}\partial _{\widetilde{y}}^2,
$$
\[
  \begin{split}
&\frac{\epsilon ^2(f_++f_-)^2}{4}\Delta _{x,y}=\frac{\epsilon ^2(f_++f_-)^2}{4}\left(\partial
  _{\widetilde{x}}-\frac{f_+'-f_-'+(f_+'+f_-')\widetilde{y}}{f_++f_-}\partial
  _{\widetilde{y}}\right)^2
+\partial _{\widetilde{y}}^2\\
&=\frac{\epsilon ^2(f_++f_-)^2}{4}
\left(\partial
  _{\widetilde{x}}^2-\partial _{\widetilde{x}}\circ
  \frac{ f_+'-f_-'+\widetilde{y} (f_+'+f_-')}{f_++f_-}\partial _{\widetilde{y}}
  -\frac{f_+'-f_-'+\widetilde{y} (f_+'+f_-')}{f_++f_-}\partial
  _{\widetilde{y}}\circ \partial
  _{\widetilde{x}}\right.\\&\left.\hskip 5mm
+\left(\frac{f_+'-f_-'+\widetilde{y}(f_+'+f_-')}{f_++f_-}\partial _{\widetilde{y}} \right)^2
\right)+\partial _{\widetilde{y}}^2\\
& =\frac{(f_++f_-)^2}{4}(\epsilon \partial _{\widetilde{x}})^2+\partial
_{\widetilde{y}}^2+\epsilon
\widetilde{a}(\widetilde{x},\widetilde{y})\epsilon \partial
_{\widetilde{x}}\partial _{\widetilde{y}}+\epsilon
^2\widetilde{b}(\widetilde{x},\widetilde{y})\partial
_{\widetilde{y}}+\epsilon
^2\widetilde{c}(\widetilde{x},\widetilde{y})\partial _{\widetilde{y}}^2,
\end{split}
\]
where $\widetilde{a},\, \widetilde{b},\, \widetilde{c}\in S^0_{\mathrm{fcl}}(\overline{\widetilde{\Omega }})$, with the natural
definition of $S^0_{\mathrm{fcl}}(\overline{\widetilde{\Omega}})$. The formulae (\ref{1.tb0,5}) and (\ref{0.tb1}) give that
$$
\epsilon^2 \Delta = \epsilon ^2\Delta _{x,y}
+\mathcal{ O}(\epsilon)
\left( (\epsilon \partial _{\widetilde{x}})^2, \partial
  _{\widetilde{y }}^2, \epsilon \partial _{\widetilde{x}}\partial
  _{\widetilde{y}},\epsilon \partial_{\widetilde{x}}, \partial _{\widetilde{y}} \right),
$$
where the remainder is an abbreviation for
\[
\mathcal{ O}(\epsilon)(\epsilon \partial _{\widetilde{x}})^2+ \mathcal{ O}(\epsilon)\partial
  _{\widetilde{y }}^2+\mathcal{ O}(\epsilon) \epsilon \partial _{\widetilde{x}} \partial
  _{\widetilde{y}} + \mathcal O(\epsilon) (\epsilon \partial_{\widetilde{x}}) + \mathcal{ O}(\epsilon)\partial _{\widetilde{y}},
\]
and $\mathcal{ O}(\epsilon )$ indicates a smooth function which is $\mathcal{ O}(\epsilon )$ with all its derivatives. We obtain that
\begin{equation}\label{1.tb1}
\begin{split}
    P:=&-\frac{\epsilon ^2(f_++f_-)^2}{4} \Delta =
    -\frac{\epsilon ^2(f_++f_-)^2}{4}\Delta _{x,y}+\mathcal{
      O}(\epsilon )
\left( (\epsilon \partial
_{\widetilde{x}})^2,\epsilon \partial_{\widetilde{x}}\partial
_{\widetilde{y}}, \partial _{\widetilde{y}}^2,\epsilon \partial _{\widetilde{x}},\partial _{\widetilde{y}}\right)\\
=&\frac{(f_++f_-)^2}{4}(\epsilon
D_{\widetilde{x}})^2+D_{\widetilde{y}}^2 \\ &+
\epsilon \left(a(\widetilde{x },\widetilde{y})
  \epsilon D_{\widetilde{x}}D_{\widetilde{y}}+b(\widetilde{x},\widetilde{y})D_{\widetilde{y}}
  +c(\widetilde{x},\widetilde{y})D_{\widetilde{y}}^2+d(\widetilde{x},\widetilde{y})(\epsilon
  D_{\widetilde{x}})^2
+e(\widetilde{x },\widetilde{y})\epsilon D_{\widetilde{x}}
\right),
\end{split}
\end{equation}
where $a,\,b,\,c,\, d,\ e\in S^0_{\mathrm{fcl}}(\overline{\widetilde{\Omega }})$. In our asymptotic analysis of the operator $P$, we shall follow the general ideas for treating spectral problems with a mixture of fast and slow quantities, appearing in the Born-Oppenheimer
approximation, thin band problems, and elsewhere, by means of pseudodifferential operators in the slow variables, with symbols that
are operator valued. See \cite{GeMaSj91}, \cite[Chapter 13]{DiSj}.

\medskip
\noindent
Here we consider
\[
P:\, H^2(\widetilde{\Omega })\cap H_0^1(\widetilde{\Omega } )\to H^0(\widetilde{\Omega }),
\]
and we shall view $P$ as an $\epsilon $-pseudodifferential operator on $\mathbb{T}_\lambda $ with the operator valued symbol given by
\begin{equation}\label{3.tb2}
\begin{split}
P(\widetilde{x},\widetilde{\xi };\epsilon )
=&\frac{(f_++f_-)^2}{4}\widetilde{\xi }^2+D_{\widetilde{y}}^2\\ &+
\epsilon \left(a(\widetilde{x },\widetilde{y})
\widetilde{\xi
}D_{\widetilde{y}}+b(\widetilde{x},\widetilde{y})D_{\widetilde{y}}+c(\widetilde{x},\widetilde{y})D_{\widetilde{y}}^2+d(\widetilde{x},\widetilde{y})\widetilde{\xi
}^2+e(\widetilde{x},\widetilde{y})\widetilde{\xi }\right):\\
&\hskip 1 cm (H_{\widetilde{\xi }}^2\cap H_0^1)(]-1,1[)\to H^0(]-1,1[).
\end{split}\end{equation}
The subscript $\widetilde{\xi } $ here indicates that the space $H^2(]-1,1[)$ is equipped with the $\widetilde{\xi }
$-dependent norm
\begin{equation}\label{4.tb2}
\|u\|_{H^2_{\widetilde{\xi } }}=(\langle \widetilde{\xi }\rangle^2\|u\|)^2+(\langle
\widetilde{\xi }\rangle \|D_{\widetilde{y}}u\|)^2+\|D_{\widetilde{y}}^2u\|^2,
\end{equation}
where $\|v\|=\|v\|_{L^2(]-1,1[)}$. It follows from (\ref{3.tb2}) that $P(\widetilde{x},\widetilde{\xi
};\epsilon )$ is a formal classical symbol on $\mathbb{ T}_\lambda \times \mathbb{
  R}_{\widetilde{\xi }}$ of the class
\begin{equation}\label{5.tb2}
  S^{0}_{\mathrm{fcl}}\left(\mathbb{ T}_\lambda \times \mathbb{ R}_{\widetilde{\xi }};
  \mathcal{ L}
 \left((H_{\widetilde{\xi }}^2\cap H_0^1)(]-1,1[),H^0(]-1,1[)\right)\right),
\end{equation}
based on
$S^0(\mathbb{ T}_\lambda \times \mathbb{ R}_{\widetilde{\xi }},...)  $, or equivalently, on the classical symbol space $S^0_{1,0}(...)$) with
the required symbol estimates $\| \partial _{\widetilde x}^j\partial _{\widetilde{\xi }}^ka\|\le
\mathcal{ O}(\langle \widetilde{\xi }\rangle^{-k})$. We next observe that the symbol $P(\widetilde{x},\widetilde{\xi};\epsilon)$ in (\ref{3.tb2}) is elliptic for $\epsilon >0$ is small enough, i.e.\ $P(\widetilde{x},\widetilde{\xi };\epsilon )$ has a two-sided inverse satisfying
\begin{equation}\label{6.tb2}
P(\widetilde{x},\widetilde{\xi };\epsilon )^{-1}\in
S^{0}_{\mathrm{cl}}(\mathbb{ T}_\lambda \times \mathbb{ R}_{\widetilde{\xi }};
\mathcal{ L} \left(H^0(]-1,1[) ,(H_{\widetilde{\xi }}^2\cap H_0^1)(]-1,1[) \right),
\end{equation}
uniformly for $0<\epsilon \le \epsilon _0$, for some $\epsilon _0$ with $0<\epsilon _0\ll 1$.\footnote {More precisely, we take
realizations of the formal classical symbols $f_\pm$, $a$, $b$, $c$, $d$, $e$ and get a true symbol denoted by the same letter, whose inverse
satisfies (\ref{6.tb2}).}

\medskip
\noindent
Let us verify the ellipticity of $P(\widetilde{x},\widetilde{\xi};\epsilon)$ in (\ref{3.tb2}): we have, with the $L^2$ norms and scalar products, for $u\in (H^2_{\widetilde{\xi}}\cap H^1_0)(]-1,1[)$,
\begin{multline*}
\left|\left|\left(\frac{(f_+ + f_-)^2}{4}\widetilde{\xi}^2 + D_{\widetilde{y}}^2\right)u\right|\right|^2 \\
= \frac{(f_+ + f_-)^4}{16}\widetilde{\xi}^4 \|u\|^2 +
\frac{(f_+ + f_-)^2}{4}\widetilde{\xi}^2\, 2{\rm Re}\, (u,D_{\widetilde{y}}^2 u) + \|D_{\widetilde{y}}^2 u\|^2 \\
\geq \frac{1}{C} \left(\widetilde{\xi}^4 \|u\|^2 + \widetilde{\xi}^2 \|D_{\widetilde{y}}u\|^2 + \|D_{\widetilde{y}}^2 u\|^2\right) \geq \frac{1}{C} \|u\|_{H^2_{\widetilde{\xi}}}^2,\quad |\widetilde{\xi}| \geq 1,
\end{multline*}
which gives the ellipticity for $\widetilde{\xi}$ away from zero. When $|\widetilde{\xi}| \leq 1$, we write for
$u\in (H^2_{\widetilde{\xi}}\cap H^1_0)(]-1,1[)$,
\[
\left(\left(D_y^2 + \frac{(f_+ + f_-)^2}{4}\widetilde{\xi}^2\right)u,u\right) \geq \|D_{\widetilde{y}}u\|^2 \geq c \|D_{\widetilde{y}}u\|\, \|u\|.
\]
Here we have also used the Poincar\'e inequality, since $u\in H^1_0(]-1,1[)$. It follows that
\[
\norm{D_{\widetilde{y}}u} \leq C \left|\left|\left(\frac{(f_+ + f_-)^2}{4}\widetilde{\xi}^2 + D_{\widetilde{y}}^2\right)u\right|\right|,
\]
and thus, also,
\begin{multline*}
\norm{D^2_{\widetilde{y}} u} \leq \left|\left|\left(\frac{(f_+ + f_-)^2}{4}\widetilde{\xi}^2 + D_{\widetilde{y}}^2\right)u\right|\right| + C \norm{u}\\
\leq \left|\left|\left(\frac{(f_+ + f_-)^2}{4}\widetilde{\xi}^2 + D_{\widetilde{y}}^2\right)u\right|\right| + C \norm{D_{\widetilde{y}}u} \leq
C \left|\left|\left(\frac{(f_+ + f_-)^2}{4}\widetilde{\xi}^2 + D_{\widetilde{y}}^2\right)u\right|\right|.
\end{multline*}
We have now shown the ellipticity of $P(\widetilde{x},\widetilde{\xi};\epsilon)$, for $\epsilon > 0$ small enough.

\medskip
\noindent
It follows from this that $P(\widetilde{x},\widetilde{\xi };\epsilon)$ is Fredholm and hence bijective for $\epsilon $ small.
We notice that more generally $P(\widetilde{x},\widetilde{\xi};\epsilon)$ in (\ref{3.tb2}) is a symbol of the class
\begin{equation}
\label{5gen.tb2}
S^{0}_{\mathrm{fcl}}\left(\mathbb{ T}_\lambda \times \mathbb{ R}_{\widetilde{\xi }};
\mathcal{ L} \left((H^{k+2}_{\widetilde{\xi }}\cap H_0^1)(]-1,1[),H^k(]-1,[)\right)\right),
\end{equation}
for every $k\in \mathbb{ N}$, and that (\ref{6.tb2}) extends to
\begin{equation}
\label{6gen.tb2}
P(\widetilde{x},\widetilde{\xi };\epsilon )^{-1}\in
S^{0}_{\mathrm{cl}}(\mathbb{ T}_\lambda \times \mathbb{ R}_{\widetilde{\xi }};
\mathcal{ L}\left(H^k(]-1,1[) ,(H_{\widetilde{\xi }}^{k+2}\cap H_0^1)(]-1,1[) \right).
\end{equation}

\medskip
\noindent
From the standard semiclassical pseudodifferential calculus, we conclude that for $\epsilon >0$ small, (any realization of) $P:\, (H^2\cap
H^1_0)(\widetilde{\Omega })\to L^2(\widetilde{\Omega })$ has a bounded inverse $Q$ such that
\begin{equation}\label{1.tb2,5}
Q,\ D_{\widetilde{y}}Q,\ D_{\widetilde{y}}^2Q,\ \epsilon
D_{\widetilde{x}}Q,\ (\epsilon D_{\widetilde{x}})^2Q,\
D_{\widetilde{y}}\epsilon D_{\widetilde{x}}Q\ =\mathcal{ O}(1):\, L^2(\widetilde{\Omega}) \to L^2(\widetilde{\Omega}),
\end{equation}
uniformly with respect to $\epsilon >0$.

\medskip
\noindent
Consider the Dirichlet problem
\begin{equation}\label{1.tb2,7}
\begin{cases}
\Delta u=0\hbox{ in }\Omega ,\\
{{u}_\vert}_{\partial \Omega }=v,
\end{cases}
\hbox{ where }v\in C^\infty (\partial \Omega ),\ \Omega =\Omega _f.
\end{equation}
The boundary condition in (\ref{1.tb2,7}) can be written more explicitly,
\begin{equation}\label{2.tb2,7}
v(x,\pm\epsilon f_\pm(x))=:v_\pm (x),
\end{equation}
where $v_\pm=v_\pm (x,\epsilon )\in S^0_{\mathrm{fcl}}(\mathbb{ T}_\lambda
)$.
The well-posedness of this problem in $C^{\infty}(\overline{\Omega})$ is of course classical see e.g.~\cite[Chapter 4]{Es11}, and we shall consider solutions that are formal power series in $\epsilon $.

\medskip
\noindent
Expressing (\ref{1.tb2,7}) in the $\widetilde{x},\, \widetilde{y}$ variables, we get
\begin{equation}\label{3.tb2,7}
Pu=0\hbox{ in }\widetilde{\Omega },\ u(\widetilde{x},\pm 1)=v_\pm (\widetilde{x}).
\end{equation}
As an approximate solution, let us take
\begin{equation}
\label{4.tb2,7}
u_0(\widetilde{x},\widetilde{y})=v_+(\widetilde{x})\frac{1+\widetilde{y}}{2}+v_-(\widetilde{x})\frac{1-\widetilde{y}}{2}\in
C^\infty (\mathbb{ T}_\lambda \times [-1,1]).
\end{equation}
Then $u_0(\widetilde{x},\pm 1)=v_\pm (\widetilde{x})$, and by (\ref{1.tb1}), we have
$Pu_0=\mathcal{O}(\epsilon (\|v_+\|_{C^2}+\|v_-\|_{C^2})$ with all its derivatives. The solution to the problem (\ref{1.tb2,7}) is given by
\begin{equation}\label{5.tb2,7}
u=u_0-QPu_0.
\end{equation}

\medskip
\noindent
We next study an asymptotic expansion of $u$ in powers of $\epsilon $, and recall that $v_\pm(x)=v_\pm (x,\epsilon )\in S^0_{\mathrm{fcl}}$ are
smooth. By (\ref{4.tb2,7}), $u_0$ is smooth on $\overline{\widetilde{\Omega}}=\mathbb{T}_\lambda \times [-1,1]$. It follows from this and (\ref{1.tb1}) that
\begin{equation}\label{6.tb2,7}
Pu_0=\epsilon v_2(\widetilde{x},\widetilde{y}),\ v_2\in
S^0_{\mathrm{fcl}}(\overline{\widetilde{\Omega }}).
\end{equation}
Here we recall (\ref{2,5.tb0,5}). We have in view of (\ref{1.tb1}),
\begin{equation}
\label{1.tb3}
P=D_{\widetilde{y}}^2+\epsilon R,
\end{equation}
where we shall only use that $R:\, C^\infty (\overline{\widetilde{\Omega }})\to C^\infty (\overline{\widetilde{\Omega }})$ is a linear operator with an asymptotic expansion $R\sim R_0+\epsilon R_1+ \ldots ,$ and that
$D_{\widetilde{y}}^2:\, H^2\cap H_0^1\to L^2$ on $]-1,1[$ is bijective, and similarly for the higher order Sobolev spaces. Asymptotically in
powers of $\epsilon $, the inverse $Q=P^{-1}$ has the expansion
\begin{equation}\label{2.tb3}\begin{split}
  Q&=\left(1+\epsilon \left(D_{\widetilde{
    y }}^2 \right)^{-1}R
\right)^{-1}(D_{\widetilde{y}}^2)^{-1}\\
&=(D_{\widetilde{y}}^2)^{-1}-\epsilon
(D_{\widetilde{y}}^2)^{-1}R (D_{\widetilde{y}}^2)^{-1}+\epsilon
^2\left( (D_{\widetilde{y}}^2)^{-1}R\right)^2 (D_{\widetilde{y}}^2)^{-1} - \ldots ,
\end{split}\end{equation}
and applying this to (\ref{6.tb2,7}), (\ref{5.tb2,7}), (\ref{1.tb3}), we get
\begin{equation}\label{3.tb3}
u\sim u_0+\epsilon u_1+\epsilon ^2u_2+ \ldots
\hbox{ in }C^\infty (\overline{\widetilde{\Omega }}),
\end{equation}
where $u_0$ is given by (\ref{4.tb2,7}) and
\begin{equation}\label{4.tb3}
u_j\in C^\infty
(\overline{\widetilde{\Omega }})\hbox{ for }j\ge 1.
\end{equation}

\medskip
\noindent
The asymptotic expansion in (\ref{3.tb3}) holds in the $C^\infty$ sense, so
\begin{equation}\label{5.tb3}
\partial _{\widetilde{y}}u\sim \partial _{\widetilde{y}}u_0+\partial_{\widetilde{y}}\epsilon u_1+ \partial _{\widetilde{y}}\epsilon ^2u_2 + \ldots
\end{equation}
In particular this holds for $\widetilde{y}=\pm 1$. Using (\ref{0.tb1}), we get the following result for the solution $u$ of the original problem (\ref{1.tb2,7}):

\begin{prop}
\label{1tb3,5}
Let $\mathbb{ T}_\lambda :=\mathbb{ R}/(\lambda \mathbb{ Z})$, let $\gamma:\mathbb{ T}_\lambda\to M$ be a simple closed curve with
$|\dot{\gamma}(x)|_{\gamma(x)}=1$, $x\in \mathbb{ T}_\lambda$, and let $\widetilde{\gamma }:\widetilde{\mathbb{ T}}_\lambda +i(-\alpha,\alpha) \to M$ be an almost holomorphic extension, where $\alpha >0$ is small. Write $\widetilde{\gamma }(w)=\widetilde{\gamma}(x+iy)$. In the $(x,y)$ coordinates, the Laplacian on $M$ takes the form {\rm (\ref{1.tb0,5})}. We consider $\Delta$ in a thin band $\Omega _f$ as in {\rm (\ref{2.tb0,5})}, with $f$ as in {\rm (\ref{2,5.tb0,5})} and introduce the variables $\widetilde{x}$, $\widetilde{y}$ in {\rm (\ref{3.tb0,5})}, transforming the thin band $\Omega _f$ to $\widetilde{\Omega }=\mathbb{ T}_\lambda \times ]-1,1[$. We view
$$
P:=-\frac{\epsilon ^2(f_++f_-)^2}{4}\Delta 
$$
as an operator on $\widetilde{\Omega }$ and we have {\rm (\ref{1.tb1})}. For $\epsilon >0$ small enough, the operator $P:(H_0^1\cap H^2)(\widetilde{\Omega })\to L^2(\widetilde{\Omega }) $ has a bounded inverse $Q$ satisfying {\rm (\ref{1.tb2,5})}.

\noindent
The Dirichlet problem {\rm (\ref{1.tb2,7})} has a unique solution $u\in C^\infty (\overline{\Omega }_f)$. Here the boundary condition is more
explicit in {\rm (\ref{2.tb2,7})}. For the equivalent formulation {\rm (\ref{3.tb2,7})}, the solution $u$ has the asymptotic expansion
{\rm (\ref{3.tb3})} in $C^\infty (\overline{\widetilde{\Omega }})$, where $u_0(\widetilde{x},\widetilde{y})$ is given in {\rm (\ref{4.tb2,7})}.
\end{prop}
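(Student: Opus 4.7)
The assertions of the proposition consist almost entirely of statements already established in the preceding discussion, and the plan is essentially to verify each of them in order, namely: the form (\ref{1.tb0,5}) of the Laplacian in the $w$--coordinates; the expansion (\ref{1.tb1}) of $P$ after the flattening transformation; the existence and boundedness of the inverse $Q$ with the mapping properties (\ref{1.tb2,5}); the solvability of the Dirichlet problem (\ref{1.tb2,7}); and the asymptotic expansion (\ref{3.tb3}) of the solution.

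For the form (\ref{1.tb0,5}) of the Laplacian, I would apply the chain rule to express $\partial_z, \partial_{\overline{z}}$ in terms of $\partial_w, \partial_{\overline{w}}$, using that $\partial_{\overline{w}}\widetilde{\gamma} = \mathcal O((\Im w)^\infty)$ because $\widetilde{\gamma}$ is an almost holomorphic extension of $\gamma$. Inverting the relations (\ref{4.tb0})--(\ref{5.tb0}) gives (\ref{6.tb0})--(\ref{7.tb0}), and composition yields the stated form of $\Delta_{z,\overline z}$ after the normalization $|\dot\gamma|=1$, which forces $|\partial_w\widetilde{\gamma}|=1$ on $\{y=0\}$. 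Applying next the explicit linear change of variables (\ref{3.tb0,5}) that maps the variable-width band $\Omega_f$ to the fixed band $\widetilde{\Omega} = \mathbb{T}_\lambda \times ]-1,1[$, and computing $\partial_{\widetilde{x}}, \partial_{\widetilde{y}}$ as in (\ref{0.tb1}), I would substitute into $\Delta_{x,y}$ and multiply by $\epsilon^2(f_++f_-)^2/4$ to obtain the expansion (\ref{1.tb1}) of $P$, with the classical symbol remainders $a,b,c,d,e \in S^0_{\mathrm{fcl}}(\overline{\widetilde{\Omega}})$.

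For the existence of the inverse $Q$ with the mapping properties (\ref{1.tb2,5}), I would view $P$ as a semiclassical pseudodifferential operator on $\mathbb{T}_\lambda$ with the operator-valued symbol $P(\widetilde{x},\widetilde{\xi};\epsilon)$ in the class (\ref{5.tb2}), following the framework of \cite{GeMaSj91} and \cite[Chapter 13]{DiSj}. The key step is the uniform ellipticity of the principal symbol $\frac{(f_++f_-)^2}{4}\widetilde{\xi}^2 + D_{\widetilde{y}}^2$ on $(H_{\widetilde{\xi}}^2\cap H_0^1)(]-1,1[) \to L^2(]-1,1[)$, which is verified in the preceding discussion by treating the regimes $|\widetilde{\xi}|\geq 1$ (where both positive terms directly control all three $\widetilde{\xi}$--weighted Sobolev norms) and $|\widetilde{\xi}|\leq 1$ (where the Poincar\'e inequality, available because of the Dirichlet condition, combines with elliptic regularity on $]-1,1[$) separately. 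Ellipticity then produces a symbol inverse in the class (\ref{6.tb2}) and in its higher-order Sobolev counterparts (\ref{6gen.tb2}); quantizing and using that $P$ is Fredholm and bijective for small $\epsilon$ gives $Q$ with the stated bounds, uniformly in $\epsilon$.

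For the Dirichlet problem and the asymptotic expansion, I would take the boundary-matching approximation $u_0$ from (\ref{4.tb2,7}), so that by (\ref{1.tb1}) we have $Pu_0 = \epsilon v_2$ with $v_2 \in S^0_{\mathrm{fcl}}(\overline{\widetilde{\Omega}})$. Then $QPu_0 \in (H_0^1\cap H^2)(\widetilde{\Omega})$ has vanishing trace, and $u = u_0 - QPu_0$ solves (\ref{3.tb2,7}); uniqueness is the standard energy estimate for $\Delta$ on a bounded domain, and interior plus boundary elliptic regularity gives $u \in C^\infty(\overline{\widetilde{\Omega}})$. To produce the expansion (\ref{3.tb3}), I would write $P = D_{\widetilde{y}}^2 + \epsilon R$ as in (\ref{1.tb3}) and expand $Q$ by the Neumann series (\ref{2.tb3}); each term $((D_{\widetilde{y}}^2)^{-1}R)^j (D_{\widetilde{y}}^2)^{-1}$ preserves $C^\infty(\overline{\widetilde{\Omega}})$, since $(D_{\widetilde{y}}^2)^{-1}$ inverts a smooth elliptic Dirichlet problem on $]-1,1[$ and $R$ is a smooth differential operator by inspection of (\ref{1.tb1}), so applying the series term by term to $\epsilon v_2$ and adding to $u_0$ yields the required $C^\infty$ expansion with $u_j \in C^\infty(\overline{\widetilde{\Omega}})$ for $j\geq 1$. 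I expect the main obstacle to be the rigorous deployment of the operator-valued semiclassical calculus with $\widetilde{\xi}$--dependent target spaces, together with the bifurcated ellipticity argument across both regimes of $\widetilde{\xi}$, both of which are handled as outlined above.
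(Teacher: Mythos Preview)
Your proposal is correct and follows precisely the approach of the paper: the proposition is a summary of the preceding discussion, and each item you list (the coordinate computation giving (\ref{1.tb0,5}), the flattening leading to (\ref{1.tb1}), the operator-valued semiclassical ellipticity argument split over $|\widetilde{\xi}|\geq 1$ and $|\widetilde{\xi}|\leq 1$ yielding $Q$ with (\ref{1.tb2,5}), the construction $u=u_0-QPu_0$, and the Neumann-series expansion (\ref{2.tb3}) producing (\ref{3.tb3})) matches the paper's treatment step for step.
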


\medskip
\noindent
We remark that even though we can get exact solutions to our Dirichlet problem, formal asymptotic ones with $\mathcal{O}(\epsilon ^\infty )$ errors in $C^\infty$ will suffice for our purposes. See also \cite[Section 12.2]{Sj19}.

\medskip
\noindent
Let $\varphi \in C^\infty (M;\mathbb{ R})$ satisfy (\ref{2.tb0}), still with $\gamma $ connected, and choose coordinates
$w=x+iy$ as above. We shall study the Dirichlet problem (\ref{1.tb2,7}), now with the functions $f_\pm(x)=f_\pm(x;\epsilon )\in S^0_{\mathrm{fcl}}(\mathbb{T}_\lambda )$ to be determined and with $v_\pm (x)=\varphi (x,\pm \epsilon f_\pm (x))\pm \tau /2$:
\begin{equation}\label{1.tb4}
\Delta u=0\hbox{ in }\Omega _f,\ \ u(x,\pm \epsilon f_\pm (x))=\varphi
(x,\pm \epsilon f_\pm (x))\pm \tau /2,\ \ u\in C^\infty (\overline{\Omega }_f).
\end{equation}
Here $\tau >0$ will be of the order of magnitude $\epsilon ^3$.

\medskip
\noindent
Our problem is to find $f_\pm =f_\pm(x,\epsilon )\asymp 1$, such that in addition to (\ref{1.tb4}), we have
\begin{equation}\label{2.tb4}
\epsilon \partial _yu(x,\pm \epsilon f_\pm (x))=\epsilon \partial
_y\varphi (x,\pm \epsilon f_\pm (x))+\mathcal{ O}(\epsilon ^\infty )\hbox{ in
}C^\infty .
\end{equation}
We look for $f_\pm$ with an asymptotic expansion
$$
f_\pm (x)=f_\pm (x,\epsilon )\sim f_\pm^0(x)+\epsilon
f_\pm^0(x)+ \ldots \hbox{ in }C^\infty .
$$
The right hand side in (\ref{1.tb4}) will be $\mathcal{ O}(\epsilon^3)$, as we shall see, and
we shall look for $u$ of the form (\ref{7.tb4}) below with an additional factor $\epsilon ^3$ in front, compared to (\ref{3.tb3}).

\medskip
\noindent
We may assume that
\begin{equation}\label{3+.tb4}
\varphi =\mathcal{ O}(\mathrm{dist\,}(\cdot ,\gamma )^3).
\end{equation}
In fact, recall that $\Delta \varphi =0$ on $\gamma $ by (\ref{2.tb0}), and let $\widetilde{\varphi }\in C^\infty (\Omega )$ satisfy
\begin{equation}\label{4+,tb4}
\Delta \widetilde{\varphi}=\Delta \varphi +\mathcal{ O}(\mathrm{dist\,}(\cdot
,\gamma )^\infty ),\ \ \widetilde{\varphi }=\mathcal{
  O}(\mathrm{dist\,}(\cdot ,\gamma )^3).
\end{equation}
Assume that $\widetilde{u}\in C^\infty (\overline{\Omega }_f)$ solves the problem (\ref{1.tb4}), (\ref{2.tb4}), with $\varphi$ replaced by
$\widetilde{\varphi }$. Let $u=\widetilde{u}+\varphi -\widetilde{\varphi}$. Then $\Delta u =\mathcal{ O}(\epsilon ^\infty )$ in $\Omega _f$ and
similarly (\ref{2.tb4}) holds. Modifying $u$ by a term $\mathcal{ O}(\epsilon ^\infty )$ in $C^\infty $, we can get an exact solution
of (\ref{1.tb4}), (\ref{2.tb4}).

\medskip
\noindent
From now on assume (\ref{3+.tb4}), so that
\begin{equation}\label{6+.tb4}
\varphi =\mathcal{ O}(y^3)
\end{equation}
in the coordinates $(x,y)$ on $\Omega $, with $w=x+iy$.

\medskip
\noindent
Let us rewrite the problem in the $(\widetilde{x},\widetilde{y})$ variables, using the same symbol for the function $u$. We expect $\tau
=\mathcal{ O}(\epsilon ^3)$ and put
\begin{equation}\label{3.tb4}
\tau =\epsilon ^3\widehat{\tau },\ \ \tau > 0,
\end{equation}
expecting to have $\widehat{\tau } = \mathcal O(1)$. We look for $u=u(\widetilde{x},\widetilde{y};\epsilon )\in C^\infty (\overline{\widetilde{\Omega}})$ such that (cf.\ (\ref{0.tb1}))
\begin{equation}\label{4.tb4}
Pu=\mathcal{O}(\epsilon ^\infty )\hbox{ in }C^\infty .
\end{equation}
\begin{equation}\label{5.tb4}
u(\widetilde{x},\pm 1)=\varphi (\widetilde{x},\pm \epsilon f_\pm (\widetilde{x},\epsilon ))\pm \frac{\tau }{2},
\end{equation}
\begin{equation}\label{6.tb4}
\partial _{\widetilde{y}}u(\widetilde{x},\pm 1)=\frac{\epsilon
  (f_+(\widetilde{x},\epsilon )+f_-(\widetilde{x},\epsilon
  ))}{2}(\partial _y\varphi )(\widetilde{x},\pm \epsilon f_\pm
(\widetilde{x},\epsilon ))+\mathcal{ O}(\epsilon ^\infty )\hbox{ in
}C^\infty .
\end{equation}
We try $u\in \epsilon ^3S^0_{\mathrm{fcl}}(\overline{\widetilde{\Omega}})$,
\begin{equation}\label{7.tb4}
u(\widetilde{x},\widetilde{y};\epsilon )\sim \epsilon
^3(u_0(\widetilde{x},\widetilde{y})+\epsilon
u_1(\widetilde{x},\widetilde{y})+ \ldots)\hbox{ in }\epsilon ^3C^\infty
(\overline{\widetilde{\Omega }}),
\end{equation}
and use that
\begin{equation}\label{8.tb4}
\partial _{\widetilde{y}}u(\widetilde{x},\widetilde{y};\epsilon )\sim \epsilon
^3(\partial _{\widetilde{y}}u_0(\widetilde{x},\widetilde{y})+\epsilon
\partial _{\widetilde{y}}u_1(\widetilde{x},\widetilde{y})+ \ldots).
\end{equation}
We look for $f_\pm (\widetilde{x},\epsilon )\in
S^0_{\mathrm{fcl}}(\mathbb{ T}_\lambda )$,
\begin{equation}\label{9.tb4}
f_\pm (\widetilde{x},\epsilon )\sim f^0_{\pm}(\widetilde{x})+\epsilon
f^1_\pm (\widetilde{x})+ \ldots
\end{equation}
Recall that by (\ref{4.tb4}), (\ref{1.tb3}),
\begin{equation}\label{10.tb4}
0=(D_{\widetilde{y}}^2+\epsilon R)(u_0+\epsilon
u_1+ \ldots)=D_{\widetilde{y}}^2u_0+\mathcal{ O}(\epsilon ),\ \ D_{\widetilde{y}}^2u_0=0.
\end{equation}
We have, by a Taylor expansion in $y$,
\begin{equation}\label{11.tb4}
\varphi (\widetilde{x},\pm \epsilon f_\pm(\widetilde{x},\epsilon ))=\pm
\frac{\epsilon ^3}{6}f^0_{\pm}(\widetilde{x})^3\partial _y^3\varphi
(\widetilde{x},0)+\mathcal{ O}(\epsilon ^4),
\end{equation}
\begin{equation}\label{12.tb4}
\partial _y\varphi (\widetilde{x},\pm \epsilon f_\pm(\widetilde{x},\epsilon ))=
\frac{\epsilon ^2}{2}f^0_{\pm}(\widetilde{x})^2\partial _y^3\varphi
(\widetilde{x},0)+\mathcal{ O}(\epsilon ^3).
\end{equation}
To the leading order in $\epsilon $ we get from (\ref{3.tb4}), (\ref{4.tb4}), (\ref{5.tb4}), and (\ref{11.tb4}),
\begin{equation}
\label{1.tb5}
D_{\widetilde{y}}^2u_0=0,
\end{equation}
\begin{equation}\label{2.tb5}
u_0(\widetilde{x},\pm 1)=\pm
\left(\frac{1}{6}f_\pm^0(\widetilde{x})^3\varphi
  ^{(3)}(\widetilde{x})+\frac{\widehat{\tau }}{2} \right),\ \ \varphi
^{(3)}(\widetilde{x}):=\partial _y^3\varphi (\widetilde{x},0).
\end{equation}
It follows from (\ref{1.tb5}) that $u_0(\widetilde{x},\widetilde{y})$ is affine in $\widetilde{y}$, so (\ref{2.tb5}) gives that
$$
u_0(\widetilde{x},\widetilde{y})=\left(\frac{\varphi
    ^{(3)}(\widetilde{x})}{6}f^0_+(\widetilde{x})^3+\frac{\widehat{\tau } }{2}
\right)\frac{1+\widetilde{y}}{2}
-
\left(\frac{\varphi
    ^{(3)}(\widetilde{x})}{6}f^0_-(\widetilde{x})^3+\frac{\widehat{\tau
    }}{2} \right)\frac{1-\widetilde{y}}{2},
$$
\begin{equation}\label{3.tb5}
  u_0(\widetilde{x},\widetilde{y})=
  \left(\frac{\varphi
      ^{(3)}(\widetilde{x})}{12}\left(f_+^0(\widetilde{x})^3-f_-^0(\widetilde{x})^3
    \right) \right)
  +
  \left(\frac{\varphi
      ^{(3)}(\widetilde{x})}{12}\left(f_+^0(\widetilde{x})^3+f_-^0(\widetilde{x})^3
    \right) +\frac{\widehat{\tau }}{2} \right) \widetilde{y}.
\end{equation}
Hence
\begin{equation}\label{4.tb5}
  \partial _{\widetilde{y}}u_0(\widetilde{x},\widetilde{y})=
  \frac{\varphi
      ^{(3)}(\widetilde{x})}{12}\left(f_+^0(\widetilde{x})^3+f_-^0(\widetilde{x})^3
    \right) +\frac{\widehat{\tau }}{2}
\end{equation}

\medskip
\noindent
To the leading order ($\epsilon ^3$), (\ref{4.tb5}), (\ref{12.tb4}) and the condition (\ref{6.tb4}) give
$$
\frac{\varphi^{(3)}(\widetilde{x})}{12}\left(f_+^0(\widetilde{x})^3+f_-^0(\widetilde{x})^3
\right) +\frac{\widehat{\tau }}{2} = \frac{1}{4}(f_+^0+f_-^0)(f_\pm^0)^2\varphi ^{(3)}(\widetilde{x}),
$$
which we can write as follows,
\begin{equation}
\label{5.tb5}
\frac{1}{6}((f_+^0)^3+(f_-^0)^3)+\frac{\widehat{\tau }}{\varphi ^{(3)}} = \frac{1}{2}(f_+^0+f_-^0)(f_\pm^0)^2.
\end{equation}
The left hand side in (\ref{5.tb5}) is independent of the choice of $f_+^0$ or $f_-^0$ in the right hand side, so we necessarily have
\begin{equation}
\label{6.tb5}
f_+^0=f_-^0=:f_0,
\end{equation}
and (\ref{5.tb5}) reduces to
$$
\frac{1}{3}f_0^3+\frac{\widehat{\tau }}{\varphi ^{(3)}}=f_0^3,\,\,\hbox{
  i.e.\ }\, \frac{2}{3}f_0^3=\frac{\widehat{\tau }}{\varphi ^{(3)}},
$$
\begin{equation}\label{7.tb5}
f_\pm^0(\widetilde{x})=\left(\frac{3}{2}\frac{\widehat{\tau
    }}{\varphi ^{(3)}(\widetilde{x})} \right)^{1/3},
\end{equation}
where we recall that $\tau =\epsilon ^3\widehat{\tau }$. By
(\ref{3,5.tb0}) we have
$$
\varphi ^{(3)}(\widetilde{x})>0.
$$

\medskip
\noindent
Let us summarize the discussion so far:
\begin{prop}
\label{1tb5}
Let $f_\pm\in S^0_{\mathrm{fcl}}(\mathbb{ T}_\lambda )$ have the asymptotic expansion {\rm (\ref{9.tb4})}, $\tau = \epsilon ^3\widehat{\tau}$,
$\widehat{\tau } = \mathcal O(1)$, $f_+^0,\, f_-^0>0$. For any such $f_+$, $f_-$, the problem {\rm (\ref{4.tb4})}, {\rm (\ref{5.tb4})} has a unique solution $\mathrm{mod\,}\mathcal{ O}(\epsilon^\infty)$ of the form {\rm (\ref{7.tb4})}, where $u_0$ is affine in $\widetilde{y}$ and has the form {\rm (\ref{3.tb5})}. Furthermore, for such $u,\, f_+,\, f_-$, we have {\rm (\ref{6.tb4})} with an error $\mathcal{ O}(\epsilon ^4)$,
\begin{equation}
\label{1.tb6}
\partial _{\widetilde{y}}u(\widetilde{x},\pm 1)=\frac{\epsilon (f_+(\widetilde{x},\epsilon )+f_-(\widetilde{x},\epsilon
  ))}{2}(\partial _y\varphi )(\widetilde{x},\pm \epsilon  f_\pm
(\widetilde{x},\epsilon ))+\mathcal{ O}(\epsilon ^4 )
\end{equation}
if and only if $f_+^0=f_-^0=:f_0$ is given by {\rm (\ref{7.tb5})}.
\end{prop}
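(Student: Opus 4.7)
The plan is to verify the two assertions of the proposition in turn: first the existence and uniqueness of a formal asymptotic solution of the form \eqref{7.tb4}, and then the characterization of $f_\pm^0$ via the Neumann-type matching condition \eqref{1.tb6}.

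For the existence and uniqueness of $u$, I would exploit the decomposition $P = D_{\widetilde y}^2 + \epsilon R$ from \eqref{1.tb3}, together with the fact that $D_{\widetilde y}^2\colon H^2\cap H_0^1(]-1,1[)\to L^2(]-1,1[)$ is an isomorphism depending smoothly on the slow variable $\widetilde x$ as a parameter. Inserting the ansatz $u = \epsilon^3\sum_{j\geq 0}\epsilon^j u_j(\widetilde x,\widetilde y)$ into $Pu=\mathcal O(\epsilon^\infty)$ and Taylor-expanding the boundary data \eqref{5.tb4} in powers of $\epsilon$, using \eqref{6+.tb4}, yields at each order a Dirichlet problem
\begin{equation*}
D_{\widetilde y}^2 u_j = F_j(\widetilde x,\widetilde y),\qquad u_j(\widetilde x,\pm 1)=g_j^{\pm}(\widetilde x),
\end{equation*}
where $F_0=0$ and $g_0^\pm(\widetilde x)=\pm\bigl(\tfrac{1}{6}(f_\pm^0)^3\varphi^{(3)}(\widetilde x)+\tfrac{\widehat\tau}{2}\bigr)$ by \eqref{11.tb4}, while for $j\geq 1$ both $F_j$ and $g_j^{\pm}$ are determined in terms of $u_0,\ldots,u_{j-1}$ and of lower-order data through $R$ and the higher Taylor coefficients of $\varphi$. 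Each such problem admits a unique $C^\infty(\overline{\widetilde\Omega})$ solution, obtained by pointwise integration in $\widetilde y$ followed by the affine correction matching the Dirichlet data, with smoothness in $\widetilde x$ inherited from the parameters. At $j=0$, the vanishing of $F_0$ forces $u_0$ to be affine in $\widetilde y$, and matching the boundary data gives exactly \eqref{3.tb5}. Uniqueness mod $\mathcal O(\epsilon^\infty)$ follows because the difference of two such formal solutions obeys homogeneous problems at every order.

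For the characterization of $f_\pm^0$, since $u_0$ is affine, $\partial_{\widetilde y}u(\widetilde x,\pm 1)$ is at leading order the constant \eqref{4.tb5}, while the right-hand side of \eqref{1.tb6} is expanded via \eqref{12.tb4}. Matching at order $\epsilon^3$, both the $+$ and $-$ versions of \eqref{1.tb6} collapse to the single identity \eqref{5.tb5}. The central observation is that the left-hand side of \eqref{5.tb5} is symmetric under the interchange of $f_+^0$ and $f_-^0$, whereas the right-hand side contains the factor $(f_\pm^0)^2$ depending on the sign choice; compatibility of the two versions therefore forces $(f_+^0)^2=(f_-^0)^2$, and the positivity assumption $f_\pm^0>0$ yields $f_+^0=f_-^0=:f_0$. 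Substituting back reduces \eqref{5.tb5} to $\tfrac{2}{3}f_0^3=\widehat\tau/\varphi^{(3)}$, that is, \eqref{7.tb5}. Conversely, if $f_\pm^0$ is given by \eqref{7.tb5}, direct substitution shows that \eqref{5.tb5} is satisfied for both signs, so \eqref{1.tb6} holds with an error of order $\epsilon^4$, coming from the next Taylor terms.

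The genuinely non-trivial point is the symmetry argument giving $f_+^0=f_-^0$: one must notice that the cubic combination $(f_+^0)^3+(f_-^0)^3$ on the left of \eqref{5.tb5} is sign-blind, so the only asymmetry comes from the quadratic factor $(f_\pm^0)^2$ on the right, forcing the two widths of the band to agree at leading order. Everything else is a routine application of the bijectivity of $D_{\widetilde y}^2$ on $H^2\cap H_0^1(]-1,1[)$ and of the vanishing $\varphi=\mathcal O(y^3)$, which makes the Dirichlet data start precisely at order $\epsilon^3$.
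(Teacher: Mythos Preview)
Your proposal is correct and follows essentially the same approach as the paper: the proposition is stated there as a summary of the preceding discussion, which uses the decomposition $P=D_{\widetilde y}^2+\epsilon R$ to solve the hierarchy of Dirichlet problems, derives the affine form of $u_0$ from $D_{\widetilde y}^2u_0=0$ and the Taylor expansion \eqref{11.tb4}, and then runs the same symmetry argument on \eqref{5.tb5} to force $f_+^0=f_-^0$ and obtain \eqref{7.tb5}. Your write-up is slightly more explicit about the inductive structure and the converse direction, but the ideas coincide.
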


\bigskip
\noindent
We next consider higher order corrections in the problem
(\ref{4.tb4}), (\ref{5.tb4}), (\ref{6.tb4}). Assume that we have found
$f_+$, $f_-$ as in Proposition \ref{1tb5} such that with $u$ solving
(\ref{4.tb4}), (\ref{5.tb4}) up to $\mathcal{ O}(\epsilon ^\infty $) we
have
\begin{equation}\label{2.tb6}
\partial _{\widetilde{y}}u(\widetilde{x},\pm 1)=\frac{\epsilon }{2}
(f_+(\widetilde{x},\epsilon )+f_-(\widetilde{x},\epsilon ))(\partial
_y\varphi ) (\widetilde{x},\pm \epsilon f_\pm (\widetilde{x},\epsilon ))+a_\pm
(\widetilde{x})\epsilon ^{3+N}+\mathcal{ O}(\epsilon ^{4+N}),
\end{equation}
for some $N\in \mathbb{ N}\cap [1,+\infty [$ and for some smooth real
functions $a_+$, $a_-$.

\medskip
\noindent
We modify $f_\pm$ by putting
\begin{equation}
\label{3.tb6}
\widehat{f }_\pm (\widetilde{x},\epsilon )=f_\pm (\widetilde{x},\epsilon)+g_\pm (\widetilde{x})\epsilon ^N,\ \ g_\pm(\widetilde{x})\hbox{ smooth}.
\end{equation}
Then we have (cf.\ (\ref{12.tb4})),
\begin{equation}
\label{4.tb6}
\begin{split}
& \varphi (\widetilde{x},\pm \epsilon \widehat{f}_\pm)-
\varphi (\widetilde{x},\pm \epsilon f_\pm) \\
& = \pm \partial _y\varphi (\widetilde{x},\pm \epsilon f_\pm )g_\pm
   (\widetilde{x})\epsilon ^{N+1}+\mathcal{ O}(\epsilon ^{2N+3})\\
 & =\pm \partial_y \varphi (\widetilde{x},\pm \epsilon f_0)g_\pm
 (\widetilde{x})\epsilon ^{N+1}+\mathcal{ O}(\epsilon ^{N+4}) \\
 & = \pm \frac{1}{2}\partial _y^3\varphi (\widetilde{x},0)f_0 ^2
 g_\pm \epsilon ^{N+3}+\mathcal{ O}(\epsilon ^{N+4}),
\end{split}
\end{equation}
\begin{equation}\label{5.tb6}
  \begin{split}
   & \partial _y\varphi (\widetilde{x},\pm \epsilon \widehat{f}_\pm)-
   \partial _y\varphi (\widetilde{x},\pm \epsilon f_\pm)\\
   &=\pm \partial _y^2\varphi (\widetilde{x},\pm \epsilon f_\pm )g_\pm
   (\widetilde{x})\epsilon ^{N+1}+\mathcal{ O}(\epsilon ^{2N+2})\\
 &=\pm \partial _y^2\varphi (\widetilde{x},\pm \epsilon f_0 )g_\pm
 (\widetilde{x})\epsilon ^{N+1}+\mathcal{ O}(\epsilon ^{N+3})\\
 &= \partial _y^3\varphi (\widetilde{x},0)f_0
 g_\pm \epsilon ^{N+2}+\mathcal{ O}(\epsilon ^{N+3}).
 \end{split}
\end{equation}

\medskip
\noindent
Let $\widehat{u}$ be a solution ($\mathrm{mod\,}\mathcal{ O}(\epsilon^\infty ))$ to the problem (\ref{4.tb4}), (\ref{5.tb4}), with $f_\pm$
replaced by $\widehat{f}_\pm$ in (\ref{3.tb6}). Then
\begin{equation}
\label{1.tb7}
P(\widehat{u}-u)=\mathcal{ O}(\epsilon ^\infty ) \hbox{ in }C^\infty .
\end{equation}
\begin{equation}\label{2.tb7}
(\widehat{u}-u)(\widetilde{x},\pm 1)=\varphi (\widetilde{x},\pm \epsilon
\widehat{f}_\pm (\widetilde{x}))-\varphi (\widetilde{x},\pm \epsilon
f_\pm (\widetilde{x})).
\end{equation}
Here we use (\ref{4.tb6}) to see that
\begin{equation}\label{3.tb7}
(\widehat{u}-u)(\widetilde{x},\pm 1)=\pm \frac{1}{2}\partial _y^3 \varphi
(\widetilde{x},0)f_0^2g_\pm \epsilon ^{N+3}+\mathcal{ O}(\epsilon ^{N+4}).
\end{equation}
We conclude that
\begin{equation}\label{4.tb7}
\widehat{u}-u=\epsilon ^{N+3}v+\mathcal{ O}(\epsilon ^{N+4}),
\end{equation}
where $v$ is the function, affine in $y$, such that
$$
v(\widetilde{x},\pm 1)=\pm \frac{1}{2}\partial _y^3\varphi
(\widetilde{x},0)f_0^2g_\pm,
$$
i.e.\
\begin{equation}\label{5.tb7}
  \begin{split}
v(\widetilde{x},\widetilde{y})&=\frac{1}{2}\partial _y^3\varphi
(\widetilde{x},0)f_0^2\left( g_+ \frac{1+\widetilde{y}}{2}
-g_- \frac{1-\widetilde{y}}{2}
\right)\\
&=
\frac{1}{4}\partial _y^3\varphi
(\widetilde{x},0)f_0^2\left(
g_+-g_-+\left(g_++g_-\right)\widetilde{y}
\right).
  \end{split}
\end{equation}
In particular,
\begin{equation}\label{6.tb7}
  \partial _{\widetilde{y}}v(\widetilde{x},\widetilde{y})=
  \frac{1}{4}\partial _y^3\varphi
(\widetilde{x},0)f_0^2\left(g_++g_-\right) ,
\end{equation}
for all $\widetilde{y}$.
Equation (\ref{4.tb7}) is valid in the $C^\infty $ sense so
$$
\partial _{\widetilde{y}}\widehat{u}-\partial
_{\widetilde{y}}u=\epsilon ^{N+3}\partial _{\widetilde{y}}v+\mathcal{
  O}(\epsilon ^{N+4}),
$$
and hence by (\ref{6.tb7}),
\begin{equation}
\label{7.tb7}
\partial _{\widetilde{y}}\widehat{u}-\partial_{\widetilde{y}}u = \frac{1}{4}\partial _y^3\varphi
(\widetilde{x},0)f_0^2\left(g_++g_-\right)\epsilon
^{N+3}+\mathcal{ O}(\epsilon ^{N+4}).
\end{equation}

\medskip
\noindent
We now return to (\ref{2.tb6}). Choosing $\widehat{f}_\pm$ in
(\ref{3.tb6}), we want to modify the term
$a_\pm (\widetilde{x})\epsilon ^{N+3}$ in (\ref{2.tb6}) arbitrarily by replacing
$f_+,\,f_-,\,u$ by $\widehat{f}_+,\,\widehat{f}_-,\,\widehat{u}$. We
have already obtained an approximation to $\partial_{\widetilde{y}} \widehat{u}-\partial_{\widetilde{y}} u$ in (\ref{7.tb7}), and we next look at
\begin{equation}\label{1.tb8}
\epsilon \frac{\widehat{f}_++\widehat{f}_-}{2}(\partial _y\varphi )(\widetilde{x},\pm
\epsilon \widehat{f}_\pm)
- \epsilon \frac{f_++f_-}{2}(\partial _y \varphi )(\widetilde{x},\pm
\epsilon f_\pm)
= \mathrm{I}_\pm+\mathrm{II}_\pm,
\end{equation}
where
$$
\mathrm{I}_\pm=
\epsilon \frac{\widehat{f}_++\widehat{f}_-}{2}\left((\partial _y\varphi )(\widetilde{x},\pm
  \epsilon \widehat{f}_\pm)
  -(\partial_y \varphi )(\widetilde{x},\pm \epsilon f_\pm),
   \right)
   $$
   $$
\mathrm{II}_\pm=\frac{\epsilon
}{2}(\widehat{f}_+-f_++\widehat{f}_--f_-)(\partial_y \varphi
)(\widetilde{x},\pm \epsilon f_\pm ).
   $$
By (\ref{5.tb6}),
\begin{equation}
\label{2.tb8}
\begin{split}
\mathrm{I}_\pm&=\frac{\epsilon}{2}(\widehat{f}_++\widehat{f}_-)(\partial _y^3\varphi
  (\widetilde{x},0)f_0g_\pm \epsilon ^{N+2}+\mathcal{ O}(\epsilon^{N+4})\\
  &
=\epsilon ^{N+3}\partial _y^3\varphi (\widetilde{x},0)f_0^2g_\pm+\mathcal{
  O}(\epsilon ^{N+4}).
\end{split}
\end{equation}
By (\ref{3.tb6}),
$$
\mathrm{II}_\pm=\epsilon ^{N+1}\frac{1}{2}(g_++g_-)\partial _y\varphi
(\widetilde{x},\pm \epsilon f_\pm)
$$ and by (\ref{12.tb4})
\begin{equation}\label{3.tb8}
  \mathrm{II}_\pm=\frac{1}{4}\epsilon ^{N+3}(g_++g_-)(f_\pm^0)^2\partial
  _y^3\varphi (\widetilde{x},0)+\mathcal{ O}(\epsilon ^{N+4}).
\end{equation}
Thus by (\ref{1.tb8}),
\begin{equation}\label{4.tb8}\begin{split}
\epsilon& \frac{\widehat{f}_++\widehat{f}_-}{2}(\partial _y
\varphi )(\widetilde{x},\pm \epsilon \widehat{f}_\pm)-
\epsilon \frac{f_++f_-}{2}(\partial _y
\varphi )(\widetilde{x},\pm \epsilon f_\pm)\\
&=\epsilon ^{N+3}
  (\partial _y^3\varphi
  (\widetilde{x},0)f_0^2g_\pm+\frac{1}{4}\epsilon ^{N+3}(g_++g_-)f_0^2\partial _y^3\varphi (\widetilde{x},0)
  +\mathcal{ O}(\epsilon ^{N+4}).
\end{split}
\end{equation}

\medskip
\noindent
We have the
following analogue of (\ref{2.tb6}):
\begin{equation}\label{1.tb9}
\partial _{\widetilde{y}}\widehat{u}(\widetilde{x},\pm 1) = \epsilon \frac{\widehat{f}_+(\widetilde{x},\epsilon
    )+\widehat{f}_-(\widetilde{x},\epsilon )}{2}(\partial _y\varphi
  )(\widetilde{x},\pm \epsilon \widehat{f}_\pm (\widetilde{x},\epsilon
  ))+\widehat{a}_\pm(\widetilde{x})\epsilon ^{3+N}+\mathcal{ O}(\epsilon ^{4+N}),
\end{equation}
where $\widehat{a}_\pm (\widetilde{x})$ are smooth. Taking the difference (\ref{1.tb9})$-$(\ref{2.tb6}) and using (\ref{4.tb8}),
(\ref{7.tb7}) gives
\begin{multline*}
\frac{1}{4} \partial _y^3\varphi (\widetilde{x},0)f_0^2(g_++g_-)\epsilon
^{N+3}=
\left(\widehat{a}_\pm(\widetilde{x})-a_\pm(\widetilde{x})
\right)\epsilon ^{N+3}+\mathcal{ O}(\epsilon ^{N+4})\\
+\epsilon ^{N+3}\left(\partial _y^3\varphi
  (\widetilde{x},0) \right)
f_0^2g_\pm+\frac{1}{4}\epsilon ^{N+3}(g_++g_-)f_0 ^2 \partial
_y^3\varphi (\widetilde{x},0),
\end{multline*}
which amounts to the following identity for the leading order terms,
$$
\widehat{a}_\pm-a_\pm = - \partial _y^3\varphi (\widetilde{x},0)f_0^2 g_\pm.
$$

\medskip
\noindent
In conclusion we have the following result.
\begin{prop}\label{1tb11}
Let $f_\pm$ be as in Proposition {\rm \ref{1tb5}}, such that with
$u\in S^0_{\mathrm{fcl}}(\overline{\widetilde{\Omega }})$
solving {\rm (\ref{4.tb4})}, {\rm (\ref{5.tb4})}, we have {\rm (\ref{2.tb6})},
$$
\partial _{\widetilde{y}}u(\widetilde{x},\pm 1)=\frac{\epsilon
}{2}(f_+(\widetilde{x},\epsilon )+f_-(\widetilde{x},\epsilon ))
(\partial _y\varphi )(\widetilde{x},\pm \epsilon f_\pm
(\widetilde{x},\epsilon ))+a_\pm (\widetilde{x})\epsilon ^{3+N}+\mathcal{
  O}(\epsilon ^{4+N}),
$$
for some $N\in \mathbb{ N}\cap [1,+\infty [$ and some smooth real
functions $a_\pm$. Let $\widehat{a}_\pm (\widetilde{x})$ be smooth
real functions independent of $\epsilon $. Then there exist unique
smooth real functions $g_\pm$, independent of $\epsilon$, such that if
$\widehat{f}_\pm$ are defined by {\rm (\ref{3.tb6})} and $\widehat{u}\in
S^0_{\mathrm{fcl}}(\overline{\widetilde{\Omega }})$ is the
solution of {\rm (\ref{4.tb4})}, {\rm (\ref{5.tb4})} with $f_\pm$ replaced by
$\widehat{f}_\pm$, then
\begin{equation}\label{2.tb11}
\partial _{\widetilde{y}}\widehat{u}(\widetilde{x},\pm 1)=\frac{\epsilon
}{2}(\widehat{f}_+(\widetilde{x},\epsilon )+\widehat{f}_-(\widetilde{x},\epsilon ))
(\partial _y\varphi )(\widetilde{x},\pm \epsilon \widehat{f}_\pm
(\widetilde{x},\epsilon ))+\widehat{a}_\pm (\widetilde{x})\epsilon ^{3+N}+\mathcal{
  O}(\epsilon ^{4+N}).
\end{equation}
\end{prop}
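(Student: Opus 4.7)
The proof reduces to solving for $g_\pm$ from the perturbation analysis just carried out in the preceding computation. The difference $\widehat u - u$ has been explicitly identified in (\ref{4.tb7})--(\ref{6.tb7}) as $\epsilon^{N+3} v + \mathcal{O}(\epsilon^{N+4})$, where $v$ is affine in $\widetilde y$ and linearly determined by $(g_+, g_-)$; and the mismatch between the boundary flux quantities $\frac{\epsilon}{2}(\widehat f_+ + \widehat f_-)(\partial_y \varphi)(\widetilde x, \pm \epsilon \widehat f_\pm)$ and their unhatted counterparts has been quantified in (\ref{4.tb8}) via the splitting $\mathrm{I}_\pm + \mathrm{II}_\pm$. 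The plan is to subtract (\ref{2.tb6}) from (\ref{1.tb9}) at $\widetilde y = \pm 1$ and identify the coefficient of $\epsilon^{N+3}$ on both sides.

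Performing this matching, with $(\partial_{\widetilde y}(\widehat u - u))(\widetilde x, \pm 1)$ from (\ref{7.tb7}) on the left and the perturbation of the flux term from (\ref{4.tb8}) on the right, yields the linear relation
$$
\widehat a_\pm(\widetilde x) - a_\pm(\widetilde x) = -\partial_y^3 \varphi(\widetilde x, 0)\, f_0(\widetilde x)^2\, g_\pm(\widetilde x),
$$
which is precisely the identity displayed at the end of the text preceding the statement. Because $\partial_y^3 \varphi(\widetilde x, 0) > 0$ by (\ref{3,5.tb0}) and $f_0 > 0$ by (\ref{7.tb5}), this equation admits the unique smooth real solution
$$
g_\pm(\widetilde x) = -\frac{\widehat a_\pm(\widetilde x) - a_\pm(\widetilde x)}{\partial_y^3 \varphi(\widetilde x, 0)\, f_0(\widetilde x)^2},
$$
independent of $\epsilon$. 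Setting $\widehat f_\pm = f_\pm + g_\pm \epsilon^N$ and defining $\widehat u$ as the resulting solution of (\ref{4.tb4})--(\ref{5.tb4}), the boundary derivative identity (\ref{2.tb11}) then holds by construction.

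The main obstacle I anticipate is the decoupling of the $+$ and $-$ equations. A priori the system for $(g_+, g_-)$ could mix, since the sum $g_+ + g_-$ appears both in $\partial_{\widetilde y} v$ via (\ref{6.tb7}) and in the symmetric term $\mathrm{II}_\pm$ from (\ref{3.tb8}), and inversion of a coupled $2 \times 2$ linear system would then be required. The structural reason these contributions cancel is that the affine-in-$\widetilde y$ form of $v$ in (\ref{5.tb7}) forces $\partial_{\widetilde y} v$ to be $\widetilde y$-independent, so it produces identical boundary values at $\widetilde y = +1$ and $\widetilde y = -1$; this matches exactly the symmetric $(g_+ + g_-)$ piece on the right, leaving only the $\pm$-diagonal term $\partial_y^3 \varphi\, f_0^2\, g_\pm$ and hence the simple decoupled equation above.
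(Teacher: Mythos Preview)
Your proof is correct and follows exactly the paper's approach: the computations (\ref{4.tb7})--(\ref{7.tb7}) and (\ref{1.tb8})--(\ref{4.tb8}) already done in the text yield the decoupled linear relation $\widehat a_\pm - a_\pm = -\partial_y^3\varphi(\widetilde x,0)\,f_0^2\,g_\pm$, and the positivity of $\partial_y^3\varphi(\widetilde x,0)$ and $f_0$ gives existence and uniqueness of $g_\pm$. Your remark on the cancellation of the $(g_+ + g_-)$ terms is a nice clarification of why the system decouples.
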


\medskip
\noindent
If we choose $\widehat{a}_\pm =0$ in (\ref{2.tb11}) and write the remainder as $\check{a}_\pm(\widetilde{x})\epsilon ^{N+4}+\mathcal{
O}(\epsilon ^{N+5})$, we get
$$
\partial _{\widetilde{y}}\widehat{u}(\widetilde{x},\pm 1)=\frac{\epsilon
}{2}(\widehat{f}_+(\widetilde{x},\epsilon )+\widehat{f}_-(\widetilde{x},\epsilon ))
(\partial _y\varphi )(\widetilde{x},\pm \epsilon \widehat{f}_\pm
(\widetilde{x},\epsilon ))+\check{a}_\pm (\widetilde{x})\epsilon ^{4+N}+\mathcal{
  O}(\epsilon ^{5+N}).
$$
Also recall from (\ref{3.tb6}), (\ref{4.tb7}) that
$$
\widehat{f}_\pm -f_\pm=\mathcal{ O}(\epsilon ^N),\ \ \widehat{u}-u=\mathcal{O}(\epsilon ^{N+3}).
$$
If we choose $\check{a}_\pm=0$, the procedure can be repeated, and in the limit we get $f_\pm^\infty $, $u^\infty $, solving (\ref{4.tb4}),
(\ref{5.tb4}), and also (\ref{6.tb4}): 
\begin{equation}
\label{3.tb11}
\partial _{\widetilde{y}}u^\infty (\widetilde{x},\pm 1) = \frac{\epsilon
}{2}(f^\infty _+(\widetilde{x},\epsilon )+f_-^\infty (\widetilde{x},\epsilon ))
(\partial _y\varphi )(\widetilde{x},\pm \epsilon f_\pm^\infty
(\widetilde{x},\epsilon ))+\mathcal{ O}(\epsilon ^\infty ).
\end{equation}

\medskip
\noindent
We now return to the coordinates $(x,y)$ for which $\Omega _f$ (still assumed to be connected) is given by (\ref{2.tb0,5}). Here we
choose $f_\pm =f_\pm^\infty $ in (\ref{3.tb11}). Abusing the notation, we denote by $u^\infty $ the same function as a function of
$(x,y)$. Thus we have
\begin{equation}\label{1.tb12}
\Delta u^\infty = \mathcal{ O}(\epsilon ^\infty ) \hbox{ in }\Omega _f,
\end{equation}
\begin{equation}\label{2.tb12}
u^\infty (x,\pm\epsilon f_\pm^\infty (x))=\varphi (x,\pm f^\infty
_\pm(x))\pm \tau /2,\ \ x\in \mathbb{ T}_\lambda ,
\end{equation}
\begin{equation}\label{3.tb12}
(\partial _y u^\infty) (x,\pm \epsilon f^\infty _\pm (x))=(\partial _y\varphi
)(x,\pm f_\pm^\infty (x,\epsilon ))+\mathcal{ O}(\epsilon ^\infty ).
\end{equation}

\medskip
\noindent
Let $f_\pm=f^\infty _\pm$, $\partial _\pm\Omega_f=\{(x,\pm\epsilon f_\pm(x) ); x\in \mathbb T_{\lambda}\}$, so that $\partial \Omega _f=\partial
_+\Omega _f\cup \partial _-\Omega _f$. Let $n$ be the unit normal to $\partial \Omega_f $, oriented in the direction of increasing
$\Delta \varphi $: $\langle n, d\Delta \varphi \rangle >0$. (This is the
$y$-direction by (\ref{3,5.tb0}).) We now break the symmetry and
define
$$
u=u^\infty -\tau /2,
$$
so that (\ref{1.tb12})--(\ref{3.tb12}) become
\begin{equation}
\label{4.tb12}
\Delta u=\mathcal{ O}(\epsilon ^\infty )\hbox{ in }\Omega _f,
\end{equation}
\begin{equation}\label{5.tb12}
  u=\begin{cases}
    \varphi \hbox{ on }\partial _+\Omega _f\\
    \varphi -\tau \hbox{ on }\partial _-\Omega _f
  \end{cases},
\end{equation}
\begin{equation}\label{6.tb12}
\partial _nu=\partial _n\varphi +\mathcal{ O}(\epsilon ^\infty )\hbox{ on
}\partial \Omega _f.
\end{equation}
Here we recall that we can drop the assumption (\ref{3+.tb4}), as explained after that equation.

\medskip
\noindent
Modifying $u$ by adding a term which is $\mathcal{ O}(\epsilon ^\infty )$ with all its derivatives, we may assume that (\ref{6.tb12}) holds
exactly, i.e. without the remainder $\mathcal{ O}(\epsilon ^\infty)$. Using this function $u$ we shall construct upper bound and
lower bound weights, as introduced in Definitions \ref{def_ubw_n}, \ref{def_lbw_n}. When doing so, we work in the coordinates $\widetilde{x},\widetilde{y}$. Using the same notation for functions of $(x,y)$ and of $(\widetilde{x},\widetilde{y})$, $\widetilde{y}=\mathcal{ O}(1)$, we have in view of (\ref{3.tb0,5}),
\begin{equation}\label{1.prad1}
\varphi (x,y)=\varphi\left(\widetilde{x},\frac{\epsilon
}{2}(f_+-f_-+(f_++f_-)\widetilde{y}\right)\sim \epsilon ^3(\varphi
_0(\widetilde{x},\widetilde{y})+\epsilon
\varphi_1(\widetilde{x},\widetilde{y})+ \ldots),
\end{equation}
in $C^\infty (\mathbb{T}_\lambda \times [-1,1])$. Here $f_\pm=f_\pm (\widetilde{x},\epsilon )$ and
\begin{equation}
\label{2.prad1}
\varphi _0(\widetilde{x},\widetilde{y})=f_0^3 \partial _y^3\varphi (x,0)\frac{\widetilde{y}^3}{6}.
\end{equation}
Recall the similar expansion (\ref{7.tb4}) for $u(\widetilde{x},\widetilde{y})$, where by (\ref{3.tb5}), and the fact
that $f_\pm^0=f_0$, we have
\begin{equation}\label{3.prad1}
u_0(\widetilde{x},\widetilde{y}) = \left(\frac{1}{6}\varphi^{(3)}(\widetilde{x})f_0^3+\frac{\widehat{\tau }}{2} \right)
\widetilde{y}-\frac{\widehat{\tau }}{2}
\end{equation}
is affine in $\widetilde{y}$ and we have incorporated the subtraction of $\tau /2$ prior to (\ref{4.tb12}), (\ref{5.tb12}),
(\ref{6.tb12}). As observed above, we may assume that (\ref{6.tb12}) holds with no error term. It is clear that
\begin{equation}
\label{4.prad1}
\varphi - u \asymp \epsilon ^3(\widetilde{y}-1)^2,\ \ u-(\varphi -\tau )\asymp
\epsilon ^3 (\widetilde{y}+1)^2 \hbox{ on }\mathbb{ T}_\lambda \times [-1,1].
\end{equation}
It is clear from (\ref{5.tb12}), (\ref{6.tb12}), that
\begin{equation}\label{5.prad1}
\varphi -\tau \le u\le \varphi \hbox{ on }\mathbb{ T}_\lambda \times [-1,1],
\end{equation}
or  on $\overline{\Omega } _f$ in the
$(x,y)$-variables.

\medskip
\noindent
We start with the construction of lower bound weights. Deform $u$ into a slightly subharmonic weight $u_{\mathrm{sub}}$ on $\mathbb{ T}_\lambda
\times [-1,1]$ such that for some $N\in [4,+\infty [$,
\begin{equation}
\label{ulb.1}
u_{\mathrm{sub}}-u= \epsilon ^N(\widetilde{y}-1)^2,
\end{equation}
\begin{equation}\label{ulb.2}
\Delta u_{\mathrm{sub}}\asymp \epsilon ^{N-2},
\end{equation}
and notice that
\begin{equation}\label{ulb.3}
\begin{cases}
u_{\mathrm{sub}}=u,\\
\partial _{\widetilde{y}}u_{\mathrm{sub}}=\partial _{\widetilde{y}}u,
\end{cases}\hbox{ when }\widetilde{y}=1.
\end{equation}
If $C>0$ is large enough, we get from (\ref{4.prad1}), (\ref{ulb.1}),
\begin{equation}\label{ulb.3,5}
  \begin{cases}
  u_{\mathrm{sub}}\le \varphi -\tau + C\epsilon ^N,\hbox{ when }\widetilde{y}=-1,\\
u_{\mathrm{sub}} \ge \varphi -\tau +C\epsilon ^N,\hbox{ when }-1+D\,\epsilon ^{(N-3)/2}\le \widetilde{y}\le 1,
\end{cases}.
\end{equation}
for some suitable $D>0$.

\medskip
\noindent
Let $\widetilde{\tau }=\tau -C\epsilon ^N$ and define the Lipschitz function $\psi _{lb}$ on $\mathbb T_{\lambda }\times
]-2,2[$ by
\begin{equation}
\label{ulb.4}
\psi _{lb}=\begin{cases}\varphi ,\ &\widetilde{y}\ge 1,\\
  \max (u_{\mathrm{sub}},\varphi -\widetilde{\tau}),\ & -1\le
  \widetilde{y}\le 1,\\
\varphi -\widetilde{\tau },\ &\widetilde{y}\le -1.\end{cases}
\end{equation}
Then $\varphi -\widetilde{\tau } \le \psi _{lb}\le \varphi $ and $\psi_{lb}$ is subharmonic in the region where
$\psi _{lb}>\varphi -\widetilde{\tau }$.

\medskip
\noindent
Now we drop the assumption that $\gamma$ is connected, see the discussion after \eqref{2.tb0}. By \eqref{2.tb0}, it follows that $\gamma$ is then the disjoint union of finitely many simple closed curves $\gamma_j$, $j=1,\dots,N$, each satisfying \eqref{2.tb0}. For each $\gamma_j$, we define the thin bands $\Omega_f^j$ as in \eqref{2.tb0,5}, the sets $\Omega_f^{\pm,j}$ as in \eqref{6.csp2} and similarly to \eqref{5.csp2} we put 
\begin{equation}
\label{5.csp2_mod}
\Omega _\pm^j=M_\pm\setminus \Omega_f^{\pm,j} , \quad j=1,\dots, N. 
\end{equation}
We then proceed as in (\ref{3.tb0,5})--(\ref{ulb.4}) for each $\Omega_f^j$ and get, for $\varepsilon>0$ small enough, functions $u^j$ as in 
(\ref{5.tb12})--(\ref{5.prad1}) and $u_{\mathrm{sub}}^j$, $\psi _{lb}^j$, $j=1,\dots,N$, as in \eqref{ulb.4}. Notice that the 
constant $C>0$ in \eqref{ulb.3,5} and in the definition of $\widetilde{\tau}$ 
above \eqref{ulb.4} can be chosen independently of $j=1,\dots,N$.

\medskip
\noindent
We may combine the functions $\psi_{lb}^j$ to a lower bound weight on all of $M$ by letting
\begin{equation}
\label{ulb4,5}
\psi _{lb}=\begin{cases}\varphi \hbox{ on } \bigcap_1^N\Omega _+^j,\\
  \max (u_{\mathrm{sub}}^j,\varphi -\widetilde{\tau })\hbox{ on }\Omega
  _f^j, ~~j=1,\dots,N,\\
\varphi -\widetilde{\tau }\hbox{ on }\bigcap_1^N\Omega _-^j.\end{cases}
\end{equation}

\medskip
\noindent
We obtain therefore the following result.
\begin{prop}\label{ulb1}
The function $\psi _{lb}$ defined in {\rm (\ref{ulb4,5})} is a lower bound weight in the sense of Definition {\rm \ref{def_lbw_n}}, with $\tau $ 
replaced by $\widetilde{\tau}=\tau -C\epsilon ^N$, when $C>0$ is large enough, and $\epsilon $ is small enough. Moreover, the contact set $M_+(\psi _{lb})$ in {\rm (\ref{eq2.7_b})} satisfies $M_+(\psi_{lb}) = \overline{\bigcap_1^N \Omega_+^j}$. 
\end{prop}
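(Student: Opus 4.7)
The plan is to verify the three defining properties of a lower bound weight from Definition \ref{def_lbw_n} directly from the construction (\ref{ulb4,5}), using the matching data (\ref{ulb.3}), (\ref{5.tb12}), (\ref{6.tb12}), and then to identify the contact set. First I would check continuity of $\psi_{lb}$ across the two interfaces $\partial_\pm\Omega_f$. At $\widetilde{y}=1$, (\ref{ulb.3}) gives $u_{\mathrm{sub}}=u$, which equals $\varphi$ by (\ref{5.tb12}), so the $\Omega_+$ piece and the $\Omega_f$ piece both equal $\varphi$ on $\partial_+\Omega_f$. At $\widetilde{y}=-1$, the first line of (\ref{ulb.3,5}) gives $u_{\mathrm{sub}}\le \varphi-\widetilde{\tau}$, so $\max(u_{\mathrm{sub}},\varphi-\widetilde{\tau})=\varphi-\widetilde{\tau}$, matching the $\Omega_-$ piece. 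The bounds $\varphi-\widetilde{\tau}\le \psi_{lb}\le \varphi$ then reduce on $\Omega_f$ to $u_{\mathrm{sub}}\le \varphi$, which follows from (\ref{ulb.1}) and (\ref{4.prad1}): since $N\ge 4>3$, for $\epsilon$ small one has $\epsilon^N(\widetilde{y}-1)^2\le c\epsilon^3(\widetilde{y}-1)^2\le \varphi-u$.

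Next I would verify subharmonicity on the open set $\{\psi_{lb}>\varphi-\widetilde{\tau}\}=\Omega_+\cup U$, where $U=\{z\in\Omega_f : u_{\mathrm{sub}}(z)>\varphi(z)-\widetilde{\tau}\}$. On $\Omega_+\subseteq M_+$, $\psi_{lb}=\varphi$ is strictly subharmonic since $\Delta\varphi>0$; on $U$, $\psi_{lb}=u_{\mathrm{sub}}$ is strictly subharmonic by (\ref{ulb.2}). The main obstacle is the matching across the shared interface $\partial_+\Omega_f$. Combining (\ref{ulb.3}) with (\ref{5.tb12}) and the exact version of (\ref{6.tb12}) assumed in the text, $u_{\mathrm{sub}}$ and $\varphi$ coincide along with all their first-order derivatives on $\partial_+\Omega_f$: tangential derivatives agree because the values agree on the curve, while the normal derivatives match by construction. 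Consequently $\psi_{lb}$ is of class $C^1$ across $\partial_+\Omega_f$, so no singular contribution is produced on this interface, the distributional Laplacian equals the pointwise one on each piece, and $\Delta\psi_{lb}\ge 0$ on $\Omega_+\cup U$.

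Finally I would compute the contact set $M_+(\psi_{lb})=\{\psi_{lb}=\varphi\}$. On $\Omega_+$ it holds tautologically; on $\Omega_-$ it fails, since $\psi_{lb}=\varphi-\widetilde{\tau}<\varphi$; on $\Omega_f$, the condition $\psi_{lb}=\varphi$ forces $u_{\mathrm{sub}}=\varphi$, and by (\ref{ulb.1}) together with the lower bound in (\ref{4.prad1}) and $N>3$ this can only occur where $(\widetilde{y}-1)^2=0$, i.e. on $\partial_+\Omega_f$. A short topological check using (\ref{5.csp2})--(\ref{6.csp2}) shows that $\overline{\Omega}_+=\Omega_+\cup \partial_+\Omega_f$ (the curve $\gamma$ is separated from $\Omega_+$ by the upper half of the band), so $M_+(\psi_{lb})=\overline{\Omega}_+$. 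The principal difficulty throughout is the $C^1$ matching at $\partial_+\Omega_f$ that underpins subharmonicity across the interface; the remaining steps are straightforward piecewise verifications.
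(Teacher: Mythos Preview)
Your proposal is correct and follows the same approach as the paper. The paper's own argument is the construction (\ref{ulb.1})--(\ref{ulb.4}) capped by the one-line assertion ``Then $\varphi-\widetilde{\tau}\le \psi_{lb}\le\varphi$ and $\psi_{lb}$ is subharmonic in the region where $\psi_{lb}>\varphi-\widetilde{\tau}$''; you have simply spelled out the verification that this sentence encodes, including the $C^1$ matching across $\partial_+\Omega_f$ (via (\ref{ulb.3}), (\ref{5.tb12}), and the exact version of (\ref{6.tb12})) that guarantees no singular term in $\Delta\psi_{lb}$ at the interface.
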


\medskip
\noindent
To get an upper bound weight, we deform $u^j$ into a slightly superharmonic weight, $u^j_{\mathrm{sup}}$, such that
\begin{equation}\label{ulb.5}
u_{\mathrm{sup}}^j-u^j= -\epsilon ^N(\widetilde{y}+1)^2,
\end{equation}
\begin{equation}\label{ulb.6}
\Delta u_{\mathrm{sup}}^j\asymp -\epsilon^{N-2},
\end{equation}
and notice that
\begin{equation}\label{ulb.7}
\begin{cases}
u_{\mathrm{sup}}^j=u^j\\
\partial _{\widetilde{y}}u_{\mathrm{sup}}^j=\partial _{\widetilde{y}}u^j,
\end{cases} \hbox{ when }\widetilde{y}=-1.
\end{equation}
We have $u_{\mathrm{sup}}^j-(\varphi -\tau )\asymp\epsilon ^3(\widetilde{y}+1)^2$ near $\widetilde{y}=-1$. We also have
$u_{\mathrm{sup}}^j\le \varphi $ for $-1\le \widetilde{y}\le 1$ and $u_{\mathrm{sup}}^j-\varphi \asymp -\epsilon ^N$ in an $\epsilon $-dependent 
neighborhood of $\widetilde{y}=1$.

\medskip
\noindent
If $C>0$ is sufficiently large but independent of $\epsilon $, we get
\begin{equation}\label{ulb.8}
u_{\mathrm{sup}}^j+C\epsilon ^N\ge \varphi ,\hbox{ when }\,\, \widetilde{y}=1,
\end{equation}
and
\begin{equation}\label{ulb.9}
u_{\mathrm{sup}}^j+C\epsilon ^N\le \varphi , \hbox{ when}\,\, |\widetilde{y}-1|\gg \epsilon ^{\frac{N-3}{2}}.
\end{equation}
Notice that we can and will choose $C>0$ independently of $j=1,\dots,N$. Put
\begin{equation}
\label{ulb.10}
\widetilde{\tau } = \tau -C\epsilon ^N
\end{equation}
and define
\begin{equation}\label{ulb.11}
\psi _{ub}=\begin{cases}
\varphi -\widetilde{\tau }, \ &\hbox{ in }\bigcap_1^N\Omega _-^j\\
    \min (u_{\mathrm{sup}}^j+C\epsilon ^N,\varphi ),\ & \hbox{ in }\Omega
    _f^j, ~~j=1,\dots,N,\\
    \varphi & \hbox{ in }\bigcap_1^N\Omega _+^j.
\end{cases}
\end{equation}
Then $\psi _{ub}\le \varphi$, with the contact set $M_+(\psi_{ub})$ satisfying
\begin{equation}
\label{ulb.12}
\bigcap_1^N \overline{\Omega}_+^j \subseteq M_+(\psi _{ub})\subseteq 
\bigcap_1^N\overline{\Omega}_+^j 
\cup \{ (\widetilde{x},\widetilde{y}) \in \widetilde{\Omega}_f^j; 
      \, 1-\mathcal{ O}(\epsilon^{(N-3)/2})\le \widetilde{y}\le 1 \},
\end{equation}
\begin{equation}\label{ulb.13}
\psi _{ub}\ge \varphi -\widetilde{\tau },
\end{equation}
and $\psi _{ub}$ is superharmonic outside $M_+(\psi _{ub})$. In \eqref{ulb.12} 
$\widetilde{\Omega}_f^j$ denotes the rescaled set $\Omega_f^j$ as in 
\eqref{3.tb0,5}. In conclusion, we obtain the following result.

\begin{prop}
\label{ulb2}
The function $\psi _{ub}$ defined in {\rm (\ref{ulb.11})} is an upper bound weight in the sense of Definition {\rm \ref{def_ubw_n}},
with $\tau $ replaced by $\widetilde{\tau } = \tau - C \epsilon^N$, when $C>0$ is large enough, and $\epsilon $ is small enough.
The contact set $M_+(\psi _{ub})$ satisfies {\rm (\ref{ulb.12})}.
\end{prop}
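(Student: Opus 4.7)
The plan is to verify the three defining properties of an upper bound weight (continuity, the two-sided bound $\varphi-\widetilde{\tau}\le \psi_{ub}\le \varphi$, and superharmonicity on $\{\psi_{ub}<\varphi\}$) for the piecewise definition (\ref{ulb.11}), and then analyze the contact set via (\ref{ulb.8}), (\ref{ulb.9}).

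First, I would check that the three pieces of (\ref{ulb.11}) glue continuously across $\partial_\pm\Omega_f$, i.e.\ at $\widetilde{y}=\pm 1$. At $\widetilde{y}=-1$, the identity $u_{\mathrm{sup}}=u$ from (\ref{ulb.7}) together with $u=\varphi-\tau$ from (\ref{5.tb12}) gives $u_{\mathrm{sup}}+C\epsilon^N = \varphi-\widetilde{\tau}<\varphi$, so the middle piece of (\ref{ulb.11}) reduces to $\varphi-\widetilde{\tau}$, matching the $\Omega_-$ piece. At $\widetilde{y}=1$, (\ref{ulb.8}) gives $u_{\mathrm{sup}}+C\epsilon^N\ge \varphi$, so the middle piece reduces to $\varphi$, matching the $\Omega_+$ piece. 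The two-sided inequality is now immediate piecewise: $\psi_{ub}\le \varphi$ is built into the $\min$ and $\widetilde{\tau}>0$; for $\psi_{ub}\ge \varphi-\widetilde{\tau}$, the only nontrivial case is in the band where it reduces to $u_{\mathrm{sup}}\ge \varphi-\tau$, which follows from (\ref{5.prad1}) and (\ref{4.prad1}) since $u-(\varphi-\tau)\asymp \epsilon^3(\widetilde{y}+1)^2\ge \epsilon^N(\widetilde{y}+1)^2$ for $N\ge 4$ and $\epsilon$ small.

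Next I would establish superharmonicity on the open set $\Omega':=\{\psi_{ub}<\varphi\}$. Since $\psi_{ub}=\varphi$ on $\Omega_+$, we have $\Omega'\subseteq \Omega_-\cup \Omega_f$. On $\Omega_-$, $\psi_{ub}=\varphi-\widetilde{\tau}$ and $\Delta\psi_{ub}=\Delta\varphi<0$ because $\Omega_-\subseteq M_-$. On the open set $\{u_{\mathrm{sup}}+C\epsilon^N<\varphi\}\cap\Omega_f$, the min in (\ref{ulb.11}) is attained by the first argument, so $\psi_{ub}=u_{\mathrm{sup}}+C\epsilon^N$ and $\Delta\psi_{ub}=\Delta u_{\mathrm{sup}}\asymp -\epsilon^{N-2}<0$ by (\ref{ulb.6}). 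The delicate point, and what I expect to be the main obstacle, is the matching across $\partial_-\Omega_f$: on either side of $\widetilde{y}=-1$, both formulas give $\psi_{ub}=\varphi-\widetilde{\tau}$, and the outer normal derivatives agree because (\ref{ulb.7}) together with the (corrected) identity (\ref{6.tb12}) gives $\partial_n u_{\mathrm{sup}}=\partial_n u=\partial_n\varphi$ on $\partial_-\Omega_f$. Hence $\psi_{ub}$ is $C^1$ across $\partial_-\Omega_f$, so $\Delta\psi_{ub}$ (as a distribution) carries no singular surface contribution, and the piecewise strict negativity of $\Delta\psi_{ub}$ on $\Omega'$ gives the required superharmonicity.

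Finally, for the contact set $M_+(\psi_{ub})=\{\psi_{ub}=\varphi\}$: in $\Omega_+$ we have equality by construction, giving $\overline{\Omega}_+\subseteq M_+(\psi_{ub})$ (including $\partial_+\Omega_f$, by (\ref{ulb.8})). In $\Omega_-$ we have $\psi_{ub}=\varphi-\widetilde{\tau}<\varphi$. In $\Omega_f$, equality holds precisely where $u_{\mathrm{sup}}+C\epsilon^N\ge \varphi$, and (\ref{ulb.9}) confines this region to $\widetilde{y}\in [1-\mathcal{O}(\epsilon^{(N-3)/2}),1]$. Combining these yields the inclusion (\ref{ulb.12}), completing the proof.
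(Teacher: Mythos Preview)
Your proof is correct and follows the same approach as the paper, which treats the proposition as an immediate consequence of the construction (\ref{ulb.5})--(\ref{ulb.11}) and simply asserts the conclusions in the paragraph preceding the statement. You have supplied the details the paper leaves implicit: the continuity checks at $\widetilde{y}=\pm1$, the verification of $u_{\mathrm{sup}}\ge \varphi-\tau$ via (\ref{4.prad1}), the $C^1$ matching across $\partial_-\Omega_f$ (using (\ref{ulb.7}) and the exact form of (\ref{6.tb12})) to rule out a singular layer in $\Delta\psi_{ub}$, and the localization of the contact set from (\ref{ulb.8}), (\ref{ulb.9}).
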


\medskip
\noindent
Let us also notice that we can choose the same constant $C$ and $\widetilde{\tau }$ for the weights $\psi _{ub}$ and $\psi _{lb}$. Combining Theorems \ref{thm:UpperBd}, \ref{thm:LowerBd} and Propositions \ref{ulb1}, \ref{ulb2}, we get
\begin{prop}
\label{1prad5}
Let $0 < \tau = \epsilon^3 \widehat{\tau}$, with $\widehat{\tau} = \mathcal O(1)$, and set $\widetilde{\tau} = \tau - C \epsilon^N$, for some fixed $4\leq N \in \mathbb N$. The number $N([0,e^{-\widetilde{\tau }/h}])$ of singular values of $P$ in the interval $[0,e^{-\widetilde{\tau }/h}]$ satisfies
\begin{equation}\label{3.prad5}
N([0,e^{-\widetilde{\tau }/h}]) \begin{cases} \le
  \frac{1}{2\pi h}\int_{M_+(\psi_{\mathrm{ub}})}\, \Delta \varphi(z)\, \omega(z, dz d\overline{z}) +\frac{o(1)}{h},\\
\ge \frac{1}{2\pi h}\int_{M_+(\psi_{\mathrm{lb}})}\Delta \varphi(z)\, \omega(z, dz d\overline{z}) - \frac{o(1)}{h}
\end{cases},\ h\to 0^+.
\end{equation}
Here $M_+(\psi _{lb})=\overline{\bigcap_1^N \Omega_+^j}$ and $M_+(\psi _{ub})$ satisfies {\rm (\ref{ulb.12})}. The estimates are not necessarily uniform for all small $\epsilon >0$ but they are uniform for $\epsilon \in [\beta ,2\beta ]$, for each fixed sufficiently small $\beta >0$.
\end{prop}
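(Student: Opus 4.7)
The approach is simply to apply Theorem \ref{thm:UpperBd} and Theorem \ref{thm:LowerBd} to the weights $\psi_{ub}$ and $\psi_{lb}$ constructed in Propositions \ref{ulb2} and \ref{ulb1}, after verifying the geometric hypotheses ($M_+\subseteq M_+$, nonempty interior, Lipschitz regularity in the lower-bound case) and then checking uniformity in $\epsilon$ on each compact interval $[\beta,2\beta]$.

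For the upper bound, Proposition \ref{ulb2} already provides that $\psi_{ub}\in C(M;\mathbb R)$ is an upper bound weight in the sense of Definition \ref{def_ubw_n}, with $\tau$ replaced by $\widetilde{\tau}$. I would then verify that the contact set $M_+(\psi_{ub})\subseteq M_+$. From \eqref{ulb.12}, $M_+(\psi_{ub})\subseteq \overline{\Omega}_+\cup\{1-\mathcal{O}(\epsilon^{(N-3)/2})\le \widetilde{y}\le 1\}$. Since $\Omega_+=M_+\setminus \Omega_f^+$ stays at distance $\asymp \epsilon$ from $\gamma=\{\Delta\varphi=0\}$, its closure lies in $M_+$. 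The thin sliver near $\widetilde{y}=1$ corresponds to the region $y$ close to $\epsilon f_+^0(x)>0$, a fixed positive distance from $\gamma$; for $\epsilon$ small enough this too is contained in $M_+$, since $\partial_y\Delta\varphi>0$ along $\gamma$ by \eqref{3,5.tb0}. Finally, $\mathrm{int}(M_+(\psi_{ub}))\supseteq \Omega_+\ne\emptyset$. Theorem \ref{thm:UpperBd} then gives the first inequality in \eqref{3.prad5}.

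For the lower bound, Proposition \ref{ulb1} ensures that $\psi_{lb}$ in \eqref{ulb4,5} is a lower bound weight with parameter $\widetilde{\tau}$. The crucial observation is that $\psi_{lb}$ is Lipschitz: it is a pointwise maximum of the smooth functions $\varphi$, $u_{\mathrm{sub}}$, $\varphi-\widetilde{\tau}$, whose local pieces match continuously across the boundaries of $\Omega_f$ (and even $C^1$ across $\partial_+\Omega_f$ by \eqref{ulb.3}, while the matching with $\varphi-\widetilde{\tau}$ across the moving free boundary within $\Omega_f$ is across a Lipschitz graph). By Proposition \ref{ulb1} we have $M_+(\psi_{lb})=\overline{\Omega}_+\subseteq M_+$ with $\mathrm{int}(M_+(\psi_{lb}))=\Omega_+\ne\emptyset$, and likewise $M_-(\psi_{lb})=\overline{\Omega}_-$ has nonempty interior since $M_-\ne\emptyset$ by \eqref{eq2.6}. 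The Lipschitz clause in Theorem \ref{thm:LowerBd} then yields the lower bound for $N([0,e^{-\widetilde{\tau}/h}])$ directly (no further reduction of the exponential rate by a modulus of continuity is required).

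The main obstacle, as I see it, is the uniformity assertion at the end of the proposition. Both Theorem \ref{thm:UpperBd} and Theorem \ref{thm:LowerBd} produce an $o(1)/h$ remainder whose implicit constants depend on the weight $\psi_{ub/lb}$ through the geometry of its contact set, through the Bergman projection constructed near $M_+(\psi)\Subset M_+$, and through the quasimode/H\"ormander estimates in $\Omega$. All the ingredients of our construction — the widths $f_\pm(\widetilde{x},\epsilon)$, the harmonic-type approximate solution $u$, the deformations $u_{\mathrm{sub}}$, $u_{\mathrm{sup}}$, the Lipschitz seminorm of $\psi_{lb}$, and the distances from $\overline{\Omega}_\pm$ and $M_+(\psi_{ub})$ to $\gamma$, all of order $\asymp \epsilon$ — depend smoothly and non-degenerately on $\epsilon$ and stay in a fixed compact range for $\epsilon\in[\beta,2\beta]$. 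I would therefore track through the arguments of Section \ref{sec:bounds} to confirm that the $o(1)/h$ terms are uniform on each such interval, and this completes the proof.
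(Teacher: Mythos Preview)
Your approach is exactly the paper's: the proposition is obtained by combining Theorems \ref{thm:UpperBd} and \ref{thm:LowerBd} with Propositions \ref{ulb1} and \ref{ulb2}, and your verification of the hypotheses (contact sets lying in $M_+$, nonempty interiors, Lipschitz continuity of $\psi_{lb}$, and uniformity of the $o(1)/h$ terms for $\epsilon\in[\beta,2\beta]$) is correct and in fact more detailed than what the paper writes.
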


\medskip
\noindent
We now come to prove Theorem \ref{1csp4_n}. When doing so, recall the set $\gamma$ in \eqref{2.tb0} and assume again until further notice that it is connected, hence a simple closed curve. Let us also recall the unit speed parametrization of $\gamma$ given in (\ref{3.tb0}), its almost holomorphic extension in (\ref{3.tb0.7.5}), as well as the notation $z=\widetilde{\gamma }(w)$, $w=x+iy$, $x\in \mathbb{T}_{\lambda}$, $y\in (-\alpha,\alpha)$. It follows from (\ref{eq_metric}), \eqref{eq:RS_n1}, \eqref{eq:RS_n2}, 
(\ref{eq:RS_n0.3b}) that when expressed in terms of the $(x,y)$ coordinates, the area form $\omega$ is given by 
\begin{equation}
\label{2.csp1.5}
\omega = \sqrt{\det g(x,y)}dxdy = \theta(x,y)(1+\mathcal{O}(|y|^{\infty}))\,dxdy.
\end{equation}
Here we identify the $(1,1)$-form $dx\wedge dy$ with the Lebesgue measure $dx dy$. We get using (\ref{1.tb0,5}) and \eqref{2.csp1.5}, 
\begin{equation}
\label{5.csp1}
(\Delta u)\omega=\left(\Delta _{x,y}u +\mathcal{ O}(|y|^\infty )(\partial_x^2u,\partial_{x}\partial_{y}u,\partial_y^2u,\partial_{x}u,\partial_{y}u)
\right)dxdy, \quad u \in C^{\infty}(M).
\end{equation}

\medskip
\noindent
Recall that $\varphi \in C^{\infty}(M; \mathbb R)$ is such that $\Delta \varphi $ satisfies (\ref{2.tb0}), and that we have equipped $\gamma$ with an orientation such that $\partial_y \Delta \varphi >0$ on $\gamma $, see (\ref{3,5.tb0}). On $\gamma $ we have $\partial _y u=\partial _n u$
when $u$ is smooth, where $n$ is the unit normal to $\gamma $, oriented in the positive $y$-direction. Recall also that $\widetilde{\varphi }$ is a real smooth function defined near $\gamma $, unique modulo $\mathcal{ O}(y^\infty )$, such that
(\ref{4+,tb4}) holds. We get using also \eqref{5.csp1},
\begin{equation}
\label{6.csp1}
\begin{cases}
\Delta _{x,y}\widetilde{\varphi }=\Delta _{x,y}\varphi +\mathcal{ O}(y^\infty)=d(x)y+\mathcal{ O}(y^2),\\
(\partial _y^j\widetilde{\varphi })(x,0)=0,\hbox{ for }0\le j\le 2,
\end{cases}
\end{equation}
where
\begin{equation}\label{1.csp2}
d(x)={{\partial _y\Delta _{x,y}\varphi }_\vert}_{y=0}={{\partial _n \Delta_{x,y}\varphi }_\vert}_{\gamma }.
\end{equation}
It follows that
\begin{equation}\label{2.csp2}
\widetilde{\varphi }(x,y)=\frac{d(x)y^3}{6}+\mathcal{ O}(y^4).
\end{equation}

\medskip
\noindent
We next recall the domain $\Omega _f$ in (\ref{2.tb0,5}), (\ref{2,5.tb0,5}), for which there is a solution $u$ to
(\ref{4.tb4})--(\ref{6.tb4}), now with the original $\varphi $ replaced by $\widetilde{\varphi }=\mathcal{ O}(y^3)$ in Proposition \ref{1tb5},
\ref{1tb11}. The functions $f_\pm$ have the asymptotic expansions (\ref{2,5.tb0,5}), where
$f^0_\pm(x)=f_0(x)$ are given by
\begin{equation}\label{3.csp2}
f_0(x)=\left(\frac{3}{2}\frac{\widehat{\tau }}{\partial
    _y^3\widetilde{\varphi }(x,0)}
\right)^{1/3}=\left(\frac{3}{2}\frac{\widehat{\tau }}{d(x)} \right)^{1/3}.
\end{equation}
(Cf. (\ref{6.tb5}), (\ref{7.tb5}).)

\medskip
\noindent
With these preliminaries in place, we can study the integral 
\begin{equation}
\label{4.csp2}
\iint_{\Lambda _\varphi }1_{\Omega _+}(x) \sigma|_{\Lambda _\varphi }
=\int_{\Omega_+}\Delta \varphi\, \omega.
\end{equation}
Here we have also used {\eqref{eq1.31}}. When $W\subseteq M_+$ is measurable, we write
$$
\mathrm{Vol}(W)=\int_W \Delta \varphi\, \omega,
$$
so that by (\ref{5.csp2}),
\begin{equation}
\label{7.csp2}
\mathrm{Vol}(\Omega _+)=\mathrm{Vol}(M_+)-\mathrm{Vol}(\Omega _f^+),
\end{equation}
and here $\mathrm{Vol}(M_+)$ is independent of $\tau $.

\medskip
\noindent
By (\ref{5.csp1}) we have
\begin{equation*}
\mathrm{Vol}(\Omega _f^+) = \int_{\mathbb{ T}_\lambda
}\left(\int_0^{\epsilon f_+(x,\epsilon )} \Delta _{x,y}\varphi\,dy\right)dx +\mathcal{ O}(\epsilon ^\infty),
\end{equation*}
and therefore by (\ref{6.csp1}),
\begin{equation*}
\begin{split}
\mathrm{Vol}(\Omega _f^+) &= \int_{\mathbb{
    T}_\lambda }\left(\int_0^{\epsilon f_+(x,\epsilon )}(d(x)y+\mathcal{
    O}(y^2)) dy \right) dx + \mathcal{ O}(\epsilon ^\infty) \\
&= \int_{\mathbb{T}_\lambda }\left(
\frac{1}{2}d(x)f_0(x)^2\epsilon ^2+\mathcal{ O}(\epsilon ^3)
\right) dx + \mathcal{ O}(\epsilon ^\infty) \\
&=\epsilon ^2\int_{\mathbb{ T}_\lambda }\frac{1}{2}d(x)f_0(x)^2 dx+\mathcal{
  O}(\epsilon ^3).
\end{split}
\end{equation*}
Inserting (\ref{3.csp2}) gives
$$
\mathrm{Vol}(\Omega _f^+)=\epsilon ^2 \frac{1}{2}\left(\frac{3}{2}
\right)^{2/3}\widehat{\tau }^{2/3}\int_{\mathbb{ T}_\lambda } d(x)^{1/3}dx + \mathcal{ O}(\epsilon ^3),
$$
and since $\tau=\epsilon^3 \widehat{\tau } $ by (\ref{3.tb4}), we get
\begin{equation}\label{1.csp3}
\mathrm{Vol}(\Omega _f^+)=\tau ^{2/3}\frac{1}{2}\left(\frac{3}{2}
\right)^{2/3}\int_\gamma (\partial _n\Delta \varphi )^{1/3} dx +\mathcal{
  O}(\epsilon ^3),
\end{equation}
recalling that $d(x)=\partial _n\Delta \varphi $ on $\gamma $ and that $dx$ is the arc length element along $\gamma $.

\medskip
\noindent
Since $d={{\partial _n\Delta \phi }_\vert}_{\gamma }\asymp 1$, we infer from (\ref{1.csp3}) that
\begin{equation}
\label{2.csp3}
\mathrm{Vol}(\Omega _f^+)\asymp\mathcal{ O}(\epsilon ^2),
\end{equation}
and with this in mind we can estimate the volume of other thin regions.

\medskip
\noindent
Comparing (\ref{1.csp3}) (including its proof) and (\ref{ulb.12}), we get
\begin{equation}\label{3.csp3}
\mathrm{Vol}(M_+(\psi_{\mathrm{ub}})\setminus \Omega_+)=\mathcal{ O}(\epsilon ^{2+(N-3)/2})=\mathcal{ O}(\epsilon ^{(N+1)/2}),
\end{equation}
and combining (\ref{3.csp3}) with Proposition \ref{1prad5}, we obtain that
\begin{equation}\label{5.csp3}
N([0,e^{-\widetilde{\tau }/h}])= \frac{1}{2\pi h}\left(\mathrm{Vol}(\Omega _+)+\mathcal{ O}(\epsilon ^{(N+1)/2})+o(1)
\right),\ h\to 0^+,
\end{equation}
where the "$\mathcal{ O}(\epsilon ^{(N+1)/2})$" is uniform in $\epsilon ,h$ for each $N\ge 4$ and the ``$o(1)$'' is not necessarily uniform in
$\epsilon$ when $\epsilon \to 0 $, but we do have uniformity for $\epsilon $ varying in a small interval $[\beta ,2\beta ]$, for each
small fixed $\beta >0$. We also recall that $\Omega _+=\Omega _+(\tau)$ depends on $\tau $.

\medskip
\noindent
With $\widehat{\tau }\asymp 1$ fixed and $\tau =\epsilon^3\widehat{\tau }$, we get
$$
\frac{d\tau }{\tau }=3\frac{d\epsilon }{\epsilon }.
$$
If $\widetilde{\epsilon }$ satisfies $|\widetilde{\epsilon }-\epsilon|\ll \epsilon $ so that
$\widetilde{\tau }=\widetilde{\epsilon}^3\widehat{\tau }$ satifies $|\widetilde{\tau }-\tau |\ll \tau $,
then
$$
\frac{\widetilde{\tau }-\tau }{\tau }\approx 3\frac{\widetilde{\epsilon }-\epsilon }{\epsilon }.
$$
We apply this to (\ref{5.csp3}) with $\widetilde{\tau }=\tau -\sigma $,
$\sigma =C\epsilon ^N$, by (\ref{ulb.10}), and get
$$
\frac{\widetilde{\epsilon }-\epsilon }{\epsilon }\approx
-\frac{1}{3}\frac{\sigma }{\tau }=-\frac{C}{3 \widehat{\tau }}\epsilon
^{N-3},\ \ \widetilde{\epsilon }-\epsilon \approx
-\frac{C}{3\widehat{\tau }}\epsilon ^{N-2}.
$$
Since $\mathrm{Vol}(\Omega _f^+(\tau ))\approx \mathrm{Const.}\epsilon
^2$, we see that
$$
\mathrm{Vol}(\Omega _f^+(\widetilde{\tau }))-\mathrm{Vol}(\Omega
_f^+(\tau ))\approx \mathrm{Const.}\epsilon (\widetilde{\epsilon
}-\epsilon )\approx \mathrm{Const.}\epsilon ^{N-1}.
$$
Since $N-1\ge (N+1)/2$ we can replace $\Omega _+(\tau )$ by $\Omega_+(\widetilde{\tau })$ in (\ref{5.csp3}) and writing $\tau $ instead of
$\widetilde{\tau }$, we get
\begin{equation}
\label{eq:tb100}
N([0,e^{-\tau /h}])= \frac{1}{2\pi h}\left(\mathrm{Vol}(\Omega
  _+)+\mathcal{ O}(\epsilon ^{(N+1)/2})+o(1) \right),\ h\to 0,
\end{equation}
with $\Omega _+=\Omega _+(\tau )$. 

\medskip
\noindent
We shall finally drop the assumption that $\gamma$ is connected and work with the assumptions and notation as in the paragraph above \eqref {5.csp2_mod}. Then we get in place of \eqref{7.csp2}, 
\begin{equation}\label{7.csp2b}
\mathrm{Vol}\left(\bigcap_1^N\Omega _+^j\right)=\mathrm{Vol}(M_+)-\sum_{j=1}^N\mathrm{Vol}(\Omega _f^{+,j}).
\end{equation}
We then apply the arguments in (\ref{7.csp2})--(\ref{eq:tb100}) for each $j$. In particular 
\eqref{1.csp3} becomes
\begin{equation}\label{1.csp3b}
\sum_{j=1}^N\mathrm{Vol}(\Omega _f^{+,j})=\tau ^{2/3}\frac{1}{2}\left(\frac{3}{2}
\right)^{2/3}\int_\gamma (\partial _n\Delta \varphi )^{1/3} dx +\mathcal{O}(\epsilon ^3), \quad \gamma = \bigcup_{j=1}^N \gamma_j.
\end{equation}
Combining (\ref{7.csp2b}), (\ref{1.csp3b}), and (\ref{eq:tb100}), with $\Omega_+$ replaced by $\bigcap_1^N\Omega_+^j$, we complete the proof of Theorem \ref{1csp4_n}.

\section{Optimal weights via a free boundary problem}
\label{sec:FB}

\medskip
\noindent
The purpose of this section is to prove Theorem \ref{thm:DPPex2}. In fact, when doing so, we shall proceed in greater generality, considering quite general double obstacle problems on compact Riemannian manifolds. Thus, let $M$ be a compact connected smooth Riemannian manifold of dimension $d\geq1$ without boundary. Let $|\cdot|_g$ denote the norm on $T_xM$ induced by the metric $g$ of $M$, let $r_{\mathrm{inj}}>0$ denote the injectivity radius of $M$, and let $B(x_0,r)$ denote the open geodesic ball of radius $0<r\leq r_{\mathrm{inj}}$ and centered at $x_0\in M$. We will denote by $dx$ the Riemannian density of integration and by $\vol(A)$ the corresponding Riemannian volume
of a measurable set $A\subset M$. Moreover, let $-\Delta$ denote the Laplace-Beltrami operator on $M$, and let $W^{s,p}(M):=(1-\Delta)^{-s/2}L^p(M)$, $1 < p < +\infty$ and $s\in \RR$, denote the Sobolev spaces on $M$. We can equip
$W^{s,p}(M)$ with the norm $\|u\|_{W^{s,p}}:=\|(1-\Delta)^{s/2}u \|_{L^p}$. Moreover, note that these spaces have the duality property $(W^{s,p})^* = W^{-s,p'}$, with $p^{-1}+(p')^{-1} =1$. When $p=2$ we will also write $H^{s}=W^{s,2}$.
In Appendix \ref{sec:appHoSoSpace} we review some basic notions of
Sobolev and Hölder spaces on smooth compact manifolds.

\medskip
\noindent
We will also denote by $\{\psi_j\}_{j\in \mathbb N}$ an orthonormal basis of $L^2$ composed of real valued eigenfunctions of the selfadjoint operator $-\Delta:L^2\to L^2$, equipped with the domain $H^2$, associated with the increasing sequence of eigenvalues
\begin{equation}
\label{eq:fbp0}
0=\lambda_0 < \lambda_1 \leq \dots
\end{equation}
In particular, we may choose $\psi_0 = 1/\sqrt{\vol(M)}$.

\medskip
\noindent
Let
\begin{equation}
\label{eq:fbp1.0}
\phi_\pm:M\to \RR
\end{equation}
be non-constant $C^2$ functions and suppose that there exists a $\tau>0$ such that for all $x\in M$, we have
\begin{equation}
\label{eq:fbp1}
\phi_+(x)-\phi_-(x)\geq \tau.
\end{equation}
For a continuous function $u\in H^1(M)$, consider the following differential inequality problem
\begin{equation}
\label{eq:din1}
\begin{cases}
\Delta u \geq 0 \quad \text{on } \{ u > \phi_-\}, \\
\Delta u \leq 0 \quad \text{on } \{ u<\phi_+\}, \\
\phi_- \leq u \leq \phi_+ \text{ on } M.
\end{cases}
\end{equation}
Such a problem is also sometimes called a \emph{double obstacle problem}
or a \emph{free boundary problem}.

\begin{theo}\label{thm:DPPex}
For $d< p<\infty$, the double obstacle problem \eqref{eq:din1} has a solution $u\in W^{2,p}(M)$. If $\max_M \phi_- \geq \min_M \phi_+$,
then it is unique. In particular, when $\max_M \phi_- = \min_M \phi_+$, then the unique solution to \eqref{eq:din1} is given by $u\equiv\max_M \phi_-$.
\end{theo}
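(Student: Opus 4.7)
The plan is to follow the penalization strategy alluded to in the introduction (a "new form of penalization adapted from \cite{LePa23}"), and then handle uniqueness by a maximum-principle argument adapted to the closed manifold. Fix a smooth nondecreasing function $\beta:\RR\to[0,\infty)$ with $\beta\equiv 0$ on $(-\infty,0]$, $\beta>0$ on $(0,\infty)$, and superlinear growth at $+\infty$; set $\beta_\varepsilon(s)=\varepsilon^{-1}\beta(s)$ with primitive $B_\varepsilon$, and consider the penalized functional
\begin{equation*}
J_\varepsilon(u) = \tfrac{1}{2}\int_M |\nabla u|_g^2\,dx + \int_M \bigl(B_\varepsilon(u-\phi_+) + B_\varepsilon(\phi_--u)\bigr)\,dx
\end{equation*}
on $H^1(M)$. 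The double penalty rules out the kernel of $-\Delta_g$: a constant $u\equiv c$ with $|c|$ large makes one of the two penalty integrals blow up. Thus $J_\varepsilon$ is strictly convex, coercive, and weakly lower semicontinuous on $H^1(M)$, and the direct method produces a unique minimizer $u_\varepsilon$ satisfying the weak Euler--Lagrange equation $-\Delta_g u_\varepsilon + \beta_\varepsilon(u_\varepsilon-\phi_+) - \beta_\varepsilon(\phi_--u_\varepsilon) = 0$ on $M$.

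To extract uniform bounds, testing with $(u_\varepsilon-\max_M\phi_+)_+$ and $(\min_M\phi_--u_\varepsilon)_+$ yields the $L^\infty$ bound $\min_M\phi_-\leq u_\varepsilon\leq \max_M\phi_+$. For the $W^{2,p}$ bound set $v_\varepsilon:=u_\varepsilon-\phi_+$; by \eqref{eq:fbp1}, on $\{v_\varepsilon>0\}$ we have $u_\varepsilon>\phi_+\geq\phi_-+\tau$, so the second penalty vanishes there and $v_\varepsilon$ satisfies $-\Delta_g v_\varepsilon+\beta_\varepsilon(v_\varepsilon)=\Delta_g\phi_+$. Testing with $\beta_\varepsilon(v_\varepsilon)^{p-1}$ and using that integration by parts on the closed manifold produces a nonnegative term from $-\Delta_g v_\varepsilon$ gives
\begin{equation*}
\int_M \beta_\varepsilon(v_\varepsilon)^p\,dx \leq \int_M |\Delta_g\phi_+|\,\beta_\varepsilon(v_\varepsilon)^{p-1}\,dx,
\end{equation*}
whence Hölder yields $\|\beta_\varepsilon(u_\varepsilon-\phi_+)\|_{L^p}\leq\|\Delta_g\phi_+\|_{L^p}$; a symmetric argument bounds the other penalty. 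Combined with the $L^\infty$ estimate and the elliptic regularity inequality $\|u\|_{W^{2,p}}\leq C(\|\Delta_g u\|_{L^p}+\|u\|_{L^p})$ on $M$, this gives $\|u_\varepsilon\|_{W^{2,p}}\leq C$. Along a subsequence $u_\varepsilon\rightharpoonup u$ weakly in $W^{2,p}$ and $u_\varepsilon\to u$ in $C^{1,\alpha}$ by Morrey (since $p>d$); the penalty bounds force $\phi_-\leq u\leq\phi_+$. On any compact subset of the open set $\{u>\phi_-\}$, uniform convergence gives $\phi_--u_\varepsilon<0$ eventually, so $\beta_\varepsilon(\phi_--u_\varepsilon)\equiv 0$ there and the equation passes to the limit as $-\Delta_g u + \mu_+=0$ with $\mu_+\geq 0$ the weak limit of $\beta_\varepsilon(u_\varepsilon-\phi_+)$; hence $\Delta_g u\geq 0$ on $\{u>\phi_-\}$, and symmetrically $\Delta_g u\leq 0$ on $\{u<\phi_+\}$.

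For uniqueness under $\max_M\phi_-\geq\min_M\phi_+$, let $u_1,u_2\in W^{2,p}(M)\hookrightarrow C(M)$ be two solutions of \eqref{eq:din1}, and consider the open set $A=\{u_1>u_2\}$. On $A$, the constraints $u_1\leq\phi_+$ and $u_2\geq\phi_-$ force $u_1>\phi_-$ and $u_2<\phi_+$, so $\Delta_g u_1\geq 0$ and $\Delta_g u_2\leq 0$, hence $\Delta_g(u_1-u_2)\geq 0$ on $A$. By Kato's inequality, $(u_1-u_2)_+$ is weakly subharmonic on all of $M$, and hence constant by the strong maximum principle on the compact connected manifold without boundary; the same applies to $(u_2-u_1)_+$, so $u_1-u_2\equiv c$ for some $c\in\RR$. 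If $c>0$, then $u_1=u_2+c>\phi_-$ and $u_2=u_1-c<\phi_+$ everywhere, so \eqref{eq:din1} forces $\Delta_g u_1\geq 0$ and $\Delta_g u_2\leq 0$ globally, which makes each $u_i$ constant. Writing $u_1\equiv a$, $u_2\equiv a-c$, the constraints give $a\leq\min_M\phi_+$ and $a-c\geq\max_M\phi_-$, so $c\leq\min_M\phi_+-\max_M\phi_-\leq 0$ by hypothesis, contradicting $c>0$. Hence $u_1=u_2$. In the special case $\max_M\phi_-=\min_M\phi_+$, the constant $u\equiv\max_M\phi_-$ is obviously a solution, and by the uniqueness just established it is the only one.

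The main technical delicacy is the uniform $L^p$-bound on the penalty terms: the standard penalization argument for single obstacle problems relies on Dirichlet boundary data to kill the kernel of $-\Delta_g$, and here this role is played instead by the double-obstacle structure together with the strict separation \eqref{eq:fbp1}, which guarantees that the two penalties never activate simultaneously and produces the clean inequality above. This is the step at which the new form of penalization from \cite{LePa23} becomes essential, and the rest of the argument (the $L^\infty$ bound, the elliptic upgrade to $W^{2,p}$, and the passage to the limit) is then relatively standard.
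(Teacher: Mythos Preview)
Your argument is correct, but the route differs substantially from the paper's, and in one place your attribution is off.

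\textbf{Existence.} The paper does \emph{not} use an unbounded convex penalty of the type $\beta_\varepsilon(u-\phi_+)+\beta_\varepsilon(\phi_--u)$. It uses the \emph{bounded} penalty $f_\varepsilon(t,x)=(\Delta_g\phi_-)^-H_\varepsilon(\phi_--t)-(\Delta_g\phi_+)^+H_\varepsilon(t-\phi_+)$ (this is the LePa23 ingredient) and then, because a bounded right-hand side does nothing to control the kernel of $-\Delta_g$ on a closed manifold, adds a second scalar penalty $g_\varepsilon((u\mid\psi_0))$ acting only on the zero Fourier mode. The penalized equation is solved by a Grushin/Lyapunov--Schmidt reduction plus a Schauder fixed-point argument (Proposition~\ref{prop:NLeq}); the $W^{2,p}$ bound is then immediate because $\|f_\varepsilon\|_{L^\infty}\le C_0$ uniformly in $\varepsilon$. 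Your variational penalization handles the kernel differently---coercivity comes from the two unbounded penalties together (Jensen on $B_\varepsilon$ bounds $\bar u$)---and you recover the uniform $W^{2,p}$ bound via the Br\'ezis--Kinderlehrer test $\beta_\varepsilon(v_\varepsilon)^{p-1}$, exploiting that \eqref{eq:fbp1} makes the supports of the two penalties disjoint. Both work; the paper's bounded penalty trades the analytic test-function step for the $g_\varepsilon$/fixed-point machinery, while yours is more classical but needs the $L^\infty$ bound first (so that $\beta_\varepsilon(v_\varepsilon)\in L^\infty$ and the test is justified). Your remark that ``this is where the new penalization from \cite{LePa23} becomes essential'' is therefore misplaced: you are using the \emph{standard} penalization, not the one from \cite{LePa23}, and what makes it go through on the closed manifold is the double-obstacle disjointness, not anything specific to that reference.

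\textbf{Uniqueness.} The paper argues directly on the open set $S=\{u_1<u_2\}$: it first rules out $S=M$ (this would force both $u_i$ constant, impossible when $\max\phi_->\min\phi_+$), then applies the maximum principle on $S$ with zero boundary data. Your Kato-inequality route is a clean alternative: $(u_1-u_2)_+$ is globally weakly subharmonic on the closed manifold, hence harmonic (since $\int_M\Delta_g(\cdot)=0$), hence constant; the contradiction $c\le\min\phi_+-\max\phi_-\le 0$ then uses exactly the hypothesis. This packages the two cases $\max\phi_->\min\phi_+$ and $\max\phi_-=\min\phi_+$ uniformly, whereas the paper treats them separately.
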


\medskip
\noindent
Before we present the proof of this theorem in Section \ref{sec:ExReUnDOP} below, we will make some remarks. Using the Sobolev embedding
\eqref{eq:MorreyInequality2a} we know from Theorem \ref{thm:DPPex} that when $\max_M \phi_- \geq \min_M \phi_+$ the unique solution to \eqref{eq:din1} satisfies
\begin{equation}\label{eq:DOPreg}
u \in C^{1,\alpha}(M), \quad \forall ~0<\alpha<1.
\end{equation}
Notice that when $\max_M \phi_- < \min_M \phi_+$ we lose uniqueness of the solution because any constant $u\in (\max_M \phi_-,\min_M \phi_+)$
is a solution. 
\begin{rem}\label{rem:OptReg} When $\max_M \phi_- \geq \min_M \phi_+$, the $C^{1,\alpha}$ regularity of the unique solution $u$ to the double obstacle problem \eqref{eq:din1} is not optimal. Indeed, when $\max_M \phi_- = \min_M \phi_+$, then  $u\equiv\max_M \phi_-$ and so it is analytic. When $\max_M \phi_- > \min_M \phi_+$ one can show that $u\in C^{1,1}(M)$ which is optimal since $\Delta u$ exhibits a discontinuity at the free boundary $\partial\{\phi_-<u<\phi_+\}$. Since we do not require the optimal regularity of $u$ for our main result, we refrain from giving the proof but instead simply outline the argument: First note that $u$ is harmonic in the open set $\{\phi_- < u < \phi_+\}$ and therefore $C^\infty$ there. In the contact sets $\{u=\phi_\pm\}$, it follows from \eqref{eq:cs1.0} that, in local coordinates, all second order derivatives of $u$ are bounded in the $L^\infty$ norm. It remains to control the second order derivatives of $u$ near the boundaries $\partial \{u=\phi_\pm\}$. To do this, one can follow the well established approach for single obstacle problems, see for instance \cite[Chapter 2]{PSU12} and \cite{Fr72,Ca77,Ca98}. More precisely, one can use the maximum principle and Harnack's inequality to show that there exist constants $C_0,C>0$ depending only on $M$ and $g$, such that if $\dist(x,\partial\{u=\phi_-\}) \leq 1/C_0$ then
  \begin{equation*}
    0 \leq u(x)-\phi_-(x) \leq C \|\Delta \phi_-\|_{L^\infty}
    \dist(x,\partial \{u=\phi_-\})^2,
  \end{equation*}
  and if $\dist(x,\partial \{u=\phi_+\}) \leq 1/C_0$ then
  \begin{equation*}
    0 \leq \phi_+(x)-u(x) \leq C \|\Delta \phi_+\|_{L^\infty}
    \dist(x,\partial \{u=\phi_+\})^2.
  \end{equation*}
This together with the Schauder interior estimates \cite[Theorem 6.2]{GiTr01} can then be used to show that $u\in W^{2,\infty}(M)$ which gives that $u\in C^{1,1}(M)$. 
\end{rem}

\subsection{Existence, regularity and uniqueness of solutions to the double
obstacle problem}\label{sec:ExReUnDOP}
This section is devoted to the proof of Theorem \ref{thm:DPPex}. To prove existence and $W^{2,p}$ regularity of solutions to \eqref{eq:din1}
we will use the method of penalization.

\medskip
\noindent
Let $\varepsilon>0$ and define the following Lipschitz continuous approximation of the Heaviside function,
\begin{equation*}
  H_\varepsilon(t) := 1_{[0,+\infty[}(t)
  + (1+t/\varepsilon)1_{[-\varepsilon,0[}(t).
\end{equation*}
Using this, we define for $x\in M$ and $t\in\RR$,
\begin{equation}\label{eq:pen1a}
  f_\varepsilon(t,x) := (\Delta\phi_-(x))^-H_\varepsilon( \phi_-(x)-t)
  -(\Delta\phi_+(x))^+H_\varepsilon(t- \phi_+(x)).
\end{equation}
Here, we use the standard notation for the positive and negative part
of a function, i.e. $(\Delta\phi_\pm(x))^\pm=\max(\pm\Delta\phi_\pm(x),0)$.
Recall the eigenfunctions $\psi_j$ introduced in the
paragraph above \eqref{eq:fbp0} and write $\phi_\pm^0:=(\phi_\pm|\psi_0)$.
By \eqref{eq:fbp1},
\begin{equation}\label{eq:pen1a.0}
  \phi_-^0 < \phi_+^0.
\end{equation}
Put $C_0:=2\max(\|\Delta\phi_-\|_{L^\infty},\|\Delta\phi_+\|_{L^\infty})$.
For $\varepsilon>0$ sufficiently small, let $g_{\varepsilon}:\RR\to \RR$ be
the strictly decreasing continuous piecewise affine function such that
$\{\phi_-^0,\phi_-^0+\varepsilon(1-\varepsilon), \phi_+^0-\varepsilon(1-\varepsilon),\phi_+^0\}$
are the only breakpoints between the affine segments with values
\begin{equation}\label{eq:pen1b0}
  \begin{split}
    &g_{\varepsilon}(\phi_\pm^0)=\mp C_0 \\
    &g_{\varepsilon}(\phi_\pm^0+\varepsilon(1-\varepsilon))=\mp C_0\varepsilon,
   \end{split}
\end{equation}
and such that $g_{\varepsilon}'(t) = -C_0$ when $t<\phi_-^0$ or
$t>\phi_+^0$. See Figure \ref{Fig1} for an illustration. Notice
that $g_{\varepsilon}:\RR\to \RR$ is a homeomorphism,
$g_{\varepsilon}: [\phi_-^0,\phi_+^0]\to [-C_0,C_0]$ is a homeomorphism, and
\begin{equation}\label{eq:pen1c}
\begin{split}
  &g_{\varepsilon}(t)>C_0, \quad t<\phi_-^0, \\
  &g_{\varepsilon}(t)<-C_0, \quad t>\phi_+^0, \\
  &|g_{\varepsilon}(t)| \leq C_0 \varepsilon,
   \quad t\in [\phi_-^0+\varepsilon(1-\varepsilon),\phi_+^0-\varepsilon(1-\varepsilon)], \\
  & g_\varepsilon\!\left(\frac{\phi_+^0+\phi_-^0}{2}\right) = 0.
\end{split}
\end{equation}

\begin{figure}[h]
  \begin{center}
  \begin{tikzpicture}
  \draw[->] (0,0) -- (12,0);
  \draw (12,-0.3) node{{\small$t\in \RR$}};
  \draw[->] (1,-3) -- (1,3);
  \draw (1.5,3) node{{\small$g_\varepsilon(t)$}};
  \draw[line width=0.25mm,-] (0,3) -- (2,2);
  \draw[line width=0.25mm,-] (2,2) -- (3,0.5);
  \draw[line width=0.25mm,-] (3,0.5) -- (9,-0.5);
  \draw[line width=0.25mm,-] (9,-0.5) -- (10,-2);
  \draw[line width=0.25mm,-] (10,-2) -- (12,-3);
  \draw[dotted] (2,2) -- (2,0);
  \draw[dotted] (2,2) -- (1,2);
  \draw (0.6,2) node{$C_0$};
  \draw[-] (2,0.1) -- (2,-0.1);
  \draw (1.8,-0.3) node{{\small$\phi_-^0$}};
  \draw[dotted] (3,0.5) -- (3,0);
  \draw[dotted] (3,0.5) -- (1,0.5);
  \draw (0.6,0.5) node{{\small $C_0\varepsilon$}};
  \draw[-] (3,0.1) -- (3,-0.1);
  \draw (3.7,-0.3) node{{\small$\phi_-^0+\varepsilon(1-\varepsilon)$}};
  \draw[dotted] (10,-2) -- (10,0);
  \draw[dotted] (10,-2) -- (1,-2);
  \draw (0.5,-2) node{$-C_0$};
  \draw[-] (10,0.1) -- (10,-0.1);
  \draw (10.4,0.4) node{{\small$\phi_+^0$}};
  \draw[dotted] (9,0) -- (9,-0.5);
  \draw[dotted] (9,-0.5) -- (1,-0.5);
  \draw (0.4,-0.5) node{{\small$-C_0\varepsilon$}};
  \draw[-] (9,0.1) -- (9,-0.1);
  \draw (8.5,0.4) node{{\small$\phi_+^0-\varepsilon(1-\varepsilon)$}};
  \end{tikzpicture}
  \end{center}
  \caption{An illustration of the penalty function $g_\varepsilon(t)$
  above \eqref{eq:pen1b0}.}\label{Fig1}
  \end{figure}
\begin{prop}\label{prop:NLeq}
Let $\varepsilon>0$ be small enough and recall \eqref{eq:pen1a}, \eqref{eq:pen1c}. For $d < p < \infty$ there exists a solution
$u^\varepsilon \in W^{2,p}(M)$ to the non-linear equation
\begin{equation}\label{eq:pen2nl}
-\Delta u^\varepsilon = f_\varepsilon(u^\varepsilon,\cdot) + g_\varepsilon(( u^\varepsilon | \psi_0 )),
\end{equation}
and it satisfies
\begin{equation}\label{eq:pen2dnl}
\begin{split}
  &( u^\varepsilon | \psi_0 )\in [\phi_-^0,\phi_+^0], \\
  &\| u^\varepsilon \|_{W^{2,p}}
  \leq  \mO_p(1)(C_0+|\phi_-^0|+|\phi_+^0|),
\end{split}
\end{equation}
uniformly in $\varepsilon>0$.
\end{prop}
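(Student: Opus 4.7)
The plan is to apply Schauder's fixed point theorem in $C^0(M)$, after decoupling the scalar mean from the rest of the equation. Since the kernel of $-\Delta_g$ is spanned by the constant $\psi_0$, the elliptic inverse is only defined on $\psi_0^\perp$, and the role of the strictly decreasing function $g_\varepsilon$ is precisely to pin down the mean uniquely.

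First, I would observe that $|f_\varepsilon|\leq C_0$ uniformly in $(t,x,\varepsilon)$ by \eqref{eq:pen1a}, so the scalar quantity $-\vol(M)^{-1}\int_M f_\varepsilon(u,x)\,dx$ lies in $[-C_0,C_0]$ for every $u\in C^0(M)$. By \eqref{eq:pen1b0}--\eqref{eq:pen1c}, $g_\varepsilon$ restricts to a strictly decreasing homeomorphism $[\phi_-^0,\phi_+^0]\to[-C_0,C_0]$, which gives a unique continuous map $u\mapsto c(u)\in[\phi_-^0,\phi_+^0]$ with
\[
g_\varepsilon(c(u))=-\frac{1}{\vol(M)}\int_M f_\varepsilon(u,x)\,dx.
\]
I would then define $T\colon C^0(M)\to C^0(M)$ by $T(u)=v$, where $v$ is the unique solution of
\[
-\Delta_g v=f_\varepsilon(u,\cdot)+g_\varepsilon(c(u)),\qquad (v|\psi_0)=c(u).
\]
The compatibility condition $\int_M(\text{RHS})\,dx=0$ holds by the definition of $c(u)$, so $v$ exists uniquely, obtained by inverting $-\Delta_g$ on $\psi_0^\perp$ and adding the prescribed mean. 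Writing $v=c(u)\psi_0+v_\perp$, standard Calder\'on--Zygmund estimates on $\psi_0^\perp$ yield, for any $d<p<\infty$,
\[
\|v\|_{W^{2,p}}\leq \mathcal O_p(1)\bigl(\|f_\varepsilon(u,\cdot)+g_\varepsilon(c(u))\|_{L^p}+|c(u)|\bigr)\leq \mathcal O_p(1)(C_0+|\phi_-^0|+|\phi_+^0|),
\]
uniformly in $u$ and $\varepsilon$.

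For $p>d$ the Sobolev embedding $W^{2,p}(M)\hookrightarrow C^0(M)$ is compact by Rellich--Kondrachov, so $T(C^0(M))$ is relatively compact in $C^0(M)$, and continuity of $T$ follows from dominated convergence applied to $f_\varepsilon(u_n,\cdot)$, continuity of $g_\varepsilon^{-1}$, and the boundedness of the linear elliptic inverse. Schauder's theorem then provides a fixed point $u^\varepsilon=T(u^\varepsilon)\in W^{2,p}(M)$, which by construction satisfies \eqref{eq:pen2nl} together with $(u^\varepsilon|\psi_0)=c(u^\varepsilon)\in[\phi_-^0,\phi_+^0]$ and the $\varepsilon$-uniform $W^{2,p}$ bound of \eqref{eq:pen2dnl}. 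The only conceptual subtlety is handling the kernel of $-\Delta_g$, but this is resolved automatically by the design of $g_\varepsilon$ as a strictly decreasing homeomorphism with the prescribed range, which supplies the missing scalar equation without requiring an involved a priori estimate or continuation argument.
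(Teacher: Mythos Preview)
Your proof is correct and follows essentially the same route as the paper: a Lyapunov--Schmidt reduction using the partial inverse of $-\Delta_g$ on $\psi_0^\perp$, the homeomorphism $g_\varepsilon\colon[\phi_-^0,\phi_+^0]\to[-C_0,C_0]$ to pin down the mean, and Schauder's fixed point theorem via the compact embedding $W^{2,p}\hookrightarrow C^0$. The only cosmetic difference is that the paper carries the scalar mean as a separate coordinate and applies Schauder on $\overline{B(0,K)}\times[\phi_-^0,\phi_+^0]\subset C(M;\RR)\oplus\RR$, whereas you fold the scalar $c(u)$ directly into the map $T$ on $C^0(M)$; this is a mild streamlining of the same argument.
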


\medskip
\noindent
{\it Remark}. The method of penalization is a by now classical approach to proving regularity of obstacle problems going back at least
to \cite{BrSt68,LeSt69}, see for instance \cite{Ro87,PSU12} and the references therein. Here, in \eqref{eq:pen2nl}, we used the penalization
function $f_\varepsilon$ which (up to a minor modification) was introduced in \cite{LePa23}. What is novel here is that
since we work on a compact manifold without boundary we need to introduce the additional penalization term $g_\varepsilon$.

\medskip
\noindent
We postpone the proof of Proposition \ref{prop:NLeq} till the end of this section and continue directly with the
\begin{proof}[Proof of Theorem {\rm \ref{thm:DPPex}}]
1. Since the last bound in \eqref{eq:pen2dnl} is independent of $\varepsilon$, it follows that there exists a $u\in W^{2,p}$ and
a subsequence of $\varepsilon \to 0$ such that
\begin{equation}\label{eq:pen7}
u^{\varepsilon} \rightharpoonup u \quad \text{weakly in } W^{2,p},
\end{equation}
i.e. $\langle u^\varepsilon, v\rangle \to \langle u, v\rangle$ for all $v\in W^{-2,p'}$, $p^{-1}+(p')^{-1}=1$. Since $d<p$, the Sobolev embedding
\eqref{eq:MorreyInequality2a} gives that $u\in C^{1,1-d/p}(M)$. Moreover, $W^{2,p}(M)$ is compactly embedded in $C(M)$, see \eqref{eq:CompImbed}. So, additionally to \eqref{eq:pen7}, we have
\begin{equation}\label{eq:pen8}
u^{\varepsilon} \to u \quad \text{uniformly.}
\end{equation}
So, by \eqref{eq:pen2dnl}
\begin{equation}\label{eq:pen7.1}
( u| \psi_0 )\in [\phi_-^0,\phi_+^0], \\
\end{equation}
Next, we will show that the limit $u$ in \eqref{eq:pen7}, \eqref{eq:pen8} is a solution to \eqref{eq:din1}.

\medskip
\noindent
2. Suppose that $u\geq\phi_-$ on $M$. If $( u | \psi_0 ) = \phi_-^0$, then $u=\phi_-$ on $M$. By \eqref{eq:fbp1} and \eqref{eq:pen8},
for $\varepsilon>0$ small enough, we have that $u^\varepsilon \leq (\phi_-+\phi_+)/2$ on $M$. So by \eqref{eq:pen1a}, we have that $f_\varepsilon(u^\varepsilon(x),x)\geq 0$. Thus, by \eqref{eq:pen2nl} and the last property in \eqref{eq:pen1c},
\begin{equation}\label{eq:pen8.1}
    -\Delta u^\varepsilon \geq g_\varepsilon(( u^\varepsilon | \psi_0 ))
    \geq 0, \quad \text{a.e. on } M,
\end{equation}
for $\varepsilon >0$ small enough. By \eqref{eq:pen7}, it follows that $-\Delta u \geq 0$ on $M$ in the sense of distributions, and a.e. on $M$
since $u\in W^{2,p}$. Hence $u$ is constant. Indeed, put $\widetilde{u}= u - \min_M u\geq 0$. So $\Delta \widetilde{u} =
\Delta u \geq 0$ almost everywhere. Since $\widetilde{u}\in W^{2,p}$, we get
\begin{equation*}
0\leq \int_M \Delta \widetilde {u} \, \widetilde {u} dx = -\int_M  |d\widetilde {u}|^2_g dx \leq 0.
\end{equation*}
It follows that $\widetilde{u}$ and, therefore, $u$ are constant almost everywhere and thus everywhere by continuity.
This is however a contradiction, since $u=\phi_-$ is not constant (see the paragraph above \eqref{eq:fbp1}).
Thus,
\begin{equation}\label{eq:pen8.1.1}
    ( u | \psi_0 )  > \phi_-^0 \text{ provided that }u\geq\phi_- \text{ on }M.
\end{equation}
A similar argument shows that $( u | \psi_0 )  < \phi_+^0$, when $u\leq\phi_+$ on $M$.

\medskip
\noindent
3. Next we will show that $u\geq \phi_-$ on $M$ by contradiction. Suppose that $\{u<\phi_-\} \subset M$ is non-empty. By continuity
it is an open set and it cannot be equal to $M$ in view of \eqref{eq:pen7.1}. Let $x_0\in\{u<\phi_-\} \subset M$. Again by continuity, there exist
$r_0,\delta_0>0$, such that $u(x)-\phi_-(x)\leq -\delta_0$ for all $x\in B(x_0,r_0)$. The latter denotes the open geodesic ball of radius
$r_0$ and centered at $x_0$. Taking $\varepsilon>0$ small enough, it follows from \eqref{eq:pen8} that
$u^\varepsilon(x)-\phi_-(x)\leq -\delta_0/2$ for all $x\in B(x_0,r_0)$. By \eqref{eq:pen1a}, \eqref{eq:pen2nl}, we find that
\begin{equation}\label{eq:pen9}
    -\Delta u^\varepsilon \geq -\Delta\phi_-
        + g_\varepsilon(( u^\varepsilon | \psi_0 )) \quad \text{a.e. on }B(x_0,r_0).
\end{equation}

\medskip
\noindent
3a. Suppose first that $( u | \psi_0 ) = \phi_-^0$. Taking $\varepsilon>0$ small enough we get that $(u^\varepsilon| \psi_0 ) \leq (\phi_-^0+\phi_+^0)/2$ by \eqref{eq:pen1a.0}. The last property in \eqref{eq:pen1c} then gives that $g_\varepsilon(( u^\varepsilon | \psi_0 ))\geq 0$, which implies that
\begin{equation*}
  -\Delta (u^\varepsilon -\phi_-) \geq 0 \quad \text{a.e. on }B(x_0,r_0).
\end{equation*}
The weak convergence \eqref{eq:pen7} shows that $-\Delta (u -\phi_-)\geq 0$ on $B(x_0,r_0)$ in the sense of distributions, and therefore almost everywhere since $u\in W^{2,p}$. Since $x_0$ was chosen arbitrarily, it follows that
\begin{equation*}
    -\Delta (u -\phi_-) \geq 0 \quad \text{a.e. on } \{u<\phi_-\} .
\end{equation*}
The maximum principle \cite[Theorem 8.1]{GiTr01}\footnote{In fact \cite[Theorem 8.1]{GiTr01} applies to functions on bounded domains in $\RR^d$, however, by following the exact same arguments of the proof, it can be applied to our case.}, implies that $u \geq \phi_-$ on $\{u<\phi_-\}$, a contradiction. Therefore,
\begin{equation}\label{eq:AvLB}
    ( u | \psi_0 ) > \phi_-^0
\end{equation}

\medskip
\noindent
3b. Suppose next that $( u | \psi_0 ) = \phi_+^0$. Since $\{u<\phi_-\}$ has strictly positive measure, the same must hold for $\{u>\phi_+\}$. Otherwise $( u | \psi_0 ) < \phi_+^0$ by \eqref{eq:pen7.1}. By \eqref{eq:pen8}, we find that $(u^\varepsilon| \psi_0 ) \geq (\phi_-^0+\phi_+^0)/2$ for $\varepsilon>0$ small enough, so $g_\varepsilon(( u^\varepsilon | \psi_0 ))\leq 0$ by the last property in \eqref{eq:pen1c}.

\medskip
\noindent
By continuity, the set $\{u>\phi_+\}$ is open and for any $x_1\in \{u>\phi_+\}$, there exist $r_1,\delta_1>0$ such that $u-\phi_+\geq \delta_1$ on $B(x_1,r_1)$. Taking $\varepsilon>0$ small enough, it follows from \eqref{eq:pen8} that $u_\varepsilon(x)-\phi_+(x)\geq \delta_1/2$ for all $x\in B(x_1,r_1)$. Using \eqref{eq:pen1a}, \eqref{eq:pen2nl}, we find that
\begin{equation}\label{eq:pen9.1}
  \begin{split}
    -\Delta u^\varepsilon &\leq -\Delta\phi_+
        + g_\varepsilon(( u^\varepsilon | \psi_0 )) \\
        &\leq -\Delta\phi_+
        \quad \text{a.e. on }B(x_1,r_1).
  \end{split}
\end{equation}
The weak convergence \eqref{eq:pen7} shows that $\Delta (u -\phi_+)\geq 0$
  on $B(x_1,r_1)$ in the sense of distributions, and therefore almost everywhere
  since $u\in W^{2,p}$. Since $x_1$ was chosen arbitrarily, it follows
  that
\begin{equation*}
    \Delta (u -\phi_+) \geq 0 \quad \text{a.e. on } \{u>\phi_+\} .
  \end{equation*}
The maximum principle, implies that $u \leq \phi_+$ on $\{u>\phi_+\}$, a contradiction. Therefore,
\begin{equation}\label{eq:AvLB2}
    ( u | \psi_0 ) < \phi_+^0
\end{equation}

\medskip
\noindent
3c. Combining \eqref{eq:AvLB}, \eqref{eq:AvLB2} and \eqref{eq:pen8}, there exists an $\eta>0$ such that for $\varepsilon >0$ small enough
\begin{equation}\label{eq:pen9.0}
    \phi_-^0+ \eta \leq ( u^\varepsilon | \psi_0 ) \leq \phi_+^0- \eta.
\end{equation}
So, by \eqref{eq:pen1c}, $|g_\varepsilon(( u^\varepsilon | \psi_0 ))|\leq C_0\varepsilon$. Combining this with \eqref{eq:pen9} we get
\begin{equation*}
    -\Delta u^\varepsilon \geq -\Delta\phi_-
        + \mO(\varepsilon) \quad \text{ on }B(x_0,r_0).
\end{equation*}
By \eqref{eq:pen7}, it follows that $-\Delta (u- \phi_-) \geq 0$ a.e. on $B(x_0,r_0)$, and therefore on $\{u<\phi_-\}$ since $x_0$ was
chosen arbitrarily. The maximum principle yields that $u\geq \phi_-$ on $\{u<\phi_-\}$, a contradiction. Hence, $u\geq \phi_-$ on $M$. A similar argument shows that $u\leq \phi_+$ on $M$.

\medskip
\noindent
4. By Step 2 and 3, we know that $\phi_- \leq u \leq \phi_+$ on $M$ and that $\phi_-^0 < ( u | \psi_0 ) < \phi_+^0$. Arguing as in
\eqref{eq:pen9.0}, the last condition implies that
\begin{equation}\label{eq:pen10}
    |g_\varepsilon(( u^\varepsilon | \psi_0 ))|\leq C_0\varepsilon
\end{equation}
for $\varepsilon >0$ small enough.

\medskip
\noindent
By the same argument as in Step 2 we find that $\{u>\phi_-\}$ is not empty. Continuity implies that it is open. Let $x_0\in \{u>\phi_-\}$. Thus, there exist $r,\delta>0$, such that $u(x)-\phi_-(x)\geq \delta$ for all $x\in B(x_0,r)$. By \eqref{eq:pen8}, it follows that for $\varepsilon>0$ small enough, $u^\varepsilon(x)-\phi_-(x)\geq \delta/2$ for all $x\in B(x_0,r)$. By \eqref{eq:pen1a}, \eqref{eq:pen2nl},
\begin{equation}\label{eq:pen11}
    -\Delta u^\varepsilon \leq g_\varepsilon(( u^\varepsilon | \psi_0 )) \quad
    \text{ a.e. on }B(x_0,r).
\end{equation}
By \eqref{eq:pen10}, \eqref{eq:pen7}, it follows that $\Delta u\geq 0$ a.e. on $B(x_0,r)$, and therefore on $\{u>\phi_-\}$ since $x_0$ was chosen arbitrarily. A similar argument shows that $\Delta u\leq 0$ on $\{u<\phi_+\}$. This shows that $u\in W^{2,p}$ is a solution to \eqref{eq:din1}.

\medskip
\noindent
4. To prove uniqueness, let $u_1,u_2$ be two solutions to \eqref{eq:din1}. First, assume that $\max_M \phi_- > \min_M \phi_+$.
Notice that this condition implies that any solution to \eqref{eq:din1} is non-constant. Suppose that the set $S:=\{u_1<u_2\}\subset M$ is not
empty. Note that $S$ cannot be equal to $M$. Indeed, if it were, then $u_1< \phi_+$ and $u_2> \phi_-$ on $M$. So by \eqref{eq:din1},
$\Delta u_1 \leq 0 \leq \Delta u_2 $ a.e. on $M$. This implies the $u_1,u_2$ are constant which is a contradiction, so $S\neq M$.

\medskip
\noindent
Continuing, we know by the continuity of $u_1,u_2$ that $S$ is open. Notice that $u_1< \phi_+$ and $u_2> \phi_-$ on $S$. Thus,
by \eqref{eq:din1}, we find that $\Delta(u_1-u_2)\leq 0$ a.e. on $S$. The maximum principle implies that $u_1-u_2\geq 0$ on $S$,
which is a contradiction.

\medskip
\noindent
Secondly, consider the case $\max_M \phi_- = \min_M \phi_+$. Arguing as above, we see that the set $S$ is either equal to $M$, in which
case both solutions are constant and thus equal to $\max_M \phi_- = \min_M \phi_+$, a contradiction. Or, $S\neq M$, in which case,
arguing as above, the maximum principle yields a contradiction.

\medskip
\noindent
Notice that this argument shows that in this case, the unique solution to \eqref{eq:din1} is given by $u\equiv\max_M \phi_-$.
\end{proof}

\begin{proof}[Proof of Proposition \ref{prop:NLeq}]
1. First, we recall that $-\Delta:H^2(M)\to L^2(M)$ is Fredholm of index $0$,
selfadjoint on $L^2(M)$ and has the kernel $\mathcal{N}(-\Delta) = \CC1$.
By the self-adjointness, $\mathcal R(-\Delta)^{\perp}= \CC1$.
Recall the discussion above \eqref{eq:fbp0} and denote by $\pi_N = (\cdot|\psi_0)\psi_0$
the $L^2$ orthogonal projection onto $\mathcal{N}(-\Delta)$.

\medskip
\noindent
Given a real-valued $v\in L^{\infty}(M)$, and a $v_+\in\RR$, we
wish to obtain a solution $(u^\varepsilon,u^\varepsilon_{-})\in H^2(M)\times \RR$
to the non-linear Grushin problem\footnote{This approach is also called
Lyapunov-Schmidt reduction.}
\begin{equation}\label{eq:pen2}
\begin{cases}
  -\Delta u^\varepsilon(x) = f_\varepsilon(v(x),x)
            + g_\varepsilon(u^\varepsilon_{-}), \\
  \pi_N u^\varepsilon = v_+\psi_0 .
\end{cases}
\end{equation}
Notice that $-\Delta: \mathcal{N}(-\Delta)^{\perp}\cap H^2 \to \mathcal{N}(-\Delta)^{\perp}$ is bijective and denote its inverse by $Q_0$. We extend $Q_0$ to an operator $Q:L^2\to L^2$ by setting $Q|_{\mathcal{N}(-\Delta)^{\perp}} = Q_0$ and $Qw=0$ if $w\in \mathcal{N}(-\Delta) =\CC 1$. Notice that
\begin{equation}\label{eq:pen2.1}
  -\Delta Q = 1 - \pi_N \quad \text{and} \quad
  Q(-\Delta) = 1 - \pi_N.
\end{equation}
For \eqref{eq:pen2} to have a solution, it is necessary that
\begin{equation}\label{eq:pen5.0}
  g_\varepsilon(u^\varepsilon_{-})
= - \pi_N f_\varepsilon(v(\cdot),\cdot).
\end{equation}
Since $f_\varepsilon(v(\cdot),\cdot)$ is in $L^\infty$, and
\begin{equation}\label{eq:pen4}
  \| f_\varepsilon(v(\cdot),\cdot) \|_{L^\infty} \leq  C_0,
\end{equation}
it follows from the fact that $g_{\varepsilon}$ is a homeomorphism and
that $g^{-1}_\varepsilon([-C_0,C_0])=[\phi_-^0,\phi_+^0]$ (see the paragraph above
\eqref{eq:pen1c}) that there exists a unique $u^\varepsilon_-$ satisfying
\eqref{eq:pen5.0}. Moreover,
\begin{equation}\label{eq:pen5}
  u^\varepsilon_- \in [\phi_-^0,\phi_+^0].
\end{equation}
Consequently, \eqref{eq:pen2} has a unique solution
$(u^\varepsilon,u^\varepsilon_-)\in H^{2} \times \RR$
with $u^\varepsilon_-$ as in \eqref{eq:pen5.0}, \eqref{eq:pen5} and
\begin{equation}\label{eq:pen2b}
  u^\varepsilon = v_+ \psi_0  + Q f_\varepsilon(v(\cdot),\cdot).
\end{equation}
In particular, the solution $u^\varepsilon$ is real-valued as this is true for all
coefficients in the equation \eqref{eq:pen2} and for $v_+$.

\medskip
\noindent
2. By ellipticity, it follows that $Q\in \Psi^{-2}(M)$ --- see Appendix \ref{sec:appHoSoSpace} for the definition of the space of pseudodifferential operators $\Psi^m(M)$. Using \eqref{eq:pen2.1}, \eqref{eq:pen2}, and applying $(1-\Delta)$
to \eqref{eq:pen2b} gives
\begin{equation}\label{eq:bddu1}
  (1-\Delta) u^\varepsilon =
  (1-\Delta) Q (-\Delta)u^\varepsilon + v_+\psi_0,
\end{equation}
where $(1-\Delta) Q \in \Psi^0(M)$. For any
$1 < p <+\infty$, a pseudodifferential operator of class $\Psi^{0}(M)$
maps $L^p\to L^p$ continuously (see \eqref{eq:PseudoMapping2}). It follows
by \eqref{eq:pen4}, \eqref{eq:pen2b} that $u^\varepsilon\in W^{2,p}$
for any $1 < p <+\infty$ and
\begin{equation}\label{eq:pen2d}
\begin{split}
  \| u^\varepsilon \|_{W^{2,p}}
     &\leq \mO(1)(\| \Delta u^\varepsilon\|_{L^p} + |v_+|) \\
     &\leq \mO(1)(\| (1-\pi_N) f_\varepsilon(v(\cdot),\cdot)\|_{L^p} + |v_+|) \\
     &\leq \mO(1)(C_0 + |v_+|).
\end{split}
\end{equation}
Notice that the estimate is uniform in $\varepsilon >0$.
%
For $d < p < \infty$, the Sobolev space $W^{2,p}$ is compactly
embedded in $C(M)$, (see \eqref{eq:CompImbed}) and we have
\begin{equation}\label{eq:pen2e.4}
  \|w\|_{C(M)} \leq \mO_{p}(1) \|w\|_{W^{2,p}}, \quad w\in W^{2,p}.
\end{equation}
Thus, by \eqref{eq:pen2d}, there exists a $C_{p}>0$ such that
\begin{equation}\label{eq:pen2e.2}
  \|u^\varepsilon\|_{C(M)}
  \leq C_{p}( |v_+| +C_0)
\end{equation}
uniformly in $\varepsilon >0$.

\begin{lemm}\label{lem:FP}
  Let $d < p < \infty$, and equip the real Banach space $C(M;\RR) \oplus \RR$
  with the norm $\|(v,v_+)\| = \|v\|_{C(M)} + |v_+|$. Let $C_p,C_0>0$ be as in
  \eqref{eq:pen2e.2}, and put $K:= C_p(\max(|\phi_-^0|,|\phi_+^0|) +C_0)$. Let
  $\overline{B(0,K)}\subseteq C(M)$ be the closed ball of radius $K$
  and centered at $0$. Then,
  \begin{equation*}
    \begin{split}
      G:\overline{B(0,K)} \oplus [\phi_-^0,\phi_+^0] &\longrightarrow \overline{B(0,K)} \oplus[\phi_-^0,\phi_+^0] \\
      (v,v_+) &\longmapsto (u^\varepsilon,u_-^\varepsilon),
    \end{split}
  \end{equation*}
  mapping $(v,v_+)$ onto the unique solution $(u^\varepsilon,u_-^\varepsilon)$
  of \eqref{eq:pen2}, admits a fixed point.
\end{lemm}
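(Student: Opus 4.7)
The plan is to prove Lemma \ref{lem:FP} by applying Schauder's fixed point theorem to the map $G$ on the closed, bounded, convex, non-empty subset $\mathcal{K} := \overline{B(0,K)} \oplus [\phi_-^0,\phi_+^0]$ of the real Banach space $C(M;\mathbb{R}) \oplus \mathbb{R}$. Three things must be verified: (i) $G(\mathcal{K}) \subseteq \mathcal{K}$, (ii) $G$ is continuous, and (iii) $G(\mathcal{K})$ is relatively compact.

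For (i), given $(v,v_+) \in \mathcal{K}$, the unique solvability of \eqref{eq:pen5.0} via the homeomorphism property of $g_\varepsilon$ already established in the construction gives $u_-^\varepsilon \in [\phi_-^0,\phi_+^0]$ by \eqref{eq:pen5}. Combining \eqref{eq:pen2e.2} with $|v_+| \leq \max(|\phi_-^0|,|\phi_+^0|)$ yields $\|u^\varepsilon\|_{C(M)} \leq C_p(\max(|\phi_-^0|,|\phi_+^0|) + C_0) = K$, so $G$ indeed sends $\mathcal{K}$ into itself.

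For (ii), I would argue the two components separately. Since $\phi_\pm \in C^2(M)$, the coefficients $(\Delta_g\phi_\pm)^\pm$ are continuous on $M$, and $H_\varepsilon$ is Lipschitz with constant $1/\varepsilon$. Hence the substitution map $v \mapsto f_\varepsilon(v(\cdot),\cdot)$ is Lipschitz continuous from $C(M;\mathbb{R})$ into $C(M;\mathbb{R}) \subseteq L^p(M)$ for every $1 \leq p < \infty$. Fixing $d < p < \infty$ and using that $Q \in \Psi^{-2}(M)$ maps $L^p(M)$ continuously into $W^{2,p}(M)$, which in turn embeds continuously into $C(M;\mathbb{R})$ by \eqref{eq:MorreyInequality2a}, the explicit formula \eqref{eq:pen2b} shows that the first component $(v,v_+) \mapsto u^\varepsilon$ is continuous into $C(M;\mathbb{R})$. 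For the second component, $v \mapsto \pi_N f_\varepsilon(v(\cdot),\cdot)$ is continuous from $C(M;\mathbb{R})$ into $\mathbb{R}$, while $g_\varepsilon : [\phi_-^0,\phi_+^0] \to [-C_0,C_0]$ is a continuous strictly monotone bijection whose inverse is therefore continuous; composition gives continuity of $v \mapsto u_-^\varepsilon$.

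For (iii), the uniform bound \eqref{eq:pen2d} implies that the first component of $G(\mathcal{K})$ is bounded in $W^{2,p}(M)$ for our fixed $d < p < \infty$, while the second component already lies in the compact interval $[\phi_-^0,\phi_+^0]$. The Rellich-Kondrachov compact embedding $W^{2,p}(M) \hookrightarrow C(M;\mathbb{R})$ recorded in \eqref{eq:CompImbed} then guarantees that $G(\mathcal{K})$ is precompact in $C(M;\mathbb{R}) \oplus \mathbb{R}$, and Schauder's theorem delivers a fixed point. The only mildly delicate point is the bookkeeping of function spaces in step (ii): $f_\varepsilon(v(\cdot),\cdot)$ is a priori merely in $L^\infty$, so one must invoke the regularizing action of $Q$ to recover enough smoothness to land back in $C(M)$; everything else is a direct consequence of estimates already in place.
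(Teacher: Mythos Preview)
Your proposal is correct and follows essentially the same approach as the paper: both arguments verify the hypotheses of Schauder's fixed-point theorem by using the Lipschitz property of $H_\varepsilon$ to get continuity of $(v,v_+)\mapsto f_\varepsilon(v(\cdot),\cdot)$, the homeomorphism property of $g_\varepsilon$ on $[\phi_-^0,\phi_+^0]$ for continuity of $u_-^\varepsilon$, the formula \eqref{eq:pen2b} together with the $W^{2,p}$ bound \eqref{eq:pen2d} and the compact embedding $W^{2,p}(M)\hookrightarrow C(M)$ for relative compactness, and the already-established inclusions \eqref{eq:pen5}, \eqref{eq:pen2e.2} for self-mapping. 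The organization differs slightly (you separate (i)--(iii) explicitly while the paper interleaves them), but the substance is the same.
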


\medskip
\noindent
We present the proof of this Lemma at the end of this section. Continuing,
it follows that the fixed point of $G$ obtained in Lemma \ref{lem:FP}
solves
\begin{equation*}
  \begin{cases}
    -\Delta u^\varepsilon(x) = f_\varepsilon(u^\varepsilon(x),x)
              + g_\varepsilon(u^\varepsilon_{-}), \\
    ( u^\varepsilon| \psi_0) = u^\varepsilon_{-} \in [\phi_-^0,\phi_+^0].
  \end{cases}
\end{equation*}
Thus, it solves the non-linear equation
\begin{equation*}
    -\Delta u^\varepsilon = f_\varepsilon(u^\varepsilon,\cdot)
        + g_\varepsilon(( u^\varepsilon | \psi_0 )).
\end{equation*}
By \eqref{eq:pen2d}, we have that $u^\varepsilon \in W^{2,p}$ and
\begin{equation*}
  \| u^\varepsilon \|_{W^{2,p}}
  \leq  \mO_p(1)(C_0+|\phi_-^0|+|\phi_+^0|)
\end{equation*}
uniformly in $\varepsilon >0$.
\end{proof}
\begin{proof}[Proof of Lemma \ref{lem:FP}]
1. Since the embedding of $W^{2,p}(M;\RR)$ into $C(M;\RR)$ is  compact, it follows by \eqref{eq:pen5}, \eqref{eq:pen2e.2} that
$G( \overline{B(0,K)}\oplus[\phi_-^0,\phi_+^0]) \subset \overline{B(0,K)}\oplus[\phi_-^0,\phi_+^0]$ is relatively compact.
Moreover, note that $\overline{B(0,K)}\oplus[\phi_-^0,\phi_+^0]$ is a closed convex subset of $C(M;\RR) \oplus \RR$.

\medskip
\noindent
2. Since $H_\varepsilon(t)$ is Lipschitz continuous, it follows that
\begin{equation}\label{eq:pen2.0}
    \| f_\varepsilon(v_1,\cdot)-f_\varepsilon(v_2,\cdot))\|_{L^\infty}
    \leq \mO(1/\varepsilon)\| v_1 - v_2 \|_{L^\infty}.
  \end{equation}
Since $g_\varepsilon : [\phi_-^0,\phi_+^0]\to [-C_0,C_0]$ is a homeomorphism,
 \eqref{eq:pen5.0} implies that
  \begin{equation}\label{eq:pen2.6}
    \overline{B(0,K)}\times[\phi_-^0,\phi_+^0]  \ni (v,v_+) \mapsto u_-^\varepsilon \in
    [\phi_-^0,\phi_+^0] \text{ is continuous.}
  \end{equation}

\medskip
\noindent
Insisting on the dependency on $v,v_+$, we write the solution to \eqref{eq:pen2} as $u^\varepsilon(v,v_+)$. By \eqref{eq:pen2.0}, \eqref{eq:pen2b}
and an ellipticity estimate as in \eqref{eq:pen2d}, we get that
\begin{equation*}
  \begin{split}
    \| u^\varepsilon(v,v_+) &- u^\varepsilon(\widetilde{v}(v,\widetilde{v}_+)) \|_{W^{2,p}}\\
    &\leq  \mO_p(1)
    \big(\|(1-\pi_N)(f(u^\varepsilon(v,v_+)(\cdot),\cdot)
      - f(u^\varepsilon(\widetilde{v},\widetilde{v}_+)(\cdot),\cdot))\|_{L^p}
    +|v_+- \widetilde{v}_+|\big) \\
    &\leq  \mO_p(1/\varepsilon) \big(\|v-\widetilde{v}\|_{L^\infty}
        +|v_+- \widetilde{v}_+|\big). \\
  \end{split}
  \end{equation*}
  Using \eqref{eq:pen2e.4}, we get
  \begin{equation*}
      \| u(v,v_+) - u(\widetilde{v}(v,\widetilde{v}_+)) \|_{C(M)}
      \leq  \mO_p(1/\varepsilon) \big(\|v-\widetilde{v}\|_{L^\infty}
          +|v_+- \widetilde{v}_+|\big).
    \end{equation*}
Combining this with \eqref{eq:pen2.6}, it follows that $G$ is continuous. The statement then follows from the Schauder fixed-point theorem,
see for instance \cite[Corollary 11.2]{GiTr01}.
\end{proof}
\subsection{The contact sets and the free boundary}\label{sec:ContactSetFB}
Let $u$ be a solution to \eqref{eq:din1}. In this section we collect some
properties of the \emph{contact sets}
\begin{equation}\label{eq:fbp9}
M_\pm(u):=\{x\in M; u(x) = \phi_\pm(x) \}
\end{equation}
Note that these sets are closed. Their boundaries
\begin{equation}\label{eq:fbp9.0}
  \partial M_\pm(u)
\end{equation}
are called \emph{free boundary}.
\begin{prop}\label{prop:cs1} Let $u$ be the unique solution to
  \eqref{eq:din1}. If $\max_M \phi_- > \min_M \phi_+$, then
  \begin{equation}\label{prop:cs1.1}
  \vol(M_\pm(u)) > 0.
  \end{equation}
  If $\max_M \phi_- = \min_M \phi_+$, then
  \begin{equation}\label{prop:cs1.2}
    M_+(u) = \phi_+^{-1}(\min_M \phi_+), \quad
    M_-(u) = \phi_-^{-1}(\max_M \phi_-).
  \end{equation}
\end{prop}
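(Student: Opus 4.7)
The plan is to handle the two cases separately, the second being essentially a tautology.

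First I would dispose of the case $\max_M \phi_- = \min_M \phi_+$. By Theorem \ref{thm:DPPex}, the unique solution is the constant $u \equiv \max_M \phi_- = \min_M \phi_+ =: c$. Then $M_+(u) = \{x; \phi_+(x) = c\} = \phi_+^{-1}(\min_M \phi_+)$ and similarly for $M_-(u)$, which is exactly \eqref{prop:cs1.2}.

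For the main case $\max_M \phi_- > \min_M \phi_+$, I would argue by contradiction. Suppose $\vol(M_+(u)) = 0$. Since $u \in W^{2,p}(M)$ with $p > d$, $\Delta_g u \in L^p(M)$ is defined almost everywhere, and the middle condition in \eqref{eq:din1} gives $\Delta_g u \leq 0$ a.e.\ on $\{u < \phi_+\}$. Since the complement $M_+(u)$ has measure zero, this gives $\Delta_g u \leq 0$ a.e.\ on $M$. On the other hand, by Stokes' theorem on the boundaryless manifold $M$ (equivalently, since $\Delta_g u \in \mathrm{Ran}(-\Delta_g) = \CC\psi_0^\perp$),
\begin{equation*}
\int_M \Delta_g u \, dx = 0.
\end{equation*}
Combining $\Delta_g u \leq 0$ a.e.\ with zero integral forces $\Delta_g u = 0$ a.e., so $u$ is weakly harmonic. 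Elliptic regularity together with connectedness of $M$ then implies that $u$ is constant, $u \equiv c$. But then $\phi_- \leq c \leq \phi_+$ on all of $M$ requires $\max_M \phi_- \leq c \leq \min_M \phi_+$, contradicting the assumption $\max_M \phi_- > \min_M \phi_+$. The symmetric argument (assuming $\vol(M_-(u)) = 0$, using $\Delta_g u \geq 0$ a.e.\ on $\{u > \phi_-\}$) yields the same contradiction.

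The only mildly delicate point is justifying that $\Delta_g u \leq 0$ holds almost everywhere on $\{u<\phi_+\}$ in the a.e.\ sense for $W^{2,p}$ solutions, but this is essentially the definition of what it means for $u$ to solve \eqref{eq:din1} in the $W^{2,p}$ sense as already used throughout the existence proof (see Step 4 of the proof of Theorem \ref{thm:DPPex}); no genuine obstacle arises. Everything else is a routine application of the fact that a compact connected boundaryless Riemannian manifold admits no non-constant superharmonic function, which is what drives the contradiction.
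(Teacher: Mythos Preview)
Your proof is correct and follows essentially the same approach as the paper. The only cosmetic difference is in how you deduce that $u$ is constant: you combine $\Delta_g u \le 0$ a.e.\ with $\int_M \Delta_g u\,dx = 0$ to get $\Delta_g u = 0$ a.e.\ and then invoke elliptic regularity, whereas the paper (referring back to the argument after \eqref{eq:pen8.1}) uses the energy identity $\int_M (\Delta_g \widetilde u)\,\widetilde u\,dx = -\int_M |d\widetilde u|^2_g\,dx$ with $\widetilde u = u - \min_M u \ge 0$; both routes are standard ways of showing that a compact connected boundaryless manifold admits no non-constant super/subharmonic function.
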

\begin{proof}
1. When $ \max_M \phi_- > \min_M \phi_+$ then $u$ cannot be
constant since $\phi_-\leq u \leq \phi_+$. If $\vol( M_-(u)) = 0$,
then $\Delta u \geq 0$ almost everywhere on $M$ which implies
that $u$ is constant, see the argument in the paragraph after
\eqref{eq:pen8.1}. This is a contradiction, so $\vol(M_-(u))>0$.
Following a similar argument shows that $\vol(M_+(u)) > 0$.

\medskip
\noindent
2. If $\max_M \phi_- = \min_M \phi_+$, then we conclude \eqref{prop:cs1.2}
directly from Theorem \ref{thm:DPPex}.
\end{proof}

\medskip
\noindent
By Stokes' theorem $\langle \Delta u, 1 \rangle =0$, so
\begin{equation*}
  \langle \Delta u, 1_{M_+(u)} \rangle =
  \langle -\Delta u, 1_{M_-(u)} \rangle.
\end{equation*}
For $f\in H^1(M)$, we have that $\nabla f = 0$ almost everywhere
in any set where $f$ is constant, this follows for instance from
\cite[Lemma 7.7]{GiTr01}. Since $u\in W^{2,p}(M)$, it follows that
for any two vector fields $X_1,X_2\in C^\infty(M;TM)$ we have that
\begin{equation}\label{eq:cs1.0}
  X_1 X_2 (u-\phi_\pm) = 0 \quad \text{a.e. in } \{u=\phi_\pm\}.
\end{equation}
In particular,
\begin{equation}\label{eq:cs1.00}
  \Delta u = \Delta \phi_\pm \quad \text{a.e. in } \{u=\phi_\pm\}.
\end{equation}
So
\begin{equation}\label{eq:cs1.1}
  \int_{M_+(u)}\Delta \phi_+ dx
  =  -\int_{M_-(u)}\Delta \phi_- dx.
\end{equation}
Next, recall \eqref{eq:din1}. Since $\Delta u \geq 0$ on $\{u>\phi_-\}$,
and $\Delta u \leq 0$ in $\{u<\phi_+\}$, it follows that
\begin{equation}\label{eq:cs1}
M_\pm(u) \subseteq \{\pm\Delta\phi_\pm \geq 0\}.
\end{equation}
Since $u$ is continuous, \eqref{eq:fbp1}
implies that the contact sets are disjoint, i.e.
\begin{equation}\label{eq:cs3}
M_-(u)\cap M_+(u) = \emptyset.
\end{equation}
\begin{prop}\label{prop:cs2}
Let $\phi_\pm\in C^4(M;\RR)$ be non constant, satisfy
\eqref{eq:fbp1} and
\begin{equation}\label{eq:cs4}
  d \Delta\phi_\pm \neq 0 \text{ on } \Gamma_\pm:=(\Delta\phi_\pm)^{-1}(0).
\end{equation}
Then, $M_\pm(u) \cap (\Delta\phi_\pm)^{-1}(0)=\emptyset$.
\end{prop}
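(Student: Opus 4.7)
The plan is a proof by contradiction via the Hopf boundary-point lemma, exploiting that the separation \eqref{eq:fbp1} forces $u$ to be harmonic, not just superharmonic, on $\{u<\phi_+\}$ in a neighborhood of the contact point. I treat $M_+(u)\cap\Gamma_+=\emptyset$; the case of $M_-(u)$ is entirely symmetric, obtained by considering $u-\phi_-$ and the set $\{\Delta_g\phi_->0\}$ in place of $\phi_+-u$ and $\{\Delta_g\phi_+<0\}$.

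Suppose for contradiction that $x_0\in M_+(u)\cap\Gamma_+$. Since $\phi_+(x_0)-\phi_-(x_0)\geq\tau>0$ by \eqref{eq:fbp1} and $u(x_0)=\phi_+(x_0)$, the continuity of $u$ (provided by the Sobolev embedding $W^{2,p}\subset C^{1,\alpha}$ with $p>d$ from Theorem \ref{thm:DPPex}) furnishes an open neighborhood $U$ of $x_0$ on which $u>\phi_-$. The first line of \eqref{eq:din1} then gives $\Delta_g u\geq 0$ a.e.\ on $U$, while the second line gives $\Delta_g u\leq 0$ a.e.\ on the open set $U\cap\{u<\phi_+\}$. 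Consequently $\Delta_g u=0$ there, i.e., $u$ is harmonic on $U\cap\{u<\phi_+\}$.

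Set $v:=\phi_+-u\in W^{2,p}(U)\cap C^{1,\alpha}(U)$. Then $v\geq 0$ on $M$ with $v(x_0)=0$, so $x_0$ is a global minimum of $v$, and in particular $\nabla v(x_0)=0$. By \eqref{eq:cs4}, $\Gamma_+$ is a $C^3$ hypersurface near $x_0$, and the open set $W:=U\cap\{\Delta_g\phi_+<0\}$ has $x_0$ on its smooth boundary and satisfies the interior sphere condition at $x_0$. By \eqref{eq:cs1}, $M_+(u)\cap\{\Delta_g\phi_+<0\}=\emptyset$, so $u<\phi_+$ throughout $W$; combining this with the harmonicity of $u$ established above yields $\Delta_g v=\Delta_g\phi_+-\Delta_g u=\Delta_g\phi_+<0$ on $W$. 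By elliptic regularity $v$ is $C^{2,\alpha}$ in $W$ up to $\Gamma_+$ near $x_0$, and $v>0$ throughout $W$.

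An application of the Hopf boundary-point lemma to the strictly superharmonic function $v$ on $W$ at the boundary minimum $x_0$ then gives $\partial_\nu v(x_0)<0$, where $\nu$ denotes the outward unit normal to $W$ at $x_0$ (i.e., pointing into $\{\Delta_g\phi_+>0\}$). This contradicts $\nabla v(x_0)=0$, completing the proof in the $+$ case. The main subtlety to be verified is that $v$ is genuinely strictly superharmonic on the "bad" side $\{\Delta_g\phi_+<0\}$: this step crucially combines both inequalities of the double obstacle problem \eqref{eq:din1} together with \eqref{eq:fbp1} to promote $u$ from being merely superharmonic on $\{u<\phi_+\}$ to being actually harmonic there, so that $\Delta_g v$ reduces cleanly to $\Delta_g\phi_+$ and acquires the strict sign needed to fire the Hopf lemma.
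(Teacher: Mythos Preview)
Your argument is correct and follows essentially the same approach as the paper: a contradiction via Hopf's boundary-point lemma, using that on the side of $\Gamma_\pm$ where the Laplacian of the obstacle has the ``wrong'' sign, the contact set is absent (by \eqref{eq:cs1}), $u$ is harmonic there (since strictly between the obstacles, thanks to \eqref{eq:fbp1}), and the resulting strict sign of $\Delta_g(\phi_\pm-u)$ forces a nonzero normal derivative at $x_0$, contradicting $\nabla(\phi_\pm-u)(x_0)=0$. The only cosmetic differences are that the paper treats the $M_-$ case and works on a small ball $B$ tangent to $\Gamma_-$ at $x_0$ rather than the full one-sided region $W$; both satisfy the interior sphere condition, so Hopf applies equally. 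Your claim of $C^{2,\alpha}$ regularity of $v$ up to $\Gamma_+$ is stronger than needed (and not obviously justified, since $u$ need not be harmonic on a full neighborhood of $x_0$), but it is superfluous: global $C^{1,\alpha}$ regularity of $v$ together with $C^\infty$ interior regularity on $W$ already suffice for Hopf's conclusion and the contradiction with $\nabla v(x_0)=0$.
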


\medskip
\noindent
Before presenting the proof we remark that an analogous result in the
case of single obstacle problems on Euclidean domains goes back to
\cite{CaRi76}.
\begin{proof}
We will consider only the case of $\phi_-$ since the second case is similar. By \eqref{eq:cs4}, it follows that $\Gamma_-$ is a $C^2$-submanifold of $M$ of dimension $d-1$.

\medskip
\noindent
Let $x_0\in \Gamma_-$ and suppose that $x_0\in M_-(u)$. By
  \eqref{eq:cs4}, we see that $\Gamma_-$ is the boundary of
  $\{\Delta\phi_- < 0\}$ and that for any neighborhood
  $U\subset M$ of $x_0$
  \begin{equation}\label{eq:cs5}
    U\cap \{ \Delta\phi_- > 0\}\neq \emptyset \text{ is open.}
  \end{equation}
Let $B\subset U\cap \{ \Delta\phi_- > 0\}$ be a small open set such that $\partial B$ is tangent to $\Gamma_-$ at $x_0$. This is
possible since $\Gamma_-$ is $C^2$. By \eqref{eq:cs3}, we see that upon potentially shrinking $B$ we may assume that $B\subset \{\phi_- < u < \phi_+\}$. By \eqref{eq:din1} we know that
\begin{equation*}
    \begin{split}
      &\Delta(\phi_--u) = \Delta \phi_- \geq 0 \text{ in } B,\\
      & \phi_--u <0\,\, \text{in}\,\, B, \\
      & (\phi_- - u)(x_0) = 0.
\end{split}
\end{equation*}
Since $u$ is globally $C^{1,\alpha}$, see \eqref{eq:DOPreg}, and harmonic in $\{\phi_- < u < \phi_+\}$, we see that $u \in C^2(B)\cap C^{1}(\overline{B})$. Hopf's Lemma\footnote{Sometimes also called Zaremba's principle} \cite[Section 5, Proposition 2.2]{Ta10} implies that
  \begin{equation*}
    \partial_{\nu}(\phi_- - u)(x_0) >0,
  \end{equation*}
  where $\nu$ denotes the outward-pointing normal to $\partial B$ at $x_0$.
  Thus, by Taylor expansion, we see that near $x_0$, $(\phi_- - u)$
  is strictly increasing in the direction of $\nu$. This is a
  contradiction since $(\phi_- - u)(x_0)=0$ and $u\geq \phi_-$ on $M$.
\end{proof}
\subsection{Regularity of the free boundary via reduction to a one obstacle problem}
Let $u\in W^{2,p}(M)$, $d< p<\infty$, be the unique solution to the double obstacle problem \eqref{eq:din1}, see Theorem \ref{thm:DPPex}. Recall also that $u \in C^{1,\alpha}(M)$ for any $0<\alpha<1$, see \eqref{eq:DOPreg}. The aim of this section is to study the free boundaries $\partial M_{\pm}(u)$. We will work under the assumptions of Proposition \ref{prop:cs2}, and we suppose that 
\begin{equation}\label{eq:shifteq0.1}
  \max_M \phi_- > \min_M \phi_+.
\end{equation}
We will focus on the free boundary $\partial M_{-}(u)$ since treating $\partial M_{+}(u)$ is similar. 
Put
\begin{equation}\label{eq:shifteq0}
v_-:= u - \phi_- \geq 0
\end{equation}
and note that $v_-$ has the same regularity as $u$. Since $u\in W^{2,p}(M)$, $d< p<\infty$, it follows from H\"older's inequality that $u\in H^2(M)$. Moreover, by \eqref{eq:din1}, 
\begin{equation}\label{eq:shifteq1}
  \begin{cases}
  \Delta v_- = -\Delta\phi_- \text{ in } M\backslash (M_-(u)\cup M_+(u)) \\
  \Delta v_- = 0 \text{ a.e. in } M_-(u),\\
  v_-\geq 0.
  \end{cases}
\end{equation}
As discussed in beginning of Section \ref{sec:ContactSetFB}, the contact set $M_-(u)$ is closed and we recall \eqref{eq:cs1}. It follows from Proposition \ref{prop:cs2} that $\dist(M_-(u),(\Delta\phi_-)^{-1}(0))>0$. Put 
\begin{equation*}
  r_0:= \min(r_{\mathrm{inj}},\dist(M_-(u),(\Delta\phi_-)^{-1}(0)))>0.
\end{equation*}
Here $r_{\mathrm{inj}}>0$ denotes the injectivity radius of $M$. For $x_0\in\partial M_-(u)$ consider the open geodesic ball $B(x_0,r)$, $0< r < r_0$, and note that by \eqref{eq:cs1} it is contained in $\{-\Delta\phi_-\geq 0\}$. We will work in local geodesic coordinates $x=(x_1,\dots,x_d)\in B(0,r)\subset\RR^d$ on $B(x_0,r)$ so that $x_0$ corresponds to $x=0$. Let $\Omega:=B(0,r/2)$ and notice that
\begin{equation}\label{eq:fct0.1}
  v_-|_{\Omega} \in H^2(\Omega) 
  \quad \text{and} \quad   
  0\leq v_-|_{\partial\Omega} = (u - \phi_-)|_{\partial\Omega}=:b \in H^{3/2}(\partial\Omega).
\end{equation}
Recall the general expression for the Laplace-Beltrami operator in local coordinates
\begin{equation*}
\Delta f = \frac{1}{\sqrt{\det g}}\sum_{j,k=1}^d \partial_{x_j} \left(g^{jk}\, \sqrt{\det g}\, \partial_{x_k} f\right), 
\quad (g^{jk}) = (g_{jk})^{-1}, 
\end{equation*}
for $f\in H^2(\Omega)$. Here the $g_{jk}$ depend smoothly on $x$ and are the coefficients of the Riemannian metric $g$ in local coordinates. Moreover, the real symmetric positive definite matrix $(g_{ij})_{i,j}$ is such that there exists $0<\lambda < \Lambda$ such that for all $\xi\in\RR^d$ and all $x\in\overline{\Omega}$ 
\begin{equation*}
  \lambda|\xi|^2 \leq \sum_{i,j} g_{ij}(x)\xi^i\xi^j \leq \Lambda|\xi|^2.
\end{equation*}
The same holds true for its inverse $(g^{jk})_{j,k}$. The latter defines a scalar product on the fibers of $T^*\Omega\simeq \Omega\times \RR^d$ which we will denote by $\langle \xi|\eta\rangle_g = \sum_{j,k} g^{i,j}\xi_i\, \eta_k$, $\xi,\eta\in\RR^d$, and it varies smoothly in $x$. The induced fiber-wise norm will be denoted by $|\xi|_g$.

Next, put $F := - 2\Delta \phi_- \geq 0$ and consider the functional
\begin{equation}\label{eq:fct0}
  J(v) = \int_{\Omega} (|\nabla v|_g^2  +  F v )dx,
\end{equation}
for $v\in K = \{v \in H^1(\Omega); v\geq 0, v|_{\partial\Omega}=b \}$. We remind the reader here that, per our notational convention introduced at the beginning of Section \ref{sec:FB}, $dx$ stands for the Riemannian density of integration. This functional admits a unique minimizer among all functions in $K$. The argument to prove this is standard and we will outline it here briefly for the reader's convenience: Since $J(v)$ is positive and $v_-|_{\Omega}\in K$, the infimum of $J(v)$ over all $v\in K$ is positive and finite, and we can find a minimizing sequence $\{v_k\}_{k\in\NN}$ of functions in $K$. One can easily check that this sequence is bounded in $H^1(\Omega)$, so we can extract a subsequence $v_{k_j}$ which converges weakly in $H^1(\Omega)$ to a function $v_0\in H^1(\Omega)$ since the closed unit ball in $H^1(\Omega)$ is weakly sequentially compact. The set $K$ is convex and closed in the strong $H^1$ topology, so by the Hahn-Banach separation theorem it is also weakly closed. Hence $v_0\in K$. Using that the functional $J$ is lower semi-continuous under weak convergence it follows that $v_0$ is indeed a minimizer of $J$. 
\par
If there are two different minimizers $v_1,v_2\in K$, then by the Hölder inequality 
\begin{equation}\label{eq:fct1}
  J\!\left(\frac{v_1+v_2}{2}\right) \leq \frac{1}{2}(J(v_1)+J(v_2)).
\end{equation}
Equality must hold since otherwise the $v_j$ would not be minimizers. A straightforward computation then shows that 
\begin{equation*}
  \int_{\Omega} |\nabla(v_1-v_2)|_g^2 dx=0.
\end{equation*}
Hence, $\nabla v_1=\nabla v_2$ almost everywhere in $\Omega$. Since also $v_1-v_2=0$ on $\partial\Omega$, the Poincaré inequality shows that $v_1=v_2$ almost everywhere in $\Omega$. In conclusion, we have shown that the functional $J(v)$ in \eqref{eq:fct0} admits a unique minimizer among all functions $v\in K$. 

Next, using classical variational calculus, one can easily check that $w\in K$ is a minimizer of \eqref{eq:fct0} if and only if for all $v\in K$ 
\begin{equation}\label{eq:fct2}
  \int_{\Omega} \big[2\langle \nabla w | \nabla(v-w)\rangle_g + F(v-w)\big] dx \geq 0.
\end{equation}
Indeed, to see sufficiency, notice that $w+\varepsilon(v-w)\in K$, $\varepsilon\in (0,1)$, for any $v\in K$ due to the convexity of $K$. Since $w$ is a minimizer of \eqref{eq:fct0}, it follows that $J(w)\leq J(w+\varepsilon(v-w))$. 
Hence, 
\begin{equation*}
  J(w) \leq J(w) + \varepsilon \int_\Omega \big[2\langle \nabla w | \nabla(v-w)\rangle_g + F(v-w)\big] dx + \varepsilon^2 \int_\Omega |\nabla(v-w)|^2_g\, dx.
\end{equation*}
Subtracting from both sides $J(w)$, dividing by $\varepsilon$ and letting $\varepsilon\to 0$, shows \eqref{eq:fct2}. To see the necessity of \eqref{eq:fct2}, notice that it is equivalent to 
\begin{equation*}
  \int_{\Omega} 2\langle \nabla w | \nabla v\rangle_g + Fv\big] dx \geq 
  J(w) + \int_{\Omega} |\nabla w|_g^2 dx.
\end{equation*}
Since $2\langle \nabla w | \nabla v\rangle_g  \leq |\nabla w|_g^2 + |\nabla v|_g^2$, it follows that $J(w)\leq J(v)$ for all $v\in K$ which shows that $w$ is a minimizer of \eqref{eq:fct0}.

\medskip
By \eqref{eq:shifteq1}, \eqref{eq:fct0.1} we know that $v_-|_{\Omega}\in K$. To ease the notation we will drop the restriction to $\Omega$ and simply write $v_-$. Notice that $v-v_- \in H^1_0(\Omega)$ for any $v\in K$. Integration by parts and the definition of $F$ in \eqref{eq:fct0} give 
\begin{equation}\label{eq:fct3}
  \int_{\Omega} \big[2\langle \nabla v_- | \nabla(v-v_-)\rangle_g + F(v-v_-)\big] dx
  =-2\int_{\Omega} (\Delta v_- +\Delta\phi_-) (v-v_-) dx.
\end{equation}
By \eqref{eq:shifteq0}, \eqref{eq:shifteq1} we see that $\Delta v_- = -\Delta\phi_-$ 
on $\{v_- > 0\}\cap \Omega$ which gives that the integrand on the right hand side of \eqref{eq:fct3} is equal to $0$ there. Similarly, we see that $\Delta v_- =0$ almost everywhere in $\{v_- = 0\}\cap \Omega$. Thus, using also that any $v\in K$ is non-negative and that $-\Delta\phi_- \geq 0$ in $\Omega$, 
\begin{equation*}
  -2\int_{\Omega} (\Delta v_- +\Delta\phi_-) (v-v_-)dx
  \geq 0.
\end{equation*}
It follows that $v_-$ is the unique minimizer of \eqref{eq:fct0}. By \cite[Theorem 1.1]{FoGeSp15}, we know that the free boundary $\partial\{v_- = 0\}\cap\Omega$ is given by the disjoint union $\mathrm{Reg}(v_-)\cup\mathrm{Sing}(v_-)$ of regular and singular points of the free boundary. More precisely, we know that the singular points $\mathrm{Sing}(v_-) \subset \bigcup_{k=0}^{d-1}S_k$, with the $S_k$ contained in the union of at most countably many $C^1$-submanifolds of dimension $k$. Moreover, the set of regular points $\mathrm{Reg}(u)$ is relatively open in $\partial\{v_- = 0\}\cap\Omega$ and that for every point $y_0\in\mathrm{Reg}(u)\cap\Omega$ there exist $s = s(y_0) > 0$ and $\beta= \beta(y_0)\in(0, 1)$ such that $\mathrm{Reg}(u)\cap B(y_0,s)$ is a $C^{1,\beta}$-submanifold of dimension $d-1$. Hence, in view of \eqref{eq:shifteq0}, covering the compact set $\partial M_-(u)$ with finitely many geodesic balls $B(p_j,r_j)$, $0<r_j<r_0$, $p_j\in \partial M_-(u)$, it follows that the free boundary $\partial M_-(u)$ is given by the disjoint union $\mathrm{Reg}_-(u)\cup\mathrm{Sing}_-(u)$ of regular and singular points such that 
\begin{itemize}
  \item $\mathrm{Sing}_-(u) \subset \bigcup_{k=0}^{d-1}S_k$, with the $S_k$ contained in the union of at most countably many $C^1$-submanifolds of dimension $k$;
  \item $\mathrm{Reg}_-(u)$ is relatively open in $\partial M_-(u)$ and that for every point $p\in\mathrm{Reg}_-(u)$ there exist $s = s(p) > 0$ and $\beta= \beta(p)\in(0, 1)$ such that $\mathrm{Reg}_-(u)\cap B(p,s)$ is a $C^{1,\beta}$-submanifold of dimension $d-1$.
\end{itemize}
These properties together with the compactness $\partial M_-(u)$ directly imply that 
\begin{equation}\label{eq:FBlowerVol0}
  \mathrm{vol}(\partial\{u= \phi_-\})=0.
\end{equation}

Performing the exact same steps with $v_+=\phi_+-u$ gives similar results for the free boundary $\partial M_+(u)$. In conclusion we obtain 
\begin{theo}
\label{thm:FreeBoundary}
Let $\phi_\pm\in C^4(M;\RR)$ be non constant, satisfy \eqref{eq:fbp1}, and
\begin{equation*}
d \Delta\phi_\pm \neq 0 \text{ on } \Gamma_\pm:=(\Delta\phi_\pm)^{-1}(0).
\end{equation*}
Then, the free boundary $\partial M_\pm(u)$ are given by the disjoint union $\mathrm{Reg}_\pm(u)\cup\mathrm{Sing}_\pm(u)$ of regular and singular points such that 
\begin{itemize}
  \item $\mathrm{Sing}_\pm(u) \subset \bigcup_{k=0}^{d-1}S_k$, with the $S_k^\pm$ contained in the union of at most countably many $C^1$-submanifolds of dimension $k$;
  \item $\mathrm{Reg}_\pm(u)$ is relatively open in $\partial M_\pm(u)$ and that for every point $p\in\mathrm{Reg}_-(u)$ there exist $s = s(p) > 0$ and $\beta= \beta(p)\in(0, 1)$ such that $\mathrm{Reg}_\pm(u)\cap B(p,s)$ is a $C^{1,\beta}$-submanifold of dimension $d-1$.
\end{itemize}
In particular, 
\begin{equation*}
    \vol(\partial M_\pm(u))=0.
\end{equation*}
\end{theo}
\subsection{Optimal weights}
Let us return to the case when $M$ is a compact Riemann surface equipped with a conformal Riemannian metric, as in Section \ref{sec:Setting}. We view $M$ as a real $2$-dimensional Riemannian manifold, and we let $\varphi:M\to \RR$ be a non-constant $C^4$ function such that
\begin{equation}
\label{eq:cs4_2}
d \Delta \varphi \neq 0 \text{ on } (\Delta \varphi)^{-1}(0),
\end{equation}
cf. (\ref{2.tb0}). Let $\tau >0$ such that
\begin{equation}
\label{eq:opw2_2}
\max_M \varphi - \min_M \varphi \geq \tau >0.
\end{equation}

\medskip
\noindent
Consider the following special case of the double obstacle problem \blue{(\ref{eq:din1})},
\begin{equation}\label{eq:din1_2}
\begin{cases}
\Delta u \geq 0 \quad \text{on } \{ u > \varphi-\tau\}, \\
\Delta u \leq 0 \quad \text{on } \{ u<\varphi\}, \\
\varphi-\tau \leq u \leq \varphi \text{ a.e on } M.
\end{cases}
\end{equation}
Putting $\phi_-:=\varphi-\tau$ and $\phi_+:=\varphi$, we see that the condition \eqref{eq:fbp1}, the assumptions of Theorems \ref{thm:DPPex} and \ref{thm:FreeBoundary}, and of Propositions \ref{prop:cs1} and \ref{prop:cs2} are satisfied. Combining these results with \eqref{eq:cs1.1} we obtain Theorem \ref{thm:DPPex2}.
\appendix

\section{Almost holomorphic extensions}
\label{sec:almholexten}
\subsection{Almost holomorphic extensions of vector fields}
We shall first work locally on $\mathbb R^n$ and its complexification $\CC^n$. Let 
\begin{equation}
\label{A1}
\nu = \sum_{j=1}^n a_j(x)\partial_{x_j}
\end{equation}
be a smooth vector field on a domain in $\RR^n$. We allow the coefficients $a_j$ to be complex valued, but assume that they are smooth. Let $\widetilde{a}_j$ stand for almost holomorphic extensions of $a_j$ on a suitable domain in $\CC^n$, see~\cite{Ho69,MeSj74}. We denote the coordinates on $\mathbb C^n$ by $z_1,\dots,z_n$, $z_j=x_j+iy_j$, so that the almost holomorphic extensions $\widetilde{a}_j$ become functions of $z$. We have 
\begin{equation}
\widetilde{a}_j\big|_{\RR^n} = {a}_j, \quad \partial_{\overline{z}}\widetilde{a}_j(z) = \mathcal{O}(|\Im z|^\infty).
\end{equation}
An almost holomorphic extension of the vector field $\nu$ in (\ref{A1}) is then the $(1,0)$ vector field on $\mathbb C^n$ given by 
\begin{equation}
\widetilde{\nu} = \sum_{j=1}^n \widetilde{a}_j(z)\partial_{z_j}, \quad \partial_{z_j} = \frac{1}{2}\left(\partial_{x_j} - i \partial_{y_j} \right).
\end{equation}
Later we may drop the tildes when there is no risk of confusion. 

\medskip
\noindent
Let $\widehat{\widetilde{\nu}}$ be the real vector field on $\mathbb C^n$ associated to $\widetilde{\nu}$, given by 
\begin{equation*}
\widehat{\widetilde{\nu}} = \widetilde{\nu} +\overline{\widetilde{\nu}}, \quad \overline{\widetilde{\nu}} = \sum_{j=1}^n \overline{\widetilde{a}}_j(z)
  \partial_{\overline{z}_j}.
\end{equation*}
We have $i\widetilde{\nu}= \sum_{j=1}^n i\widetilde{a}_j(z) \partial_{z_j}$ so that 
\begin{equation*}
i \widetilde{\nu} = \widetilde{i\nu} = \sum_{j=1}^n i\widetilde{a}_j(z) \partial_{z_j}, \quad \widehat{i\widetilde{\nu}} = i\widetilde{\nu} - i 
\overline{\widetilde{\nu}}.
\end{equation*}
In the real coordinates $x,y$, where $z=x+iy$, we have 
\begin{equation*}
\widehat{\widetilde{\nu}} = \sum_{j=1}^n \left((\Re \widetilde{a}_j)\partial_{x_j} + (\Im  \widetilde{a}_j )\partial_{y_j}\right), \quad \widehat{i\widetilde{\nu}} = \sum_{j=1}^n \left(-(\Im \widetilde{a}_j)\partial_{x_j} + (\Re  \widetilde{a}_j )\partial_{y_j}\right), 
\end{equation*}
so we can identify $\widehat{\widetilde{\nu}}$, $\widehat{i\widetilde{\nu}}$ with the real vectors fields $(\Re \widetilde{a}, \Im \widetilde{a})$, $(-\Im \widetilde{a}, \Re \widetilde{a})$ on $\RR^{2n}_{x,y}\simeq \mathbb C^n_z$, respectively. We compute the commutator 
\begin{equation}
\label{eq_appA0}
\begin{split}
  [\widehat{\widetilde{\nu}},\widehat{i\widetilde{\nu}}] 
  &=
  [\widetilde{\nu} +\overline{\widetilde{\nu}},
                i\widetilde{\nu} - i\overline{\widetilde{\nu}}]
  =
  -i [\widetilde{\nu},\overline{\widetilde{\nu}}]
  +i[\overline{\widetilde{\nu}},\widetilde{\nu}]\\
  &=2i[\overline{\widetilde{\nu}},\widetilde{\nu}] 
    =2i \sum_{j,k=1}^n [\overline{\widetilde{a}}_j\partial_{\overline{z}_j},
      \widetilde{a}_k\partial_{z_k}] \\
  & = 2i \sum_{j,k=1}^n \left(
    \overline{\widetilde{a}}_j\partial_{\overline{z}_j}(\widetilde{a}_k)\partial_{z_k} 
      - \widetilde{a}_k(\partial_{z_k}\overline{\widetilde{a}}_j)\partial_{\overline{z}_j}   \right)\\
  & = \sum_{k=1}^n \mathcal{O}(|\Im z|^\infty)\partial_{z_k} 
  + \sum_{j=1}^n \mathcal{O}(|\Im z|^\infty)\partial_{\overline{z}_j},
\end{split}
\end{equation}
which is a real vector field. 

\medskip
\noindent
Consider the flow $\Phi_s=\exp (s\widehat{\widetilde{\nu}})$ of the real vector field $\widehat{\widetilde{\nu}}$, defined locally in $\mathbb C^n$ for small times $s\in\RR$. We have 
\begin{equation}
\label{A2}
\begin{cases}
\partial_s \Phi_s(z) = \widehat{\widetilde{\nu}}(\Phi_s(z) ), \quad s\in\neigh(0,\RR),\\
\Phi_0(z) =z.
\end{cases}
\end{equation}
More explicitly, by the identification $\widehat{\widetilde{\nu}} \simeq \widetilde{a}$, the first equation in (\ref{A2}) reads 
\begin{equation*}
\partial_s\Phi_s(z) = \widetilde{a}(\Phi_s(z) ). 
\end{equation*}
Applying $\partial_z$ and $\partial_{\overline{z}}$, we get 
\begin{equation}\label{eq:ahvf1}
  \partial_s\partial_z\Phi_s(z) = \partial_w\widetilde{a}(\Phi_s(z) )\partial_z\Phi_s(z)
  + \partial_{\overline{w}}\widetilde{a}(\Phi_s(z) )\partial_z\overline{\Phi}_s(z),
\end{equation}
\begin{equation*}
  \partial_s\partial_{\overline{z}}\Phi_s(z) 
  = \partial_w\widetilde{a}(\Phi_s(z) )\partial_{\overline{z}}\Phi_s(z) 
  + \partial_{\overline{w}}\widetilde{a}(\Phi_s(z) )\partial_{\overline{z}}\overline{\Phi}_s(z). 
\end{equation*}
Taking the complex conjugation of the last equation gives 
\begin{equation}\label{eq:ahvf2}
\partial_s\partial_{z}\overline{\Phi}_s(z) = 
\overline{\partial_{\overline{w}}\widetilde{a}}(\Phi_s(z) )\partial_{z}\Phi_s(z)
+ \overline{\partial_{w}\widetilde{a}}(\Phi_s(z) )\partial_{z}\overline{\Phi}_s(z).
\end{equation}
Here the linear evolution system (\ref{eq:ahvf1}), (\ref{eq:ahvf2}) for $\partial_z\Phi_s$, $\partial_{z}\overline{\Phi}_s$ 
comes with the initial condition 
\begin{equation}\label{eq:ahvf3}
\partial_z\Phi_0=1, \quad \partial_z\overline{\Phi}_0 = 0. 
\end{equation}
Recalling that $\widetilde{\nu}$ and $\widetilde{a}$ are almost holomorphic, we infer from \eqref{eq:ahvf1}, \eqref{eq:ahvf2} 
\begin{equation}\label{eq:ahvf4}
\begin{cases}
\partial_s\partial_z\Phi_s(z) =\partial_w\widetilde{a}(\Phi_s(z) )\partial_z\Phi_s(z) 
      + \mathcal{O}(|\Im\Phi_s(z)|^\infty )\partial_z\overline{\Phi}_s(z), \\ 
      \partial_s\partial_{z}\overline{\Phi}_s(z) = 
    \mathcal{O}(|\Im\Phi_s(z)|^\infty )\partial_{z}\Phi_s(z) 
    + \overline{\partial_{w}\widetilde{a}}(\Phi_s(z) )\partial_{z}\overline{\Phi}_s(z).
  \end{cases}
\end{equation}
We shall only consider trajectories $[0,s_0]\ni s \mapsto \Phi_s(z)$, with $s_0 = \mathcal{O}(1)$, that remain in some fixed compact set, and then we 
know that $\partial_z\Phi_s$, $\partial_z\overline{\Phi}_s$ are $\mathcal{O}(1)$. From \eqref{eq:ahvf3}, \eqref{eq:ahvf4}, we then get 
\begin{equation}\label{eq:ahvf5}
 \overline{ \partial_{\overline{z}}\Phi_s(z)} = 
 \partial_{z}\overline{\Phi}_s(z) = \mathcal{O}(1)\int_0^s |\Im\Phi_\sigma(z)|^\infty d\sigma. 
\end{equation}

\bigskip
\noindent
We shall next look at the flow $\exp(\widehat{t\widetilde{\nu}})(z)$ for $t\in \mathrm{neigh}(0,\CC)$\footnote{This notation means that $t$ is in a 
sufficiently small open complex neighborhood of $0$.}. Put $\Phi_{s,t}=\exp (s\widehat{t\widetilde{\nu}})$ for $0\leq s\leq 1$, 
so that $\Phi_{1,t}=\exp (\widehat{t\widetilde{\nu}})$. Replacing $\widehat{\widetilde{\nu}}$ by $\widehat{t\widetilde{\nu}}$ and 
$\widetilde{a}$ by $t\widetilde{a}$ in (\ref{A2}), we get 
$\partial_s\Phi_{s,t}(z) = \widehat{t\widetilde{\nu}}(\Phi_{s,t}(z))$ or 
equivalently, 
\begin{equation}
\label{eq:ahvf6}
  \partial_s\Phi_{s,t}(z) = t \widetilde{a}(\Phi_{s,t}(z)), 
  \quad \Phi_{0,t}(z)=z.
\end{equation}
Applying $\partial_t$ and $\partial_{\overline{t}}$ to (\ref{eq:ahvf6}), we get 
\begin{equation}\label{eq:ahvf7}
\partial_s\partial_t\Phi_{s,t}(z) = \widetilde{a}(\Phi_{s,t}(z)) + t \partial_w\widetilde{a}(\Phi_{s,t}(z))\partial_t\Phi_{s,t}(z) 
+ t \partial_{\overline{w}}\widetilde{a}(\Phi_{s,t}(z))\partial_t\overline{\Phi}_{s,t}(z),
\end{equation}
\begin{equation*}
  \partial_s\partial_{\overline{t}}\Phi_{s,t}(z) = 
   t \partial_w\widetilde{a}(\Phi_{s,t}(z))\partial_{\overline{t}}\Phi_{s,t}(z) 
  + t \partial_{\overline{w}}\widetilde{a}(\Phi_{s,t}(z))\partial_{\overline{t}}
  \overline{\Phi}_{s,t}(z).
\end{equation*}
Taking the complex conjugation of the last equation gives 
\begin{equation}\label{eq:ahvf8}
\partial_s\partial_{t}\overline{\Phi}_{s,t}(z) = \overline{t}\, \overline{\partial_w\widetilde{a}}(\Phi_{s,t}(z))\partial_{t}\overline{\Phi}_{s,t}(z) 
  + \overline{t}\, \partial_{w}\overline{\widetilde{a}}(\Phi_{s,t}(z))\partial_{t}\Phi_{s,t}(z).
\end{equation}
The initial condition for the system \eqref{eq:ahvf7}, \eqref{eq:ahvf8} is 
\begin{equation}\label{eq:ahvf9}
\partial_{t}\Phi_{0,t}=0, \quad \partial_{\overline{t}}\Phi_{0,t}=0.
\end{equation}
Using the almost holomorphy of $\widetilde{a}$ in \eqref{eq:ahvf7}, \eqref{eq:ahvf8}, we get 
\begin{equation}\label{eq:ahvf10}
  \begin{cases}
    \partial_s\partial_t\Phi_{s,t}(z) - t\, \partial_w\widetilde{a}(\Phi_{s,t}(z))\partial_t\Phi_{s,t}(z) 
    = \widetilde{a}(\Phi_{s,t}(z))
  + t\, \mathcal{O}(|\Im\Phi_{s,t}(z)|^\infty)\partial_t \overline{\Phi}_{s,t}(z),\\
  \partial_s\partial_{t}\overline{\Phi}_{s,t}(z) - 
  \overline{t} \,\overline{\partial_w\widetilde{a}}(\Phi_{s,t}(z))\partial_{t}\overline{\Phi}_{s,t}(z) 
  = \overline{t} \,\mathcal{O}(|\Im\Phi_{s,t}(z)|^\infty)\partial_{t}\Phi_{s,t}(z).
  \end{cases}
\end{equation}

\medskip
\noindent
Assume that 
\begin{equation}\label{eq:ahvf10.1}
  |\Im \Phi_{s,t}(z)| \leq \varepsilon, \quad 0\leq s \leq 1,
\end{equation}
where $\varepsilon >0$ is a small parameter. Then by integration of the second equation in (\ref{eq:ahvf10}) from $s=0$ to $s=1$, we get 
\begin{equation}\label{eq:ahvf11}
  \partial_{\overline{t}}\Phi_{1,t}(z) = \mathcal{O}(t\varepsilon^\infty). 
\end{equation}
Here we recall that 
\begin{equation}\label{eq:ahvf12}
  \Phi_{1,t}(z) = \exp(\widehat{t\widetilde{\nu}})(z), 
\end{equation}
and we have also used that $\partial_{\overline{t}}\Phi_{s,t} = \overline{\partial_t\overline{\Phi}_{s,t}}$. 

\bigskip
\noindent
We shall next discuss approximation of the map $\exp(\widehat{t\widetilde{\nu}})$ by the composition of the flows 
\[ 
\exp(t_1 \widehat{\widetilde{\nu}})\exp(t_2 \widehat{i\widetilde{\nu}}), 
\] 
when $t=t_1+it_2\in\CC$, $t_1$, $t_2 \in \mathbb R$. When doing so, let us start by making some elementary computations and remarks. Let $\Omega \subseteq \mathbb R^N$ be open and let $X$ be a $C^{\infty}$ real vector field on $\Omega$. Associated to $X$ is its flow $\Phi_s = \exp(sX)$, $s\in \mathbb R$, defined by
\[ 
\partial_s \Phi_s(y) = X(\Phi_s(y)), \quad \Phi_0(y) = y.
\] 
For each $\omega \Subset \Omega$ there exists $\varepsilon = \varepsilon_{\omega} > 0$ such that the map 
\[ 
\omega \times (-\varepsilon, \varepsilon) \ni (y,s) \mapsto \Phi_s(y) \in \Omega
\] 
is defined and is of class $C^{\infty}$. It follows that the pullback map 
\[ 
\Phi_s^* u = u \circ \Phi_s, \quad u \in C^{\infty}(\Omega), 
\] 
satisfies 
\[ 
\Phi_s^*: C^{\infty}(\Omega) \rightarrow C^{\infty}(\omega), \quad |s| < \varepsilon, 
\] 
and depends smoothly on $s$. We have a Taylor expansion at $s=0$, 
\begeq
\label{eq_appA1}
\Phi_s^* u \sim \sum_{k=0}^{\infty} \frac{s^k X^k u}{k!}. 
\endeq
Here $X$ is viewed as a homogeneous real first order differential operator on $\Omega$ with $C^{\infty}$ coefficients. Let $Y$ be a second $C^{\infty}$ real vector field on $\Omega$ and consider the composition $\Phi_s^* \circ Y \circ \Phi_{-s}^*$ acting on $C^{\infty}(\Omega)$. Here $\Phi_s^* \circ Y \circ \Phi_{-s}^*$ is a first order differential operator with coefficients depending smoothly on $s$. It follows from (\ref{eq_appA1}) that 
\begeq
\label{eq_appA2}
\partial_s|_{s=0} \left(\Phi_s^* \circ Y \circ \Phi_{-s}^*\right) = [X,Y], 
\endeq
and combining (\ref{eq_appA2}) with the group property 
\begeq
\label{eq_appA2.1}
\Phi_t^* \circ \Phi_s^* = \Phi_{s+t}^*, \quad t,s \in {\rm neigh}(0,\mathbb R), 
\endeq
we get more generally 
\begeq
\label{eq_appA3}
\partial_s \left(\Phi_s^* \circ Y \circ \Phi_{-s}^*\right) = \Phi_s^* \circ [X,Y] \circ \Phi_{-s}^*.
\endeq
Integrating (\ref{eq_appA3}) we obtain 
\begeq
\label{eq_appA4} 
\Phi_s^* \circ Y \circ \Phi_{-s}^* - Y = \int_0^s \Phi_{\sigma}^* \circ [X,Y] \circ \Phi_{-\sigma}^*\, d\sigma.
\endeq
Composing (\ref{eq_appA4}) with $\Phi_s^*$ to the right and using (\ref{eq_appA2.1}) we get therefore 
\begeq
\label{eq_appA5} 
[\Phi_s^*, Y] = \int_0^s \Phi_{\sigma}^* \circ [X,Y] \circ \Phi_{s-\sigma}^*\, d\sigma.
\endeq 

\medskip
\noindent
Next, let $\Psi_s = \exp(sY)$, $\Lambda_s = \exp(s(X+Y))$, $s\in {\rm neigh}(0,\mathbb R)$, and consider the derivative 
$\partial_s \left(\Psi_s^* \circ \ \Phi_s^* \circ \Lambda_{-s}^*\right)$. Here we first observe that  
\begeq
\label{eq_appA7} 
\partial_s|_{s=0} \left(\Psi_s^* \circ \ \Phi_s^* \circ \Lambda_{-s}^*\right) = 0,
\endeq
in view of (\ref{eq_appA1}) and its analogues for $\Psi_s$ and $\Lambda_s$. More generally, we may write using the group property, 
\begin{equation} 
\label{eq_appA8} 
\partial_s \left(\Psi_s^* \circ \ \Phi_s^* \circ \Lambda_{-s}^*\right) = \Psi_s^* \circ \left(\partial_t|_{t=0} \left( \Psi_t^* \circ \Phi^*_{s+t} \circ \Lambda^*_{-t}\right) \right) \circ \Lambda^*_{-s}. 
\end{equation}
Here we have in view of (\ref{eq_appA7}) and (\ref{eq_appA1}), 
\begin{multline} 
\label{eq_appA9}
\partial_t|_{t=0} \left( \Psi_t^* \circ \Phi^*_{s+t} \circ \Lambda^*_{-t}\right) = \partial_t|_{t=0} \left( \Psi_t^* \circ \Phi^*_s \circ \Phi^*_t \circ \Lambda^*_{-t}\right) \\
= \Phi_s^* \circ \partial_t|_{t=0} \left( \Psi_t^* \circ \Phi^*_t \circ \Lambda^*_{-t}\right) + \partial_t|_{t=0} \left([\Psi_t^*, \Phi_s^*] \circ \Phi_t^* \circ \Lambda^*_{-t}\right) = [Y, \Phi_s^*]. 
\end{multline} 
We infer, combining (\ref{eq_appA8}), (\ref{eq_appA9}), and (\ref{eq_appA5}),  
\begeq
\label{eq_appA10} 
\partial_s \left(\Psi_s^* \circ \ \Phi_s^* \circ \Lambda_{-s}^*\right) = \Psi_s^* \circ [Y, \Phi_s^*] \circ \Lambda^*_{-s} = \Psi_s^* \circ \left(
\int_0^s \Phi_{\sigma}^* \circ [Y,X] \circ \Phi_{s-\sigma}^*\, d\sigma \right) \circ \Lambda^*_{-s}.
\endeq 

\medskip
\noindent
We shall apply (\ref{eq_appA10}) when $X = t_1\widehat{\widetilde{\nu}}$, $Y = t_2\widehat{i\widetilde{\nu}}$, $\Omega$ is a suitable bounded open set in $\mathbb C^n$ with $\Omega \cap \mathbb R^n \neq \emptyset$, and $t = t_1 + it_2 \in {\rm neigh}(0,\mathbb C)$. Here we observe that 
\[
X + Y = t_1 \widehat{\widetilde{\nu}} + t_2 \widehat{i\widetilde{\nu}} = t_1 (\widetilde{\nu}+\overline{\widetilde{\nu}}) +  t_2 (i\widetilde{\nu}-i\overline{\widetilde{\nu}}) = t\widetilde{\nu} + \overline{t \widetilde{\nu}} = \widehat{t\widetilde{\nu}},
\] 
and hence $\Lambda_s = \exp(s \widehat{t \widetilde{\nu}})$. We get 
\begin{equation}
\label{eq:ahvf15}
\partial_s \left(\Psi_s^* \circ \ \Phi_s^* \circ \Lambda_{-s}^*\right) 
= \Psi_s^* \circ \left(\int_0^s \Phi_{\sigma}^* \circ  t_2t_1 [\widehat{i\widetilde{\nu}},\widehat{\widetilde{\nu}}] \circ \Phi_{s-\sigma}^*\, d\sigma \right) \circ \Lambda^*_{-s}.
\end{equation} 
Assume that 
\begeq
\label{eq_appA10.1}
|{\rm Im}\, \Phi_{\sigma}(\Psi_s(z))| \leq \mathcal O(\varepsilon), \quad 0 \leq \sigma \leq s \leq 1, 
\endeq
for all $z\in \Omega \Subset \mathbb C^n$. It follows then from (\ref{eq:ahvf15}), (\ref{eq_appA10.1}), and (\ref{eq_appA0}) that 
\begeq
\label{eq_appA10.2}
\partial_s \left(\Psi_s^* \circ \ \Phi_s^* \circ \Lambda_{-s}^*\right) = \mathcal O(\varepsilon^{\infty}), \quad 0 \leq s \leq 1, 
\endeq
in the sense of linear continuous operators: $C^{\infty}(\Omega) \rightarrow C^{\infty}(\omega)$, $\omega \Subset \Omega$. 

\medskip
\noindent
{\it Example}. Assume that the vector field $\nu$ in (\ref{A1}) is real, i.e. the coefficients $a_j$ in (\ref{A1}) are real valued. Then the vector field $X = t_1\widehat{\widetilde{\nu}}$ is tangent to $\mathbb R^n$, for $t_1 \in {\rm neigh}(0,\mathbb R)$, and we get locally 
\begeq
\label{eq_appA11} 
|{\rm Im}\, \Phi_{\sigma}(z)| = |{\rm Im}\, \exp(\sigma X)(z)| \leq \mathcal O(1) |{\rm Im}\, z|,\quad 0 \leq \sigma \leq 1. 
\endeq
Using also that 
\[ 
{\rm Im}\, \Psi_s(z) = {\rm Im}\, z + \mathcal O(|t_2|), \quad 0 \leq s \leq 1, 
\] 
we get 
\[ 
|{\rm Im}\, \Phi_{\sigma}(\Psi_s(z))| \leq \mathcal O(1) |{\rm Im}\, \Psi_s(z)| \leq \mathcal O(1) \left(|{\rm Im}\, z| + |t_2|\right) \leq \mathcal O(\varepsilon), \quad 0 \leq \sigma, s \leq 1,
\] 
provided that ${\rm Im}\, z = \mathcal O(\varepsilon)$ and $t_ 2 = \mathcal O(\varepsilon)$. Letting $\Omega$ be an $\varepsilon$--neighborhood of the real domain, we conclude that (\ref{eq_appA10.1}), (\ref{eq_appA10.2}) hold for all $z\in \Omega$, $t_1 \in {\rm neigh}(0,\mathbb R)$, and $t_2 = \mathcal O(\varepsilon)$. 

\medskip
\noindent
Assuming that (\ref{eq_appA10.1}) holds and writing 
\begin{equation}
\label{eq_appA12}
\Psi_1^* \circ \Phi_1^* \circ \Lambda_{-1}^* - 1 = \int_0^1 \partial_s \left(\Psi_s^* \circ \ \Phi_s^* \circ \Lambda_{-s}^*\right)\, ds, 
\end{equation} 
we get in view of (\ref{eq_appA10.2}), 
\[ 
\Psi_1^* \circ \Phi_1^* \circ \Lambda_{-1}^* - 1 = \mathcal O(\varepsilon^{\infty}). 
\] 
Therefore,
\[ 
\left(\exp(X) \circ \exp(Y)\right)^* = \Psi_1^* \circ \Phi_1^* = (1 + \mathcal O(\varepsilon^{\infty}))\circ \Lambda_{1}^* = 
(1 + \mathcal O(\varepsilon^{\infty}))\circ (\exp(X+Y))^*, 
\]
and we get the corresponding relation for the flows, 
\begeq
\label{eq_appA13}
\exp(t_1\widehat{\widetilde{\nu}})\exp(t_2\widehat{i\widetilde{\nu}})(z) = \exp(\widehat{t\widetilde{\nu}})(z) + \mathcal O(\varepsilon^{\infty}), \quad z\in \Omega. 
\endeq 

\paragraph{\textbf{Almost holomorphic coordinate functions}}
Let $M$ be a complex manifold of complex dimension $n$ and let $\Gamma\subseteq M$ be a compact connected real submanifold of real dimension $n$. We assume that $\Gamma$ is totally real, 
\begin{equation}
\label{eq:ahvf18}
T_z M = T_z\Gamma \oplus i T_z\Gamma, \quad \forall z\in \Gamma. 
\end{equation}
Here when defining the action of the imaginary unit $i$ on the real linear subspace $T_z \Gamma \subseteq T_z M$, we use the isomorphism $TM \simeq T^{(1,0)}M$ given by $\displaystyle \nu \mapsto \mu = \frac12 (\nu - i J \nu)$. Here $J: TM \rightarrow TM$ is the canonical almost complex structure induced by the complex structure on $M$. We have 
\[ 
\nu = 2{\rm Re}\, \mu, \quad J \nu = 2{\rm Re}\, (i\mu), 
\] 
and in local holomorphic coordinates $z = (z_1, \ldots \, z_n)$, $z_ j = x_j + iy_j$, on $M$ the isomorphism is given by 
\[
\nu = \sum_{j=1}^n \left(a_j \partial_{x_j} + b_j \partial_{y_j}\right) \mapsto \mu = \sum_{j=1}^n (a_j + i b_j)\partial_{z_j}. 
\] 

\medskip
\noindent
For every $z_0\in\Gamma$, there are neighborhoods $V\subseteq M$, $W\subseteq \CC^n$ of $z_0$ and $0$, respectively, and a $C^{\infty}$ diffeomorphism $\kappa:V \to W$, mapping $\Gamma$ to $\RR^n$, which is almost holomorphic in the sense that 
\begin{equation*}
|\overline{\partial}\kappa(z)| = \mathcal{O}(\dist(z,\Gamma)^\infty),\quad z\in V, 
\end{equation*}
\begin{equation*}
|\overline{\partial}\kappa^{-1}(w)| = \mathcal{O}(|\Im w|^\infty), \quad w\in W.
\end{equation*}
Indeed, locally in a neighborhood of $z_0$, we have $\Gamma = \{f(y); y\in \mathbb R^n\}$, where $f\in C^{\infty}(\mathbb R^n; \mathbb C^n)$ has injective differential, $f(0) = z_0$. It is then well known that $\Gamma$ is totally real precisely when the complex $n\times n$ matrix $(\partial_{y_k} f_j(0))$ is invertible, see~\cite[Section 3]{SjZw91}. Letting $\widetilde{f}$ be an almost holomorphic extension of $f$, we can take $\kappa = \widetilde{f}^{-1}$. 

\medskip
\noindent 
Let $\nu$ be a smooth complex vector field on $\Gamma$. Then locally, the pushforward $\kappa_* \nu$ is a well defined smooth vector field on $\RR^n$. Taking an almost holomorphic extension of $\kappa_* \nu$ of type $(1,0)$ to a complex domain and using the inverse $\kappa^{-1}$ to push it forward to $M$, we obtain a locally defined vector field $\widetilde{\nu}$ on $M$. The $(0,1)$ part of $\widetilde{\nu}$ is then $\mathcal{O}(\dist(z,\Gamma)^\infty)$ and can be suppressed. If we use two different diffeomorphisms as above, then in the overlap region the two extensions $\widetilde{\nu}$ agree modulo $\mathcal{O}(\dist(z,\Gamma)^\infty)$. It is then clear how to get an extension 
$\widetilde{\nu}$ to a neighborhood of $\Gamma$ in $M$ by means of a partition of unity. It is unique modulo $\mathcal{O}(\dist(z,\Gamma)^\infty)$. 

\medskip
\noindent 
Finally, let $M$ be of complex dimension $1$ and let $\Gamma \subseteq M$ be of real dimension $1$, i.e. a simple closed curve. 
Let $\nu$ be a non-vanishing real vector field on $\Gamma$ and let $T_0>0$ be the minimal period of the $\nu$--flow, so that $\exp T_0\nu = \mathrm{Id}$. An almost holomorphic complex coordinate function $t\in \RR/ T_0\ZZ + i(-\varepsilon,\varepsilon)$ in an $\varepsilon$--neighborhood of $\Gamma$ in $M$ is given by 
\begin{equation*}
z =  \exp((\Re t)\widehat{\widetilde{\nu}})\exp((\Im t)\widehat{i\widetilde{\nu}})(z_0), 
\end{equation*}
where $z_0\in \Gamma$ is fixed.

\subsection{Almost holomorphic extensions using an embedding}

Let $M$ be a compact Riemann surface equipped with a conformal Riemannian metric (\ref{eq_metric0}), (\ref{eq_metric1}). Let $\Gamma \subseteq M$ be a compact connected real submanifold of real dimension one, i.e. a simple closed curve, and let $\gamma:\mathbb{T}_\lambda \to M$ be a unit speed parametrization of $\Gamma$. Here $\mathbb{T}_\lambda =\mathbb{R}/\lambda \mathbb{ Z}$, for a suitable $\lambda >0$. The purpose of this subsection is to present an alternative approach to the construction of an almost holomorphic extension of $\gamma$, using the fact that each open Riemann surface embeds in $\mathbb C^3$. 

\medskip
\noindent
Let $\Omega \subseteq M$ be an open neighborhood of $\Gamma$ in $M$. The open Riemann surface $\Omega$ is a Stein manifold, see~\cite[Section 2.2]{Fo11}, and thus there exists a proper holomorphic embedding $f:\Omega \to \mathbb C^3$, see~\cite[Theorem 5.3.9]{Horm_CASV}. 
By \cite{DoGr60}, see also \cite[Theorem 8, p. 257]{GuRo65}, there exists an open neighborhood $\widetilde{\Omega}\subseteq \mathbb C^3$ of the closed complex submanifold $f(\Omega) \subseteq \mathbb C^3$ and a holomorphic retraction 
\begin{equation}\label{3.tb0.5}
\rho : \widetilde{\Omega} \to f(\Omega), 
\end{equation}
i.e. a holomorphic map which is the identity on $f(\Omega)$. Setting $\Sigma_\alpha = \RR+i(-\alpha,\alpha)$, $\alpha > 0$, let 
\begin{equation}
\label{3.tb0.6.0}
\widetilde{g}: \Sigma_\alpha \to \mathbb C^3
\end{equation}
be an almost holomorphic extension of $g = f\circ \gamma: \mathbb T_{\lambda} \rightarrow \mathbb C^3$, see~\cite{Ho69,MeSj74}. We have therefore  
\begin{equation}\label{3.tb0.6}
\widetilde{g}|_{\Im w =0} = g, \quad \partial_{\overline{w}}\widetilde{g} = \mO(|\Im w|^\infty),\quad w=x+iy\in \Sigma_\alpha. 
\end{equation}
Let us recall from~\cite{Ho69} that $\widetilde{g}$ may be obtained by adapting the Borel construction, setting 
\begin{equation*}
\widetilde{g}(x+iy) =  \sum_{n = 0}^{\infty} g^{(n)}(x) \frac{(iy)^n}{n!} \chi(t_n y).
\end{equation*}
Here $\chi\in C^\infty_0(\RR;[0,1])$ is such that $\chi =1$ near $0$, and $t_n\to\infty$ sufficiently rapidly. This construction allows us to assume that $\widetilde{g}$ is $\lambda$-periodic with respect to the $x$ variable. Let the group $\lambda \ZZ$ act on $\Sigma_\alpha$ by translation by integer multiples of $\lambda$ of the real part of an element in $\Sigma_\alpha$. Then $\widetilde{\mathbb{ T}}:=\Sigma_\alpha/\lambda \ZZ = \mathbb{T}_{\lambda} + i(-\alpha,\alpha)$ is a complex manifold of complex dimension one with the natural quotient complex structure, and $\mathbb{ T}_\lambda$ is a real smooth submanifold. Using the natural projection $\Sigma_\alpha \to \Sigma_\alpha/\lambda\ZZ$, 
we may therefore view \eqref{3.tb0.6.0} as a smooth function 
\begin{equation*}
\widetilde{g}: \widetilde{\mathbb{ T}} \to \mathbb C^3, 
\end{equation*}
such that \eqref{3.tb0.6} holds with $w=x+iy\in \widetilde{\mathbb{ T}}$. Since $g(\mathbb{T}_{\lambda}) = (f\circ \gamma) (\mathbb{T}_{\lambda})$ is a compact subset of $f(\Omega)\subseteq \widetilde{\Omega}$, it follows that for $\alpha>0$ small enough, $\widetilde{g}(\widetilde{\mathbb{ T}}) \subseteq \widetilde{\Omega}$. Setting 
\begin{equation}\label{3.tb0.6.1}
\widetilde{\gamma} = f^{-1}\circ \rho \circ \widetilde{g}: \widetilde{\mathbb{ T}} \to M,
\end{equation}
and using that the embedding $f$ and the retraction $\rho$ in \eqref{3.tb0.5} are holomorphic, we obtain that $\widetilde{\gamma}$ is an almost holomorphic extension of $\gamma$, 
\begin{equation}\label{3.tb0.7}
\widetilde{\gamma}|_{\mathbb{ T}_\lambda} = \gamma, \quad \text{ and }\quad \overline{\partial}\widetilde{\gamma}(w) = \mO(|\Im w|^\infty),\quad w\in \widetilde{\mathbb T}. 
\end{equation}
Here the second equation in (\ref{3.tb0.7}) is understood introducing local holomorphic coordinates near $w \in \widetilde{\mathbb T}$ and $\widetilde{\gamma}(w) \in M$, and viewing $\widetilde{\gamma}$ locally as a map: $\mathbb C \rightarrow \mathbb C$. We may also remark that when viewed more invariantly, $\overline{\partial} \widetilde{\gamma}(w)$ is a complex linear map: $T^{0,1}_w \widetilde{\mathbb T} \rightarrow T^{1,0}_{\widetilde{\gamma}(w)}M$ and there is a corresponding interpretation of (\ref{3.tb0.7}).

\section{Off-diagonal decay of Bergman kernels: Proof of Proposition \ref{prop:off-diag}} 
\label{sec:Prop3.9} 

\medskip
\noindent
The purpose of this appendix is to provide a proof of Proposition \ref{prop:off-diag}. Our starting point is the following essentially well known estimate of Agmon type, see \cite{De98}, \cite{Li01}, \cite{Br09}. 

\begin{prop}
\label{min_norm}
Let $(M,g)$ be a compact Riemann surface equipped with a conformal Riemannian metric and let $\varphi \in C^{\infty}(M; \mathbb R)$ be non-constant. Let $W \subseteq M$ be open such that $W \Subset \{x\in M; \Delta \varphi(x) >0\}$ and let 
$\beta \in  L^2_{\varphi}(W, T^*_{0,1}W) = e^{\varphi/h}L^2(W, T^*_{0,1}W)$. For each $\chi \in C^{\infty}_0(W;[0,1])$ there exist $0 < \delta < 1$ and $h_0 >0$ such that for all $h\in (0,h_0]$ the minimal norm solution $u\in L^2_{\varphi}(W,\omega)$ to the equation $h\overline{\partial}u = \beta$ satisfies 
\begeq
\label{eq_appB0}
\|u\|_{L^2_{\psi}(W,\omega)} \leq \frac{\mathcal O (1)}{h^{1/2}} \|\beta\|_{L^2_{\psi}(W,T^*_{0,1}W)}, \quad \psi = \varphi + \delta h^{1/2} \chi. 
\endeq
Here $\omega$ is the area form associated to the metric $g$, given in {\rm (\ref{eq_metric})}. 
\end{prop}
\begin{proof}
Proceeding as in the proof of~\cite[Theorem 2.1]{Br09}, let us write 
\begin{multline}
\label{eq_appB1}
\|u\|_{L^2_{\psi}(W,\omega)}^2 = \int |u(x)|^2 e^{-2\psi(x)/h}\, \omega(x, dx d\overline{x}) = \int |u(x)|^2 e^{-2(\varphi(x) + \delta h^{1/2} \chi(x))/h}\, \omega(x, dx d\overline{x}) \\ 
=  \int |v(x)|^2 e^{-2\widetilde{\psi}(x)/h}\, \omega(x, dx d\overline{x}) = \|v\|_{L^2_{\widetilde{\psi}}(W,\omega)}^2, \quad \widetilde{\psi} = \varphi - \delta h^{1/2} \chi,
\end{multline}
where 
\begeq
\label{eq_appB2}
v = u e^{-2\delta h^{1/2}\chi/h}.
\endeq
Expressing the orthogonality condition 
\[
(u,f)_{L^2_{\varphi}(W,\omega)} = 0, \quad f \in H_{\varphi}(W) = {\rm Hol}(W) \cap L^2_{\varphi}(W,\omega), 
\] 
in terms of $v$, we get 
\begeq
\label{eq_appB3} 
(v,f)_{L^2_{\widetilde{\psi}}(W,\omega)} = 0, \quad f \in H_{\widetilde{\psi}}(W) = {\rm Hol}(W) \cap L^2_{\widetilde{\psi}}(W,\omega) = H_{\varphi}(W),
\endeq 
and therefore $v \in L^2_{\widetilde{\psi}}(W,\omega)$ is the minimal norm solution of the $\overline{\partial}$--problem $h\overline{\partial} w = \alpha$, where $\alpha = h\overline{\partial} v$. Here
\[
\Delta \widetilde{\psi} = \Delta \varphi - \delta h^{1/2} \Delta \chi \geq \alpha - \mathcal O_{\chi}(h^{1/2}) \geq \frac{1}{2}\alpha, \quad \alpha > 0, 
\] 
for all $h >0$ small enough and all $\delta \in (0,1)$. An application of Proposition \ref{exist_dbar} gives therefore for all $h>0$ small enough and all $\delta \in (0,1)$, 
\begeq
\label{eq_appB4} 
\|v\|_{L^2_{\widetilde{\psi}}(W,\omega)} \leq \frac{\mathcal O (1)}{h^{1/2}} \|h\overline{\partial} v\|_{L^2_{\widetilde{\psi}}(W,T^*_{0,1}W)}.
\endeq
The left hand side in (\ref{eq_appB4}) equals $\|u\|_{L^2_{\psi}(W,\omega)}$, in view of (\ref{eq_appB1}), and when estimating the right hand side we write using (\ref{eq_appB2}), 
\begeq
\label{eq_appB5} 
h\overline{\partial} v = e^{-2\delta h^{1/2}\chi/h} \left(\beta - 2\delta h^{1/2} u \overline{\partial}\chi\right). 
\endeq
We get, combining (\ref{eq_appB4}) and (\ref{eq_appB5}), 
\begin{multline} 
\label{eq_appB6}
\|u\|_{L^2_{\psi}(W,\omega)} \\ \leq \frac{\mathcal O (1)}{h^{1/2}} \|\beta - 2\delta h^{1/2} u \overline{\partial}\chi\|_{L^2_{\psi}(W,T^*_{0,1}W)} 
\leq \frac{\mathcal O (1)}{h^{1/2}} \|\beta\|_{L^2_{\psi}(W,T^*_{0,1}W)} + \mathcal O_{\chi}(\delta) \|u\|_{L^2_{\psi}(W,\omega)}. 
\end{multline} 
Taking $\delta >0$ sufficiently small depending on $\chi$, we may absorb the second term in the right hand side of (\ref{eq_appB6}) into the left hand side. The estimate (\ref{eq_appB0}) follows and the proof is complete. 
\end{proof} 

\bigskip
\noindent
We shall now prove Proposition \ref{prop:off-diag}, proceeding along the lines of~\cite[Section 4]{Li01}. Our starting point is the following observation: for each $V \Subset W$ there exist $C>0$ and $h_0 > 0$ such that for all $h\in (0,h_0]$ and all $u\in H_{\varphi}(W)$, we have 
\begeq
\label{eq_appB7}
|u(y)|^2 e^{-2\varphi(y)/h} \leq \frac{C}{h^2} \|u\|^2_{L^2_{\varphi}(U(y,h),\omega)}, \quad y\in V. 
\endeq 
Here $U(y,h) \subseteq W$ is a neighborhood of $y$ of diameter of size $h$. Indeed, to obtain (\ref{eq_appB7}) we first fix a point $y\in V$ and choose a local holomorphic coordinate $z$ near $y$ such that $z(y) = 0$. The estimate (\ref{eq_appB7}) then follows by applying the mean value property for holomorphic functions in a disc $D(0,h) \subseteq \mathbb C$ and the Cauchy-Schwarz inequality. The uniformity of (\ref{eq_appB7}) follows from the compactness of $\overline{V}$. 

\medskip
\noindent
Let $C_j \subseteq W$, $j=1,2$, be disjoint compact sets and let $C_j \subseteq V_j \Subset W$ be open such that $\overline{V_1} \cap \overline{V_2} = \emptyset$. We get, applying (\ref{eq_appB7}) to the function $W \ni y \mapsto \overline{K(x,y)} \in H_{\varphi}(W)$ in (\ref{eq3.1.0.5}), for each $x\in W$, 
\begeq
\label{eq_appB8} 
|K(x,y)|^2 e^{-2(\varphi(y) + \varphi(x))/h} \leq \frac{C}{h^2} \int_{U(y,h)} |K(x,z)|^2 e^{-2(\varphi(z) + \varphi(x))/h}\, \omega(z,dz\,d\overline{z}),\quad y\in C_1.
\endeq
Letting $\chi_0 \in C^{\infty}_0(W;[0,1])$ be such that $\chi_0 = 1$ in a neighborhood of $\overline{V_1}$ and ${\rm supp}\, \chi_0 \cap\, \overline{V_2} = \emptyset$, we get using (\ref{eq_appB8}) for all $h>0$ small enough and all $x\in W$, $y\in C_1$, 
\begin{multline} 
\label{eq_appB9} 
|K(x,y)|^2 e^{-2(\varphi(y) + \varphi(x))/h} \leq \frac{C}{h^2} \int_{W} \chi_0(z) |K(x,z)|^2 e^{-2(\varphi(z) + \varphi(x))/h}\, \omega(z,dz\,d\overline{z}) \\
= \frac{C}{h^2} \int_{W} K(x,z) \chi_0(z) \overline{K(x,z)} e^{-2(\varphi(z) + \varphi(x))/h}\, \omega(z,dz\,d\overline{z}) \\ 
= \frac{C}{h^2} \left(\int_{W} K(x,z) \chi_0(z) K(z,x) e^{-\varphi(x)/h} e^{-2\varphi(z)/h}\, \omega(z,dz\,d\overline{z})\right) e^{-\varphi(x)/h} \\
= \frac{C}{h^2} \left|\Pi\left(\chi_0 K(\cdot, x) e^{-\varphi(x)/h}\right)(x)\right| e^{-\varphi(x)/h},\quad x\in W,\,\, y \in C_1. 
\end{multline}
Here $\Pi$ is the orthogonal projection introduced in (\ref{eq3.1.0.5}) and in the penultimate equality we have used that $\overline{K(x,z)} = K(z,x)$. 

\medskip
\noindent
Let 
\begeq
\label{eq_appB10} 
u(\zeta) = \chi_0(\zeta) K(\zeta,x) e^{-\varphi(x)/h} - \Pi\left(\chi_0 K(\cdot, x) e^{-\varphi(x)/h}\right)(\zeta),\quad \zeta \in W,
\endeq
be the solution of the $\overline{\partial}$--problem 
\begeq
\label{eq_appB10.1}
h \overline{\partial} u = h \overline{\partial} \left(\chi_0 K(\cdot,x) e^{-\varphi(x)/h}\right) = K(\cdot,x) e^{-\varphi(x)/h} h \overline{\partial} \chi_0,  
\endeq
having minimal norm in $L^2_{\varphi}(W,\omega)$. We have 
\begeq
\label{eq_appB11}
|u(x)| = |\Pi\left(\chi_0 K(\cdot, x) e^{-\varphi(x)/h}\right)(x)|, \quad x\in C_2, 
\endeq
since $\chi_0$ vanishes on $C_2$, and we get in view of (\ref{eq_appB9}), (\ref{eq_appB11}), 
\begeq
\label{eq_appB12} 
|K(x,y)|^2 e^{-2(\varphi(y) + \varphi(x))/h} \leq \frac{C}{h^2} |u(x)| e^{-\varphi(x)/h},\quad x\in C_2,\,\, y \in C_1.
\endeq
Now it follows from (\ref{eq_appB10.1}) that $u\in {\rm Hol}(V_2)$, and we get as in (\ref{eq_appB7}), for all $h>0$ small enough, 
\begeq
\label{eq_appB13} 
|u(x)| e^{-\varphi(x)/h} \leq \frac{C}{h} \|u\|_{L^2_{\varphi}(U(x,h),\omega)} \leq \frac{C}{h} \|u\|_{L^2_{\varphi}(V_2,\omega)},  \quad x\in C_2.
\endeq
We infer, in view of (\ref{eq_appB12}) and (\ref{eq_appB13}), that 
\begeq
\label{eq_appB14}
|K(x,y)|^2 e^{-2(\varphi(y) + \varphi(x))/h} \leq \frac{C}{h^3} \|u\|_{L^2_{\varphi}(V_2,\omega)}, \quad x\in C_2,\,\, y \in C_1. 
\endeq 
Let $\chi \in C^{\infty}_0(W;[0,1])$ be such that $\chi = 1$ in a neighborhood of ${\rm supp}\, \chi_0$, ${\rm supp}\, \chi \cap \overline{V_2} = \emptyset$. Applying (\ref{eq_appB0}) with $\psi = \varphi + \delta h^{1/2} \chi$ and using (\ref{eq_appB10.1}) we obtain that  
\begin{multline}
\label{eq_appB15} 
\|u\|^2_{L^2_{\varphi}(V_2,\omega)} = \|u\|^2_{L^2_{\psi}(V_2,\omega)} \leq \frac{\mathcal O(1)}{h} \|K(\cdot,x)e^{-\varphi(x)/h} h\overline{\partial} \chi_0\|^2_{L^2_{\psi}(W,T^*_{0,1}W)}, \\ 
\leq \mathcal O(1) \|K(\cdot,x)e^{-\varphi(x)/h}\|^2_{L^2_{\psi}({\rm supp}\, \overline{\partial} \chi_0,\omega)} \\ 
\leq \mathcal O(1) e^{-\frac{2\delta}{h^{1/2}}}\, 
\int_{{\rm supp}\, \overline{\partial} \chi_0} |K(z,x)|^2 e^{-2(\varphi(z) + \varphi(x))/h}\, \omega(z,dzd\overline{z}).
\end{multline}
Here the general arguments for Bergman kernels, see~\cite[Chapter 4]{Br10},~\cite{Li01}, show that 
\begeq
\label{eq_appB16}
|K(z,x)|^2 e^{-2(\varphi(z) + \varphi(x))/h} \leq \left(K(z,z) e^{-2\varphi(z)/h}\right) \left(K(x,x) e^{-2\varphi(x)/h}\right) \leq \frac{\mathcal O(1)}{h^2}, 
\endeq
uniformly on compact subsets of $W \times W$. Combining (\ref{eq_appB14}), (\ref{eq_appB15}), and (\ref{eq_appB16}), we complete the proof of Proposition \ref{prop:off-diag}.

\section{Singular values for large exponential decay rates}
\label{sec:app_large_decay}

\medskip
\noindent
Let $(M,g)$ be a compact Riemann surface equipped with a conformal Riemannian metric, see (\ref{eq_metric0}), and let $\varphi \in C^{\infty}(M;\mathbb R)$ be non-constant. The purpose of this appendix is to give a proof of Theorem \ref{decay_rate_large}. See also \cite[Section 2.2]{HeKaSu24}, \cite[Appendix]{HeKa24} for related arguments and results. 

\medskip
\noindent
When proving Theorem \ref{decay_rate_large} we let 
\begeq
\label{appA1}
\tau > \underset{M}{\rm max}\, \varphi - \underset{M}{\rm min}\, \varphi, 
\endeq 
and without loss of generality we may assume  that
\begin{equation}
\label{eq:appA2}
\min_M \varphi = 0.
\end{equation}
Let $u\in L^2_\varphi(M,\omega)$ be a normalized singular state of the operator $h\overline{\partial}:L^2_\varphi(M,\omega)\to L^2_\varphi(M,T^*_{0,1}M)$, associated to a singular value  $s \in [0, e^{-\tau/h}]$. Thus, we have
\begin{equation}
\label{eq:appA3}
\|u\|_{L^2_\varphi(M,\omega)} =1,
\quad
\|h\overline{\partial}u\|_{L^2_\varphi(M,T^*_{0,1}M)} = s \leq e^{-\tau/h}.
\end{equation}
The following proposition shows that, as a normalized element of $L^2_\varphi(M,\omega)$, the singular state $u$ is exponentially small away from any fixed neighborhood of the set $\varphi^{-1}(0)$. While not used directly in the proof of Theorem \ref{decay_rate_large}, including it here seems natural as it may be of some independent interest. 

\medskip
\noindent
\begin{prop}
Let $u\in L^2_{\varphi}(M,\omega)$ be such that {\rm (\ref{eq:appA3})} holds, for some $\tau > 0$ satisfying {\rm (\ref{appA1})}, {\rm (\ref{eq:appA2})}. Then for each $\eta > 0$ there exists $C_{\eta} > 0$ such that
\begeq
\label{eq:appA3.1}
\|u\|_{L^2(M,\omega)} \leq C_{\eta}\, e^{\eta/h}.
\endeq
\end{prop} 
\begin{proof}
Let $0< \varepsilon < \underset{M}{\rm max}\, \varphi$ be a regular value of $\varphi$ and let us define
\begeq
\label{eq:appA4}
U_{\varepsilon} = \{x\in M; \varphi(x) < \varepsilon\}.
\endeq
Then $\partial U_{\varepsilon} = \varphi^{-1}(\varepsilon)$ is smooth. Let $\Omega_{\varepsilon} = M\setminus \overline{U}_{\varepsilon}$ and let $\psi_{\varepsilon} \in C^{\infty}(\overline{\Omega}_{\varepsilon})$ be the solution of the boundary value problem
\begin{equation}
\label{eq:appA5}
\begin{cases}
-\Delta \psi_{\varepsilon} = 1 \hbox{ in }\Omega_{\varepsilon},\\
\psi_{\varepsilon}|_{\partial \Omega_{\varepsilon}} = 0.
\end{cases}
\end{equation}
In particular, $\psi_{\varepsilon}$ is strictly superharmonic in $\Omega_{\varepsilon}$ and $\psi_{\varepsilon} > 0$ in $\Omega_{\varepsilon}$, by the maximum principle. Define $\psi_{\delta, \varepsilon} = \delta \psi_{\varepsilon}$, for $0 < \delta < 1$, so that
\[
-\Delta \psi_{\delta,\varepsilon} = \delta > 0. 
\]
We get, arguing as in the proof of Proposition \ref{propCH}, cf. (\ref{eq1.19}),
\begeq
\label{eq:appA6}
\left(\frac{h\delta}{2}\right)^{1/2} \|e^{-\psi_{\delta,\varepsilon}/h} u\|_{L^2(\Omega_{\varepsilon},\omega)} \leq
\|e^{-\psi_{\delta,\varepsilon}/h} h\overline{\partial} u\|_{L^2(\Omega_{\varepsilon}, T^*_{0,1} \Omega_{\varepsilon})}, \quad u\in C^{\infty}_0(\Omega_{\varepsilon}).
\endeq
If $\Theta \subseteq \overline{\Omega}_{\varepsilon}$ is an open neighborhood of $\partial \Omega_{\varepsilon}$ in $\overline{\Omega}_{\varepsilon}$, we also get, continuing to follow the proof of Proposition \ref{propCH},
\begin{multline}
\label{eq:appA7}
\|e^{-\psi_{\delta,\varepsilon}/h} u\|_{L^2(\Omega_{\varepsilon},\omega)}
\leq \left(\frac{2}{h \delta}\right)^{1/2}\|e^{-\psi_{\delta,\varepsilon}/h} h\overline{\partial} u\|_{L^2(\Omega_{\varepsilon}, T^*_{0,1} \Omega_{\varepsilon})} \\
+ \left(\frac{C(\Theta,\varepsilon)}{\delta^{1/2}} + 1\right)\|e^{-\psi_{\delta,\varepsilon}/h} u\|_{L^2(\Theta, \omega)}, \quad u \in C^{\infty}(M).
\end{multline}
Here $C(\Theta,\varepsilon) > 0$ is a constant depending on $\Theta$ and $\varepsilon$. We shall apply the Carleman estimate (\ref{eq:appA7}) to the singular state $u$ in (\ref{eq:appA3}). When doing so, we observe first that
\begin{multline}
\label{eq:appA8}
\|e^{-\psi_{\delta,\varepsilon}/h} h\overline{\partial} u\|_{L^2(\Omega_{\varepsilon}, T^*_{0,1}\Omega_{\varepsilon})}\leq
\|h\overline{\partial} u\|_{L^2(\Omega_{\varepsilon},T^*_{0,1}\Omega_{\varepsilon})} \leq \|h\overline{\partial} u\|_{L^2(M,T^*_{0,1} M)}\\
\leq e^{\max_M \varphi/h} \|e^{-\varphi/h} h\overline{\partial} u\|_{L^2(M, T^*_{0,1} M)}
\leq  e^{(\max_M \varphi -\tau)/h} \leq e^{-1/Ch},\quad C > 0.
\end{multline} 
Here we have used (\ref{appA1}), (\ref{eq:appA2}), and (\ref{eq:appA3}). Next, we have using (\ref{eq:appA3}),
\begeq
\label{eq:appA9}
\|e^{-\psi_{\delta,\varepsilon}/h} u\|_{L^2(\Theta, \omega)}\leq \|u\|_{L^2(\Theta, \omega)} \leq
e^{\max_{\Theta} \varphi/h}\|e^{-\varphi/h}u\|_{L^2(\Theta, \omega)} \leq e^{\max_{\Theta} \varphi/h}.
\endeq
Recalling that $\varphi = \varepsilon$ along $\partial \Omega_{\varepsilon}$, we see that we can choose $\Theta = \Theta_{\varepsilon}$ so that
$\max_{\Theta} \varphi \leq 2\varepsilon$. 

\medskip
\noindent
We obtain using (\ref{eq:appA7}), (\ref{eq:appA8}), and (\ref{eq:appA9}),
\begeq
\label{eq:appA10}
\|e^{-\psi_{\delta,\varepsilon}/h} u\|_{L^2(\Omega_{\varepsilon},\omega)} \leq \frac{C(\varepsilon)}{\delta^{1/2}} e^{2\varepsilon/h},
\endeq
for some (new) constant $C(\varepsilon) > 1$ depending on $\varepsilon > 0$. Combining (\ref{eq:appA10}) with the estimate
\begeq
\label{eq:appA11}
\|u\|_{L^2(U_{\varepsilon},\omega)} \leq e^{\varepsilon/h} \|u\|_{L^2_{\varphi}(M,\omega)} = e^{\varepsilon/h},
\endeq
we get
\begeq
\label{eq:appA12}
\|u\|_{L^2(M,\omega)} \leq \frac{C(\varepsilon)}{\delta} e^{(\delta \|\psi_{\varepsilon}\|_{L^{\infty}(\Omega_{\varepsilon})} + 2\varepsilon)/h}.
\endeq
Taking $\delta > 0$ small enough depending on $\varepsilon > 0$, we conclude therefore that for each regular value $0 < \varepsilon < \underset{M}{\rm max}\, \varphi$ of $\varphi$ there exists $C(\varepsilon) > 0$ such that
\begeq
\label{eq:appA13}
\|u\|_{L^2(M,\omega)} \leq C(\varepsilon)\, e^{3\varepsilon /h}.
\endeq
Taking a sequence of regular values $0 < \varepsilon_j \rightarrow 0$, by Sard's theorem, we obtain (\ref{eq:appA3.1}).
\end{proof}

\medskip
\noindent
Let $u\in L^2_{\varphi}(M,\omega)$ be a normalized singular state of $h\overline{\partial}: L^2_{\varphi}(M,\omega) \rightarrow L^2_{\varphi}(M, T^*_{0,1}M)$, associated to a singular value $s\in [0, e^{-\tau/h}]$, for some $\tau > 0$ satisfying (\ref{appA1}), (\ref{eq:appA2}). It follows from (\ref{eq:appA8}) that
\begeq
\label{eq:appA14}
\|\overline{\partial} u\|_{L^2(M, T^*_{0,1} M)} \leq \mathcal O(1)\, e^{-1/Ch}, \quad C> 0,
\endeq
and using the representation (\ref{eq1.4.0.5}) for the Laplacian $\Delta$ on $M$ we get
\begeq
\label{eq:appA15}
(-\Delta u, u)_{L^2(M,\omega)} \leq \mathcal O(1)\, e^{-2/Ch}.
\endeq

\medskip
\noindent
Let
\begin{equation*}
0 = \lambda_0 < \lambda_1 \leq \lambda_2 \leq \dots \to +\infty
\end{equation*}
stand for the increasing sequence of eigenvalues of $-\Delta$, repeated according to their multiplicity, and let $\psi_0, \psi_1, \psi_2, \ldots$ be an orthonormal basis of $L^2(M,\omega)$ composed of associated real valued eigenfunctions of $-\Delta$. We shall take
\begeq
\label{eq:appA16}
\psi_0 = \frac{1}{{\rm vol}(M)^{1/2}},
\endeq
see also (\ref{eq:fbp0}).

\medskip
\noindent
Writing
\begeq
\label{eq:appA17}
u = \sum_{j=0}^{\infty} c_j \psi_j, \quad c_j = (u,\psi_j)_{L^2(M,\omega)},
\endeq
where the series converges in $C^{\infty}(M)$, we obtain in view of (\ref{eq:appA15}),
\begeq
\label{eq:appA18}
\sum_{j=1}^{\infty} \lambda_j \abs{c_j}^2 \leq \mathcal O(1)\, e^{-2/Ch}.
\endeq
If we let
\begeq
\label{eq:appA19}
u^\perp = \sum_{j=1}^{\infty} c_j \psi_j,
\endeq
then (\ref{eq:appA17}) gives an orthogonal decomposition in $L^2(M,\omega)$,
\begin{equation}
\label{eq:appA20}
u = c_0 \psi_0 + u^\perp,
\end{equation}
and using (\ref{eq:appA18}) we obtain that
\begin{equation}
\label{eq:appA21}
\| u^\perp\|_{H^1(M)} \leq \mathcal{O}(1)\, {e}^{-1/Ch}.
\end{equation}
The normalization condition (\ref{eq:appA3}), together with (\ref{eq:appA20}), give that
\begeq
\label{eq:appA22}
1 = \|u\|_{L^2_{\varphi}(M,\omega)}^2 = \abs{c_0}^2 \|\psi_0\|^2_{L^2_{\varphi}(M,\omega)} + 2{\rm Re}\, (c_0 \psi_0, u^{\perp})_{L^2_{\varphi}(M,\omega)} + \|u^{\perp}\|^2_{L^2_{\varphi}(M,\omega)}.
\endeq
Here
\begeq
\label{eq:appA23}
\|u^{\perp}\|_{L^2_{\varphi}(M,\omega)} \leq \|u^{\perp}\|_{L^2(M,\omega)} \leq \mathcal{O}(1)\, {e}^{-1/Ch},
\endeq
in view of (\ref{eq:appA21}), and we get using (\ref{eq:appA22}), (\ref{eq:appA23}),
\begeq
\label{eq:appA24}
1 = \abs{c_0}^2 \|\psi_0\|^2_{L^2_{\varphi}(M,\omega)} + \mathcal O(1)\, e^{-1/Ch} \abs{c_0} \|\psi_0\|_{L^2_{\varphi}(M,\omega)}  + \mathcal{O}(1)\, {e}^{-2/Ch}.
\endeq
It follows that
\begeq
\label{eq:appA25}
\abs{c_0} \|\psi_0\|_{L^2_{\varphi}(M,\omega)} = 1 + \mathcal O(1)\, e^{-1/Ch}.
\endeq
Here we have, recalling (\ref{eq:appA16}),
\begeq
\label{eq:appA26}
\|\psi_0\|_{L^2_{\varphi}(M,\omega)} = \frac{1}{{\rm vol}(M)^{1/2}} \|1\|_{L^2_{\varphi}(M,\omega)} \in [\alpha h^{1/2}, 1], \quad \alpha > 0,
\endeq
using that
\[
\|1\|^2_{L^2_{\varphi}(M,\omega)} = \int_M e^{-2\varphi(z)/h}\, \omega(z,dz\,d\overline{z}) \leq {\rm vol}(M),
\]
and
\[
\|1\|^2_{L^2_{\varphi}(M,\omega)} = \int_M e^{-2\varphi(z)/h}\, \omega(z, dz\, d\overline{z} ) \geq 
\int_{B(z_0,h^{1/2})} e^{-2\varphi(z)/h}\, \omega(z,dz\,d\overline{z}) \geq \frac{h}{C}.
\]
Here $z_0 \in \varphi^{-1}(0)$, $B(z_0, h^{1/2})$ is a small neighborhood of $z_0$ of diameter of size $h^{1/2}$, and we have also used that $\varphi$ vanishes to the second order at $z_0$. 

\bigskip
\noindent
Next, the smallest singular value of $h\overline{\partial}:L^2_\varphi(M,\omega)\to L^2_\varphi(M, T^*_{0,1} M)$ is $0$, and the corresponding singular state $u_0$, normalized in $L^2_\varphi(M,\omega)$, is a constant such that
\begeq
\label{eq:appA27}
\abs{u_0} = \frac{1}{\|1\|_{L^2_{\varphi}(M,\omega)}} \in \left[{\rm vol}(M)^{-1/2}, \mathcal O(h^{-1/2})\right],
\endeq
in view of (\ref{eq:appA26}). Assuming that $u_0$ is a real constant $>0$, we get using (\ref{eq:appA16}), (\ref{eq:appA26}), and (\ref{eq:appA27}),
\begeq
\label{eq:appA27.1}
u_0 = \frac{\psi_0}{\|\psi_0\|_{L^2_{\varphi}(M,\omega)}}.
\endeq
If the singular value $s$ in (\ref{eq:appA3}) is non-vanishing, then we have
\begin{equation}
\label{eq:appA28}
(u, u_0)_{L^2_\varphi(M,\omega)}=0,
\end{equation}
and recalling (\ref{eq:appA20}), we get
\begin{multline}
\label{eq:appA29}
0 = c_0(\psi_0,u_0)_{L^2_{\varphi}(M,\omega)} + (u^{\perp},u_0)_{L^2_{\varphi}(M,\omega)} = c_0(\psi_0,u_0)_{L^2_{\varphi}(M,\omega)} + \mathcal O(1)\, e^{-1/Ch} \\
= c_0 \|\psi_0\|_{L^2_{\varphi}(M,\omega)} + \mathcal O(1)\, e^{-1/Ch}.
\end{multline}
Here we have also used (\ref{eq:appA23}), (\ref{eq:appA27.1}). The right hand side of (\ref{eq:appA29}) is non-vanishing for all $h>0$ small enough, in view of (\ref{eq:appA25}), giving a contradiction. The proof of Theorem \ref{decay_rate_large} is complete.

\section{Sobolev and H\"older spaces on manifolds}
\label{sec:appHoSoSpace}

\noindent
We recall some notions and results for Sobolev and H\"older spaces on compact manifolds. In this section we let $s\in \RR$ and $1< p < + \infty$, unless stated otherwise.
\subsection{Sobolev and Hölder spaces on Euclidean space}
First, we work on $\mathbb R^d$. Let $m\in\RR$ and let $\Psi^m(\RR^d)$ denote the class of pseudodifferential operators on $\RR^d$ whose symbols are in the class $S^m(\RR^{2d})$, i.e. the class of smooth functions $a$ on $\RR^{2d}$ such that for all $\alpha,\beta\in\NN^d$
\begin{equation*}
\sup_{x,\xi\in\RR^d}|\langle \xi\rangle^{|\alpha|-m} \partial_\xi^\alpha
\partial_x^\beta a(x,\xi)| < +\infty.
\end{equation*}
Let $P\in \Psi^0(\mathbb R^d)$, then, see for instance \cite[Section 5, Proposition 4]{St93},
\begin{equation}\label{eq:PseudoMapping1}
    P:L^{p}(\RR^d)\to L^{p}(\RR^d)
\end{equation}
is a linear continuous operator.

\medskip
\noindent
Next, we recall that the Sobolev space $W^{s,p}(\RR^d):=(1-\Delta)^{-s/2}L^p(\RR^d)$ equipped with the norm
\begin{equation}\label{eq:SobNorm1}
  \|u\|_{W^{s,p}(\RR^d)} := \|(1-\Delta)^{s/2}u\|_{L^p(\RR^d)},
\end{equation}
is a Banach space. Notice that when $s$ is a positive integer, the space $W^{s,p}(\RR^d)$ coincides with the space of functions $f\in L^p(\RR^d)$
such that $\partial^\alpha f$, $|\alpha| \leq s$, taken in the sense of distributions, belong to $L^p(\mathbb R^d)$. Moreover, \eqref{eq:PseudoMapping1} shows that the norm \eqref{eq:SobNorm1}
is equivalent to the norm
\begin{equation}\label{eq:SobNorm2}
  \|u\|_{W^{s,p}(\RR^d)} = \sum_{|\alpha|\leq s} \|\partial^\alpha u\|_{L^p(\RR^d)}.
\end{equation}

\medskip
\noindent
{\it Remark}. When $p=\infty$ and $s\in \NN$, then we will keep the second definition of the Sobolev spaces $W^{s,\infty}(\RR^d)$ equipped with the norm \eqref{eq:SobNorm2}.

\medskip
\noindent
Going back to the case of $s\in \RR$ and $1< p < + \infty$, we get, using the pairing induced by the standard $L^2$ scalar product, that
these spaces satisfy the duality property
\begin{equation}\label{eq:daulity}
  (W^{s,p}(\RR^d))^* = W^{-s,p'}(\RR^d), \quad p^{-1}+(p')^{-1} =1.
\end{equation}
Let $P\in \Psi^m(\RR^d)$, for some $m\in \mathbb R$. Standard pseudodifferential calculus
and \eqref{eq:PseudoMapping1} then show that
\begin{equation}\label{eq:PseudoMapping}
    P:W^{s,p}(\RR^d)\to W^{s-m,p}(\RR^d)
\end{equation}
is a linear continuous operator.

\medskip
\noindent
For $k\in\NN$, we recall that the space $C^{k}(\overline{\RR^d})$ is the space of $C^k$ functions $f$ whose partial derivatives $\partial^\alpha f$, $|\alpha|\leq k$, are bounded and uniformly continuous on $\RR^d$. For $0< \lambda \leq 1$,
the space $C^{k,\lambda}(\overline{\RR^d})$ is the subspace of $C^{k}(\overline{\RR^d})$ consisting of those functions $f$ for which
$\partial^\alpha f$, for all $|\alpha|=k$, are uniformly Hölder continuous on $\RR^d$ with exponent $\lambda$, i.e.
\begin{equation*}
  |\partial^\alpha f(x)-\partial^\alpha f(x)| \leq \mO(|x-y|^\lambda),
  \quad \text{for all } x,y\in \RR^d.
\end{equation*}
When equipped with the norm
\begin{equation*}
  \|u\|_{C^{k,\lambda}(\overline{\RR^d})} =
    \sup_{|\alpha|\leq k}\sup_{x\in\RR^d}|\partial^\alpha u|
    +
    \sup_{|\alpha|= k}\sup_{\substack{x,y\in\RR^d \\ x\neq y}}
    \frac{|\partial^\alpha u(x) - \partial^\alpha u(y)| }{|x-y|^\lambda},
\end{equation*}
the space $C^{k,\lambda}(\overline{\RR^d})$ becomes a Banach space.

\medskip
\noindent
By the Sobolev embedding theorem, see for instance \cite[Theorem 5.4]{Ad75}, we know that for $s\in \NN$, $d<p<+\infty$ and $0< \lambda \leq 1 - d/p$ the embedding
\begin{equation}\label{eq:MorreyInequality0}
  W^{s+1,p}(\RR^d)\subset C^{s,\lambda}(\overline{\RR^d})
\end{equation}
is continuous. There exists therefore a constant $C=C_{s,p,d,\lambda}>0$ such that for
all $u\in W^{s,p}(\RR^d)$
\begin{equation}\label{eq:MorreyInequality}
    \|u\|_{C^{s,\lambda}(\overline{\RR^d})}
    \leq C \|u\|_{W^{s+1,p}(\RR^d)}.
\end{equation}
\subsection{Sobolev and Hölder spaces on compact manifolds}
Let $M$ be a compact smooth $d$-dimensional manifold without boundary and let $s\in \RR$ and $1< p < + \infty$.
We define the Sobolev spaces $W^{s,p}(M)\subset \mathcal{D}'(M)$ as the set of all distributions $u\in\mathcal{D}'(M)$ such that
\begin{equation*}
(\kappa^{-1})^*\chi u \in W^{s,p}(\RR^d),
\end{equation*}
for all local smooth coordinate charts $\kappa:M\supset U \to V\subset \RR^d$ and cut-off functions $\chi\in C^\infty_0(U)$. Since $M$ is compact, we may consider a finite open cover of $M$ by smooth coordinate charts $\{\kappa_k:M\supset U_k\to V_k\subset \RR^d\}_{k\in K}$
and a subordinate finite partition of unity $\{\chi_k\}_{k\in K}$, $\chi_k \in C^\infty_0(U_k)$. We can equip the Sobolev spaces $W^{s,p}(M)$
with the norm
%
\begin{equation}\label{eq:EqSobNorm}
	\| u \|_{W^{s,p}(M)} = \sum_{k\in K}
	\|(\kappa_k^{-1})^*\chi_k u \|_{W^{s,p}(\RR^d)},
\end{equation}
turning it into a Banach space. Notice that taking different coordinate patches and cut-off functions in \eqref{eq:EqSobNorm} gives an equivalent norm.

\medskip
\noindent
{\it Remark}. When $p=\infty$ and $s\in \NN$, we define Sobolev spaces $W^{s,\infty}(M)$ analogously, however, in \eqref{eq:EqSobNorm} we use the norm \eqref{eq:SobNorm2}.

\medskip
\noindent
We recall from~\cite[Chapter 3]{GrSj} that a linear continuous map $P: C^{\infty}(M)\to \mathcal{D}'(M)$ is a pseudodifferential operator of class $\Psi^{m}(M)$, $m\in\RR$, if and only if the following two conditions hold:
\begin{itemize}
\item $\phi P \psi:\mathcal{D}'(M)\to C^\infty(M)$ for all $\phi,\psi \in C^\infty_0(M)$ with
$\supp \phi \cap \supp \psi = \emptyset$;
\item for all local coordinate charts $\kappa: M\supset U \to V\subset \RR^d$ and cut-off functions $\chi\in C^\infty_0(U)$ there exists a
$P_{\kappa,\chi}\in \Psi^m(\RR^{d})$ such that
\begin{equation*}
\chi P\chi = \chi\kappa^*  P_{\kappa\chi}(\kappa^{-1})^*\chi.
\end{equation*}
\end{itemize}
It then follows from \eqref{eq:PseudoMapping}, \eqref{eq:EqSobNorm} that
\begin{equation}\label{eq:PseudoMapping2}
  P=\mO(1):W^{s,p}(M)\to W^{s-m,p}(M).
\end{equation}
Now fix a smooth Riemannian metric $g$ on $M$. Then $-\Delta \in \Psi^2(M)$ is a positive selfadjoint operator: $L^2(M)\to L^2(M)$
with domain $W^{2,2}(M)=H^2(M)$. By the functional calculus of selfadjoint operators we can define $(1-\Delta)^s$, $s\in\RR$, and a result due to Seeley \cite{Se67} shows that $(1-\Delta)^s\in \Psi^{2s}(M)$.
Thus \eqref{eq:PseudoMapping2} shows that
\begin{equation}\label{eq:sob3}
  (1-\Delta)^{-s/2}L^p(M) = W^{s,p}(M)
\end{equation}
and the norm $\|(1-\Delta)^{s/2}u\|_{L^p(M)}$ is equivalent to \eqref{eq:EqSobNorm}. Using the pairing induced by the $L^2$ scalar
product on $M$ with respect to the Riemannian volume form, we get an analogue of \eqref{eq:daulity},
\begin{equation}\label{eq:daulity2}
  (W^{s,p}(M))^* = W^{-s,p'}(M), \quad p^{-1}+(p')^{-1} =1.
\end{equation}

\medskip
\noindent
For $k\in \NN$ and $0 <\lambda \leq 1$ we define the Hölder space $C^{k,\lambda}(M)$ as the set of all $C^k(M)$ functions $u$ such that
\begin{equation*}
(\kappa^{-1})^*\chi u \in C^{k,\lambda}(\overline{\RR^d}),
\end{equation*}
for all smooth local coordinate charts $\kappa:M\supset U \to V\subset \RR^d$ and cut-off functions $\chi\in C^\infty_0(U)$. Since $M$ is compact, we may consider a finite open cover of $M$ by coordinate charts $\{\kappa_k:M\supset U_k\to V_k\subset \RR^d\}_{k\in K}$
and a subordinate finite partition of unity $\{\chi_k\}_{k\in K}$, $\chi_k \in C^\infty_0(U_k)$. We can equip the Hölder space $C^{k,\lambda}(M)$
with the norm
\begin{equation}\label{eq:EqHolNorm}
	\| u \|_{C^{k,\lambda}(M)} = \sum_{k\in K}
	\|(\kappa_k^{-1})^*\chi_k u \|_{C^{k,\lambda}(\RR^d)},
\end{equation}
turning it into a Banach space. Similarly, we can equip the space $C^k(M)$ with the Banach space norm
\begin{equation}\label{eq:EqHolNorm1}
	\| u \|_{C^{k}(M)} = \sum_{k\in K}
	\|(\kappa_k^{-1})^*\chi_k u \|_{C^{k}(\RR^d)}.
\end{equation}

\medskip
\noindent
Let $\mathrm{dist}$ denotes the geodesic distance on $M$ with respect to the metric $g$.
Notice that since $M$ is compact, the globally defined norm
\begin{equation}\label{eq:EqHolNorm2}
	\| u \|_{C^{0,\lambda}(M)} = \sup_{x\in M} |u(x)| +
  \sup_{\substack{x,y\in M \\ x\neq y}}
  \frac{| u(x) -  u(y)| }{\mathrm{dist}(x,y)^\lambda}
\end{equation}
is equivalent to \eqref{eq:EqHolNorm} when $k=0$.

\medskip
\noindent
In view of the definition of $W^{s,p}(M)$ and $C^{k,\lambda}(M)$, the Sobolev embedding \eqref{eq:MorreyInequality0} immediately yields that
for $s\in \NN$, $d<p<+\infty$ and $0< \lambda \leq 1 - d/p$, we have
\begin{equation}\label{eq:MorreyInequality2a}
  W^{s+1,p}(M)\subset C^{s,\lambda}(M).
\end{equation}
Moreover, in view of \eqref{eq:EqSobNorm} and \eqref{eq:EqHolNorm}, the inequality \eqref{eq:MorreyInequality} gives that here exists a
constant $C=C_{s,p,d,\lambda}>0$ such that for all $u\in W^{s,p}(M)$,
\begin{equation}
\label{eq:MorreyInequality2b}
\|u\|_{C^{s,\lambda}(M)} \leq C \|u\|_{W^{s+1,p}(M)},
\end{equation}
which shows that the embedding \eqref{eq:MorreyInequality2a} is continuous.

\medskip
\noindent
Finally, let us recall the Rellich-Kondrachov embedding theorem for the space of continuous functions $C(M)$ equipped
with the uniform norm $\|u\|_{C(M)} = \sup_M|u|$, see~\cite[Theorems 2.10, 2.20, 2.34]{Au82}. If $s>d/p$ with $s\in\NN$,
$d<p<\infty$, then the embedding
\begin{equation}
\label{eq:CompImbed}
W^{s,p}(M)\subset C(M).
\end{equation}
is compact.

\end{document}